\documentclass[12pt]{article}

\usepackage{setspace,enumerate}
\linespread{1.18}
\usepackage[margin=1in]{geometry}
\usepackage{parskip}
\setlength{\parindent}{10pt}

\usepackage[sc]{mathpazo}
\usepackage[T1]{eulervm}
\usepackage[scaled]{helvet}

\usepackage{amsmath,amsthm, amssymb, amsfonts}
\usepackage{mathtools}
\usepackage{graphicx}
\usepackage[hidelinks]{hyperref}
\usepackage{xcolor}

\newcommand{\arxiv}[1]{\href{https://arxiv.org/abs/#1}{\texttt{ArXiv:#1}}}
\newcommand{\arxivmath}[1]{\href{https://arxiv.org/abs/math/#1}{\texttt{ArXiv:#1}}}
\newcommand{\arxivcond}[1]{\href{https://arxiv.org/abs/cond-mat/#1}{\texttt{ArXiv:#1}}}
\newcommand{\arxivph}[1]{\href{https://arxiv.org/abs/math-ph/#1}{\texttt{ArXiv:#1}}}

\theoremstyle{plain}
\newtheorem{thm}{Theorem}[section]
\newtheorem{cor}[thm]{Corollary}
\newtheorem{lem}[thm]{Lemma}
\newtheorem{prop}[thm]{Proposition}

\theoremstyle{definition}
\newtheorem{defn}[thm]{Definition}
\newtheorem{rem}[thm]{Remark}
\newtheorem{exm}[thm]{Example}

\numberwithin{equation}{section}

\newcommand{\Hp}{\mathbb{H}}
\newcommand{\R}{\mathbb{R}}
\newcommand{\Z}{\mathbb{Z}}
\newcommand{\Ze}{\mathbb{Z}_{\rm{even}}}

\newcommand{\La}{\mathcal{L}}
\newcommand{\Ai}{\mathcal{A}}
\newcommand{\hh}{\mathbf{h}}
\newcommand{\eps}{\varepsilon}

\newcommand{\bemph}[1]{{\bf #1}}
\newcommand{\cm}{\operatorname{cmaj}}

\title{Infinite geodesics, competition interfaces and the second class particle in the scaling limit}
\author{Mustazee Rahman \thanks{\textsc{Department of Mathematical Sciences, Durham University}. \textit{Email}: \texttt{mustazee@gmail.com}}
	\and B\'alint Vir\'ag\thanks{\textsc{Department of Mathematics, University of Toronto}. \textit{Email}: \texttt{balint@math.toronto.edu}}}
\date{}

\begin{document}
	\maketitle
		\setcounter{tocdepth}{1}
 
	\tableofcontents
	\begin{abstract} 
	    We establish fundamental properties of infinite geodesics and competition interfaces in the directed landscape.
	    We construct infinite geodesics in the directed landscape, establish their uniqueness and coalescence, and define Busemann functions.
	    We then define competition interfaces in the directed landscape. We prove the second class particle in tasep
	    converges under KPZ scaling to a competition interface. Under suitable conditions, we show the competition interface
	    has an asymptotic direction, analogous to the speed of a second class particle, and determine its law. Moreover, we
	    prove the competition interface has an absolutely continuous law on compact sets with respect to infinite geodesics.
	\end{abstract}
	\medskip
	{\footnotesize
		\emph{Keywords}: competition interface, KPZ universality, last passage percolation, second class particle, tasep \\
		\emph{AMS 2020 subject classification}: Primary 60F05, 60K35; Secondary 82C22.
	}
	
	\newpage
	
	\section{Introduction} \label{sec:intro}
	Imagine a random metric over the plane. Given two points, consider the locus of all points that are equidistant from them.
	In Euclidean geometry this is of course a straight line, the perpendicular bisector between the points, and also an infinite geodesic
	of the Euclidean metric. What is the shape of this curve in the random metric? Of interest are its local behaviour, its relation to
	geodesics, and its global nature.
	
	We study geodesics and bisectors (called competition interfaces) in the  directed landscape, which is expected to be the scaling limit of the most natural planar discrete random metric: first passage percolation. This connection is conjectural, but it has been shown that the directed landscape is the scaling limit of most classical last passage percolation models, and
	carries the geometric structure inherent in models belonging to the KPZ universality class. It therefore describes random, universal KPZ geometry.

    Let us introduce the directed landscape as the scaling limit of exponential last passage percolation.
    Assign independent exponential random variables $\omega_p$ of mean 1 to points $p \in \Ze^2 =\{(x,y)\in \mathbb Z^2,x+y \mbox{ even}\}$.
	A finite sequence $\pi_0,\ldots, \pi_n$ of points in $\Ze^2$  is an (upwards) directed path if $\pi_{i+1}-\pi_{i}=(1,1)$ or $(-1,1)$ for all $i$.
	For $a,b\in \Ze^2$, let 
	\begin{equation} \label{eqn:LPPdef}
		L(a;b)=\max_{\pi} \sum_{u\in \pi} \omega_u 
	\end{equation} 
	where the maximum is taken over directed paths beginning at $a$ and ending in $b$; let $L(a;b)= -\infty$ if there is no such path, and extend $L$ to $(\mathbb R^2)^2$ by rounding arguments to the nearest point in $\Ze^2$. 
	
	In \cite{DV} it is shown that as $\eps \to 0$, jointly as functions of $(y,s;x,t) \in \R^4$,
	\begin{equation} \label{eqn:KPZscaling2}
		 \frac{\eps}{2} \,  L\Big(2y/\eps^2,s/\eps^3\;;\;2x/\eps^2,t/\eps^3\Big) - (t-s)/\eps^2 \;\stackrel{law}{\Rightarrow}\; \La(y,s;x,t),
	\end{equation}
	where  the random function $\La :(\R^2)^2\to \R\cup\{-\infty\}$ is the {\bf directed landscape}.
	This function is a directed metric on $\mathbb R^2$: it satisfies $\mathcal L(p,p)=0$ and the reverse triangle inequality
	$\La(p,q)\ge \La(p,r)+\La(r,q)$. For distinct points $p = (y,s), q = (x,t)$ with $s \geq t$, $\La(p,q)=-\infty$.
	Otherwise $\La(p,q)+(x-y)^2/(t-s)$ has  a GUE Tracy-Widom law scaled by $(t-s)^{1/3}$.
	
	As an application of the results in this paper, we find the following formula for the asymptotic direction of bisectors in the directed landscape. Let $\Hp = \{(x,t): x \in \R, t > 0\}$ denote the open upper half plane.
	
	\begin{thm}[Direction of bisectors] \label{thm:bisdir}
		Consider the points $(-1,0)$ and $(1,0)$ and the locus of all points $(x,t) \in \Hp$
		that are $\La$-equidistant from them, namely $\La(-1,0; x,t) = \La(1,0; x,t)$. This set is the graph $\{(I(t), t): t> 0\}$ of a continuous function $I(t)$. Moreover,  
		$$\lim_{t \to \infty} \frac{I(t)}{t} = D$$
  	exists almost surely and is normally distributed with mean 0 and variance $1/4$.
		
		Next, consider the line segment $[-1,1]\times \{0\}$. A point $(x,t) \in \Hp$ is $\La$-equidistant from its left half and its right half if $\sup_{y \in [-1,0]}  \La(y,0;x,t) = \sup_{y \in [0,1]}  \La(y,0;x,t)$.
		The $\La$-equidistant points form the graph $\{(I(t), t): t > 0\}$ of a continuous function and, almost surely,
		$$\lim_{t \to \infty} \frac{I(t)}{t} = D \stackrel{law}{=} \frac{(\beta_2 - \beta_1)\chi_5 - N}{2}.$$
		The pair $(\beta_1, \beta_2)$ are the first two components from a Dirichlet(1,1,2) random vector.
		The random variable $\chi_5$ has the chi-five distribution and $N$ has a standard normal distribution.
		All three of $(\beta_1, \beta_2), \chi_5$ and $N$ are independent. 
	\end{thm}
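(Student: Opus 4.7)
The overall strategy is to reduce both parts of the theorem to a distributional question about Busemann functions, developed earlier in the paper. Recall that the Busemann function in a fixed direction $d \in \R$ is
\begin{equation*}
B_d(p, q) \;:=\; \lim_{t \to \infty}\,\La(p;\, dt, t) - \La(q;\, dt, t),
\end{equation*}
and (as will be proved in the body of the paper) $d \mapsto B_d(p, q)$ is continuous and strictly monotone, while the marginal $x \mapsto B_d((x, 0), (0, 0))$ has the law of a two-sided Brownian motion of variance $2$ with a linear-in-$d$ drift.

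\emph{Part 1.} For fixed $t$, set $f_t(x) := \La(-1, 0; x, t) - \La(1, 0; x, t)$. Using the planar non-crossing inequality for $\La$ together with almost sure uniqueness of geodesics at generic endpoints (an earlier result of this paper), $f_t$ is strictly decreasing in $x$; it is continuous and ranges from $+\infty$ to $-\infty$. Hence $I(t) := f_t^{-1}(0)$ exists uniquely, and joint continuity of $\La$ makes $t \mapsto I(t)$ continuous. To extract the asymptotic direction, pick any subsequential limit $d^* = \lim_n I(t_n)/t_n$; then $f_{t_n}(x_n) \to B_{d^*}((-1, 0), (1, 0))$ whenever $x_n/t_n \to d^*$, so $B_{d^*}((-1, 0), (1, 0)) = 0$. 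Strict monotonicity in $d$ pins $d^* = D$ for a unique $D$. The law of $D$ follows by writing
\begin{equation*}
B_d((-1, 0), (1, 0)) \;=\; B_d((-1, 0), (0, 0)) - B_d((1, 0), (0, 0)),
\end{equation*}
and noting that this is an affine function of two Brownian values of total variance $4$ plus a linear drift of slope $-4$ in $d$. Solving for the root $D$ yields a centred Gaussian of variance $1/4$.

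\emph{Part 2.} Define $\La^L(x, t) := \sup_{y \in [-1, 0]} \La(y, 0; x, t)$ and $\La^R$ analogously. Continuity and strict monotonicity in $x$ of $\La^L - \La^R$ again give a unique continuous interface $I(t)$. By compactness of $[-1, 1]$ and equicontinuity of the family $y \mapsto \La(y, 0; \cdot, \cdot) - \La(0, 0; \cdot, \cdot)$, the Busemann convergence is uniform in $y$. Passing to the limit characterises the asymptotic direction $D$ as the unique solution of
\begin{equation*}
\sup_{y \in [-1, 0]} \bigl( W(y) + 2Dy \bigr) \;=\; \sup_{y \in [0, 1]} \bigl( W(y) + 2Dy \bigr),
\end{equation*}
where $W$ is a two-sided Brownian motion of variance $2$. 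Uniqueness of $D$ follows from the envelope computation: the derivative in $d$ of the difference of suprema equals $2(A(d) - B(d))$ with $A(d) \in [-1, 0]$ and $B(d) \in [0, 1]$, so it is strictly negative.

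\emph{Main obstacle.} The remaining task is to identify the implicitly defined random variable $D$ above with the explicit law $((\beta_2 - \beta_1)\chi_5 - N)/2$. My plan is to invoke Williams' path decomposition around the two argmaxes $A^* \in [-1, 0]$ and $B^* \in [0, 1]$: at $d = D$ the drifted process attains its maximum on $[-1, 1]$ simultaneously at $A^*$ and $B^*$, and the restrictions to $[-1, A^*]$, $[A^*, B^*]$, $[B^*, 1]$ decouple into independent Bessel$(3)$-type pieces. The three interval lengths should produce the Dirichlet$(1, 1, 2)$ weights (the parameter $2$ reflecting the two boundary constraints on the middle piece), the common maximum height contributes the $\chi_5$ factor, and a residual Gaussian endpoint fluctuation contributes $N$. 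Reconciling the self-consistency of $D$ (since the drift depends on $D$ itself) with the pathwise decomposition, and then simplifying $D = (W(A^*) - W(B^*))/(2(B^* - A^*))$ in terms of these independent pieces, is the main technical step; the rest of the proof is a packaging of the Busemann-function machinery developed earlier in the paper.
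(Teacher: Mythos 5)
Your overall philosophy (Busemann functions plus a concave-majorant/argmax characterization) matches the paper's, but there are three concrete gaps. First, the monotonicity you rely on is too strong: $x\mapsto \La(1,0;x,t)-\La(-1,0;x,t)$ is only non-decreasing — it is the Airy difference profile of Basu--Ganguly--Hammond, which is locally constant off a measure-zero set — so "strictly decreasing, hence a unique zero" fails as stated, and the same issue affects the split-suprema function in Part 2. Uniqueness of $I(t)$ at every $t$ has to come from a different mechanism: in the paper it is the unique geodesic condition (at a fixed time the non-unique-geodesic points are countable, and the zero set is an interval consisting of such points), together with, in Part 2, the fact that $p=0$ is a non-polar point of $h_0$. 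Second, your direction argument implicitly treats the Busemann function jointly in the direction $d$ as a single Brownian motion with drift $2d$ (you "solve for the root $D$" of $d\mapsto B_d$, and in Part 2 you write the self-consistent equation with one fixed $W$). Only the fixed-$d$ marginal law is available from shear invariance; the joint process in $d$ is a nontrivial object (the stationary horizon of Busani--Sepp\"al\"ainen--Sorensen), and your subsequential-limit step also needs tightness of $I(t)/t$ and Busemann convergence along a \emph{random} direction, none of which is supplied. The paper avoids all of this: existence of $\lim I(t)/t$ is proved by sandwiching the interface between infinite geodesics from $h_0$ in rational directions (which cannot cross a bisector), and the law is extracted only through fixed rational directions — $\mathbf{Pr}(D>a)=\lim_t\mathbf{Pr}(d_0(at,t)<0)$ plus shear invariance in Part 1, and two one-sided distribution-function bounds at each fixed rational $d$ in Part 2 (Theorem \ref{thm:randdirection} and Corollary \ref{cor:directionlaw}).

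Third, and most importantly for Part 2, the explicit law is not actually derived: the Williams-decomposition sketch is precisely the missing content. The claims that the three pieces decouple, that the interval lengths give Dirichlet$(1,1,2)$, that the common maximum height gives $\chi_5$, and that an independent Gaussian survives, are asserted rather than proved, and reconciling them with the self-consistency of $D$ is acknowledged but not done. The paper's route is to first reduce to $D\stackrel{law}{=}-\tfrac12\partial_{l/r}\cm(h_0+B)(0)$ via Corollary \ref{cor:directionlaw}, then write $B(t)=Nt+R(t)$ with $R$ a Brownian bridge, apply Doob's transformation to convert the bridge to a Brownian motion, and invoke the Ouaki--Pitman density for the (slope, intercept) pair of the tangent line of the concave majorant, which is $(\beta_1,\beta_2)\chi_5$ with $(\beta_1,\beta_2,\beta_3)$ Dirichlet$(1,1,2)$; your plan would amount to reproving that result from scratch. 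To complete your proposal you would need either to carry out that path decomposition rigorously or to quote the Ouaki--Pitman/Groeneboom results as the paper does.
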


    Bisectors and geodesics of the directed landscape are intimately connected, and the paper explores their interplay. Along the way we also draw on the connection between bisectors and the second class particle in the totally asymmetric simple exclusion process. The rest of the Introduction discusses these concepts and our results in three parts: competition interfaces, second class particles and geodesics.

    \subsection{Competition interfaces of the directed landscape} \label{sec:intcomp}
	
	Competition interfaces in last passage percolation were introduced by Ferrari, Martin and Pimentel \cite{FP, FMP}. They 
	are related to the trajectory of a 2nd class particle in tasep and also identify directions with non-unique geodesics
	in last passage percolation. Their asymptotic direction has been a subject of intense research.
        In  \cite{FP} Ferrari and Pimentel derived the direction of the competition interface in exponential last passage percolation with the step
        initial condition. This was generalized by Ferrari, Martin and Pimentel \cite{FMP, FMP2} to initial conditions having different inclinations.
        Georgiou, Rassoul-Agha and Sepp\"{a}l\"{a}inen \cite{GRS} showed existence of the direction for step initial condition in the last passage percolation model with continuous i.i.d.~weights and certain assumptions. Cator and Pimentel \cite{CP} considered exponential last passage percolation and the Hammersley process with rather general initial conditions, and described the limiting direction of competition interfaces in terms of suprema of certain random walks. 
 
        Bal\'{a}zs, Cator, and Sepp\"{a}l\"{a}inen \cite{BCS} proved an interesting duality relating competition interfaces to geodesics. Ferrai and Nejjar \cite{FN} found single-point limiting fluctuations for the competition interface in the presence of shocks.

        In this paper we introduce competition interfaces of the directed landscape. This is the topic of Section \ref{sec:Interfaces}.
	Competition interfaces are defined in $\S$\ref{sec:defofint}, and basic properties such as continuity and distributional invariances are established in  $\S$\ref{sec:propertiesofint} and $\S$\ref{sec:invarianceofint}.
 
        We believe the next theorem is the first {\it geometric} representation of the direction of competition interfaces. Here is the setup. An {\bf initial condition} $h_0: \R \to \R\cup\{-\infty\}$ is an upper semicontinuous function that is finite at some point and satisfies $h_0(x) \leq c(1+ |x|)$
        for all $x$ and some constant $c$. For $(x,t) \in \Hp$, the distance from $h_0$ to $(x,t)$ is
	\begin{equation} \label{eqn:Ldist} 
		\La(h_0; x,t) = \sup_{y \in \R} \, \{ h_0(y) + \La(y,0;x,t)\}.
	\end{equation}
	
	We say that $y$ is an {\bf interior reference point} for an initial condition $h_0$ if  there are $x < y < z$ such that $h_0(x)$ and $h_0(z)$ are finite. We say $y$ is a {\bf non-polar point} if for a two-sided Brownian motion $B$ with diffusivity constant $\sqrt{2}$, and every $a>0$, the function $h_0(x) + B(x)$ does not have a maximum over $[-a,a]$ at $x = 0$ almost surely. Let $h_0^-$ and $h_0^+$ be the restrictions of $h_0$ to the non-positive  and non-negative axes, taking value $-\infty$ elsewhere. In Proposition \ref{prop:intunique} we show that if $0$ is a non-polar interior reference point for $h_0$, the set of points $(x,t) \in \Hp$ that are $\La$-equidistant from $h_0^-$ and $h_0^+$ forms the graph
	\begin{equation}  \label{eqn:interface}
		\{(I(t),t), t>0\} 
	\end{equation}
	of a continuous function $I(t)$. The curve $I(t)$ is the bisector of $h_0$ from the origin $x=0$,  also called a competition interface. 
	
	Let $\cm(f)$ denote the concave majorant of the function $f$: the infimum of all concave functions that are at least $f$.
	Let $B$ be a two-sided Brownian motion with constant diffusivity $\sqrt{2}$. The following theorem gives the direction of $I(t)$.
	
	\begin{thm}[Direction of interfaces] \label{thm:directionlaw}
	Suppose that $0$ is a non-polar interior reference point for an initial condition $h_0$.
        Assume that there are $a,b > 0$  so that $h_0(x) \leq -a|x| + b$ for every $x$.
	Then there is an almost sure limit:
	 $$
	 \lim_{t\to\infty} \frac{I(t)}{t} = D \stackrel{law}{=} -\frac{1}{2} \cm(h_0 + B)'(0).
	 $$
	\end{thm}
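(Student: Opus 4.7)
My plan is to reduce the question to the maximizer of $h_{0}(y)+\mathcal{B}_{\theta}(y)$, where $\mathcal{B}_{\theta}$ is the direction-$\theta$ Busemann function constructed earlier in the paper, and then to use the distributional identification $\mathcal{B}_{\theta}(y)-2\theta y\stackrel{d}{=}B(y)$. First, for $(x,t)\in\Hp$ let $y^{\ast}(x,t)=\arg\max_{y}\{h_{0}(y)+\La(y,0;x,t)\}$; this argmax is almost surely unique by the geodesic uniqueness results proved earlier. Because the interface is exactly where $\sup_{y\leq 0}=\sup_{y\geq 0}$, we have $y^{\ast}(x,t)\leq 0$ for $x<I(t)$ and $y^{\ast}(x,t)\geq 0$ for $x>I(t)$, so the asymptotics of $I(t)/t$ are controlled by the sign of $y^{\ast}(\theta t,t)$ for $\theta$ slightly above or below the putative limit.

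Next I would pass to the limit along rays $x=\theta t$. The Busemann convergence $\La(y,0;\theta t,t)-\La(0,0;\theta t,t)\to\mathcal{B}_{\theta}(y)$, combined with the decay $h_{0}(y)\leq -a|y|+b$, forces $y^{\ast}(\theta t,t)$ into a compact set uniformly in large $t$ and yields $y^{\ast}(\theta t,t)\to y^{\ast}(\theta):=\arg\max_{y}\{h_{0}(y)+\mathcal{B}_{\theta}(y)\}$ almost surely. Coalescence and the monotone ordering of semi-infinite geodesics in $\theta$, developed in earlier sections, imply that $\theta\mapsto y^{\ast}(\theta)$ is monotone nondecreasing and changes sign at a unique random direction $D$; combining with the characterization above gives $I(t)/t\to D$ almost surely. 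To identify the law of $D$, use the distributional identity $\mathcal{B}_{\theta}(y)\stackrel{d}{=}2\theta y+B(y)$ at fixed $\theta$: a concave-majorant computation shows the argmax of $h_{0}(y)+2\theta y+B(y)$ is strictly positive iff $\cm(h_{0}+B)'(0+)>-2\theta$. Hence for each $\theta$,
\[
P(D<\theta)=P(y^{\ast}(\theta)>0)=P\!\left(\cm(h_{0}+B)'(0+)>-2\theta\right),
\]
so $D\stackrel{d}{=}-\tfrac{1}{2}\cm(h_{0}+B)'(0+)$. The good condition ensures $0$ is almost surely not a vertex of $\cm(h_{0}+B)$, so its derivative at $0$ exists, matching the stated formula.

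The main obstacle is uniform tightness of the optimizers $y^{\ast}(\theta t,t)$ as $t\to\infty$, and the upgrade from fixed-$\theta$ Busemann limits to a pathwise monotone-in-$\theta$ picture of geodesic endpoints. Concretely, one needs quantitative control on $\La(y,0;\theta t,t)+(\theta t-y)^{2}/t$ (via parabolic-Airy-type estimates) to bound $y^{\ast}(\theta t,t)$ uniformly as $\theta$ ranges in a neighborhood of $D$. A secondary subtlety is that $\mathcal{B}_{\theta}-2\theta y$ has Brownian law only at each fixed deterministic $\theta$; we avoid the need to evaluate at the random direction $D$ by computing $P(D<\theta)$ at deterministic $\theta$, which is precisely why the concave-majorant formula involves a Brownian motion $B$ that sits outside, rather than inside, the random direction $D$.
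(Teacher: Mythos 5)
The second half of your plan --- identifying the law of $D$ by evaluating only at deterministic directions $\theta$, using $W_\theta(\cdot,0)\stackrel{law}{=}B(\cdot)+2\theta\,\cdot$ together with the concave majorant, and invoking goodness to rule out a vertex of $\cm(h_0+B)$ at $0$ --- is sound and is essentially the paper's Corollary \ref{cor:directionlaw}; your remark about why the Brownian motion sits ``outside'' the random direction is exactly the point there. The gap is in the first half. You deduce $I(t)/t\to D$ from the claim that $y^{*}(\theta t,t)\to y^{*}(\theta)=\mathrm{argmax}_y\{h_0(y)+W_\theta(y,0)\}$, and this rests on uniform-in-$t$ localization of the finite-time maximizers, which you name as the main obstacle but never establish. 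This is not a routine step with the tools at hand: Proposition \ref{prop:Busemann} gives the Busemann limit only uniformly on compact spatial intervals, and the global fluctuation bound \eqref{eqn:DOVAbound} controls $\Ai(y,0;\theta t,t)$ only up to $C\,t^{1/3}$ times polylogarithmic factors, so combined with $h_0(y)\leq -a|y|+b$ and the parabolic term it localizes $y^{*}(\theta t,t)$ only to $|y|\lesssim t^{1/3}$ (up to logs), not to a fixed compact set. To close your argument you would need increment estimates for $y\mapsto \Ai(y,0;\theta t,t)-\Ai(0,0;\theta t,t)$ that are sublinear in $|y|$ uniformly in large $t$ on the intermediate range between $O(1)$ and $t^{1/3}$; no such estimate is proved or cited in your proposal, so the central step ``$I(t)/t\to D$ almost surely'' is not established.

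The paper's proof (Theorem \ref{thm:randdirection}) avoids finite-time maximizers altogether. For a deterministic rational $d$ with $W_d(h_0^{+})<W_d(h_0^{-})$ it constructs, via Lemma \ref{p:h0-geod}, a semi-infinite geodesic in direction $d$ from $h_0^{+}$ and a direction-$d$ geodesic started from a point $y$ with $h_0^{-}(y)+W_d(y,0)>W_d(h_0^{+})$, and uses the exact Busemann identity $\La(h_0^{+};q)=W_d(h_0^{+})-W_d(q)$ along the first geodesic plus coalescence of same-direction geodesics to conclude that, after the two geodesics merge, the interface lies weakly to the right of them; the symmetric argument works for rational $d$ on the other side. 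This sandwiches $I(t)/t$ between any two rational directions straddling $D$ and yields the almost sure limit with no tightness input at all; your monotone-sign-change description of $D$ then coincides with the paper's definition of $D^{\pm}_p$ via $W_d(h_p^{+})\gtrless W_d(h_p^{-})$. So either adopt that geodesic-sandwich argument or supply the missing uniform localization; as written, your proof of the existence of the limit is incomplete, while your law identification is correct.
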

	In words, the vector $(2D,1)$ is perpendicular to the graph of $\cm(h_0+B)$ at zero. This is a randomized analogue of  Euclidean plane geometry, where the direction is  perpendicular to $2\cm(h_0)$. Theorem \ref{thm:directionlaw} allows us to compute the direction in several settings, in particular, recent work of Ouaki and Pitman \cite{OP} about $\cm(B|_{[0,\infty]})$ allows us to deduce Theorem \ref{thm:bisdir}. 
	
    The asymptotic direction of competition interfaces is studied in $\S$\ref{sec:directionofint}.
	Theorem \ref{thm:direction} proves that when $h_0$ is asymptotically flat, its interfaces are asymptotically vertical.
	Theorem \ref{thm:randdirection} is then a generalization of Theorem \ref{thm:directionlaw}.

 Next, we connect competition interfaces to geodesics by studying the local nature of the curve $I(t)$ from \eqref{eqn:interface}. In Euclidean geometry, bisectors are generally lines, which are themselves Euclidean geodesics.
 In this random geometry, the curve $I(t)$ is locally indistinguishable in law from a geodesic of $\mathcal L$.
	
	Specifically, there is almost surely a unique infinite geodesic of $\La$ from $(0,0)$ in direction zero,
	see Theorems \ref{thm:geoexist} and \ref{thm:longgeo}. This is the unique random continuous function $(g(t), t \geq 0)$ such that
	(i) $g(0) = 0$ and $g(t)/t \to 0$ as $t \to \infty$, and (ii) for every $t_1 < t_2 < t_3$ and $z_i = (g(t_i), t_i)$, $\La(z_1,z_3) = \La(z_1,z_2) + \La(z_2,z_3)$. 	The following theorem about absolute continuity of competition interfaces with respect to geodesics is proved in $\S$\ref{sec:intabscont}.

	\begin{thm}[Geodesic-like interfaces] \label{thm:abscont}
	For any compact interval $[a,b] \subset (0,\infty)$, the law of the restricted competition interface  $I|_{[a,b]}$ is absolutely continuous, as a continuous process, with respect to the law of  the restricted geodesic $g|_{[a,b]}$. 
	\end{thm}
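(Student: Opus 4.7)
The plan is to localize at the left endpoint $a$ via the semigroup structure of the directed landscape, identify the relevant time-$a$ boundary data, and establish absolute continuity at the level of this boundary data before transferring it to the path level.

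\emph{Step 1: Decomposition at time $a$.} Fix $a>0$. The metric composition identity
\[
\La(h_0^\pm;x,t)\;=\;\sup_{y\in\R}\{f^\pm(y)+\La(y,a;x,t)\},\qquad f^\pm(y):=\La(h_0^\pm;y,a),
\]
valid for $t\ge a$, presents $I|_{[a,b]}$ as a measurable functional of $(f^-,f^+,\La|_{\{s\ge a\}})$. By the uniqueness of infinite geodesics established earlier in the paper, $g|_{[a,b]}$ is likewise a measurable functional of $(g(a),\La|_{\{s\ge a\}})$. Moreover, the pair $(f^-,f^+)$ depends only on $h_0$ and $\La|_{\{s\le a\}}$ and is independent of $\La|_{\{s>a\}}$.

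\emph{Step 2: Localization via coalescence.} Applying the coalescence of infinite geodesics, on an event of arbitrarily high probability the maximizers $y^\pm(t)=\operatorname{argmax}_y\{f^\pm(y)+\La(y,a;I(t),t)\}$ remain in a compact interval $[-R,R]$ for all $t\in[a,b]$, and $I(a)$ lies in a compact set. On this localization event the path $I|_{[a,b]}$ depends on the boundary data only through the restrictions $f^\pm|_{[-R,R]}$, and is a continuous function of these restrictions and $\La|_{\{s\ge a\}}$.

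\emph{Step 3: Change of measure and transfer to paths.} The main step is to prove that the joint law of $(f^-|_{[-R,R]},f^+|_{[-R,R]})$ is absolutely continuous with respect to a reference family of profiles under which the bisector degenerates into the infinite geodesic from $(I(a),a)$ in direction zero. The key input is the Brownian regularity of the spatial marginal of the landscape at a fixed time: the map $x\mapsto\La(y,0;x,a)$ is, modulo the parabolic drift $-(x-y)^2/a$, absolutely continuous with respect to a two-sided Brownian motion of diffusivity $\sqrt{2}$. A Girsanov-type argument built from this structure compares the actual distribution of $(f^-,f^+)$ with a ``flat'' reference in which $f^+\equiv f^-$ on a neighborhood of $I(a)$; for the reference, the equidistance condition degenerates and the bisector equals the infinite geodesic from $(I(a),a)$ in direction zero. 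Continuity of the bisector map in the boundary data on the localization event then pushes AC forward from the data level to the path level.

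The hardest part will be the Girsanov-type step of Step 3: controlling the joint law of $(f^-,f^+)$, which arises from correlated sup-functionals of a shared landscape, and identifying a reference class of profiles under which the bisector collapses to a geodesic. The ``good'' initial condition hypothesis enters precisely here, ensuring uniqueness of the relevant maximizers and continuity of the bisector map on the localization event, so that the AC at the boundary level can be transferred to $C([a,b],\R)$.
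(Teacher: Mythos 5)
Your Steps 1 and 2 are sound and broadly parallel to what the paper does (it too restarts the problem at a positive time, via Lemma \ref{lem:intgeo3}, and exploits Brownian absolute continuity of the evolved profile). The genuine gap is Step 3, which is the heart of the matter. You need a reference law for the boundary pair $(f^-,f^+)$ under which the bisector \emph{is} the infinite geodesic from $(I(a),a)$ in direction zero, and you propose a ``flat'' reference with $f^+\equiv f^-$ near $I(a)$. But equalizing the two profiles does not make the equidistance set degenerate to a geodesic: if $f^+\equiv f^-$ the competition function vanishes identically and the equidistant set is the whole half-plane ($I^-=-\infty$, $I^+=+\infty$), and for no choice of boundary data is the bisector \emph{pathwise} a geodesic of $\La$ --- the relation between interfaces and geodesics is an equality of laws only, and it is not produced by tilting the boundary data. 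Moreover, the pair $(f^-,f^+)$ consists of two coupled sup-functionals of the same landscape which coincide on large random regions, so there is no Gaussian-type reference measure for the \emph{pair} against which a Girsanov argument could be run; the Brownian absolute continuity of $x\mapsto\La(y,0;x,a)$, or of the KPZ fixed point on compacts, controls each marginal but not the joint law in the way your step requires.

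The missing idea is the exact distributional identity in the stationary case, which cannot be obtained by a continuum change of measure: in exponential last passage percolation the competition interface of the stationary initial condition has the same law as the point-to-point infinite geodesic in direction zero \cite{BCS,Sep2}, and passing this duality through the KPZ scaling limit yields Lemma \ref{lem:intgeo1}, namely that the interface of the Brownian initial condition is equal in law to the infinite geodesic of $\La$ from $(0,0)$ in direction zero. The paper then extends this by Cameron--Martin and localization arguments to initial data absolutely continuous with respect to Brownian motion on compacts (Lemma \ref{lem:intgeo2}), reduces a general $h_0$ to that case by restarting at a small time $s<a$ where $h_s$ is Brownian on compacts by \cite{SV} (Lemma \ref{lem:intgeo3}), and finally uses the absolute continuity of infinite geodesics with different starting points (Proposition \ref{prop:infgeoabscont}); this last step also settles a point you leave open, namely how to pass from a geodesic rooted at the random point $(I(a),a)$ to the fixed geodesic $g$ appearing in the statement. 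Without some substitute for the stationary duality, your change-of-measure step has no target law under which the bisector has the geodesic's distribution, so the proposal cannot be completed as written.
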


	The implications of absolute continuity are very strong: all local almost sure properties of $g(t)$ are inherited by $I(t)$.
	For example, $I(t)$ has three-halves variation and is H\"{o}lder continuous with any exponent less than 2/3, see \cite{DSV}.
	
        Moreover, from a single sample of $I(t)$ one can recover the law of the entire stationary geodesic $\gamma_*$ defined in \cite{DSV}. This $\gamma_*$ is the scaling limit of a geodesic near a time away from the boundary, see \cite{DSV}. 
        It can also be thought of as a geodesic conditioned to be doubly infinite. (Such conditioning changes the law of the directed landscape environment; doubly infinite geodesics do not exist in the directed landscape as recently shown in \cite{Bh}.) To see how to recover the law of $\gamma_*$, let $A$ be an event defined in terms of $\gamma_*$ restricted to some compact time-interval $[a,b]$. Let $I_{n,k}(t)= n^{2/3}(I(1+t/n+k/n)-I(1+k/n))$ for $t \in [a,b]$ and $1 \leq k \leq n$. Then along a deterministic sequence of natural numbers, 
	$$
	\lim \frac{1}{n}\sum_{k=1}^n\mathbf{1}(I_{n,k}\in A) =\textbf{Pr}(\gamma_{*}\in A) \qquad a.s.
	$$

	The law of the competition interface $I(t)$ at a fixed time is discussed in $\S$\ref{sec:interfacelaw}, where it
	is expressed in terms of a concave majorant associated to the Airy process.

	Section \ref{sec:portrait} looks at the family of all competition interfaces associated to an initial condition $h_0$.
	We call this the interface portrait of $h_0$. It is complementary to geodesics from $h_0$ and
	is a forest lying in the upper half plane, see Theorem \ref{thm:portraitgeom}.
	The portrait has curious topological and geometric properties. For instance, it is quite ``thin" -- its horizontal sections are always
	a discrete set as shown in Proposition \ref{prop:portraittopo}. There should be an interesting duality between interface portraits
	and geodesics that remains to be understood.

  \subsection{The 2nd class particle in tasep} \label{sec:int2nd}

	We use interfaces to derive the scaling limit of a 2nd class particle in the totally asymmetric simple exclusion process (tasep).
	A fundamental result of Ferrari, Martin and Pimentel is that competition interfaces are related to the trajectory of a 2nd class particle
	in tasep via a time-change \cite{FP, FMP}. We show that, under suitable conditions, the scaling limit of a competition interface in exponential last passage percolation is a corresponding interface $I(t)$. This leads to the scaling limit of a 2nd class particle in tasep. There are no previous functional limit theorems about the 2nd class particle, but the theory built so far allows us to prove such a result. Let us elaborate on it now.
	
	In the continuous-time tasep, particles initially occupy sites along the integers, at most one per site,
	and those sites which are not occupied are called holes. Each particle tries to jump randomly at rate 1 to its
	neighbouring site to the right. However, a jump is allowed only if a particle is displacing a hole, as per the exclusion rule
	each site can have at most one particle.
	
	A \bemph{2nd class particle} in tasep initially starts at site 0, surrounded by other particles and holes. It, too, tries
	to jump right like a regular particle, displacing holes but not particles. But when a regular particle wants to jump
	to the site of the 2nd class particle, it is allowed to do so, moving the 2nd class particle leftward. See Liggett's book \cite{Lig} for both a rigorous construction of tasep and of tasep with 2nd class particles.
	
	Consider tasep with a single 2nd class particle at the origin and initial condition $X_0(\cdot)$, where $X_0(n)$ is the initial location
	of regular particle number $n$, with particles labelled right to left and particle number $1$ being the first one below site $0$.
	Let $X(t)$ be the position of the 2nd class particle at time $t$.
	
	Suppose there is a sequence of tasep initial conditions $X^{n}_0$ for $n \geq 1$, and it converges under the re-scaling
	\begin{equation} \label{eqn:diffscaling}
		n^{-1/2}( X^{n}_0(\lceil n x \rceil ) + 2 nx) \to - h_0(-x) \quad \text{as}\; n \to \infty,
	\end{equation}
	where $h_0$ is an initial condition in the continuum as introduced earlier. Suppose the convergence is uniform on compact subsets of $x \in \R$. Suppose the initial conditions also satisfy the following technical growth condition: $n^{-1/2}( X^{n}_0(\lceil nx \rceil ) + 2 nx) \leq c (1+|x|)$ for a constant $c$.
	Assume that $0$ is a non-polar interior reference point for $h_0$. Then there is a competition interface $I(t)$ associated to $h_0$ as in \eqref{eqn:interface}.
	
	\begin{thm}[Scaling limit of the 2nd class particle] \label{thm:scalinglimit}
		Let $X^{n}(t)$ be the position of the 2nd class particle at time $t$ with initial condition $X^{n}_0$.
		Under the assumptions above the re-scaled trajectory of the 2nd class particle converges to the interface:
		$$ (n^{-2/3}/2) X^{n}(2 n t) \to I(t)$$
		in law, under the topology of uniform convergence over compact subsets of $t \in (0,\infty)$.
	\end{thm}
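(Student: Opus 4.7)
The plan is to combine three ingredients: the Ferrari--Martin--Pimentel identification of the 2nd class particle trajectory in tasep with a competition interface in exponential last passage percolation (LPP); the scaling convergence \eqref{eqn:KPZscaling2} of LPP to the directed landscape $\La$; and a continuity statement for the competition interface as a functional of $(\La, h_0)$, which allows the interface to be passed through the scaling limit.

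First I would invoke the FMP correspondence to express the pre-limit trajectory $X^\eps(t)$ as a discrete competition interface $I^\eps$ in exponential LPP built from the weights $\omega_p$ on $\Ze^2$. Decomposing $X_0^\eps$ into particles left and right of site $0$ produces two families of sources, and $I^\eps$ is the curve separating lattice points whose maximal passage time comes from the left family versus the right. Under the rescaling in \eqref{eqn:KPZscaling2}, the initial data converge by \eqref{eqn:diffscaling} to $-h_0(-\cdot)$, and the tasep time-scale $2\eps^{-3/2}t$ matches the LPP temporal scale so that $(\eps/2)X^\eps(2\eps^{-3/2}t)$ is exactly the rescaled discrete bisector evaluated at continuum time $t$, up to a vanishing time-change.

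Next, using the joint convergence of the rescaled LPP to $\La$ and Skorokhod representation, I would place everything on a common probability space with almost sure uniform convergence on compacts. The core technical step is then to show that, for compact $[a,b]\subset(0,\infty)$, the bisector is continuous in the pair (landscape, initial condition) under uniform convergence on compact sets. Concretely I would prove that the rescaled half-passage functions $(x,t)\mapsto \sup_{y\le 0}\{h_0^\eps(y)+L^\eps(y,0;x,t)\}$ and the right-hand analog converge uniformly on $\R\times[a,b]$ to $\La(h_0^-;x,t)$ and $\La(h_0^+;x,t)$; the tails at $|y|\to\infty$ are controlled by the linear upper bound on $h_0$ together with parabolic decay of $\La$. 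The goodness hypothesis on $h_0$ ensures that the limiting difference of these two functions has a zero set which is exactly the graph of the continuous curve $I(t)$ from \eqref{eqn:interface}, crossing zero transversally at each $t\in[a,b]$. This lets me upgrade the uniform convergence of the two passage-time functions to Hausdorff convergence of their equidistant loci, hence uniform convergence $I^\eps\to I$ on $[a,b]$.

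The main obstacle is the last step: establishing stability of the bisector uniformly in $\eps$. Part (ii) of the goodness condition (non-maximality at $0$ of $h_0+B$) is exactly the hypothesis that rules out a tied maximum on both sides of $0$, which would cause the interface to jump and break stability. Propagating this non-degeneracy from the limit back to the pre-limit requires an argmax-continuity argument for \eqref{eqn:Ldist}, together with tail estimates ruling out a maximizer at spatial infinity uniformly in small $\eps$ --- this is where the linear upper bound on $h_0$ and standard one-point tail bounds for LPP are used. Once stability and tightness near $t=a>0$ are in hand, Steps 1--3 combine to give the claimed convergence in law under uniform convergence on compact subsets of $(0,\infty)$.
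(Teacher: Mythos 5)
Your outline follows the same route as the paper: the Ferrari--Martin--Pimentel map from the 2nd class particle to a competition interface in exponential LPP, the coupled convergence of rescaled passage times to $\La$, and a stability statement for the interface in terms of the split passage-time functions, with the goodness hypothesis (via the non-polar-point criterion of Proposition \ref{prop:intunique}) guaranteeing the limiting interface is uniquely defined so that uniform convergence of the monotone competition functions forces convergence of the zero-crossing curves (this is Proposition \ref{prop:intlimit} and Corollary \ref{cor:intlimit} in the paper). Two remarks on that step: you do not need, and cannot easily get, a ``transversal crossing'' of zero --- what the argument actually uses is monotonicity of $d_p(\cdot,t)$ in $x$ together with the zero set being a singleton for every $t$; and there is no need to ``propagate non-degeneracy back to the pre-limit'': one only needs strict positivity/negativity of the limiting competition function off the interface graph on compact sets, plus uniform convergence.

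The genuine gap is the time change, which you dismiss as ``a vanishing time-change.'' The FMP correspondence identifies the interface with the 2nd class particle only after reparametrization: the discrete interface $I_n$ (and the pair position $X_n$, with $X_n\in\{I_n,I_n+2\}$ as in Lemma \ref{lem:compint}) is indexed by the number of jumps $n$, while $X^\eps(t)$ is indexed by continuous tasep time through $X(t)=X_{n_t}$, $\tau_n\le t<\tau_{n+1}$, with $\tau_n=g(X_n,n)$ (Lemma \ref{lem:2ndLPP}). The factor $2$ in $X^\eps(2\eps^{-3/2}t)$ is exactly the statement that by tasep time $t$ the pair has made about $t/2$ jumps, and this is a hydrodynamic fact, not bookkeeping: it comes from $g(x,n)=2n+O(n^{1/3})$ for $|x|\lesssim n^{2/3}$, which in turn uses the same passage-time convergence you invoke, applied along the (random) trajectory $X_{n_t}$, together with an a priori bound $|X_{n_t}|\lesssim n_t^{2/3}$ obtained from the already-established interface convergence. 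Moreover the error must be quantified: the crude Lipschitz bound $|X_n-X_m|\le|n-m|$ converts a time-index error $|n_t-t/2|$ into a spatial error of the same order, so a bare law of large numbers $n_t/t\to 1/2$ (error $o(\eps^{-3/2})$) is not enough after multiplying by $\eps/2$; you need either the $O(\eps^{-1/2})$ fluctuation bound as in Lemma \ref{lem:jumptimes}, or an argument routing the time error through the uniform convergence $\mathbf I^\eps\to I_0$ and the continuity of $I_0$ rather than through the Lipschitz bound. As written, your proposal asserts the conclusion of this step without an argument, and it is one of the two substantive components of the proof of Theorem \ref{thm:2ndclass}.
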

	The mode of convergence \eqref{eqn:diffscaling} of initial conditions can be relaxed to include a broader class.
	We state and prove the more general theorem in $\S$\ref{sec:2ndclassparticle} as Theorem \ref{thm:2ndclass}.
	
	Theorem \ref{thm:scalinglimit} is new even for the classical flat and stationary initial conditions.
	The flat initial condition consists of particles at every even integer site, and under the re-scaling \eqref{eqn:diffscaling}
	it converges to $h_0(x) \equiv 0$. The stationary initial condition has a particle at each site independently with probability $1/2$; under diffusive scaling \eqref{eqn:diffscaling} it converges in law to a two-sided Brownian motion. For the stationary initial condition, the convergence of the 2nd class particle holds jointly over the randomness of the initial condition and the independent evolution of tasep (one needs an additional argument to justify the technical growth condition above, which is included in Proposition \ref{prop:statintlimit}).
	
	The flat interface $I_{\rm{flat}}(t)$ has an invariant law under re-scaling $\sigma^{-2} I_{\rm{flat}}(\sigma^3 t)$.
	Its law at time $t=1$ is
	$$I_{\rm{flat}}(1) \stackrel{law}{=} - \frac{1}{2} \cm( \mathfrak A)' (0)$$
	where $\mathfrak A(x) = \La(0,0; x,1)$ is the parabolic Airy process.
	Indeed, in Proposition \ref{p:interface-distr} we provide a geometric characterization of the single-time law of interfaces.
	
	The stationary interface $I_{\rm{stat}}(t)$ has the same law as the aforementioned infinite geodesic of $\La$ from $(0,0)$ in direction zero.
	This equality is based on a duality between geodesics and competition interfaces in last passage percolation \cite{Sep2}.
	The law of the infinite geodesic is in turn described by Corollary \ref{cor:geolaw}, in terms of geodesics from the stationary (Brownian) initial condition.
	The law of $I_{\rm{stat}}(t)$ is scale invariant like $I_{\rm{flat}}(t)$, and
	$$I_{\rm{stat}}(1) \stackrel{law}{=} \mathrm{argmax} \, \{ \mathfrak A(x) + B(x)\}$$
	where $B$ is a two-sided Brownian motion with diffusivity constant $\sqrt{2}$ and independent of $\mathfrak A$.
	Ferrari and Spohn \cite{FS} found the distribution function of $I_{\rm{stat}}(1)$ in terms of the KPZ scaling function.
	Bal\'{a}zs, Cator, and Sepp\"{a}l\"{a}inen \cite{BCS} gave optimal bounds on the fluctuation of the 2nd class particle and competition interface in stationarity.
	
The motivation behind the scaling in Theorem \ref{thm:scalinglimit} and its connection to the Burgers' equation is discussed in Section \ref{s:burgers}.

	\subsection{Geodesics of the directed landscape} \label{sec:intgeo}
	A {\bf geodesic} of the directed landscape is a continuous function $g$ from a closed interval $J$ so that for all  $t_1<t_2<t_3$ in $J$ and
	$z_i=(g(t_i),t_i)$, the reverse triangle inequality is sharp: $\La(z_1,z_3)=\La(z_1,z_2)+\La(z_2,z_3)$. Section \ref{sec:geodesic} studies geodesics, which are in fact the building blocks for competition interfaces and therefore discussed first.
	
	We consider geodesics from an initial condition $h_0$.
	Given $(x,t) \in \Hp$, a geodesic from $h_0$ to $(x,t)$ is a geodesic $g$ of the directed landscape defined on $[0,t]$ with $g(t)=x$
	and so that $$ y \mapsto h_0(y)+\mathcal L(y,0;x,t)$$ is maximized at $y=g(0)$.
	In $\S$\ref{sec:geotoinit}, regularity properties of geodesics from an initial condition are established, and the following unique geodesic condition is proved in $\S$\ref{sec:UGC}, see Theorem \ref{thm:ugc}. For every initial condition $h_0$, almost surely for every $t>0$, for all but countably many $x$, there exists a unique geodesic from $h_0$ to $(x,t)$. This fact about the web of geodesics emanating from $h_0$ allows us to study competition interfaces in detail.
	
	We also study infinite geodesics of the directed landscape in $\S$\ref{sec:longgeo}.
	These are geodesics defined on time intervals of the form $[t, \infty)$.
	Such geodesics are parametrized by their starting point and direction.
	Existence of an infinite geodesic from a fixed point in a fixed direction is shown in Theorem \ref{thm:geoexist},
	along with an effective bound on its deviation.
	Next, the almost sure uniqueness of an infinite geodesic from a fixed point in a fixed direction is proven, and it is shown that geodesic uniqueness starting from a fixed point fails only for a (random) countable number of directions, see Theorem \ref{thm:longgeo}. Finally, the almost sure coalescence of all infinite geodesics in a fixed direction is established in Theorem \ref{thm:geotree} and Corollary \ref{cor:geotree}.
	
	Infinite geodesics allow us to study Busemann functions of the directed landscape in $\S$\ref{sec:Busemann}.
	The law of the Busemann function in a fixed direction is described by Corollary \ref{c:Busemann-Brownian} and Theorem \ref{thm:Busemann-KPZ}. This is used to identify the law of an infinite geodesic in Corollary \ref{cor:geolaw}.
	
	The results on infinite geodesics and Busemann functions are inspired by, and complement, what has been proven about infinite geodesics in first and last passage percolation. The results here are nevertheless self-contained, although our methods bear a resemblance to the pioneering work on discrete models, particularly by Newman and co-authors. A key difference for the directed landscape is that one has to deal with both small and large time scales, and there are additional symmetries.
 
        The study of geodesics in first and last passage percolation models has a rich history, beginning with the work of Newman and co-authors; see for instance Newman \cite{Ne}, Licea and Newman \cite{LN}, and Howard and Newman \cite{HN}.
        In the last passage percolation literature, Ferrari and Pimentel \cite{FP} studied infinite geodesics in exponential last passage percolation
	and proved several key results, which were extended by Coupier \cite{Co}. 
	Sepp\"{a}l\"{a}inen \cite{Sep2} gives a  recent exposition about geodesics in last passage percolation, 
	including proofs of the main results for the exponential case. Busemann functions in first and last passage
	percolation are intimately connected to its geometry. Hoffman \cite{Hoff} was one of the first to study them in first passage percolation.
        Sepp\"{a}l\"{a}inen and co-authors have a large body of work in this direction for last passage percolation,
	establishing fine geometric properties, see \cite{FaSe, GRS2, JRS, SeSo}. Cator and Pimentel \cite{CP2, CP} have also studied similar questions.
 
        Busani, Sepp\"{a}l\"{a}inen and Sorensen \cite{BSS} use our results to study Busemann functions and geodesics in the directed landscape. They characterize the Busemann process simultaneously across all directions and establish geometric properties of infinite geodesics along all directions and starting points, for instance, characterizing the countable dense set of directions with non-unique geodesics. Ganguly and Zhang \cite{GZ} recently and independently study infinite geodesics and Busemann functions of the directed landscape as well. Bhatia \cite{Bh} has recently studied the fractal nature of infinite geodesics in the directed landscape and the duality with interfaces.
	
	\section{Impressions from a landscape} \label{sec:impressions}
	We gather here some preliminaries and basic notions used throughout the article.
	
	\subsection{The directed landscape} \label{sec:undrand}
	The objects in our discussion are governed by the underlying randomness of the directed landscape $\La(y,s;x,t)$,
	defined for spatial coordinates $x,y \in \R$ and temporal coordinates $s, t \in \R$. It was introduced
	by Dauvergne, Ortmann and Vir\'{a}g \cite{DOV}.
	
	The function $\La$ acts like a directed metric on $\mathbb R^2$: it satisfies $\mathcal L(p;p)=0$
	and the reverse triangle inequality $\mathcal L(p;q)\ge \La(p;r)+\La(r;q)$. In fact, it has the the metric composition property
	\begin{equation} \label{eqn:metcomp}
		\La(y,s; x,t) = \max_{z \in \R} \left \{ \La(y,s;z,u) + \La(z,u; x,t) \right \}
	\end{equation}
	for every $x,y$ and $s < u < t$. For distinct points $(y,s)$ and $(x,t)$, $\mathcal L(y,s;x,t)=-\infty$ if and only if $s \geq t$.
	Otherwise, for $s < t$, $\La(y,s;x,t)+(x-y)^2/(t-s)$ has  GUE Tracy-Widom law scaled by $(t-s)^{1/3}$.
    
    $\La$ is a continuous function, stationary separately in the space and time coordinates,
    and has independent increments in time in the metric composition semigroup. We use the notation 
	$$ \Ai(y,s; x,t)=\La(y,s; x,t) + \frac{(x-y)^2}{t-s} $$
	for the ``centered'' version of $\mathcal L$.
	Several properties of $\La$ and $\Ai$ will be used throughout.  A bound that will be used a lot, see \cite[Corollary 10.7]{DOV}, is that
	for all $x,y \in \R$ and $t > 0$,
	\begin{equation} \label{eqn:DOVAbound}
		|\Ai(y,0;x,t)| \leq  C t^{1/3} \log^{4/3}\left (\frac{2(||u||+2)}{t} \right )\log^{2/3}(||u||+2)
	\end{equation}
	for a finite random variable $C$, where $u = (y,0;x,t)$ and $||u||$ is the Euclidean norm. In particular, this bound implies that
	\begin{equation} \label{eqn:Abound}
		|\Ai(y,0;x,t)| \leq C (t^{1/2} + |x|^{1/2} + |y|^{1/2} + 1)
	\end{equation}
	for a random variable $C$.
	
	Another property of $\La$ is shear invariance \cite[Lemma 10.2]{DOV}, which states that for any $d \in \R$,
	as a process in $(y,s;x,t)$,
	\begin{equation} \label{eqn:shear}
		\La(y + ds ,s; x + dt,t) \overset{law}{=} \La(y,s;x,t) + \frac{(y-x)^2 - (y-x-d(t-s))^2}{t-s}.
	\end{equation}
	
	Other properties of $\La$ will be discussed as the time comes.
	The properties mentioned here are proved in \cite{DOV}, see also \cite{CQR}.
	
	\subsection{The KPZ fixed point} \label{sec:KPZfixedpt}
	Denote the open upper half plane by $\Hp = \{(x,t): x \in \R, t > 0 \}$ and by $\overline{\Hp}$ its closure.
	An {\bf initial condition} is a function
	$$h_0: \R \to \R \cup \{-\infty \}$$
	which is upper semicontinuous, finite on at least one point, and satisfies $h_0(x) \leq c(1+|x|)$ for some constant $c$.
	Define the height function with initial condition $h_0$ as
	\begin{equation} \label{eqn:KPZfixedpt}
		\La(h_0; x,t) = \sup_{y \in \R} \; \{ \, h_0(y) + \La(y,0; x,t) \, \},\qquad  (x,t) \in \Hp.
	\end{equation}
	The law of  $\La(h_0;\cdot)$ is called the {\bf KPZ fixed point}, first constructed by Matetski, Quastel and Remenik \cite{MQR}
	as a Markov process on function space.
	The height function is continuous in $(x,t)$, and for every $t$, $\La(h_0; \cdot,t)$ also satisfies
	the definition of an initial condition \cite{MQR}.
	
	The KPZ fixed point is  also a stochastic integrable system in that its finite
	dimensional laws can be expressed in terms of Fredholm determinants;
	see \cite{JR, JR2, Liu, MQR} for such results on the spacetime law of the KPZ fixed point,
	and \cite{BoGo, CoKPZ, QuKPZ} for surveys on the KPZ universality class.
	
	The directed landscape gives a simultaneous coupling of the KPZ fixed point for all initial conditions through the variational formula \eqref{eqn:KPZfixedpt}.
	This allows to compare different height functions through a common underlying randomness.
	We will thus call $\La(h_0;x,t)$ the directed landscape height function when it is realized as \eqref{eqn:KPZfixedpt}.
	
	\subsection{Geodesics and their properties} \label{sec:Lgeodesic}
	The function $\La(y,s; x,t)$ can be thought of as giving a random geodesic length from $(y,s)$ to $(x,t)$,
	and in this sense the equation \eqref{eqn:KPZfixedpt} measures the length of a geodesic weighted by an initial condition.
	These concepts can be formalized by discussing geodesics of $\La$.
	
	A path $p$ from $(y,s)$ to $(x,t)$ for $s < t$ is a continuous function $p: [s,t] \to \R$ with $p(s) = y$ and $p(t) =x$.
	Paths are considered to be moving upwards, so $p$ begins at $(y,s)$ and ends at $(x,t)$.
	A path may be identified with its graph
	$$ \{ (p(u), u), \; u \in [s,t] \}$$
	lying in the spacetime plane $\R^2$. The Hausdorff metric on closed subsets of the plane then induces the Hausdorff topology on the space of paths in terms of their graphs. In the following, the topology on geodesics will be induced by the Hausdorff topology on their graphs.
	
	Here are some basic notions about paths.
	\begin{description}
		\item [Ordering] A path $p$ from $(y,s) \to (x,t)$ is to the left of a path $p'$ from $(y',s) \to (x',t)$ if $p(u) \leq p'(u)$ for every $u \in [s,t]$.
		\item [Crossing] Paths $p$ and $p'$ cross if there are times $s \neq t$ such that $p(s) < p'(s)$ and $p(t) > p'(t)$.
		\item [Coalescing] Paths $p$ and $p'$ coalesce downward from time $t$ if $p(s) = p'(s)$ for every $s \leq t$, and coalesce upward
		from time $t$ if $p(s) = p'(s)$ for every $s \geq t$.
	\end{description}
	
	The length of a path $p: [s,t] \to \R$ with respect to $\La$ is
	\begin{equation} \label{eqn:pathlength}
		\ell(p) = \inf_{\text{all partitions of}\; [s,t]} \sum_{i=1}^n \La(p(t_{i-1}), t_{i-1}; p(t_i), t_i)
	\end{equation}
	where $s = t_0 < t_1 < \cdots < t_n = t$ is any partition of $[s,t]$.
	
	A geodesic of $\La$ from $(y,s)$ to $(x,t)$ is a path of maximal length. Recall $\La$ satisfies the reverse triangle inequality.
	The reverse triangle inequality becomes an equality along points of a geodesic,
	and geodesics are the paths for which that holds: for a geodesic $g$ from $(y,s)$ to $(x,t)$,
	$$\ell(g) = \sum_{i=1}^n \La(g(t_{i-1}), t_{i-1}; g(t_i), t_i)$$
	for any partition $\{t_i\}$ of $[s,t]$.  The geodesic length from $(y,s)$ to $(x,t)$ is $\La(y,s;x,t)$.

        We observe the following elementary fact about geodesics of $\La$. Suppose $g$ is a geodesic from $p=(y,s)$ to $q = (x,t)$.
        Let $a = (g(u),u)$ and $b = (g(v),v)$ be two intermediate points on $g$ with $u < v$. Suppose $g'$ is another geodesic from $a$ to $b$.
        The path $\gamma$ that is equal to $g$ on $[s,u]$, $g'$ on $[u,v]$ and then $g$ on $[v,t]$ is also a geodesic from $p$ to $q$.
        We will say that $\gamma$ is obtained by swapping the segment of $g$ from $a$ to $b$ with $g'$.
        
	Geodesics of $\La$ have properties that hold almost surely in samples of $\La$.
	Some of these are listed below and will be used throughout our discussion. They are derived from \cite{BatGH, DOV, DSV}.
	\begin{description}
		\item[Existence] There is a geodesic between every pair of points $(y,s)$ and $(x,t)$ for $s < t$.
		There are also \textbf{leftmost} and \textbf{rightmost} geodesics from $(y,s)$ to $(x,t)$.
            A leftmost geodesic $g$ from $(y,s)$ to $(x,t)$ is a geodesic such that if $g'$ is any other geodesic from $(y,s)$ to $(x,t)$
            then $g(u) \leq g'(u)$ for every $u \in [s,t]$; analogously for the rightmost geodesic. See \cite[Lemma 13.2]{DOV}.
		\item [Closure] If a sequence of geodesics converges in terms of the Hausdorff metric on their graphs, then the limit is also a geodesic.
		The length functional \eqref{eqn:pathlength} is continuous with respect to convergence of geodesics in the Hausdorff metric. See \cite[Lemma 3.1]{DSV}.
		\item [Compactness] The collection of geodesics whose graphs have endpoints lying in a given compact subset of $\{(y,s;x,t)\in \R^4: s<t\}$
		is itself compact in the Hausdorff topology. See \cite[Lemma 3.1]{DSV}.
		\item [Nearby geodesics meet] For every compact subset $K \subset \R^2$, there is an $\epsilon > 0$ such that if $g$ and $g'$ are two geodesics whose graphs lie in $K$ and are within distance $\epsilon$ in the Hausdorff metric, then there is a common time $t$ such that $g(t) = g'(t)$. See \cite[Theorem 1.18]{BatGH}.
	\end{description}
	
	\section{Geodesics} \label{sec:geodesic}
	Geodesics form the building blocks behind much of our results, so we begin our discussion with them.
 
	\subsection{Geodesics from an initial condition} \label{sec:geotoinit}
	Fix an initial condition $h_0$ as in $\S$\ref{sec:KPZfixedpt}.
	A path from $h_0$ to $(x,t) \in \Hp$ is a path $p: [0, t] \to \R$ with $p(t)=x$.
	The starting point of $p$ is $p(0)$. The length of the path relative to $h_0$ is
	\begin{equation} \label{eqn:relativelength}
		\ell(h_0;p) = h_0(p(0)) + \ell(p),
	\end{equation}
	where $\ell(p)$ is defined by \eqref{eqn:pathlength}.
	
	A geodesic from $h_0$ to $(x,t)$ is a path from $h_0$ to $(x,t)$ of maximal relative length, that is,
	$p \mapsto \ell(h_0;p)$ is maximal among all paths $p$ from $h_0$ with endpoint $(x,t)$. This maximal value is 
	the geodesic length from $h_0$ to $(x,t)$; it equals $\La(h_0;x,t)$ from \eqref{eqn:KPZfixedpt}. In turn, this equals
	equals $h_0(g(0))+\La(0,g(0); x,t)$ for any geodesic $g$ from $h_0$ to $(x,t)$.
	
	Geodesics from $h_0$ are a sub-collection of point-to-point geodesics of $\La$.
	A geodesic from $h_0$ to $(x,t)$ is obtained by maximizing over all geodesics of
	$\La$ from $(y,0)$ to $(x,t)$, for $y \in \R$, the relative length \eqref{eqn:relativelength}. This leads to
	the variational formula \eqref{eqn:KPZfixedpt}.

        We will use the following elementary fact about geodesics from $h_0$ throughout.
        \begin{lem}[Geodesic swapping] \label{lem:geoswap}
        Let $g$ be a geodesic from $h_0$ to $p = (x,t)$. Let $q = (g(s),s)$ be any point on $g$. Suppose $g'$ is a geodesic from $h_0$ to $q$.
        The path $\gamma$ that equals $g'$ on $[0,s]$ and then $g$ on $[s,t]$ is also a geodesic from $h_0$ to $p$. In particular,
        if $g$ is the unique geodesic from $h_0$ to $p$ then it is also the unique geodesic from $h_0$ to $q$.
        \end{lem}

        \begin{proof}
        Let $y = g(0)$ and $y' = g'(0)$. Since $g$ and $g'$ are both geodesics from $h_0$ to $q$, $\La(h_0;q) = h_0(y) + \La(y,0;q) = h_0(y') + \La(y',0;q)$.
        Furthermore, $g$ is a geodesic of $\La$ from $(y,0)$ to $p$ and $g'$ is a geodesic of $\La$ from $(y',0)$ to $q$.
        In order to show that $\gamma$ is a geodesic from $h_0$ to $p$, we need that $\La(h_0;p) = h_0(y')+\La(y',0;p)$ and that $\gamma$ is a geodesic of $\La$ from $(y',0)$ to $p$.

        By definition, $\La(h_0;p) \geq h_0(y')+\La(y',0;p)$. By the reverse triangle inequality, $\La(y',0;p) \geq \La(y',0;q) + \La(q;p)$. Consequently,
        \begin{align*}
            h_0(y') + \La(y',0;p) & \geq h_0(y') + \La(y',0;q) + \La(q,p) \\
            & = h_0(y) + \La(y,0;q) + \La(q,p) \\
            & = h_0(y) + \La(y,0;p) = \La(h_0;p).
        \end{align*}
        We have used that $\La(y,0;q) + \La(q,p) = \La(y,0;p)$ since $(y,0), q,p$ lie on the geodesic $g$.

        In order to verify that $\gamma$ is a geodesic of $\La$ from $(y',0)$ to $p$ it is enough to show $\La(y',0;q) + \La(q;p) = \La(y',0;p)$, since $\gamma$ is the concatenation of a geodesic of $\La$ from $(y',0)$ to $q$ with one from $q$ to $p$.
        We know that $h_0(y') + \La(y',0;p) = h_0(y) + \La(y,0;p)$ and $h_0(y') + \La(y',0;q) = h_0(y) + \La(y,0;q)$.
        Subtracting the second equation from the first gives $\La(y',0;p) - \La(y',0;q) = \La(y,0;p) - \La(y,0;q)$.
        But the latter difference equals $\La(q;p)$, as required.
        \end{proof}
        
	\subsection{Basic properties of geodesics from an initial condition} \label{sec:geobasics}
	Certain properties of geodesics from $h_0$ come from properties of point-to-point geodesics of $\La$, holding
	whenever the properties of geodesics from $\S$\ref{sec:Lgeodesic} do. The assertions in
	the following three lemmas hold almost surely in the samples of $\La$, simultaneously for all initial conditions.
        Specifically, assume samples of $\La$ for which the aforementioned properties of point-to-point geodesics from $\S$\ref{sec:Lgeodesic} hold (existence, closure, compactness and ``nearby geodesics meet"), and also for which $\La$ is continuous and satisfies the bound \eqref{eqn:Abound}.
	
	\begin{lem}[Existence of geodesics] \label{lem:exist}
		For all initial conditions $h_0$, there is a geodesic from $h_0$ to every $(x,t) \in \Hp$.
		There are also leftmost and rightmost geodesics from $h_0$ to every point.
	\end{lem}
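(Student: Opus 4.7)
The plan is to reduce the existence of geodesics from $h_0$ to the existence of $\La$-geodesics via the variational formula \eqref{eqn:KPZfixedpt}. The key step is to show that the supremum in that formula is attained, after which the $\La$-geodesic existence property from $\S$\ref{sec:Lgeodesic} does the rest.

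Fix $(x,t)\in\Hp$ and set $F(y)=h_0(y)+\La(y,0;x,t)$. Because $h_0$ is upper semicontinuous and $\La(\cdot,0;x,t)$ is continuous, $F$ is upper semicontinuous. Writing $\La(y,0;x,t)=\Ai(y,0;x,t)-(x-y)^2/t$ and combining the linear bound $h_0(y)\le c(1+|y|)$ with the Airy estimate \eqref{eqn:Abound}, one obtains
$$F(y)\;\le\;c(1+|y|)+C\bigl(t^{1/2}+|x|^{1/2}+|y|^{1/2}+1\bigr)-\tfrac{(x-y)^2}{t},$$
so $F(y)\to -\infty$ as $|y|\to\infty$ because the quadratic term dominates. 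Being upper semicontinuous, finite at some point (since $h_0$ is), and diverging to $-\infty$ at infinity, $F$ attains its supremum on a non-empty compact set $M\subset\R$.

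Pick any $y^*\in M$. By existence of $\La$-geodesics there is a geodesic $g$ of $\La$ from $(y^*,0)$ to $(x,t)$. Its relative length is $h_0(y^*)+\La(y^*,0;x,t)=\sup F=\hh(x,t)$, so $g$ is a geodesic from $h_0$ to $(x,t)$. For the leftmost geodesic, set $y^*_L=\min M$ (well defined by compactness of $M$) and take the leftmost $\La$-geodesic $g^L$ from $(y^*_L,0)$ to $(x,t)$. Any other $h_0$-geodesic $g'$ has starting point $g'(0)\in M$, hence $g'(0)\ge y^*_L$. If $g^L(s_0)>g'(s_0)$ at some $s_0$, the two graphs must cross; at a crossing time $u$ with $g^L(u)=g'(u)$, concatenating the $g^L$-piece on $[0,u]$ with the $g'$-piece on $[u,t]$ yields, by metric composition \eqref{eqn:metcomp}, another $\La$-geodesic from $(y^*_L,0)$ to $(x,t)$ that lies strictly to the left of $g^L$ just above $u$, contradicting the leftmost choice of $g^L$. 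Hence $g^L\le g'$ pointwise, so $g^L$ is the leftmost geodesic from $h_0$ to $(x,t)$. The rightmost case is symmetric, starting from $y^*_R=\max M$.

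The main obstacle is verifying the supremum is attained, i.e.\ that mass does not leak to infinity in $F$; this is where the linear growth assumption on $h_0$ and the sub-parabolic control of $\La$ coming from \eqref{eqn:Abound} are both used. Once $M$ is non-empty and compact, the $h_0$-geodesic structure is completely inherited from the $\La$-geodesic structure based at points of $M$, and the leftmost/rightmost assertion reduces to the standard non-crossing argument recalled above.
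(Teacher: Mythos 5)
Your proposal is correct and follows essentially the same route as the paper: show the variational formula's supremum is attained on a nonempty compact set using upper semicontinuity, the linear bound on $h_0$ and the estimate \eqref{eqn:Abound}, then take an $\La$-geodesic from a maximizer, with the leftmost/rightmost geodesics coming from the extreme points of the maximizer set. Your additional crossing/concatenation argument simply spells out the ordering claim the paper states without proof, and it is sound.
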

	\begin{proof}
		Consider the function $y \mapsto \La(y,0;x,t) + h_0(y)$. It decays rapidly to $-\infty$ because the $\La$-term
		decays to $-\infty$ parabolically while $h_0$ is bounded above by a linear term. More concretely, fix an $a \in \R$
		for which $h_0(a)$ is finite. From the estimate \eqref{eqn:Abound} for $\La$ and the bound $h_0(y) \leq B(1 + |y|)$
		it follows that there is a compact interval $J$, depending only on $a, B$, the finite random variable $C$ and the point $(x,t)$,
		such that the supremum of $\La(y,0;x,t) + h_0(y)$ over $y \in \R$ cannot be attained outside $J$. Inside of $J$ the supremum is
		achieved because $h_0$ is upper semicontinuous and bounded above (and $\La$ is continuous). Thus, all maximizers
		of the function lie in the compact interval $J$, and being upper semicontinuous and bounded above, the set of
		maximizers form a non-empty compact subset $K \subset J $.
		
		A geodesic from  $h_0$ to $(x,t)$ is obtained by picking any point $y \in K$ and choosing an $\La$-geodesic from $(y,0)$ to $(x,t)$.
		The rightmost geodesic is going to be the rightmost $\La$-geodesic from the rightmost point in $K$ to $(x,t)$;
		likewise for the leftmost geodesic.
	\end{proof}

        \begin{lem}[Continuity of length] \label{lem:lencont}
            For all initial conditions $h_0$, the mapping $(x,t) \mapsto \La(h_0;x,t)$ is continuous over $(x,t) \in \Hp$.
        \end{lem}
        \begin{proof}
            We must show that if $(x_n,t_n) \to (x,t)$ then $\La(h_0;x_n,t_n) \to \La(h_0; x,t)$.
            It suffices to show that $\liminf_n \La(h_0;x_n,t_n) \geq \La(h_0;x,t)$ and $\limsup_n \La(h_0;x_n,t_n) \leq \La(h_0;x,t)$.
            By Lemma \ref{lem:exist}, there is a geodesic $g$ from $h_0$ to $(x,t)$ as well as a geodesic $g_n$ from $h_0$ to $(x_n,t_n)$ for every $n$. Suppose $g$ begins at $(y,0)$ and $g_n$ begins at $(y_n,0)$.
            Then $\La(h_0; x,t) = h_0(y) + \La(y,0;x,t)$ and $\La(h_0;x_n,t_n) = h_0(y_n) + \La(y_n,0; x_n,t_n)$.

            Now $h_0(y_n)+\La(y_n,0;x_n,t_n) = \sup_z \{h_0(z) + \La(z,0;x_n,t_n)\} \geq h_0(y) + \La(y,0;x_n,t_n)$.
            Since $\La(y,0;x_n,t_n) \to \La(y,0;x,t)$ by continuity, it follows that $\liminf_n \La(h_0;x_n,t_n) \geq h_0(y) + \La(y,0;x,t) = \La(h_0;x,t)$.
            
            On the other hand, as $(x_n,t_n) \to (x,t)$, there is a compact set $K \subset \Hp$ that contains every $(x_n,t_n)$ and $(x,t)$.
            There are also constants $a$ and $B$ such that $h_0(a)$ is finite and $h_0(y) \leq B(1+|y|)$.
            Arguing as in the proof of Lemma \ref{lem:exist}, there is a compact interval $J$ depending only on $K$, $a$, $B$ and the random variable $C$ from \eqref{eqn:Abound}
            such that if $(x',t') \in K$ and $z'$ is any maximizer of $z \mapsto h_0(z) + \La(z,0;x',t')$, then $z' \in J$.
            In particular, every $y_n$ belongs to $J$. By passing to a subsequence we may assume that $y_n \to y'$.
            Due to upper semicontinuity of $h_0$, $\limsup_n h_0(y_n) \leq h_0(y')$.
            Since $\La(y_n,0;x_n,t_n) \to \La(y',0;x,t)$ by continuity, it follows that $\limsup_n h_0(y_n) + \La(y_n,0;x_n,t_n) \leq h_0(y') + \La(y',0;x,t)$. Since $h_0(y') + \La(y',0;x,t) \leq \La(h_0;x,t)$, we have $\limsup_n \La(h_0;x_n,t_n) \leq \La(h_0;x,t)$.
        \end{proof}
        
	\begin{lem}[Closure, Continuity and Compactness of geodesics] \label{lem:ccc}
		For all initial conditions $h_0$, the following properties hold.
		\begin{enumerate}
			\item The limit of a sequence of geodesics from $h_0$ in the Hausdorff topology is a geodesic from $h_0$.
			\item The relative length function $g \mapsto \ell(h_0;g)$ is continuous with respect to geodesics $g$ from $h_0$ in the Hausdorff topology.
			\item Geodesics from $h_0$ whose graphs lie in a compact set $K \subset \overline{\Hp}$ are compact in the Hausdorff topology.
			Consequently, if $(x_n,t_n) \to (x,t)$ and $g_n$ is a geodesic from $h_0$ to  $(x_n,t_n)$ for each $n$,
			then a subsequence of these $g_n$ converges to a geodesic $g$ from $h_0$  to $(x,t)$ .
		\end{enumerate}
	\end{lem}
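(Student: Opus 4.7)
My plan is to derive parts (1) and (2) together from the identity $\ell(g_n; h_0) = h_0(g_n(0)) + \ell(g_n) = \hh(x_n, t_n)$ satisfied by any geodesic $g_n$ from $h_0$ to $(x_n, t_n)$, and then deduce (3) from (1) combined with the $\La$-geodesic compactness from Section \ref{sec:Lgeodesic}. The only input beyond the $\La$-geodesic theory will be the continuity of the KPZ fixed point height function $\hh$ on $\Hp$ (stated in Section \ref{sec:KPZfixedpt}).

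For (1), suppose a sequence $g_n:[0,t_n]\to\R$ of geodesics from $h_0$ converges in Hausdorff metric on graphs to a continuous $g:[0,t]\to\R$ with $t>0$; then $g_n(0)\to g(0)$, $t_n\to t$, and $g_n(t_n)\to g(t)=:x$. Since each $g_n$ is an $\La$-geodesic, the closure and length-continuity properties in Section \ref{sec:Lgeodesic} imply that $g$ is an $\La$-geodesic and $\ell(g_n)\to\ell(g)$. Using $\ell(g_n;h_0) = h_0(g_n(0)) + \ell(g_n) = \hh(x_n,t_n)$ and continuity of $\hh$, the sequence $h_0(g_n(0))$ converges to $\hh(x,t)-\ell(g)$. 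Upper semicontinuity of $h_0$ at $g(0)$ then gives
\[
\hh(x,t) - \ell(g) \;=\; \lim h_0(g_n(0)) \;\leq\; h_0(g(0)),
\]
so $h_0(g(0))+\ell(g)\geq \hh(x,t)$. The reverse inequality is automatic from the variational formula \eqref{eqn:KPZfixedpt}, so $g$ is a geodesic from $h_0$ to $(x,t)$. Part (2) is then immediate: $\ell(g_n;h_0) = \hh(x_n,t_n)\to \hh(x,t) = \ell(g;h_0)$.

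For (3), let $\mathcal F$ be the family of geodesics from $h_0$ with graphs in a fixed compact $K\subset\overline{\Hp}$. Endpoints of members of $\mathcal F$ lie in $K$, and by the uniform a priori argument in the proof of Lemma \ref{lem:exist} (using the estimate \eqref{eqn:Abound} together with the linear upper bound on $h_0$), their starting points lie in a compact interval $[a,b]\times\{0\}$ depending only on $K$, $h_0$, and the random constant $C$. Hence $\mathcal F$ is contained in the family of $\La$-geodesics with graphs in the compact set $K\cup([a,b]\times\{0\})$, which is compact in the Hausdorff topology by Section \ref{sec:Lgeodesic}. Part (1) shows $\mathcal F$ is closed inside this larger compact family, hence $\mathcal F$ is itself compact. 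The consequence for $(x_n,t_n)\to(x,t)\in\Hp$ follows by applying this compactness to any compact neighborhood of $(x,t)$ in $\Hp$.

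The main obstacle is that $h_0$ is only upper semicontinuous, so $h_0(g_n(0))$ cannot be passed to the limit by continuity alone. The trick resolving this is the observation that along any Hausdorff-convergent sequence of geodesics from $h_0$, continuity of $\hh$ and of $\La$-length already \emph{forces} $h_0(g_n(0))$ to converge to a definite value; upper semicontinuity of $h_0$ then upgrades this to the single inequality needed to promote the limit $\La$-geodesic $g$ to a geodesic from $h_0$.
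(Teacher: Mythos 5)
Your proposal is correct, and for parts (1) and (2) it is essentially the paper's argument: pass to the limit using the closure and length-continuity of $\La$-geodesics, use upper semicontinuity of $h_0$ at the limiting starting point, and compare with $\hh(x_n,t_n)\to\hh(x,t)$; your reformulation via the forced convergence of $h_0(g_n(0))$ is the same inequality the paper obtains through $\limsup_n\ell(g_n;h_0)\le\ell(g;h_0)$. The only real divergence is in part (3), specifically in how the family $\{g_n\}$ of the ``consequently'' clause is confined: the paper sandwiches all the graphs between the leftmost and rightmost geodesics from $h_0$ to $(x_-,T)$ and $(x_+,T)$ (geodesic ordering), so that everything lies in one compact set, while you confine only the starting points via the a priori bound from the proof of Lemma \ref{lem:exist} (the estimate \eqref{eqn:Abound} plus linear growth of $h_0$). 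Your route does work, but note that the compactness property quoted in \S\ref{sec:Lgeodesic} is stated for $\La$-geodesics whose \emph{endpoint pairs} lie in a compact subset of $\{s<t\}$, not for geodesics whose graphs lie in a given compact set; so your final sentence, which applies the ``graphs in $K$'' compactness to a compact neighborhood of $(x,t)$, is loose as written, since membership of the endpoints in that neighborhood does not by itself place the graphs in a compact set. The fix is already in your own setup: since $t>0$, the endpoint pairs $(g_n(0),0;x_n,t_n)$ eventually lie in a compact subset of $\{s<t\}$ (starting points bounded by your a priori argument, endpoints near $(x,t)$), so the endpoint version of $\La$-geodesic compactness gives a Hausdorff-convergent subsequence, and your part (1) identifies the limit as a geodesic from $h_0$ to $(x,t)$. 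With that phrasing corrected, your argument is complete; the paper's sandwich argument buys the same confinement without invoking the quantitative bound \eqref{eqn:Abound}, using only geodesic ordering.
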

	\begin{proof}
		The assertions are proven one by one by using the aforementioned properties of point-to-point geodesics of $\La$.
		\begin{enumerate}
			\item Let $g_n$ be a sequence of geodesics from $h_0$ that are convergent. Suppose $g_n$ starts at $(y_n,0)$ and ends at $(x_n,t_n)$.
			Since each $g_n$ is a geodesic of $\La$, their limit $g$ is too, say from $(y,0)$ to $(x,t)$ where $(x_n,t_n) \to (x,t)$ and $y_n \to y$.
			Upper semicontinuity of $h_0$ means $\limsup_n h_0(y_n) \leq h_0(y)$, and since the lengths $\ell(g_n) \to \ell(g)$,
			it follows that $\limsup_n \ell(h_0; g_n) \leq \ell(h_0; g)$. Now $\ell(h_0; g_n) = \La(h_0; x_n,t_n)$,
                which tends to $\La(h_0; x,t)$ by continuity from Lemma \ref{lem:lencont}.
			So $\ell(h_0; g) \geq \La(h_0;x,t)$. The reverse inequality $\ell(h_0;g) \leq \La(h_0;x,t)$ holds because $g$ is a path from $h_0$ to $(x,t)$.
			It follows that $g$ is a geodesic from $h_0$.
			
			\item The relative length $\ell(h_0;g)$ equals $\La(h_0;x,t)$ for a geodesic $g$ from $h_0$ to $(x,t)$.
			Its continuity is implied by the closure property in $(1)$ and the continuity of $\La(h_0;x,t)$.
			
			\item Compactness follows from the compactness of point-to-point geodesics and the closure property in $(1)$.
			The assertion in the consequence follows from the compactness property if there is a compact set that contains the graph of every $g_n$. In this regard note that there are $x_{-}, x_{+}$ and $T$ such that $x_{-} \leq x_n \leq x_{+}$ and $t_n \leq T$ for every $n$.
		The graphs of the $g_n$ are bounded by the leftmost and rightmost geodesics from $h_0$ to  $(x_{-}, T)$ and $(x_{+}, T)$, respectively.
		\end{enumerate}
	\end{proof}
	
	\subsection{Unique geodesics from an initial condition and the unique geodesic condition} \label{sec:UGC}
	We establish uniqueness of geodesics from an initial condition to points in $\Hp$ in a strong enough sense.
	Such a notion is needed for the discussion on competition interfaces and motivates the following definition.
	
	\begin{defn}
		Given a sample from $\La$ and an initial condition $h_0$, a point $(x,t) \in \Hp$ is a unique geodesic point,
		UGP for short, if there is a unique geodesic from $h_0$ to $(x,t)$.
		
		The initial condition $h_0$ satisfies the \bemph{unique geodesic condition} if the following holds almost surely in samples of $\La$:
		
		\emph{For every $t > 0$, the set of points $x \in \R$ such that $(x,t)$ is not a UGP for $h_0$ is countable.}
		
		The unique geodesic condition is abbreviated as \textbf{UGC}.
	\end{defn} 
	
	We are going to prove that every initial condition satisfies the UGC (Theorem \ref{thm:ugc}).
        For the remainder of this section $h_0$ is a fixed initial condition.
        The first step to establishing the UGC for $h_0$ is uniqueness of the geodesic starting point given an endpoint.
	\begin{lem} \label{lem:uniqueend}
		Let $(x,t) \in \Hp$ and $h_0$ be an initial condition. Then the following property holds almost surely in samples of $\La$. Every geodesic from $h_0$ to $(x,t)$ starts at a unique common point.
	\end{lem}
	
	\begin{proof}
		We have to show the process $y \mapsto \La(y,0;x,t) + h_0(y)$ has a unique maximizer almost surely.
		Lemma \ref{lem:exist} says there is a maximizer, so we must show there cannot be more than one.
		If there were, then one could find two disjoint intervals with rational endpoints that capture different maximizers.
		So it is enough to show that for any two disjoint compact intervals $[a,b]$ and $[c,d]$, with $b < c$, the maximums
		of the process over the intervals are different almost surely. We can assume $h_0$ takes some finite values over
		both $[a,b]$ and $[c,d]$, for otherwise there would be no maximizers there.
		
		The process $y \mapsto \La(y,0;x,t) - \La(a,0;x,t)$ is a re-centred and re-scaled Airy process.
		It is absolutely continuous with respect to a Brownian motion $B(y)$ on the interval $[a,d]$, started from
		$B(a) = 0$ and with diffusivity constant $\sqrt{2}$ \cite{CH, SV}.
		So it suffices to prove $B + h_0$ has different maximums over $[a,b]$ and $[c,d]$ almost surely.
		Observe that
		\begin{align*} & \max_{y \in [a,b]} \{B(y) + h_0(y)\} - \max_{y \in [c,d]} \{B(y) + h_0(y)\} = \\
			& \max_{y \in [a,b]} \{B(y)-B(b) + h_0(y)\} - \max_{y \in [c,d]} \{B(y)-B(c) + h_0(y)\} + B(b)-B(c).
		\end{align*}
		The three terms on the right hand side above are independent. The first two terms are finite random variables
		and the third has a Normal distribution. So the right hand side has an absolutely continuous law
		and avoids the value 0 almost surely.
	\end{proof}
	
	The second step is uniqueness of the geodesic from $h_0$ to a given point.
	\begin{lem} \label{lem:uniquegeo}
		Let $(x,t) \in \Hp$ and $h_0$ be an initial condition.
		Then the following properties hold almost surely in samples of $\La$.
		There is a unique geodesic from $h_0$ to $(x,t)$.
		The set of non UGPs from $h_0$ has empty interior.
	\end{lem}
	
	\begin{proof}
		Given a point $(x,t)$, let $g$ be a geodesic from $h_0$ to $(x,t)$.
		If the values of $g$ are almost surely unique for all rational times $s \in [0,t]$, then $g$ is unique almost surely by continuity.
		The claim about the rational times follows from a union bound if for any given $s \in [0,t)$, the value of $g(s)$ is unique almost surely.
		
		Let $\hh_s(y) = \sup_{z \in R} \{h_0(z) + \La(z,0; y,s) \}$ be the height function grown to time $s$ from $h_0$.
		Set $\hat{\La}(y,u;y',u') = \La(y, s+u; y', s+u')$ for $0 \leq u < u'$ and $y,y' \in \R$. So $\hat{\La}$  is $\La$ viewed from time $s$ onwards.
		The process $\hat{\La}$ has the same law as $\La$ and is independent of $\hh_s$ \cite{DOV}.
		The value $g(s)$ is the starting point of a geodesic from
		$\hh_s$ to $(x,t-s)$ according to $\hat{\La}$. It is then unique almost surely by Lemma \ref{lem:uniqueend}.
		
		Now that we know any given point has a unique geodesic from $h_0$ to it almost surely, it follows that almost surely
		every rational point has a unique geodesic from $h_0$ to it. The set of non UGPs from $h_0$ has an empty interior when it contains no rational point.
	\end{proof}
	
	The third observation is that geodesics from $h_0$ have a natural ordering.
	\begin{lem}[Geodesic ordering] \label{lem:geoorder}
		Let $h_0$ be an initial condition. Then the following holds almost surely in samples of $\La$.
		Suppose $g_1$ and $g_2$ are any two geodesics from $h_0$ to $(x,t)$ and $(y,t)$, respectively. If $x < y$ then $g_1(s) \leq g_2(s)$ for every $s \in [0,t]$, that is, $g_1$ stays to the left of $g_2$. If $g_1$ and $g_2$ meet then they coalesce downward from that point. As a result, no two geodesics from $h_0$ to any points of $\Hp$ can cross.
	\end{lem}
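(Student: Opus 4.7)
My plan is to use the standard swap argument in combination with the a.s.\ uniqueness of geodesics at fixed endpoints (Lemmas \ref{lem:uniqueend} and \ref{lem:uniquegeo}), and to extend from a countable dense set of UGPs to all endpoints via compactness.

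First I record the swap identity. Suppose $g_1, g_2$ are geodesics from $h_0$ to $(x, t), (y, t)$ that meet at $(z, u)$ for some $u \in (0, t)$. Form the concatenations $\tilde g_1 := g_1|_{[0, u]} \cup g_2|_{[u, t]}$ and $\tilde g_2 := g_2|_{[0, u]} \cup g_1|_{[u, t]}$, going to $(y, t)$ and $(x, t)$. Since each $g_i$ is a geodesic passing through $(z, u)$, its length decomposes additively there, and summing the two decompositions yields
$$\ell(\tilde g_1; h_0) + \ell(\tilde g_2; h_0) \;=\; \ell(g_1; h_0) + \ell(g_2; h_0) \;=\; \hh(x, t) + \hh(y, t).$$
Each summand on the left is at most $\hh$ at its endpoint, so both bounds are tight and $\tilde g_1, \tilde g_2$ are themselves geodesics from $h_0$.

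For fixed $(x, t), (y, t)$ with $x < y$, Lemma \ref{lem:uniquegeo} gives a.s.\ unique geodesics $g_1, g_2$ from $h_0$. If $g_1(s) > g_2(s)$ at some $s$, then $g_1(t) < g_2(t)$ combined with continuity produces a first meeting time $u^* > s$, and the swap yields a second geodesic $\tilde g_1$ to $(y, t)$ with $\tilde g_1(s) = g_1(s) > g_2(s)$, contradicting the uniqueness of $g_2$. Hence $g_1 \leq g_2$. An analogous swap at a meeting point $u$ in which $g_1 \neq g_2$ on some sub-interval of $[0, u)$ again produces two distinct geodesics to $(y, t)$, yielding the coalescence claim.

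To upgrade the ordering to arbitrary endpoints and arbitrary geodesics, I work on the a.s.\ event $\Omega_0$ that every point of the countable set $D := (\mathbb{Q} \times \mathbb{Q}_{>0}) \cap \Hp$ is a UGP (by Lemma \ref{lem:uniquegeo} and a countable union). On $\Omega_0$, unique geodesics at $D$-pairs sharing a rational time are ordered. For arbitrary $(x, t), (y, t) \in \Hp$ with $x < y$, I approximate $(x, t)$ by $(x_n, t_n) \in D$ with $x_n \downarrow x$, and $(y, t)$ by $(y_n, t_n) \in D$ with $y_n \uparrow y$, choosing a common rational $t_n \to t$, so that $x_n < y_n$ for large $n$. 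The ordered unique geodesics $g_{1, n} \leq g_{2, n}$ pass along subsequences to geodesics $g_1^* \leq g_2^*$ from $h_0$ to $(x, t), (y, t)$ by Lemma \ref{lem:ccc}(3). The main obstacle is to identify $g_1^* = g_1^R$ and $g_2^* = g_2^L$ (the rightmost and leftmost geodesics). This is done by squeezing: on $\Omega_0$, the unique geodesics $g_{1,n}$ to $(x_n, t_n)$ dominate, at each $D$-point below $(x_n, t_n)$, the unique geodesics there; passing to the limit shows $g_1^*$ dominates every geodesic to $(x, t)$, hence coincides with $g_1^R$, and symmetrically for $g_2^*$. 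Once this is in place, $g_1 \leq g_1^R = g_1^* \leq g_2^* = g_2^L \leq g_2$ for any geodesics $g_1, g_2$, and the coalescence claim extends by the same approximation.
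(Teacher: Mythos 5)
Your swap identity and the fixed-pair argument via uniqueness are correct, but the upgrade in your final paragraph has a genuine gap, and it sits exactly where the content of the lemma lies: the statement must hold almost surely simultaneously for all endpoints and for all (possibly non-unique) geodesics. Your sandwich $g_1 \leq g_1^R = g_1^* \leq g_2^* = g_2^L \leq g_2$ uses two facts that are not available at this point. First, that the rightmost geodesic of Lemma \ref{lem:exist} pointwise dominates every geodesic from $h_0$ to $(x,t)$: Lemma \ref{lem:exist} only constructs it as the rightmost $\La$-geodesic from the rightmost maximizer, and pointwise domination of all other geodesics to the same endpoint is itself an instance of the non-crossing statement being proved (two geodesics to the same endpoint could a priori cross), so invoking $g_1 \leq g_1^R$ is circular. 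Second, the identification $g_1^* = g_1^R$ rests on the assertion that ``passing to the limit shows $g_1^*$ dominates every geodesic to $(x,t)$''; but what you actually have on $\Omega_0$ is an ordering between unique geodesics to rational points sharing a rational time, and an arbitrary geodesic to the (possibly irrational) point $(x,t)$ is neither one of these nor controlled by them through anything you establish. To compare it with the unique geodesic $g_{1,n}$ to a nearby rational UGP one needs a further argument: a swap at that UGP does show the two paths coalesce downward from any meeting point (hence cannot cross), but since the two endpoints lie at different times you have no ordering at a common time with which to convert non-crossing into the pointwise domination you need, and your sketch does not supply one.

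For comparison, the paper's proof avoids extremal geodesics and limiting procedures entirely. On the almost sure event that the set of non-UGPs has empty interior (equivalently, that every rational point is a UGP, which is exactly your $\Omega_0$), it argues directly with the given $g_1,g_2$: if they meet for the last time at $p$ but fail to coalesce downward, then every point $q$ in the open region trapped between their graphs above $p$ satisfies $\hh(q)=\hh(p)+\La(p;q)$, and hence admits two distinct geodesics from $h_0$ obtained by following $g_1$, respectively $g_2$, down from $p$; thus the whole open region consists of non-UGPs, a contradiction. This one-stroke argument applies to arbitrary geodesics and arbitrary endpoints, which is precisely the case your approximation scheme leaves open; your third paragraph would need to be repaired along these lines, or by a careful proof that unique geodesics to rational UGPs dominate arbitrary geodesics below them, before the proof is complete.
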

	
	\begin{proof}
		Condition on the almost sure event that the non UGPs from $h_0$ have an empty interior.
		
		If $g_1$ and $g_2$ do not meet,  then continuity and $x < y$ implies $g_1(s) < g_2(s)$ for every $s$.
		Suppose they do meet at the maximal time $s \in (0,t)$ but do not coalesce downward. Then $p = (g_1(s),s) = (g_2(s),s)$ is not a UGP.
		There is an open region of points between the graphs of $g_1$ and $g_2$ from time $s$ to $t$. We claim no point in this region can be a UGP either, thus contradicting the assumption that non UGPs from $h_0$ have empty interior.
		
		Indeed, let $q = (z,v)$ be a point in said region and choose a geodesic $\gamma$ from $h_0$ to $q$.
		By continuity, the geodesic $\gamma$ has to meet $g_1$ or $g_2$ as some time $u \geq s$. Suppose it meets $g_1$.
		Consider the following two distinct paths $\pi_1$ and $\pi_2$ from $h_0$ to $q$: $\pi_1$ equals $g_1$ on $[0,u]$ and $\gamma$ on $[u,v]$; $\pi_2$ equals $g_2$ on $[0,s]$, $g_1$ on $[s,u]$ and $\gamma$ on $[u,v]$.
		We claim $\pi_1$ and $\pi_2$ are geodesics from $h_0$ to $q$ -- so $q$ is not a UGP.
		
		The point $r = (\gamma(u),u) = (g_1(u),u)$ has three geodesics from $h_0$ to itself, namely $\gamma$
		from $h_0$ to $r$, $g_1$ from $h_0$ to $r$, and $g_2$ from $h_0$ to $p$ followed by $g_1$ from $p$ to $r$.
		The paths $\pi_1$ and $\pi_2$ are obtained by swapping the segment of $\gamma$ from $h_0$ to $r$
		by the latter two segments, respectively. Swapping geodesic segments does not change the geodesic property (Lemma \ref{lem:geoswap}),
		and thus $\pi_1$ and $\pi_2$ remain geodesics.
	\end{proof}
	
	The three previous lemmas culminate in a proof of the unique geodesic condition.
	\begin{thm}[Unique geodesic condition] \label{thm:ugc}
		The unique geodesic condition holds for a given initial condition $h_0$.
	\end{thm}
	
	\begin{proof}
		Condition again on the event that the non UGPs from the initial condition $h_0$ have an empty interior,
		which implies the geodesic ordering property from Lemma \ref{lem:geoorder}.
		Condition also on the almost sure event about existence of geodesics from $h_0$ asserted by Lemma \ref{lem:exist}.
		
		For a point $p = (x,t)$, let $A_p$ be the interior of the region between the leftmost and rightmost geodesics from $h_0$ to $(x,t)$.
		The set $A_p$ is empty if and only if $p$ is a UGP. For distinct points $p = (x,t)$ and $q = (y,t)$ with the same time coordinate,
		$A_p$ and $A_q$ are disjoint by the geodesic ordering property. The sets $A_p$ as $p$ varies over all non UGPs with
		time coordinate $t$ are then disjoint, non-empty open sets. There can only be countably many of them
		(each contains a different rational point). As $t$ is arbitrary, the UGC is established.
	\end{proof}
	
	\subsection{Good samples assumption for an initial condition} \label{sec:goodoutcomes}
	The properties of geodesics discussed so far hold over various almost sure events associated to $\La$.
	Many of them are interrelated, and it is convenient to have a single event over which they are all satisfied.
	This will be useful in the coming sections. Such an event -- let us call it the good samples -- is described below.
	
	The good samples, relative to an initial condition $h_0$, are samples of $\La$ for which all the following properties hold, which they do almost surely.
	\begin{itemize}
		\item The process $\La$ is continuous and satisfies the bound from \eqref{eqn:Abound}.
		\item The properties of point-to-point geodesics of $\La$ described in $\S$\ref{sec:Lgeodesic} hold.
            \item The geodesic ordering property from Lemma \ref{lem:geoorder} holds for geodesics from $h_0$.
		\item The unique geodesic condition for $h_0$ is satisfied.
	\end{itemize}
	
	\subsection{Where do geodesics emanate from?} \label{sec:geoendpoint}
	For $(x,t)\in \Hp$, consider the point from which the rightmost geodesic from $h_0$ to $(x,t)$ emanates: 
	\begin{equation} \label{eqn:geoendpoint}
		e(x,t) = \text{starting point of the rightmost geodesic from}\; (x,t) \;\text{to}\; h_0.
	\end{equation}
	
	It encapsulates many properties of geodesics and will also help to understand interfaces in the next section.
	Assume good samples of $\La$ relative to $h_0$.
	
	\begin{prop} \label{prop:endpointfunc}
		For every $t > 0$, the mapping $x \mapsto e(x,t)$ is a non-decreasing, right continuous step function with a discrete set of discontinuities. The left limit of $e(\cdot, t)$ at $x$ is the starting point of the leftmost geodesic to $(x,t)$. The image of $e$ is also countable.
	\end{prop}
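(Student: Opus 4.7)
The plan is to work on a good sample of $\La$ (as in $\S$\ref{sec:goodoutcomes}) and prove the listed properties in sequence: (i) monotonicity, (ii) right-continuity, (iii) identification of the left limit, (iv) discreteness of discontinuities together with the step function property, (v) countability of the image. Throughout I will exploit the compactness, closure, and nearby-geodesics-meet properties of $\La$-geodesics listed in $\S$\ref{sec:Lgeodesic}, together with the geodesic ordering of Lemma \ref{lem:geoorder}.

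For (i), if $x < y$ then Lemma \ref{lem:geoorder} applied to the rightmost geodesics to $(x,t)$ and $(y,t)$ shows that the former lies weakly to the left of the latter on all of $[0,t]$, so at time $0$ we get $e(x,t) \leq e(y,t)$. For (ii), take $x_n \downarrow x$ and let $g_n$ be the rightmost geodesic from $h_0$ to $(x_n,t)$. By monotonicity $e(x_n,t) \downarrow L \geq e(x,t)$. The sequence $g_n$ lies in a common compact region, so by Lemma \ref{lem:ccc}(3) a subsequence converges in Hausdorff distance to a geodesic $g_\infty$ from $h_0$ to $(x,t)$ starting at $L$. Since $e(x,t)$ is by definition the largest starting point of such a geodesic, $L \leq e(x,t)$, forcing equality. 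For (iii), take $x_n \uparrow x$. Compactness gives $e(x_n,t)\uparrow L'$ with a limit geodesic to $(x,t)$ starting at $L'$, hence $L' \geq e_L(x,t)$ where $e_L(x,t)$ is the starting point of the leftmost geodesic to $(x,t)$. Conversely, Lemma \ref{lem:geoorder} applied to the rightmost geodesic to $(x_n,t)$ and the leftmost geodesic to $(x,t)$ (whose endpoints are ordered by $x_n < x$) gives $e(x_n,t) \leq e_L(x,t)$, so $L' \leq e_L(x,t)$ and thus $L' = e_L(x,t)$.

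The key step (iv) is the one I expect to be the main obstacle. Suppose for contradiction that in some compact interval $[a,b]$ the function $e(\cdot,t)$ takes infinitely many distinct values. By monotonicity we can extract $x_n \in [a,b]$ with $x_n$ monotone, $x_n \to x^\ast$, and $e(x_n,t)$ strictly monotone. Let $g_n$ be the rightmost geodesic from $h_0$ to $(x_n,t)$. The $g_n$ lie in a common compact set (again by Lemma \ref{lem:ccc}(3), since the endpoints are bounded and the starting points $e(x_n,t)$ are bounded by monotonicity), so by compactness a subsequence converges in Hausdorff distance. By the nearby-geodesics-meet property, for large $n,m$ in this subsequence $g_n$ and $g_m$ share a common time, and then Lemma \ref{lem:geoorder} forces them to coalesce downward from that time, giving $e(x_n,t) = g_n(0) = g_m(0) = e(x_m,t)$, contradicting strict monotonicity. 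Hence $e(\cdot,t)$ takes only finitely many values on each compact interval, which combined with (i)--(iii) yields that $e(\cdot,t)$ is a right-continuous step function with a discrete set of discontinuities, establishing (iv). Finally, (v) is immediate: the image of $e(\cdot,t)$ is the countable union over $n$ of its finite image on $[-n,n]$.
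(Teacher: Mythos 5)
Your treatment of monotonicity, right continuity, the left limit, and the step-function property is correct, and it runs on the same machinery as the paper's proof (geodesic ordering from Lemma \ref{lem:geoorder}, compactness via Lemma \ref{lem:ccc}, the nearby-geodesics-meet property, and downward coalescence). The only organizational difference is that the paper shows directly that $e(\cdot,t)$ is locally constant to the right of each $x$ (and locally equal to its left limit on the left), whereas you argue by contradiction, extracting a strictly monotone sequence of values on a compact interval; both routes are sound and essentially equivalent.

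There is, however, a genuine gap in the last claim. The proposition asserts that the image of the two-variable function $e$, that is $\{e(x,t) : (x,t)\in\Hp\}$, is countable, while you only prove countability of the image of $x\mapsto e(x,t)$ for each fixed $t$. Since $t$ ranges over an uncountable set, this does not suffice on its own. The missing ingredient is a nesting of images across times: if $g$ is the rightmost geodesic from $h_0$ to $(x,t)$ and $0<s<t$, then $g|_{[0,s]}$ is a geodesic from $h_0$ to $(g(s),s)$, and its starting point must equal $e(g(s),s)$ --- otherwise concatenating the rightmost geodesic to $(g(s),s)$ with $g|_{[s,t]}$ would yield a geodesic from $h_0$ to $(x,t)$ starting strictly to the right of $e(x,t)$. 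Hence $e(x,t)=e(g(s),s)$, so $\mathrm{Im}\,e(\cdot,t)\subset \mathrm{Im}\,e(\cdot,s)$ whenever $t>s$, and therefore $\mathrm{Im}(e)=\bigcup_{n\geq 1}\mathrm{Im}\,e(\cdot,1/n)$ is a countable union of countable sets. This cross-time argument is exactly the step in the paper's proof that your write-up omits.
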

	
	\begin{proof}
		Let $e_t(x) = e(x,t)$. Ordering of geodesics from $h_0$ in Lemma \ref{lem:geoorder} implies $e_t$ is non-decreasing. Right continuity comes from the ordering and compactness properties of geodesics from $h_0$ in Lemmas \ref{lem:ccc} and \ref{lem:geoorder}. The left limit is derived by the same reasoning.
		
		In order to show $e_t$ is a step function with a discrete set of discontinuities it is enough to show that for every $x$, there are $w$ and $y$ with
		$w < x < y$, such that $e_t$ is constant on the interval $[x,y]$ (with common value $e_t(x)$), and also on the interval $[w,x)$ (the common value being the left limit of $e_t(x)$). Now when $y > x$ is sufficiently close to $x$, the rightmost geodesic from $(y,t)$ has to meet the rightmost geodesic from $(x,t)$ by geodesic compactness and the property that nearby geodesics meet. The ordering property implies that these geodesics coalesce downward, so $e_t(x)=e_t(y)$. Similarly, for $w < x$ sufficiently close, the rightmost geodesic from $(w,t)$ has to meet and coalesce downward with the leftmost geodesic from $(x,t)$.
		
		The image of $e_t$ is countable because $e_t$ is a step function with a discrete set of discontinuities. Now observe that the image of $e_t$ is contained in the image of $e_s$ if $t > s$, since a rightmost geodesic from time $t$ contains a rightmost geodesic from time $s$. So the image of $e$ is a countable union, $\mathrm{Im}(e) = \cup_{n \geq 1} \mathrm{Im}(e_{1/n})$, of countable sets. 
	\end{proof}
	
	\subsection{Infinite geodesics of the directed landscape} \label{sec:longgeo}
	We turn to the construction and properties of infinite  geodesics of $\La$.
	A highlight of this section is Theorem \ref{thm:geotree}, which describes the family of infinite geodesics in a fixed direction.
	
	\begin{defn}\label{def:longgeo}
		A continuous function $g:[s,\infty)\mapsto \R$ is an \bemph{infinite geodesic} of $\La$ if it is a geodesic of $\La$ when restricted to every interval $[s,T]$. 
		Its starting point is $(g(s), s)$.
		
		The \bemph{direction} of an infinite geodesic $g$ is the limit of $g(t)/t$ as $t$ tends to infinity, if it exists.
	\end{defn}
	
	Infinite geodesics may be identified with their graphs lying in the spacetime plane.
	When convenient, we will call an infinite geodesic in direction $d$ simply a geodesic with direction $d$.

	\begin{thm}[Existence of infinite geodesics] \label{thm:geoexist}
		Given a starting point $p =(x,s) \in \R^2$ and direction $\alpha \in \R$, there is almost surely an infinite geodesic $g$ of $\La$ from $p$ with direction $\alpha$. Moreover, there is $a>1$ and a random variable $C$ with $Ea^{C^3}<\infty$ so that
		\begin{equation}\label{eqn:longgeo}
         |g(s+t) - x - \alpha t| \leq C \big( 1 \vee t^{2/3} (\log \log t)^{1/3}\big) \quad \text{for}\; t \geq 1.
		\end{equation}
	\end{thm}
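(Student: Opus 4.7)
By the shear invariance \eqref{eqn:shear} and the spatial/temporal stationarity of $\La$, it suffices to construct an infinite geodesic from the origin $p=(0,0)$ in direction $d=0$: once we have such a $g$, the path $t \mapsto g(t) + dt$ after shearing, translated to $p$, yields the general case, and the bound \eqref{eqn:longgeo} transports. Throughout the argument I work on the good samples event of $\S$\ref{sec:goodoutcomes}, so existence, compactness and closure properties of $\La$-geodesics from $\S$\ref{sec:Lgeodesic} are available.

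The plan is to take, for a diverging sequence of times $T_n \uparrow \infty$, the leftmost geodesic $g_n$ of $\La$ from $(0,0)$ to $(0,T_n)$, and to extract a subsequential limit using geodesic compactness. The key input is a transversal fluctuation estimate: for fixed $T$, and for each dyadic scale $t = 2^k \le T$,
\begin{equation*}
  P\!\left( \max_{u \in [0,t]} |g_n(u)| \ge h\, t^{2/3} \right) \le \exp(-c\, h^{3})
\end{equation*}
for a universal $c>0$ and $h$ in the moderate deviation regime. Such bounds follow from the $\exp(-c h^3)$ upper tail for $\mathcal A$ implicit in \eqref{eqn:DOVAbound} together with the standard argument that a geodesic from $(0,0)$ to $(0,T)$ cannot deviate by more than $h t^{2/3}$ at an intermediate time $t$ without forcing a corresponding deficit in $\mathcal A$-values of order $h^2$; I would import this estimate from \cite{DOV} (or from the geodesic deviation results of \cite{BatGH}) rather than redo it. Combined with the compactness of $\La$-geodesics, these fluctuation bounds confine the $g_n$ to a deterministic compact region on each fixed window $[0,T]$, so Arzel\`a--Ascoli through Hausdorff compactness yields a subsequential limit $g:[0,\infty)\to \R$, which is itself an infinite geodesic by the closure property.

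To upgrade the pointwise estimate to the quantitative bound \eqref{eqn:longgeo} and the direction statement, I would run a dyadic argument. Set $t_k = 2^k$ and $h_k = C_0 (\log \log t_k)^{1/3}$ for a large constant $C_0$. The displayed inequality gives
\begin{equation*}
  P\!\left(\max_{u \in [t_k, t_{k+1}]} |g_n(u)| \ge h_k t_k^{2/3}\right) \le \exp(-c C_0^3 \log\log t_k) = (\log t_k)^{-cC_0^3},
\end{equation*}
which is summable in $k$ once $C_0$ is large enough; by Borel--Cantelli, all but finitely many $k$ satisfy the reverse inequality, uniformly in the sequence $\{T_n\}$ (since the bound is for geodesics emanating from $(0,0)$ in general, not tied to $T_n$). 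Passing to the limit through the subsequence preserves this, so the limiting infinite geodesic $g$ satisfies $|g(t)| \le C t^{2/3}(\log \log t)^{1/3}$ for $t \ge 10$, where $C$ depends on the finitely many exceptional scales and hence inherits a cubic-exponential tail $E a^{C^3}<\infty$ from the Gaussian-in-$h^3$ bound on each scale. In particular $g(t)/t \to 0$, confirming the direction.

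The main obstacle is setting up the transversal fluctuation bound with tails sharp enough to survive the $\log \log$-indexed union bound across dyadic scales, including the intermediate times rather than just the endpoint $(0,T)$. One has to handle the issue that the estimate is needed for points along an auxiliary geodesic (to $(0,T_n)$) while the quantitative bound is claimed for the limiting object; here geodesic ordering from Lemma \ref{lem:geoorder} (applied to $\La$-geodesics between appropriate points, or its analog for $\La$ directly via the standard sandwiching between leftmost/rightmost geodesics) lets one transfer the bound from the prelimit to $g$ by passing to the limit scale by scale. Once the tail estimate is in hand, assembling it into the bound \eqref{eqn:longgeo} with control on $E a^{C^3}$ is routine.
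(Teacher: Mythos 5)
Your overall strategy is the same as the paper's: approximate by finite geodesics $g_n$ from $(0,0)$ to $(0,n)$ (after shearing to $d=0$), use a transversal fluctuation bound with $e^{-c\lambda^3}$ tails uniformly over dyadic scales, and extract an infinite geodesic by compactness and diagonalization, with the $t^{2/3}(\log\log t)^{1/3}$ bound coming from a law-of-iterated-logarithm style union bound over scales. However, there is a genuine gap in the step where you transfer the quantitative bound to the limit. Your Borel--Cantelli is run over scales $k$, but the event at scale $k$ concerns $g_n$ for some $n$: if $n$ is fixed, the conclusion (all but finitely many scales are good, with a random constant) holds for that single prelimit geodesic, and both the exceptional scales and the constant may depend on $n$; if you mean the event simultaneously over all $n$ with $T_n \ge t_{k+1}$, the per-scale probability $( \log t_k)^{-cC_0^3}$ does not control a supremum over infinitely many $n$ without a further argument, since the tail bound is uniform in $n$ but not summable in $n$. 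The parenthetical ``uniformly in the sequence $\{T_n\}$, since the bound is for geodesics emanating from $(0,0)$ in general'' conflates uniformity of the probability estimate in $n$ with an almost-sure statement holding simultaneously in $n$. The proposed rescue via Lemma \ref{lem:geoorder} does not apply either: that ordering compares geodesics with distinct spatial endpoints at a common terminal time, whereas your $g_n$ all end at $x=0$ but at different times $T_n$, and such geodesics are not mutually ordered, so the scale-by-scale transfer from prelimit to limit is not justified.

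The paper closes exactly this gap with an extra step you are missing: from the uniform-in-$n$ bound $\mathbf{Pr}(E(n,\lambda)) \le c'e^{-d'\lambda^3}$ on the event $E(n,\lambda)$ that $g_n$ violates the global LIL bound with constant $\lambda$, Fatou's lemma gives $\mathbf{Pr}(E(n,\lambda)\ \text{eventually in } n) \le c'e^{-d'\lambda^3}$, and Borel--Cantelli over integer $\lambda$ then produces a single random constant $\Lambda$ with $Ea^{\Lambda^3}<\infty$ and a \emph{random subsequence} of $n$'s along which every $g_n$ satisfies the bound at all scales simultaneously with the same constant $\Lambda$. The compactness/diagonalization is performed along that subsequence, so the limiting infinite geodesic inherits \eqref{eqn:longgeo} directly. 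Without this (or some substitute giving one constant valid for infinitely many prelimit geodesics at once), your argument yields an infinite geodesic but not the claimed bound with controlled tail, and hence not even the direction statement. A minor further point: the fluctuation bounds confine the $g_n$ to an almost surely finite random region on each window, not a deterministic compact one, though this does not affect the extraction of a limit.
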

	
	\begin{proof}
		
		Due to shear and translation invariance of $\La$, see $\S$\ref{sec:undrand},
		it is enough to consider the starting point $p = (0,0)$ and direction $\alpha = 0$.
		
		For each $n$, let $g_n(t)$ be a geodesic of $\La$ from $(0,0)$ to $(0,n)$.
		These geodesics are almost surely unique for every $n$ and the law of $g_n(t)$ is $n^{2/3}g_1(t/n)$ \cite[Theorem 1.7]{DOV}.
		The following bound on geodesics is derived from Lemma \ref{lem:geobound} below by scale invariance.
		There are constants $c$ and $d$ such that for every $t > 0$ and $n \geq t$,
		\begin{equation} \label{eqn:geobound}
			\mathbf{Pr}\left ( \frac{\sup_{s\in [0,t]}|g_n(s)|}{t^{2/3}} > \lambda \right ) \leq c e^{-d \lambda^3}.
		\end{equation}
        Indeed, $\sup_{s \in [0,t]} |g_n(s)|$ has the same law as $t^{2/3} \sup_{s \in [0,1]} |g_{n/t}(s)|$,
        and then the conclusion of Lemma \ref{lem:geobound} can be applied.
        		
        Now we argue as in the proof of the law of the iterated logarithm to establish that there are absolute constants $c'$ and $d'$ for which
		\begin{equation} \label{eqn:geobound1}
			\mathbf{Pr}\left ( \sup_{t\in [1,n]} \frac{|g_n(t)|}{1 \vee t^{2/3}(\log \log t)^{1/3}} > \lambda \right ) \leq c' e^{-d' \lambda^3}
		\end{equation}
 		for every $n$. First we note that if $\lambda$ is such that $(d/27) \lambda^3 \leq 2$ with $d$ is as in \eqref{eqn:geobound},
 		then the bound \eqref{eqn:geobound1} holds for all such $\lambda$ with any given $d'$ by choosing $c'$ sufficiently large.
 		So let us assume $\lambda$ is such that $(d/27) \lambda^3 \geq 2$, and find appropriate $d'$ and $c'$.
   
        For $0 \leq k < \log n$, let $t_k = e^k$. Define
        $$M_k = \sup_{t \in [t_k, t_{k+1}]} \frac{|g_n(t)|}{1 \vee t^{2/3}(\log\log t)^{1/3}}.$$
        By an union bound, the probability in \eqref{eqn:geobound1} is bounded above by
        $ \sum_k \mathbf{Pr}(M_k > \lambda)$. For $t \in [t_0,t_1]$, $1 \vee t^{2/3}(\log \log t)^{1/3} = 1$ and so by \eqref{eqn:geobound},
        $$ \mathbf{Pr}(M_0 > \lambda) \leq c e^{-d e^{-2}\lambda^3}.$$
        If $t \in [t_k, t_{k+1}]$ then $t^{2/3} (\log \log t)^{1/3} \geq t_k^{2/3} (\log \log t_k)^{1/3} \geq (1/3) t_{k+1}^{2/3} (\log \log t_{k+1})^{1/3}$.
        Consequently,
        $$M_k \leq 3 \; \frac{\sup_{t \in [0,t_{k+1}]} |g_n(t)|}{t_{k+1}^{2/3} (\log\log t_{k+1})^{1/3}},$$
        which by \eqref{eqn:geobound} implies that
        $$\mathbf{Pr}(M_k > \lambda) \leq c e^{-d (\lambda/3)^3\log(k+1)} = c (k+1)^{-(d/27)\lambda^{3}}.$$
        Therefore, the probability in \eqref{eqn:geobound1} is bounded above by
        $$ ce^{-d e^{-2}\lambda^3} + \sum_{k \geq 2} c k^{-(d/27)\lambda^3} \leq ce^{-d e^{-2}\lambda^3} + c'' 2^{- (d/27)\lambda^3 + 1} = c' e^{-d'\lambda^3}.$$
		
		Let $E(n,\lambda)$ be the event in \eqref{eqn:geobound1}.
		Fatou's lemma applied to the indicator of the events $E(n,\lambda)$ over $n$ implies
	    $$ \mathbf{Pr}\left ( E(n,\lambda) \; \text{occurs eventually for all }\; n\right ) \leq
	    c' e^{-d' \lambda^3}.$$
		Thus, by the Borel-Cantelli lemma applied over integer values of $\lambda$, there is a random $\Lambda$ with tail bounds as in \eqref{eqn:geobound1} such that
 		$E(n,\Lambda)^c$ occurs infinitely often along some random subsequence. 
 		By passing to such a subsequence we may assume that for every $n$
 		\begin{equation}\label{eqn:geobound2}
 			\sup_{t\in [0,n]} \frac{|g_n(t)|}{1 \vee t^{2/3}(\log \log t)^{1/3}} \leq \Lambda.
 		\end{equation}
		
		The next step is a sample-wise compactness and diagonalization argument.
		Assume good samples of $\La$ with respect to the narrow wedge initial condition, which is $h_0(0) = 0$ and $h_0(x) = -\infty$ for all $x \neq 0$.
		Thus, all geodesics from $(0,0)$, in particular every $g_n$, satisfy the good samples properties from $\S$\ref{sec:goodoutcomes}.
  
		Along the subsequence where \eqref{eqn:geobound2} holds, the graphs of the geodesics $g_n(t)$ for $t \in [0,1]$ have endpoints that remain inside a compact subset of $\overline{\Hp}$. By the compactness property of geodesics, there is a further subsequence on which $g_n$ restricted to $[0,1]$ converges to a geodesic $\gamma_1$ for times $t \in [0,1]$. After we pass to this subsequence, the same argument implies there is a further subsequence on which $g_n$  restricted to $[0,2]$ converges to a geodesic $\gamma_2$. Continuing like this for integer times and then moving to the diagonal subsequence gives the desired infinite geodesic $g$, and a subsequence along which $g_{n}$ tends to $g$ on every compact interval of times.

        Since $g$ is the limit of geodesics satisfying \eqref{eqn:geobound2},
        it satisfies the bound \eqref{eqn:longgeo}.
	\end{proof}
	
	\begin{lem} \label{lem:geobound}
    There are constants $c,d>0$ such that for every $T \geq 1$, the geodesic $g_T$ from $(0,0)$ to $(0,T)$ satisfies
    $$
    \mathbf{Pr}\left (\sup_{t\in[0,1]}|g_T(t)|>\lambda \right )\le c e^{-d\lambda^3}.
    $$
    \end{lem}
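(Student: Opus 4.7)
The plan is to transfer the transversal fluctuation event to a comparison of directed-landscape lengths via the metric composition identity, then bound the resulting Airy-process statement with a dyadic time decomposition.

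By continuity of $g_T$ with $g_T(0) = 0$, the event $\{\sup_{t \in [0,1]} |g_T(t)| > \lambda\}$ is, by the intermediate value theorem, equivalent to $g_T$ hitting $\pm\lambda$ at some first time $\tau \in (0, 1]$. Using the spatial reflection invariance of $\La$ it suffices to consider hitting $+\lambda$. On this event metric composition saturates: $\La(0,0;0,T) = \La(0,0;\lambda,\tau) + \La(\lambda,\tau;0,T)$. Comparing with the reference path through $(0,\tau)$, which always satisfies $\La(0,0;0,T) \geq \La(0,0;0,\tau) + \La(0,\tau;0,T)$, and substituting $\La(y,s;x,u) = \Ai(y,s;x,u) - (x-y)^2/(u-s)$, one obtains
\[
\Delta_1(\tau) + \Delta_2(\tau) \;\geq\; \frac{\lambda^2}{\tau} + \frac{\lambda^2}{T-\tau},
\]
where $\Delta_1(t) := \Ai(0,0;\lambda,t) - \Ai(0,0;0,t)$ and $\Delta_2(t) := \Ai(\lambda,t;0,T) - \Ai(0,t;0,T)$ are spatial Airy increments at spacing $\lambda$, independent by the time-Markov property of $\La$.

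By the scaling $\Ai(0,0;y,t) \stackrel{d}{=} t^{1/3}\mathcal{A}_2(y/t^{2/3})$ with $\mathcal{A}_2$ the stationary Airy$_2$ process (and an analogous representation for $\Delta_2$ obtained by space-reflection and time-translation of $\La$), the Brownian-Gibbs concentration of the Airy line ensemble yields the tail $\mathbf{Pr}(\Delta_i(t) \geq B) \leq C\exp(-cB^2/\lambda)$, with $c, C$ independent of $t \in (0,1]$ and $T \geq 1$ in the relevant range of $B$. Since the sum inequality forces at least one of $\Delta_1, \Delta_2$ to exceed $\lambda^2/(2\tau)$, plugging $B = \lambda^2/(2\tau)$ gives $B^2/\lambda = \lambda^3/(4\tau^2)$, producing a pointwise bound $C\exp(-c\lambda^3/\tau^2)$.

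Finally, partition $(0, 1] = \bigsqcup_{k \geq 0}(2^{-k-1}, 2^{-k}]$. On the $k$th dyadic piece the pointwise bound is $C\exp(-c\lambda^3 \cdot 4^k)$; the supremum over $\tau$ within each piece is controlled by a standard chaining argument using H\"older continuity of the Airy sheet in time, incurring at worst polynomial prefactors absorbed by the exponential. Summing $\sum_{k \geq 0} C\exp(-c\lambda^3 \cdot 4^k) \leq C'\exp(-c'\lambda^3)$ gives the claim. The principal obstacle is extracting a Gaussian tail on $\Delta_2$ uniformly in $T$: the value $\Ai(\lambda, t; 0, T)$ itself fluctuates at scale $T^{1/3}$, but only its spatial increment at displacement $\lambda$ enters the bound, and this is governed by local Brownian-Gibbs regularity of the Airy sheet at $(0, T)$, yielding a $T$-independent tail.
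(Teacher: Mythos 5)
Your skeleton is sound and is a genuinely different route from the paper's: you attempt to prove the transversal-fluctuation bound from scratch by the classical scheme (first hitting time $\tau$, the geodesic passing through $(\lambda,\tau)$ forces $\Delta_1(\tau)+\Delta_2(\tau)\ge \lambda^2/\tau+\lambda^2/(T-\tau)$ via the reverse triangle inequality, then a dyadic sum over $\tau$), whereas the paper simply combines two cited inputs --- the one-point bound $\mathbf{Pr}(|g_1(t)|>\lambda t^{2/3})\le c e^{-d\lambda^3}$ of \cite[Corollary 2.14]{DSV} (transferred to $g_T$ by scaling) and the geodesic modulus of continuity of \cite[Proposition 12.3]{DOV} --- with shear invariance and union bounds. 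The algebra in your comparison step is correct, and the dyadic summation at the end is fine \emph{if} the pointwise bound $C\exp(-c\lambda^3/\tau^2)$ is available.

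The genuine gap is that the two quantitative inputs are asserted rather than established, and the mechanism you invoke does not deliver them as stated. For $\Delta_1$, at $B=\lambda^2/(2\tau)$ the displacement $\lambda$ typically exceeds the spatial correlation scale $\tau^{2/3}$, so this is not a locally Brownian regime with ``variance $\lambda$''; the correct route there is one-point (Tracy--Widom type) tails plus stationarity, which does yield $Ce^{-c\lambda^3/\tau^2}$, so that half is repairable. The real difficulty is $\Delta_2$: the one-point route gives only $C\exp\bigl(-c(B/(T-\tau)^{1/3})^{3/2}\bigr)$, which degenerates as $T\to\infty$, so you truly need a Gaussian-type tail for the spatial increment of $y\mapsto\La(y,\tau;0,T)$ at displacement $\lambda$, uniform in $T\ge 1$, $\tau\in(0,1]$, and at depth $e^{-c\lambda^3/\tau^2}$. ``Local Brownian--Gibbs regularity'' gives absolute continuity with respect to Brownian bridge on compacts, but absolute continuity alone does not transfer tails of this depth uniformly in the horizon; one needs either quantitative Radon--Nikodym control or a comparison-to-stationary/Busemann-monotonicity argument, and that is precisely where the content of the lemma lies --- it is missing from your write-up. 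The same uniformity issue silently reappears in your ``standard chaining'' over $\tau$ within each dyadic block, since the $\tau$-increments of $\Delta_2$ again involve the large-$T$ landscape and no modulus estimate is supplied. As written, then, the proof is incomplete at its critical steps, even though the overall strategy could be carried out with substantially more work (or by citing the very estimates the paper cites instead).
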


    \begin{proof}
    The proof of the lemma is based on the following two bounds. The first bound is derived
    from \cite[Proposition 12.3]{DOV} and the second is from \cite[Corollary 2.14]{DSV}.

    For every $b>0$, there exists $a>1$ and a random variable $C$ with $\mathbf{E}[a^{C^3}]<\infty$ such that for any geodesic $g$ from  a point in $[-b,b]\times\{0\}$ to a point in $[-b,b]\times\{1\}$,
    \begin{equation} \label{eqn:modulus}
    |g(s)-g(t)|<C |s-t|^{2/3}\log^{1/3} (2/|s-t|).
    \end{equation}
	
	There are also constants $c_1$ and $d_1$ such that for every $t \in (0,1]$,
	\begin{equation} \label{eqn:DSSbound}
		\mathbf{Pr}\left ( \frac{|g_1(t)|}{t^{2/3}} > \lambda \right ) \leq c_1 e^{-d_1 \lambda^3}.
	\end{equation}
	Since $g_T(t)$ has law $T^{2/3}g_1(t/T)$, it follows from \eqref{eqn:DSSbound} that for every $T$ and $t \in (0,T]$,
    \begin{equation} \label{eqn:bound}
	    \mathbf{Pr}\left ( \frac{|g_T(t)|}{t^{2/3}} > \lambda \right ) \leq c_1 e^{-d_1 \lambda^3}.
    \end{equation}
		
    Let $\gamma_x$ be a geodesic from $(0,0)$ to $(x,1)$. The bound
    $$ \mathbf{Pr}\left (\sup_{x\in[0,1],t\in [0,1]} |\gamma_x(t)|>\lambda \right ) \leq c_2e^{-d_2\lambda^3} $$
    follows from \eqref{eqn:modulus} with $s=0$. Here $c_2$ and $d_2$ are absolute constants derived from the fact that $\mathbf{E}[a^{C^3}] < \infty$.
    
    Now consider geodesics $\gamma_x$ for $x \in [k,k+1]$. By shear invariance \eqref{eqn:shear}, $\gamma_x$ has law $\gamma_{x-k}(t) + kt$.
    Therefore, using the bound above, 
    $$ \mathbf{Pr} \left(\sup_{x\in[k,k+1],t\in [0,1]} |\gamma_x(t)-tk|>\lambda \right )\le c_2e^{-d_2\lambda^3}.$$
    This bound implies that if $\lambda \geq |2k|$ then
    $$ \mathbf{Pr} \left (\sup_{x\in[k,k+1],t\in [0,1]} |\gamma_x(t)|>\lambda \right )\le c_2e^{-d_2(\lambda/2)^3}.$$
    Let $m$ be a positive integer. By a union bound for $k=-m,-m+1, \ldots, m$ in the above, it follows that for $|\lambda|>2m$,
    \begin{equation}\label{e:unionk}
    \mathbf{Pr} \left (\sup_{x\in[-m,m],t\in [0,1]} |\gamma_x(t)|>\lambda \right)
    \leq 2mc_2 e^{-(d_2/8)\lambda^3}\leq c_3e^{-d_3\lambda^3}.
    \end{equation}
    Now 
    $$\sup_{t\in [0,1]} |g_T(t)|>2m$$
    implies that either $|g_T(1)|>m$, or $g_T(1)\in [-m,m]$ and the segment of $g_T$ over $[0,1]$ violates the event in \eqref{e:unionk} with $\lambda=2m$. By \eqref{eqn:bound} and \eqref{e:unionk}, the probabilities of either of these events are bounded above as claimed. 
    \end{proof}
    
	Infinite geodesics exist from a given point simultaneously across all directions, and they are unique for typical directions.
	
	\begin{thm}[Geodesics in every direction from a point] \label{thm:longgeo}
		Let $p \in \R^2$. The following properties of infinite geodesics of $\La$ from $p$ hold almost surely.
            \begin{enumerate}
            \item Let $g$ be a geodesic in direction $d$ and $g'$ be a geodesic in direction $d'$, both from $p$. If $d < d'$, then $g(t) \leq g'(t)$ for every $t$.
            \item There is a geodesic from $p$ in direction $d$ for every $d \in \R$. There is a leftmost geodesic $g_d^-$ and a rightmost geodesic $g_d^+$ from $p$ in direction $d$ for every $d \in \R$.
            \item Let $D$ be the random set of directions such that $d \in D$ if and only if there are multiple geodesics from $p$ in direction $d$ ($g_d^- \neq g_d^+$). Almost surely, $D$ is at most countable. For any $d \in \R$, $\mathbf{Pr}(d \in D) = 0$ (there is almost surely a unique geodesic from $p$ in any fixed direction).
            \end{enumerate}
	\end{thm}
	
	\begin{proof}
		We may take $p = (0,0)$ as before. Assume good samples of $\La$ with respect to the narrow wedge initial condition at zero, which is $h_0(0)=0$ and $h_0(x)=-\infty$ for $x \neq 0$. So all geodesics from $(0,0)$ satisfy the good samples properties from $\S$\ref{sec:goodoutcomes}. Note that good samples form an almost sure event.
            Furthermore, by Theorem \ref{thm:geoexist} and a union bound, there are geodesics from $(0,0)$
		along every rational direction almost surely. So assume samples of $\La$ for which these exist as well.
            Parts (1) - (3) can now be shown sample-wise.
		
		  For Part (1), due to the good samples assumption, we have geodesic ordering from Lemma \ref{lem:geoorder}.
            If $d < d'$, then $g(t) < g'(t)$ for all large $t$ due to their respective directions.
		But then geodesic ordering implies $g(t) \leq g'(t)$ for every $t$.
  
		For Part (2), let $d$ be an arbitrary direction. Approximate $d$ by rational directions $d_{2n}$ that are decreasing
		to $d$ as well as rational directions $d_{2n+1}$ that are increasing to it. Let $g_n$ be a geodesic
		from $(0,0)$ along direction $d_n$. Geodesic ordering and the compactness property, used as in the proof of
		Theorem \ref{thm:geoexist}, imply that a subsequence $g_{n_k}$ converges to an infinite geodesic
		$g$ from $(0,0)$. To see that $g$ has direction $d$ note that for every $n$,
		by geodesic ordering, $g_{2n+1}(t) \leq g(t) \leq g_{2n}(t)$. Dividing by $t$ and taking limits imply that
		$$ d_{2n+1} \leq \liminf_{t \to \infty} \, \frac{g(t)}{t} \leq \limsup_{t \to \infty} \, \frac{g(t)}{t} \leq d_{2n}.$$
		Taking the limit over $n$ shows the direction of $g$ is indeed $d$.
		
		The leftmost and rightmost geodesics in direction $d$ are defined according to
		$$ g_d^{-}(t) = \inf_{g} g(t) \quad \text{and}\quad g_d^{+}(t) = \sup_g g(t),$$
		where $g$ ranges over all geodesics from $(0,0)$ in direction $d$. Geodesic ordering
		and compactness ensure $g_d^{\pm}$ are in fact geodesics with direction $d$.

            For Part (3), suppose there is more than one geodesic from $(0,0)$ along a direction $d$.
		Then there is a non-empty open region $A_d$ of points between the leftmost and rightmost geodesics from $(0,0)$ in direction $d$. (Let $A_d$ be empty if there is only one geodesic in direction $d$.) If $d \neq d'$ then the sets $A_d$ and $A_{d'}$ are disjoint due to geodesic ordering. So the sets $A_d$, as the direction varies, are a collection of open, pair-wise disjoint sets. Only a countable number of them can be non-empty. Thus the set $D$ of directions along which there are multiple infinite geodesics from $(0,0)$ is at most countable.

            Uniqueness of a geodesic from $(0,0)$ in a fixed direction comes for free from the above construction and
		invariances of $\La$.
		Shear invariance of $\La$  \eqref{eqn:shear} implies the set $D$ is translation invariant.
		A countable, translation invariant set has zero probability of containing any given number. Indeed,
		$$ \int_{-\infty}^{\infty} \mathbf{1}_{x \in D} \, dx = 0, $$
		and on taking expectation this gives $\int_{-\infty}^\infty\mathbf{Pr}(x \in D)\,dx =0$, so that $\mathbf{Pr}(x \in D)=0$
		for almost every $x$. But this probability does not depend on $x$.
		So any fixed direction has a unique infinite geodesic from $(0,0)$ almost surely.
		
		Note the set $D$ is also scale invariant by the KPZ scale invariance of $\La$. As such, although countable, it should  be dense over the real line.
	\end{proof}
	
	\begin{cor} \label{cor:coalesence}
		Let $p=(x,s)\in \R^2$, let $t_n\to \infty$ and let $x_n/t_n\to d \in \R$.
		Let $g_n$ be a geodesic from $p$ to $(x_n,t_n)$ and let $g$ be a geodesic from $p$ in direction $d$. Assume that $g$ is unique almost surely.  
		Then, almost surely, for every $r>s$, $g_n=g$ on $[s,r]$ for all large $n$. 
	\end{cor}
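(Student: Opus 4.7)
The plan is to argue in two stages. First, I would show that $g_n\to g$ uniformly on every compact subinterval of $[s,\infty)$, by sandwiching $g_n$ between two infinite geodesics with slightly perturbed directions. Second, I would promote this uniform convergence to eventual equality on $[s,r]$ by combining the ``nearby geodesics meet'' property with a concatenation argument that exploits the assumed uniqueness of $g$.

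For the first stage, assume good samples and fix any $\epsilon>0$. By Theorem~\ref{thm:longgeo} there exist infinite geodesics $h_\epsilon^{\pm}$ from $p$ with directions $d\pm\epsilon$. Since $h_\epsilon^{\pm}(t_n)/t_n\to d\pm\epsilon$ and $x_n/t_n\to d$, we have $h_\epsilon^{-}(t_n)<x_n<h_\epsilon^{+}(t_n)$ for all large $n$, so the geodesic ordering property (Lemma~\ref{lem:geoorder} applied to the narrow-wedge initial condition at $p$ in the time-shifted landscape) gives
$$h_\epsilon^{-}(t)\le g_n(t)\le h_\epsilon^{+}(t),\qquad t\in[s,t_n].$$
Consequently, on every compact $[s,T]$ the graphs $g_n|_{[s,T]}$ lie in a fixed compact set for all large $n$. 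The compactness property of geodesics together with a diagonal extraction then implies that every subsequence of $(g_n)$ admits a further subsequence converging uniformly on compacts to an infinite geodesic $\tilde g$ from $p$. Passing to the limit in the sandwich for each $\epsilon$ and then letting $\epsilon\to 0$ forces $\tilde g$ to have direction $d$, whence uniqueness of $g$ yields $\tilde g=g$. The full sequence $g_n$ therefore converges to $g$ uniformly on compacts.

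For the second stage, fix $r>s$ and choose a compact $K\subset\R^2$ whose interior contains the graph of $g|_{[r,r+1]}$. Let $\epsilon_0>0$ be the constant supplied by ``nearby geodesics meet'' for $K$. For all large $n$, the graph of $g_n|_{[r,r+1]}$ lies in $K$ and is within Hausdorff distance $\epsilon_0$ of that of $g|_{[r,r+1]}$, so there is a time $t_n^{*}\in[r,r+1]$ with $g_n(t_n^{*})=g(t_n^{*})$. Concatenate $g_n|_{[s,t_n^{*}]}$ with $g|_{[t_n^{*},\infty)}$ into a continuous path $\bar g\colon[s,\infty)\to\R$ starting at $p=(x,s)$. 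For any $T>t_n^{*}$, refining partitions of $[s,T]$ to include $t_n^{*}$ shows that the length of $\bar g|_{[s,T]}$ equals
$$\La(x,s;g(t_n^{*}),t_n^{*})+\La(g(t_n^{*}),t_n^{*};g(T),T)=\La(x,s;g(T),T),$$
the second equality because $g$ is itself an infinite geodesic from $p$. Hence $\bar g$ is an infinite geodesic from $p$, and since it coincides with $g$ for $t\ge t_n^{*}$ it has direction $d$. Uniqueness of $g$ then forces $\bar g=g$, so $g_n=g$ on $[s,t_n^{*}]\supset[s,r]$.

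The main obstacle lies in the second stage: uniform convergence by itself does not preclude persistent small discrepancies between $g_n$ and $g$, so ``nearby geodesics meet'' must be invoked to extract a single shared time $t_n^{*}$, and the concatenation argument is then needed to propagate that lone coincidence backward via the rigidity of the unique infinite geodesic $g$. A cosmetic point is the translation of Lemma~\ref{lem:geoorder} from initial time $0$ to starting time $s$, but this is immediate from the stationarity of $\La$ in the time coordinate.
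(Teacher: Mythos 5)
Your argument is correct and follows essentially the same route as the paper: the sandwich between geodesics in directions $d\pm\epsilon$ plus compactness and uniqueness gives $g_n\to g$ on compacts, and then ``nearby geodesics meet'' supplies a shared time $t_n^{*}\ge r$ at which equality propagates back to $[s,t_n^{*}]$. Your explicit concatenation of $g_n|_{[s,t_n^{*}]}$ with $g|_{[t_n^{*},\infty)}$ is just a spelled-out version of the paper's observation that, by uniqueness of the infinite geodesic $g$, its segment from $p$ to $(g(t_n^{*}),t_n^{*})$ is the unique finite geodesic between those points, so the two proofs coincide in substance.
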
	
	
	\begin{proof}
		Let $\eps > 0$ and let $g_-$ and $g_+$ be geodesics from $p$ in the directions $d\pm \eps$.
		Then for all large enough $n$, $g_-(t_n) < g_n(t_n) < g_+(t_n)$, and by geodesic ordering,
		$g_-(t)\leq g_n(t)\leq g_+(t)$ for $t \in [s, t_n]$.
		By the compactness property of geodesics, $g_n$ is precompact in the Hausdorff topology restricted to compact time intervals,
		and every  limit point $\gamma$ satisfies
		$$ d-\eps\leq \liminf_{t \to \infty} \, \frac{\gamma(t)}{t} \leq \limsup_{t \to \infty} \, \frac{\gamma(t)}{t} \leq d+\eps.$$
		Since this holds for all $\eps>0$, $\gamma$ is a geodesic in direction $d$, and by uniqueness, $\gamma=g$.
		So $g_n\to g$ on compacts.
		
		Next, observe that for every $t >s$, the segment of $g$ from $p = (g(s),s)$ to $(g(t),t)$ is the unique geodesic between these points. This is because if there were another geodesic $\pi$ from $p$ to $(g(t),t)$ then the path obtained by swapping the segment of $g$ on $[s,t]$ with $\pi$ would produce another infinite geodesic
		from $p$ in direction $d$, contradicting its uniqueness. Therefore, if $g_n(t) = g(t)$ for some time $t$ then $g_n  = g$ on $[s,t]$ due to uniqueness.
		
		Finally, observe that $g_n$ meets $g$ at arbitrarily large times. Indeed, $g_n$ tends to $g$ on any compact time interval $[t_1,t_2]$ with $t_1 \geq s$. So by the property
		that nearby geodesics meet, $g_n$ must meet $g$ at some time in $[t_1,t_2]$. As $t_1$ may be arbitrarily large, the claim follows.
  
		The two observations above imply that for any compact interval $[s,r]$, $g_n = g$ on $[s,r]$ for all large $n$.
	\end{proof}

	\subsection{The geodesic tree in a direction}
        Consider all infinite geodesics of $\La$ in a fixed direction. From typical starting points there is a unique such geodesic, but there can be exceptional points with multiple geodesics. Nevertheless, any two such geodesics will coalesce to make a single coalescing family -- the geodesic tree. We prove this result as Corollary \ref{cor:geotree} for direction zero, which then extends to any fixed direction by shear invariance of $\La$.
	
	By Theorem \ref{thm:longgeo}, for every $(x,d) \in \R^2$, there is almost surely a unique geodesic $g_{x,d}$ started from $(x,0)$ in direction d. So the event 
	\begin{equation} \label{eqn:eventE}
		E = \{\text{there is a unique geodesic}\; g_{x,d}\; \text{from}\; (x,0) \;\text{in direction}\; d\; \text{for every}\; (x,d) \in \mathbb{Q}^2\}    
	\end{equation} 
	has probability one. We will condition on the event $E$ repeatedly in this section. We note that on the event $E$ we may also assume that these geodesics are ordered, in the sense that if $x$ is rational and $d < d'$ are rational then $g_{x,d}(t) \leq g_{x,d'}(t)$ for every $t$. Indeed, geodesic ordering with respect to the direction parameter for any given starting location $(x,0)$ is established in Theorem \ref{thm:longgeo}.
	
	\begin{lem} \label{lem:geoordering2}
		The following ordering of geodesics holds on the event $E$, and thus almost surely.
		If $g$ and $g'$ are any two geodesics in direction 0 started from $(y,0)$ and $(y',0)$ respectively, then
		$g(t) \leq g'(t)$ for every $t \geq 0$ when $y < y'$.
	\end{lem}
		
		\begin{proof}
			Condition on the event $E$ in \eqref{eqn:eventE} and let $g_{x,d}$ be the unique geodesic in direction
			$d$ stated from $(x,0)$ for every $(x,d) \in \mathbb{Q}^2$.
   
                We first establish the following auxiliary fact about geodesics. Suppose $x < x'$ and $d < d'$ are real numbers. Let $\gamma$ be a geodesic from $(x,0)$ in direction $d$ and $\gamma'$ be one from $(x',0)$ in direction $d'$. Suppose one of them is unique.
			Then $\gamma(t) \leq \gamma'(t)$ for every $t \geq 0$.
			
			To see why this is so, suppose to the contrary that $\gamma$ and $\gamma'$ cross.
			Then there will be times $0 < s < t$ such that $\gamma(s) = \gamma'(s)$, $\gamma(t) = \gamma'(t)$
			and $\gamma(u) > \gamma'(u)$ on $(s,t)$. In other words, if the geodesics cross then they must cross back. This follows from continuity, the assumption that the geodesics cross, while their respective directions $d < d'$ forces them to cross back. Observe that the segments of $\gamma$ and $\gamma'$ on $[s,t]$
			are both (finite) geodesics between their common respective endpoints.
			
			Now suppose $\gamma$ is unique. Consider the path $\pi$ obtained by following $\gamma$ on $[0,s]$,
			$\gamma'$ on $[s,t]$ and $\gamma$ on $[t, \infty)$. The path is obtained by swapping the geodesic segment of
			$\gamma$ on $[s,t]$ with the segment from $\gamma'$. As such, it is also an infinite geodesic from $(x,0)$ in direction $d$, which contradicts the uniqueness of $\gamma$. Thus, $\gamma \leq \gamma'$ on $[0,\infty)$.
			
			Now consider the geodesics $g$ and $g'$ from the statement of the lemma. Pick any rational number $x \in (y,y')$. Let $g_{x,0}$ be the unique geodesic from $(x,0)$ in direction 0. We will infer that $g(t) \leq g_{x,0}(t) \leq g'(t)$ for every $t$ to deduce the lemma. Let us prove that $g(t) \leq g_{x,0}(t)$ as the other inequality is analogous.
			
			Consider the sequence of directions $d_n = 1/n$ and the unique geodesics $g_{x,d_n}$ from $(x,0)$ with direction $d_n$.
			The aforementioned fact implies $g(t) \leq g_{x,d_n}(t)$ for every $t$ and every $n$. Due to geodesic
			ordering and compactness, along a subsequence the geodesics $g_{x,d_n}$ converge to some geodesic from $(x,0)$ in direction 0. Since there is a unique such geodesic $g_{x,0}$, the limit must be it.
			Therefore, $g(t) \leq g_{x,0}(t)$ for every $t$.
		\end{proof}
		
	\begin{lem} \label{lem:twogeocoalesce}
	   The following coalescence property of geodesics hold on the event $E$, and thus almost surely.
	   Let $g$ and $g'$ be geodesics in direction 0 from $(y,0)$ and $(y',0)$ respectively with $y < y'$.
	   Suppose one of them is unique. If $g$ and $g'$ meet, they coalesce from that point onward.
	\end{lem}
	
	\begin{proof}
	    Condition on the event $E$ in \eqref{eqn:eventE}. Due to the geodesic ordering property from Lemma \ref{lem:geoordering2},
	    $g \leq g'$. Suppose $g$ and $g'$ meet and let $p = (w,t)$ be the earliest meeting point. We will show that $g=g'$ on $[t,\infty)$. Suppose $g$ is unique. Let $\pi$ be the path obtained by following $g$ on $[0,t]$ and $g'$ on $[t,\infty)$. We will show that $\pi$ is also an infinite geodesic. Since $g$ is the unique geodesic from $(y,0)$ in direction 0, it follows that $g = \pi$, and hence $g$ and $g'$ coalesce as stated.
	    
	    In order to show that $\pi$ is an infinite geodesic it suffices to show that for every $T > t$, $\pi$ restricted to $[0,T]$ is a geodesic. Let $z = g(T)$ and $z' = g'(T)$; note $z \leq z'$. Let $\gamma$ be the rightmost geodesic from $(y,0)$ to $(z',T)$.
	    We show below that $g \leq \gamma$ on $[0,T]$. Assuming this, let $s$ be the earliest meeting time of $\gamma$ and $g'$ on $[0,T]$. Then on $[0,s)$, $g \leq \gamma < g'$. Since $g(t) = g'(t)$, it follows that $s \leq t$. The restrictions of $\gamma$ and $g'$ to $[s,T]$ are both geodesics from $(g'(s),s)$ to $(z',T)$. Therefore, the concatenation $\gamma'$ of $\gamma$ on $[0,s]$ followed by $g'$ on $[s,T]$ is also a geodesic. Next, the restrictions of $g$ and $\gamma'$ to $[0,t]$ are both geodesics from $(y,0)$ to $(g(t),t) = (g'(t),t) = (\gamma'(t),t)$. Thus the concatenation $\gamma''$ of $g$ on $[0,t]$ followed by $\gamma'$ on $[t,T]$ is also a geodesic. But by construction, $\gamma'' = \pi$ on $[0,T]$.
	    
	    To complete the proof we show that $g \leq \gamma$ on $[0,T]$. If this is not the case then there are times $0 \leq u < v \leq T$ such that $g(u) = \gamma(u)$, $g(v) = \gamma(v)$ and $\gamma < g$ on $(u,v)$. This follows from continuity and the fact that $g(0) = \gamma(0) = y$ while $g(T) = z \leq z' = \gamma(T)$. The restrictions of $g$ and $\gamma$ to $[u,v]$ are two distinct geodesics from $(g(u),u)$ to $(g(v),v)$. By swapping the segment of $\gamma$ on $[u,v]$ with $g$, we find another geodesic from $(y,0)$ to $(z',T)$ that is more to the right than $\gamma$, contradicting the definition of $\gamma$.
	    
	\end{proof}
	
	\begin{lem} \label{lem:geocoalesce}
		The following coalescing of geodesics holds almost surely.
		If $g$ and $g'$ are any two geodesics in direction 0 started from the real axis,
		then they coalesce upward, meaning there is a time $t$ such that $g(s) = g'(s)$ for $s \geq t$.
	\end{lem}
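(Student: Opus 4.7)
The plan is to approximate the infinite direction-0 geodesics $g$ and $g'$ by finite two-point geodesics that share a common endpoint at $(0,n)$ and to exploit the almost sure uniqueness of two-point geodesics together with Corollary \ref{cor:coalesence}. Without loss of generality assume $y \leq y'$, where $g(0)=y$ and $g'(0)=y'$; by Lemma \ref{lem:geoordering2}, $g(t) \leq g'(t)$ for every $t \geq 0$. I would work on the almost sure event where $g$ and $g'$ are the unique direction-0 geodesics from their starting points, the two-point geodesics $g_n$ from $(y,0)$ to $(0,n)$ and $g'_n$ from $(y',0)$ to $(0,n)$ are unique for every $n \in \mathbb{N}$, and the good samples hypothesis of $\S\ref{sec:goodoutcomes}$ holds.

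Both $g_n$ and $g'_n$ terminate at $(0,n)$ while $g_n(0)=y \leq y'=g'_n(0)$, so by continuity they meet at some first time $T_n \leq n$. At $T_n$ the restrictions $g_n|_{[T_n,n]}$ and $g'_n|_{[T_n,n]}$ are both two-point geodesics from $(g_n(T_n),T_n)$ to $(0,n)$; almost sure uniqueness forces $g_n = g'_n$ on $[T_n,n]$. Corollary \ref{cor:coalesence} now supplies $g_n \to g$ and $g'_n \to g'$ uniformly on every compact time interval. If $T_n \leq M$ along a subsequence, then for any $r > M$ the identity $g_n = g'_n$ passes to the limit on $[M,r]$, yielding $g = g'$ on $[M,\infty)$, which is the desired upward coalescence.

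The main obstacle is to rule out $T_n \to \infty$. Suppose it occurs. First I would show $g(t) < g'(t)$ for every $t \geq 0$: if $g(s) = g'(s)$ at some $s$, restrict attention to a small window $[s-\delta,s+\delta]$, where H\"older continuity of geodesics keeps the graphs of $g$ and $g'$ within Hausdorff distance $O(\delta^{2/3})$. For $\delta$ small the ``nearby geodesics meet'' property, combined with $g_n \to g$ and $g'_n \to g'$ in Hausdorff distance on this window, forces $g_n$ and $g'_n$ to share a common value at some time in $[s-\delta,s+\delta]$, contradicting $T_n \to \infty$. Hence $g$ and $g'$ are pointwise strictly ordered.

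To close the argument I would combine translation and shear invariance of $\La$ with an ergodic / Fubini argument on the event that two direction-0 geodesics from $(0,0)$ and $(\Delta,0)$ never coalesce. By translation invariance this event has a probability depending only on $\Delta$, and propagating the preceding dichotomy to rational intermediate starting points $r \in (y,y')$ via Lemma \ref{lem:geoordering2} would produce a countable family of pairwise non-coalescing unique direction-0 geodesics from a bounded interval. Applying the same reasoning after shearing by an arbitrary drift $d$ would force non-coalescence simultaneously for an uncountable family of directions, contradicting the countability of random non-unique directions from a fixed base point established in Theorem \ref{thm:longgeo}. This last propagation-and-contradiction step is where I expect the real difficulty to lie.
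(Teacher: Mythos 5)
Your opening reduction is fine as far as it goes: approximating by the finite geodesics $g_n,g'_n$ to the common endpoint $(0,n)$, noting that after their first meeting time $T_n$ uniqueness of the $(y,0)\to(0,n)$ geodesic forces $g_n=g'_n$ on $[T_n,n]$, and passing to the limit via Corollary \ref{cor:coalesence} when $T_n$ stays bounded, does yield coalescence in that case. (One caveat: since the lemma quantifies over \emph{all} geodesics from the real axis, you must first reduce to rational starting points by the ordering of Lemma \ref{lem:geoordering2}, as the paper does; a.s.\ uniqueness of two-point and direction-$0$ geodesics is only available for a fixed or countable set of starting points, not simultaneously for the random points $y,y'$ of an arbitrary pair $g,g'$.)

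The genuine gap is the step you yourself flag: ruling out $T_n\to\infty$, i.e.\ showing that two direction-$0$ geodesics from distinct fixed points meet with probability one. Your proposed contradiction does not work. Shear invariance \eqref{eqn:shear} is a \emph{distributional} symmetry, so from positive probability of non-coalescence in direction $0$ you cannot conclude that on a single sample non-coalescence holds simultaneously for an uncountable family of sheared directions; and even granting many pairwise disjoint direction-$0$ geodesics from a bounded interval, this does not contradict Theorem \ref{thm:longgeo}, whose countability statement concerns multiple geodesics \emph{from one fixed point} in exceptional directions, a different phenomenon. (Nor is the "pairwise non-coalescing family from intermediate rational points" justified: intermediate geodesics are sandwiched between $g$ and $g'$ but could coalesce with one of them.) The paper closes this hole by a different, and essential, mechanism: letting $C_{[x,y]}$ be the event that the unique direction-$0$ geodesics from $(x,0)$ and $(y,0)$ meet, translation and KPZ scale invariance show $\mathbf{Pr}(C_{[x,y]})$ is the same for every interval; the events increase as the interval shrinks, and $\mathbf{Pr}(\cup_n C_{[0,1/n]})=1$ because $g_{1/n}\to g_0$ on $[0,1]$ by compactness and ordering, so the "nearby geodesics meet" property forces a meeting by time $1$ for large $n$. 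Hence $\mathbf{Pr}(C_{[0,1]})=1$. Without this scale-invariance argument (or some substitute of comparable strength), your proof does not go through.
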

	
	\begin{proof}
		Condition on the almost sure event $E$ in \eqref{eqn:eventE} and on the properties of $\La$-geodesics discussed in $\S$\ref{sec:Lgeodesic}. Recall the unique geodesics $g_{x,0}$ for $x \in \mathbb{Q}$ from the event $E$.
		
		Due to the ordering of geodesics property from Lemma \ref{lem:geoordering2},
		it is enough to show the geodesics $g_{x,0}$ and $g_{x',0}$ coalesce for every $(x,x') \in \mathbb{Q}^2$.
		By Lemma \ref{lem:twogeocoalesce}, it is enough to show that $g_{x,0}$ and $g_{x',0}$ meet for every $(x,x') \in \mathbb{Q}^2$.
		By a union bound, it suffices to show this for every specific pair of rationals $x < x'$.
		
		For rationals $x < y$ consider the event
		$$ C_{[x,y]} = \{ g_{x,0} \; \text{meets}\; g_{y,0}\}.$$
		Thus we need to show that $\mathbf{Pr}(C_{[x,y]}) = 1$ for every rational $x < y$. However,
		due to translation and scale invariance of $\La$, $C_{[x,y]}$ has the same probability for
		every interval $[x,y]$. So the lemma follows from having $\mathbf{Pr}(C_{[0,1]}) = 1$.
		
		Observe that $C_{[x,y]}$ is non-increasing in that $C_{[x,y]} \subset C_{[x',y']}$ if $[x',y'] \subset [x,y]$.
		This is due to geodesic ordering. Thus, $\mathbf{Pr}(C_{[0,1]}) = \mathbf{Pr}(C_{[0,1/n]})$ for every integer $n \geq 1$ and $C_{[0,1/n]} \subset C_{[0, 1/(n+1)]}$. As a result,
		$$ \mathbf{Pr}(C_{[0,1]}) = \lim_{n \to \infty} \mathbf{Pr}(C_{[0,1/n]}) = \mathbf{Pr}(\cup_n C_{[0,1/n]}).$$
		The event $C_{[0,1/n]}$ occurs for sufficiently large $n$, so the probability above is one. Indeed, consider $g_{0,0}$ and $g_{1/n,0}$ only up to time 1. By geodesic compactness and ordering, $g_{1/n,0}$ tends to $g_{0,0}$ for times $t \in [0,1]$ as $n$ goes to infinity. Then, due to the property that nearby geodesics meet, they must in fact meet by time 1 for all large $n$.
	\end{proof}
	
	\begin{thm} \label{thm:geotree}
		 Consider all infinite geodesics of $\La$ in direction zero started from points along the real axis. It has the following almost sure properties.
		\begin{enumerate}
			\item There is a geodesic in direction zero from every point $(x,0)$ for $x \in \R$. There are also
			leftmost and rightmost geodesics $g^{-}_x$ and $g^{+}_x$ in direction zero starting from every $(x,0)$.
			
			\item If $x < y$ then $g^{+}_x(t) \leq g^-_y(t)$ for every $t$.
			
			\item There are only countably many $x \in \R$ for which there is more than one geodesic from $(x,0)$
			in direction zero (that is, $g^{-}_x \neq g^{+}_x$).
			
			\item Every pair of geodesics started from the real axis along direction zero coalesce upward.
                More precisely, if $g$ and $g'$ are two such geodesics and $g(0) \neq g'(0)$, then there is $t > 0 $ such that $g-g'$ is non-zero on $[0,t)$ and zero on $[t,\infty)$. For the leftmost and rightmost geodesics from $(x,0)$, there is $t \geq 0$ such that $g^-_x < g^+_x$ on $(0,t)$ and $g^-_x = g^+_x$ on $[t,\infty)$.
		\end{enumerate}
	\end{thm}
	
	\begin{proof}
		Condition on the almost sure event $E$ in \eqref{eqn:eventE} and on the properties of $\La$-geodesics
		from $\S$\ref{sec:Lgeodesic}. These ensure the conclusion of Lemmas \ref{lem:geoordering2}, \ref{lem:twogeocoalesce} and \ref{lem:geocoalesce}. Recall for rational $x$, $g_{x,0}$ denotes the unique geodesic in direction zero started from $(x,0)$.
		
		Part (1) is proved in the same way as Theorem \ref{thm:longgeo}. The unique geodesics $g_{x,0}$ for rational $x$
		can be used to build geodesics from every starting point at time zero, sample-wise, by the ordering and compactness property of geodesics. The leftmost and rightmost geodesics from $(x,0)$, for $x \in \R$, are the pointwise infimum and supremum, respectively, of all geodesics from $(x,0)$ along direction zero.
		
		Part (2) follows from Lemma \ref{lem:geoordering2}. The proof of part (3) is the same as in Theorem \ref{thm:longgeo}.
  
            For the proof of Part (4), let $g(0) = x$ and $g'(0) = x'$, and suppose $x < x'$. Let $y \in (x,x')$ be rational and consider the unique geodesic $g_{y,0}$ from $(y,0)$ in direction zero. By Lemma \ref{lem:geocoalesce}, there is an earliest time $t > 0$ such that $g(t) = g'(t)$.
            By geodesic ordering, $g(t) = g_{y,0}(t) = g'(t)$. Then by Lemma \ref{lem:twogeocoalesce}, $g = g_{y,0} = g'$ on $[t,\infty)$. Thus $g < g'$ on $[0,t)$ and $g = g'$ on $[t,\infty)$. Now suppose $x = x'$ and consider the leftmost and rightmost geodesics $g^-_x$ and $g^+_x$. These two geodesics coalesce by Lemma \ref{lem:geocoalesce}. So it suffices to show that if $t > 0$ is any time such that $g^-_x(t) = g^+_x(t)$, then $g^-_x = g^+_x$ on $[t,\infty)$. Consider the restrictions of $g^-_x$ and $g^+_x$ to $[0,t]$. By geodesic ordering, compactness, and the fact that nearby geodesics meet, there is a rational $y < x$ such that the unique geodesic $g_{y,0}$ meets $g^-_x$ at some time $t' \leq t$. Similarly, there is a rational $z > x$ such that $g_{z,0}$ meets $g^+_x$ at some time $t'' \leq t$. By Lemma \ref{lem:twogeocoalesce}, $g_{y,0} = g^-_x$ on $[t',\infty)$ and $g_{z,0} = g^+_x$ on $[t'', \infty)$. But then $g_{y,0} = g_{z,0}$ on $[t,\infty)$ by Lemma \ref{lem:twogeocoalesce} since $g_{y,0}(t) = g_{z,0}(t)$. Consequently, by geodesic ordering, $g^-_x = g^+_x$ on $[t,\infty)$.
	\end{proof}
	
	\begin{cor}[The geodesic tree in a direction] \label{cor:geotree}
		The geodesic tree of $\La$ in direction zero consists of all infinite geodesics in direction zero started from every point in $\R^2$. It has the following properties.
		\begin{enumerate}
			\item Almost surely, for every $p \in \R^2$ there is a leftmost geodesic $g_p^{-}$ and rightmost geodesic $g_p^{+}$ from $p$ in direction zero.
			
			\item Given $p \in \R^2$, the geodesic from $p$ in direction zero is unique almost surely.
			
			\item The following coalescence property holds almost surely. If $g$ and $g'$ are any geodesics from $(x,t)$ and $(x',t')$ in direction zero with $t' \leq t$, then there is an $s \geq t$ such that $g = g'$ on $[s, \infty)$. Also, if $g(t) \neq g'(t)$ then there is an $s > t$ such that  $g-g'$ is non-zero on $[t,s)$ and zero on $[s,\infty)$.

                \item The following holds almost surely. Let $p = (x,t)$ and $q = (x',t')$ with $t' \leq t$. The rightmost geodesics $g_p^+$ and $g_q^+$ have the property that if $g_p^+(t) = g_q^+(t)$ then $g_p^+ = g_q^+$ on $[t,\infty)$, and if $g_p^+(t) \neq g_q^+(t)$ then there is an $s > t$ such that $g^+_p - g^+_q$ is non-zero on $[t,s)$ and zero on $[s,\infty)$. The analogous property holds for leftmost geodesics.
		\end{enumerate}
	\end{cor}
	
	\begin{proof}
		By Theorem \ref{thm:geotree} and translation invariance of $\La$, the geodesics $g^{\pm}_p$ exist almost surely for every $p = (x,s)$ with $x \in \R$ and $s$ rational. Condition on the almost sure event that ensures the conclusions from Theorem \ref{thm:geotree} at all rational times.
  
            For an arbitrary point $p$, define
		$$ g_p^{-}(t) = \lim_{\eps \to 0} \, \inf_{g: |g^0-p| < \eps} g(t)$$
		where $g^0$ is the starting point of $g$, and the infimum is over all geodesics $g$ along direction 0 starting at rational times. The limit exists and is a geodesic by geodesic compactness, and has direction zero due to geodesic ordering. The geodesic $g_p^+$ is defined by changing the infimum to a supremum.
		
		Part (2) is proved in Theorem \ref{thm:longgeo}. Part (3) follows from Theorem \ref{thm:geotree} by considering the restrictions of $g$ and $g'$ to intervals $[t'',\infty)$ for rational $t'' > t$ and using continuity.
  
        For Part (4), if $g^+_p(t) \neq g^+_q(t)$ then the assertion follows from Part (3). If $g^+_p(t) = g^+_q(t)$ then $g^+_p \geq g^+_q$ on $[t,\infty)$ because $g^+_p$ is the rightmost geodesic from $p = (x,t)$ in direction zero. By Part (3), there is an $s \geq t$ such that $g^+_p = g^+_q$ on $[s,\infty)$. Suppose, by way of contradiction, that there is an $s' \in (t,s)$ with $g_p^+(s') > g^+_q(s')$. Then swapping the segment of $g^+_q$ on $[t,s]$ with that of $g^+_p$ exhibits another geodesic from $q$ in direction zero that it more to the right. Therefore, it must be that $s=t$.
	\end{proof}
	
	\subsection{Busemann functions and the law of the infinite geodesic} \label{sec:Busemann}
	
	The geodesic tree allows us to define and study the Busemann function of the directed landscape in a fixed direction.
	In turn, Busemann functions help to identify the law of an infinite geodesic as stated in Corollary \ref{cor:geolaw}.
	Other notable facts are that the Busemann function in a fixed direction and at a fixed time has a Brownian law (Corollary \ref{c:Busemann-Brownian}),
	and that backwards in time it evolves as a KPZ fixed point (Theorem \ref{thm:Busemann-KPZ}).
	
	First, we define the Busemann function.
	For $p\in \R^2$ let $g^+_p$ be the rightmost geodesic from $p$ in direction zero,
	and let $\kappa(p,q)$ be the point where the geodesics $g^+_p$ and $g^+_{q}$ coalesce upward. 
	Define
	\begin{equation} \label{eqn:BusemannW}
		W(p;q)=\La(p;\kappa(p,q))-\La(q;\kappa(p,q)).
	\end{equation}

	Note that if $\kappa'$ is any point on the geodesics after they coalesce then $W(p,q) = \La(p,\kappa') - \La(q, \kappa')$ because,
	by the sharpness of the reverse triangle inequality along geodesics, $\La(p,\kappa') = \La(p, \kappa(p,q)) + \La(\kappa(p,q), \kappa')$ and likewise for $\La(q, \kappa')$.
	It follows that $W(p;q)=-W(q;p)$ and  $W(p;q)+W(q;r)=W(p;r)$. So all values can be expressed from
	$$W(p):=W(p;0,0),$$
	which is the {\bf Busemann function} (in direction 0 with basepoint $(0,0)$) of $\La$. The Busemann function $W_d$
	in direction $d$ may be defined by considering geodesics in direction $d$ instead.
	
	Clearly $(0,0)$ is just a reference point where $W$ is anchored, and for any  $q\in \mathbb R^2$,
	by translation invariance of $\La$, as functions on $\mathbb R^2$,
	\begin{equation}\label{e:W-shift}
		W(\cdot+q)-W(q)\;\;\stackrel{law}{=} W(\cdot).
	\end{equation}
	It also follows from the definition and time stationarity of $\La$ that $t \mapsto W(\cdot,t)-W(0,t)$ is a stationary process.
	
	The following proposition gives an alternate definition of $W$ without the use of geodesics. 
	
	\begin{prop} \label{prop:Busemann}
		Almost surely, the following holds. 
		Let $t_n\to\infty$, $x_n/t_n\to 0$ and set $q_n=(x_n,t_n)$. Then for every compact interval $I$ and $t \in \R$, 
		$$W(p)=\mathcal L(p;q_n)-\mathcal L(0,0;q_n)\qquad \mbox{ for all } p\in I\times \{t\}$$ for all sufficiently large $n$.
	\end{prop}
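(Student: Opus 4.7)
The plan is to find a single waypoint $\kappa^* = (y^*, T^*)$ such that for all large $n$ and all $p \in I \times \{t\}$, the geodesics from $p$ to $q_n$ and from $(0,0)$ to $q_n$ both pass through $\kappa^*$, while $\kappa^*$ also lies on the coalesced portion of the rightmost direction-$0$ geodesics $g_p$ and $g_{(0,0)}$. Given such a waypoint, the triangle equality along geodesics immediately yields
$$\La(p;q_n) - \La(0,0;q_n) = \La(p;\kappa^*) - \La(0,0;\kappa^*) = W(p),$$
where the last identity is the basepoint-independent expression for $W$ noted right after \eqref{eqn:BusemannW}.

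To build $\kappa^*$, pick $a < \inf I$ and $b > \sup I$, set $\tilde p_- = (a,t)$ and $\tilde p_+ = (b,t)$, and condition on the full-probability event (Theorem \ref{thm:longgeo}) that the direction-$0$ infinite geodesics from $\tilde p_-$, $\tilde p_+$ and $(0,0)$ are each unique. By the geodesic tree (Theorem \ref{thm:geotree}) these three geodesics coalesce pairwise, hence meet at a common point $\kappa^*$ with time coordinate $T^* > \max(t,0)$ past which they all agree. For any $p \in I \times \{t\}$, the rightmost geodesic $g_p$ is sandwiched between $g_{\tilde p_-}$ and $g_{\tilde p_+}$ by direction-$0$ ordering (Theorem \ref{thm:geotree}), so $g_p(T^*) = y^*$ and $\kappa^*$ lies on the coalesced portion of $g_p$ and $g_{(0,0)}$, which is what is needed to rewrite $W(p)$ as above.

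For the waypoint claim, apply Corollary \ref{cor:coalesence} to each of the three unique infinite geodesics: there exists $N$ so that for all $n \geq N$, the geodesics from $\tilde p_-$, $\tilde p_+$, and $(0,0)$ to $q_n$ coincide with their respective direction-$0$ counterparts on $[\max(t,0), T^*]$, and hence pass through $\kappa^*$. For any $p \in I \times \{t\}$, any geodesic from $p$ to $q_n$ is sandwiched between the geodesics from $\tilde p_-$ and from $\tilde p_+$ to the common endpoint $q_n$ by the non-crossing property of geodesics sharing an endpoint, so at time $T^*$ its spatial coordinate is squeezed between $y^*$ and $y^*$, forcing it to pass through $\kappa^*$ as well.

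The main obstacle is securing uniformity in $p$ without having to assume uniqueness of the direction-$0$ geodesic from every $p \in I \times \{t\}$, which would require uncountably many almost-sure statements. The sandwich by the two fixed outer starting points $\tilde p_\pm$ circumvents this, reducing all uses of Corollary \ref{cor:coalesence} to three specific points, and letting the monotonicity of geodesics propagate the conclusion to the entire compact section $I \times \{t\}$ at once.
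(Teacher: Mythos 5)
Your proposal is correct and follows essentially the same route as the paper's proof: sandwich $I\times\{t\}$ between two outer points with unique direction-$0$ geodesics, take the common coalescence point with $g_{(0,0)}$, use Corollary \ref{cor:coalesence} at those finitely many points to force the finite geodesics to $q_n$ through that point, and propagate to all $p\in I\times\{t\}$ by ordering/non-crossing before applying the triangle equality and the basepoint-independence of $W$.
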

	\begin{proof}
		Let $[y_1,y_2]$ be an interval containing $I$ so that there are unique geodesics $g_i$ in direction zero from $(y_i, t)$.
		There is almost surely a unique geodesic from $(0,0)$ in direction zero.
		Let $(z,r)$ be where all three of these geodesics coalesce for the first time. 
		
		By Corollary \ref{cor:coalesence} there is  an $n_0$  so that for $n \ge n_0$, the geodesics $g_{n,i}$ from $(y_i,t)$ to $q_n$
		as well as the geodesic $g_n$ from $(0,0)$ to $q_n$ satisfy  $g_{n,i}(r)= g_n(r)= z$.
		By geodesic ordering, any geodesic $g$ from any $p \in [y_1,y_2]\times \{t\}$ to $q_n$ satisfies $g(r)=z$.
		This implies that for such $p$ and $n \geq n_0$,
		$$
		\mathcal L(p;q_n)-\mathcal L(0,0;q_n)=\mathcal L(p;z,r)-\mathcal L(0,0;z,r)=W(p).
		$$ 
	\end{proof}
	
	\begin{cor}\label{c:Busemann-Brownian}
		For every $t$, the process $x \mapsto W(x,t)-W(0,t)$ is a two-sided Brownian motion with diffusivity $\sqrt{2}$. 
	\end{cor}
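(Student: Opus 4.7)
The plan is to apply Proposition \ref{prop:Busemann} to reduce the statement to a scaling-limit identification for $\La$, and then to use KPZ scale invariance together with the local Brownian behavior of the stationary Airy process.

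Fix a compact interval $I \ni 0$ and take $q_n = (0,n)$ in Proposition \ref{prop:Busemann}. On the almost sure good event, for all $n$ exceeding some random $N = N(\omega, I, t)$,
$$
W(x,t)-W(0,t) \;=\; \La(x,t;0,n) - \La(0,t;0,n) \qquad \text{for all } x \in I.
$$
Since the right-hand side is eventually constant in $n$, it converges almost surely, and therefore in law, as $n \to \infty$; hence the law of $x \mapsto W(x,t)-W(0,t)$ on $I$ is the weak limit of the laws of the right-hand side. By time stationarity of $\La$, this equals the weak limit (as $s = n - t \to \infty$) of the law of $x \mapsto \La(x,0;0,s) - \La(0,0;0,s)$ on $I$. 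The task therefore reduces to showing that this latter process converges weakly on $I$ to $\sqrt{2}\,B$, where $B$ is a standard Brownian motion.

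I would next apply KPZ scale invariance, $\La(x,0;0,s) \stackrel{law}{=} s^{1/3}\,\La(x/s^{2/3},0;0,1)$ jointly in $x$, together with $\La(y,0;0,1) = \Ai(y,0;0,1) - y^2$. This yields
$$
\La(x,0;0,s) - \La(0,0;0,s) \stackrel{law}{=} s^{1/3}\bigl[\Ai(x/s^{2/3},0;0,1) - \Ai(0,0;0,1)\bigr] - \frac{x^2}{s}.
$$
The quadratic correction $x^2/s$ vanishes uniformly on $I$. Setting $\eps = s^{-2/3}$, so that $s^{1/3} = \eps^{-1/2}$, the main term becomes the diffusive rescaling $y \mapsto \eps^{-1/2}[\Ai(\eps y,0;0,1) - \Ai(0,0;0,1)]$. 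Shear invariance \eqref{eqn:shear} combined with spatial stationarity of $\La$ shows that $y \mapsto \Ai(y,0;0,1)$ is stationary in $y$; this is the stationary Airy$_2$ process.

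The last step, which I expect to be the main obstacle, is the local Brownian scaling limit $\eps^{-1/2}[\Ai(\eps \cdot, 0; 0, 1) - \Ai(0,0;0,1)] \Rightarrow \sqrt{2}\,B$ weakly on compacts as $\eps \to 0$. This is the standard local Brownian behavior of the stationary Airy process at diffusivity $\sqrt{2}$, derivable from the Brownian absolute continuity of the Airy process established in \cite{CH,SV}, combined with stationarity and a routine tightness estimate. Once this scaling limit is invoked, the reduction above completes the proof.
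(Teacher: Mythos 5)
Your proposal is correct and follows essentially the same route as the paper: reduce via Proposition \ref{prop:Busemann} and time stationarity to the increments of $\La(\cdot,0;0,s)-\La(0,0;0,s)$, apply KPZ scale invariance to rewrite this as a diffusive rescaling of the stationary Airy$_2$ process plus a vanishing parabolic term, and invoke its locally Brownian behaviour (diffusivity $\sqrt{2}$) from \cite{CH}. The only differences are cosmetic (where stationarity is applied and the parametrization $s$ versus $n^3$).
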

	\begin{proof}
		By stationary, it suffices to show this for $t=0$. By Proposition \ref{prop:Busemann},
		$W(x,0)$ restricted to compacts is the $n\to \infty$ limit of 
		$$W_n(x)=\mathcal L(x,0;0,n^3)-\mathcal L(0,0;0,n^3).$$
		By the definition and scaling invariance of $\La$, the process $W_n(x)$ has the law of
		$x\mapsto n[\Ai(x/n^2) - \Ai(0)] - x^2/n^3$ where $\Ai$ is the stationary Airy-two  process. 
		
		By the locally Brownian property of the Airy-two process \cite[Corollary 4.2]{CH}, $x \mapsto n[\Ai(x/n^2) - \Ai(0)]$ converges in law
		to a two-sided Brownian motion with diffusivity $\sqrt{2}$. The topology of convergence is that of uniformly on compacts.
		The term $x^2/n^3$ vanishes uniformly on compacts as $n \to \infty$. Thus $x \mapsto W(x,0)$ is Brownian. 
	\end{proof}
	
	Backwards in time, the Busemann function evolves as the KPZ fixed point. 
	
	\begin{thm}[Busemann law]\label{thm:Busemann-KPZ}
		For any pair of times $s<t$ and $x \in \mathbb{R}$,
		\begin{equation}\label{e:Busemann-metric}
			W(x,-t) = \sup_{y \in \mathbb{R}}\,  \{\La(x,-t; y,-s)+W(y,-s)\}. 
		\end{equation}
		The maximum is attained exactly at those $y$ for which $(y,-s)$ is on a geodesic from $(x,-t)$ in direction 0. 
		
		Moreover, the process  $(W(x,-t);\ x\in \R, t \geq 0)$ is  a KPZ fixed point with initial condition $W(\cdot,0)$, a two-sided Brownian motion with diffusivity $\sqrt{2}$.
	\end{thm}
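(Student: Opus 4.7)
For the metric composition formula \eqref{e:Busemann-metric}, fix $p=(x,-t)$ and $q=(y,-s)$. By Theorem \ref{thm:geotree}, the rightmost direction-$0$ geodesics $g_p$ and $g_q$ coalesce, and for any point $\kappa'$ on their coalesced tail one has
\[
W(p)-W(q) \;=\; \La(p,\kappa')-\La(q,\kappa') \;\ge\; \La(p,q)
\]
by the reverse triangle inequality, giving $\sup_y\{\La(p,q)+W(q)\}\le W(p)$. For the matching direction and the characterization of maximizers, suppose $(y,-s)$ lies on some direction-$0$ geodesic $g$ from $p$. The continuation of $g$ past $q$ is itself a direction-$0$ geodesic from $q$, so $g_p$, $g_q$, and this continuation all coalesce by Theorem \ref{thm:geotree}; $\kappa'$ can then be chosen on all three simultaneously. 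The triangle equality along $g$ now yields $\La(p,\kappa')=\La(p,q)+\La(q,\kappa')$, hence $W(p)=\La(p,q)+W(q)$. Conversely, if equality holds at some $y$, then this same identity must hold for $\kappa'$ taken on the $g_p$--$g_q$ coalescence, and concatenating a geodesic from $p$ to $q$ with the geodesic from $q$ to $\kappa'$ (extended past $\kappa'$ along the coalesced tail) produces a direction-$0$ geodesic from $p$ through $(y,-s)$.

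For the KPZ fixed point identification, specialize \eqref{e:Busemann-metric} to $s=0$:
\[
W(x,-t) \;=\; \sup_{y}\,\{\La(x,-t;y,0)+W(y,0)\}.
\]
Define $\La^{\downarrow}(y,u;x,v) := \La(x,-v;y,-u)$ for $u<v$; by the skew time-reversal symmetry of the directed landscape, $\La^{\downarrow}$ has the same joint law as $\La$. The display above then reads as the KPZ fixed point formula \eqref{eqn:KPZfixedpt} driven by $\La^{\downarrow}$ with initial condition $W(\cdot,0)$, which is a two-sided Brownian motion of diffusivity $\sqrt{2}$ by Corollary \ref{c:Busemann-Brownian}. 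The Busemann function $W(\cdot,0)$ is measurable with respect to $\La$ on nonnegative times, since the direction-$0$ geodesics used to build $W$ from the $t=0$ axis go forward in time; whereas $\La^{\downarrow}$ uses only $\La$ on nonpositive times. By the independent-increments property of $\La$ in the metric composition semigroup, the two are independent, which completes the identification.

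The main expected obstacle is the joint coalescence step in the first paragraph---placing $\kappa'$ simultaneously on $g_p$, $g_q$, and the chosen direction-$0$ geodesic $g$ through $(y,-s)$---which rests squarely on Theorem \ref{thm:geotree}. A subsidiary concern is verifying that $\La^{\downarrow}$ is genuinely a directed landscape independent of $W(\cdot,0)$: this follows from combining skew time reversal with independent increments, but must be recorded carefully so that the full joint law of $(\La^{\downarrow}, W(\cdot,0))$ matches that of a landscape driving a KPZ fixed point from a Brownian initial condition.
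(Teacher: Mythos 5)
Your proposal is correct and follows essentially the same route as the paper: prove \eqref{e:Busemann-metric} by evaluating $W(p)-W(q)$ at a common point past coalescence of the direction-$0$ geodesics and applying the reverse triangle inequality, characterize the maximizers via geodesic concatenation, and obtain the KPZ fixed point statement from $W(\cdot,0)\in\mathcal F_{\ge 0}$ together with independence of $\La$ at nonpositive times (the paper leaves the time-reversal bookkeeping you record via $\La^{\downarrow}$ implicit, and handles possible non-uniqueness of the geodesic from $p$ with the leftmost geodesic and ordering rather than your concatenation argument). The only minor point is that coalescence for starting points off the time-zero axis is supplied by the corollary following Theorem \ref{thm:geotree} rather than the theorem itself.
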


	\begin{proof}
		Let $x,y \in \R$ and $s,t \in \R$ with $s < t$.
            Set $p = (x,-t)$ and $q=(y,-s)$. Let $u$ be the maximum of the time coordinates of $\kappa(p,(0,0))$, $\kappa(q, (0,0))$
		and the time when the leftmost geodesic from $p$ in direction zero meets $g_{(0,0)}$ (the unique geodesic from $(0,0)$ in direction zero). 
		(The latter is the ``left'' version of the time of $\kappa(p, (0,0))$, needed here since the geodesic from $p$ may not be unique).
		Let $r=(g_{(0,0)}(u),u)$. Since $g^+_p, g^+_q$ and $g_{(0,0)}$ have coalesced by time $u$ at point $r$,
		$$W(p)=\mathcal L(p;r)-\mathcal L(0,0;r) \quad \text{and} \quad W(q)=\mathcal L(q;r)-\mathcal L(0,0;r).$$
		The reverse triangle inequality applied to $p,q,r$ implies
		$$
		W(p)\geq \La(p,q)+W(q),
		$$
		with equality if and only if $q$ is on a geodesic from $p$ to $r$. Applying this for all $y \in \R$ gives \eqref{e:Busemann-metric}.
		Now by geodesic ordering, all geodesics from $p$ meet $g_{(0,0)}$ by time $u$, so $q$ is on a $p$ to $r$ geodesic if and only if $q$ is on an infinite geodesic from $p$ in direction zero. 
		
		By definition, $W(\cdot,0)$ is contained in the $\sigma$-field $\mathcal F_{\ge 0}$ where
		\begin{equation}\label{e:latesigma}
		\mathcal F_{\ge r}=\sigma \left (\mathcal L(x,s;y,t)\;:\; x,y\in \mathbb R, s,t\ge r \right ).
		\end{equation}
		Since  the values of $\mathcal L(x,-t;y,0)$ for $t > 0$ are independent of $\mathcal F_{\ge 0}$, $W(\cdot,-t)$ for $t > 0$ evolves as a KPZ fixed point with initial condition $W(\cdot, 0)$ due to \eqref{e:Busemann-metric}. The law of $W(\cdot,0)$ is given by Corollary \ref{c:Busemann-Brownian}.
	\end{proof}
	
	It follows from Theorem \ref{thm:Busemann-KPZ}, Corollary \ref{c:Busemann-Brownian}, and the stationarity
	of $t \mapsto W(\cdot, t) - W(0,t)$ that a two-sided Brownian initial condition with diffusivity $\sqrt{2}$
	is, up to an additive constant, a stationary distribution for the KPZ fixed point.
	Furthermore, Theorem \ref{thm:Busemann-KPZ} and the continuity of the KPZ fixed point implies $W$ is almost surely
	continuous on the lower half plane.  Due to the shift-invariance \eqref{e:W-shift}, $W$ is then almost surely continuous everywhere.
	
	When $(x,t)=(0,0)$, the first part of Theorem \ref{thm:Busemann-KPZ} can be interpreted as follows.
	For any time $s \geq 0$, $g_{(0,0)}$ restricted to $[0,s]$ is just the geodesic from $(0,0)$ to a ``final condition'' $W(\cdot,s)$
	at time $s$. By reversing time, it implies the following. 
	
	\begin{cor}[Law of the infinite geodesic] \label{cor:geolaw}
		Let  $g_{(0,0)}$ be the infinite geodesic of $\La$ from $(0,0)$ in direction zero.
		For $s>0$, let $\gamma_s$ denote the geodesic from the stationary two-sided Brownian initial condition with diffusivity $\sqrt{2}$ to $(0,s)$.
		As random continuous functions from $[0,s]\to \mathbb R$,
		$$
		g_{(0,0)}(\cdot) \stackrel{law}{=} \gamma_s(s-\cdot).
		$$
	\end{cor}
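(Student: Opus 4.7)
The plan is to interpret $g_{(0,0)}|_{[0,s]}$ as a geodesic in $\La|_{[0,s]}$ with a Brownian ``terminal condition'' at time $s$ supplied by the Busemann function, and then apply the time-reversal symmetry of the directed landscape to turn this into a geodesic with a Brownian initial condition ending at $(0,s)$. The Busemann data on the terminal side will be independent of $\La|_{[0,s]}$, matching the independence of the Brownian initial condition from $\La$ in the definition of $\gamma_s$.

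The argument proving Theorem \ref{thm:Busemann-KPZ} applies verbatim to any pair of times $0<s$ (in place of $-t<-s$) to give
$$W(0,0) = \sup_{y \in \R}\,\{\La(0,0;y,s) + W(y,s)\},$$
with the supremum attained precisely at those $y$ for which $(y,s)$ lies on an infinite geodesic from $(0,0)$ in direction zero. By almost-sure uniqueness of this geodesic (Theorem \ref{thm:longgeo}), the argmax is $y^{*} = g_{(0,0)}(s)$, and $g_{(0,0)}|_{[0,s]}$ is the unique $\La$-geodesic from $(0,0)$ to $(y^{*},s)$. Subtracting the constant $W(0,s)$ and writing $\tilde B(y) := W(y,s)-W(0,s)$, we have $g_{(0,0)}(s) = \operatorname{argmax}_y\{\La(0,0;y,s) + \tilde B(y)\}$. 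By Corollary \ref{c:Busemann-Brownian}, $\tilde B$ is a two-sided Brownian motion with diffusivity $\sqrt{2}$; by Proposition \ref{prop:Busemann}, $\tilde B$ is a measurable function of $\La$ restricted to times $\geq s$, and hence, by the independence of $\La$-increments in the metric composition semigroup (Section \ref{sec:undrand}), independent of $\La$ restricted to $[0,s]$.

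Now define $\tilde \La(x,u;y,v) := \La(-y, s-v; -x, s-u)$ for $0 \leq u<v \leq s$. By the time-reversal-with-reflection symmetry of the directed landscape from \cite{DOV} combined with time-stationarity, $\tilde \La \stackrel{law}{=} \La|_{[0,s]}$ as random continuous fields. The reverse triangle inequality characterization of geodesics yields that a path $g:[0,s] \to \R$ is an $\La$-geodesic if and only if $\bar g(u) := -g(s-u)$ is a $\tilde \La$-geodesic. Substituting $z = -y$ in the variational formula of the previous paragraph gives
$$\bar g(0) = -g_{(0,0)}(s) = \operatorname{argmax}_{z}\,\{\tilde \La(z,0;0,s) + \tilde B(-z)\},$$
so $\bar g$ is the unique $\tilde \La$-geodesic from the random initial condition $\bar h_0(z) := \tilde B(-z)$ to $(0,s)$. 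Because $\tilde B(-\cdot) \stackrel{law}{=} \tilde B$ and $\tilde B$ is independent of $\tilde \La$, the pair $(\tilde \La, \bar h_0)$ has the same joint law as $(\La|_{[0,s]}, B)$ for an independent two-sided Brownian motion $B$ of diffusivity $\sqrt{2}$. Hence $\bar g \stackrel{law}{=} \gamma_s$, i.e.\ $-g_{(0,0)}(s-\cdot) \stackrel{law}{=} \gamma_s(\cdot)$ on $[0,s]$. The spatial reflection symmetry of $\La$ together with $B \stackrel{law}{=} -B$ gives $\gamma_s \stackrel{law}{=} -\gamma_s$, so $g_{(0,0)}(s-\cdot) \stackrel{law}{=} \gamma_s(\cdot)$, which is the claim after substitution.

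The main obstacle is the careful bookkeeping around the time-reversal symmetry of $\La$ from \cite{DOV} (not quoted in the excerpt) and the induced bijection on geodesics; once that symmetry is in place, the Busemann-function interpretation of $g_{(0,0)}|_{[0,s]}$ in the second paragraph does essentially all the work.
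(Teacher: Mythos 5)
Your proposal is correct and follows essentially the same route as the paper: the paper derives the corollary precisely by applying the first part of Theorem \ref{thm:Busemann-KPZ} at $(0,0)$ to view $g_{(0,0)}|_{[0,s]}$ as the geodesic to the ``final condition'' $W(\cdot,s)$ (Brownian by Corollary \ref{c:Busemann-Brownian}, independent of $\La$ on $[0,s]$) and then reversing time. Your write-up simply makes explicit the bookkeeping the paper leaves implicit --- the cancellation of the basepoint term so that $W(\cdot,s)-W(0,s)$ is $\mathcal F_{\ge s}$-measurable, the time-reversal map on geodesics, and the reflection step --- all of which is sound.
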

	
	The distribution function of $g_{(0,0)}(1)$ is the well-known KPZ scaling function \cite{PS}.
	
	\begin{rem}[Busemann functions in other directions]\label{r:Busemann-direction}
		For any direction $d$, the Busemann function $W_d$ in direction $d$ is related in law to $W_0$ through the shear invariance of the directed landscape. The  two-sided Brownian law of $W_d(\cdot,0)$ picks up a drift of $2d$. In fact, the reverse quadrangle inequality (see Figure \ref{fig:compete}) and Proposition \ref{prop:Busemann} imply that for any $t$ and $x<y$, the increment $W_d(y,t)-W_d(x,t)$ is non-decreasing in $d$. This means for any fixed $t$, the function $(d,x)\mapsto W_{d}(x,t)$ is the cumulative distribution function of a random measure on the plane, namely the measure of the rectangle $(c,d]\times(x,y]$ equals $W_d(y,t) - W_d(x,t) - W_c(y,t) + W_c(x,t)$. (Two-dimensional CDFs can be irregular: $F(d,x)$ and $F(d,x)+g(d)+h(x)$ are CDFs describing the same measure, for any $g,h$, so the fact that the law of $W_d(\cdot,t)$ is Brownian motion is not an obstacle. Moreover, for infinite measures, there seems to be no canonical way to choose $g$, $h$.)
        We expect this to be the limiting version of the shock measure introduced in \cite{DV}. 
	\end{rem}

	\subsection{Absolute continuity of geodesics} \label{sec:geoabscont}
	There is almost surely a unique geodesic of $\La$ from a given point $p = (y,s)$ to another point $q = (x,t)$ when $s < t$ \cite{DOV}.
	In this section we will show that for two unique geodesics with a common endpoint, the law of the shorter one is absolutely continuous with
	respect to the longer geodesic. This technical fact will be utilized later in the proof of Theorem \ref{thm:intgeo}.
	
	We will need a fact about the overlap between geodesics. For two geodesics $\gamma$ and $\pi$, the overlap is the set of times $t$ where $\gamma(t) = \pi(t)$.
	For a geodesic $\gamma$ that is the unique geodesic between its endpoints, its overlap with any other geodesic $\pi$ is an interval. Furthermore, if a sequence of geodesics $\gamma_n$ converges to $\gamma$, and $\gamma$ is unique, then their overlaps tend to the full interval of times where $\gamma$ is defined. These facts, which will be used below,  are proved in \cite[Lemma 3.3]{DSV}.
	
	\begin{prop} \label{prop:fingeoabscont}
		Let $\gamma$ be the unique geodesic from $q=(0,0)$ to $r=(x_1,t_1)$ with $t_1 > 0$, and let $\pi$ be the unique  geodesic from $p=(x_0,t_0)$ to $r$ with  $t_0<0$. Let $s\in (0,t_1)$.  Then $\gamma$ restricted to $[s,t_1]$ is absolutely continuous with respect to $\pi$ restricted to $[s,t_1]$.
	\end{prop}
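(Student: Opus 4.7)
The plan is to exploit the independence of the landscape before and after time $0$. Let $\mathcal{G}$ be the $\sigma$-algebra generated by $\La(\cdot,u;\cdot,v)$ for $0\le u<v$; it is independent of the pre-zero landscape. Since $\gamma$ lives in times $[0,t_1]$, it is $\mathcal G$-measurable. By metric composition \eqref{eqn:metcomp}, $\pi|_{[0,t_1]}$ is the almost surely unique $\La$-geodesic from $(\pi(0),0)$ to $r$, where
\[
\pi(0) \;=\; \mathrm{argmax}_y\{\La(p;y,0)+\La(y,0;r)\}
\]
depends on both the pre- and post-zero landscapes.

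For each $y\in\R$, let $G_y$ be the almost surely unique $\La$-geodesic from $(y,0)$ to $r$; each $G_y$ is $\mathcal G$-measurable and $\gamma = G_0$. By geodesic compactness and uniqueness of $\gamma$, $G_y\to \gamma$ as $y\to 0$. The stability of geodesic overlaps \cite[Lemma 3.3]{DSV} then implies that the overlap of $G_y$ with $\gamma$ tends to the full interval $[0,t_1]$ as $y\to 0$. Since $s>0$, there is an $\mathcal G$-measurable random $\eps>0$ such that $G_y|_{[s,t_1]} = \gamma|_{[s,t_1]}$ whenever $|y|<\eps$. Consequently, on the event $\{|\pi(0)|<\eps\}$ one has $\pi|_{[s,t_1]} = G_{\pi(0)}|_{[s,t_1]} = \gamma|_{[s,t_1]}$.

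The main technical step is to show that $\mathbf{Pr}(|\pi(0)|<\eps\mid\mathcal G) > 0$ almost surely. Given $\mathcal G$, $f(y) := \La(y,0;r)$ is a fixed continuous function with parabolic decay, while $h(y) := \La(p;y,0)$ is independent of $\mathcal G$. Up to the parabolic drift $-(y-x_0)^2/|t_0|$, $h$ has the law of $|t_0|^{1/3}$ times a stationary Airy-2 process, whose restriction to any compact interval is absolutely continuous with respect to a Brownian motion of diffusivity $\sqrt{2}$ \cite{CH}. Because $h+f$ decays parabolically at infinity, its global argmax lies in a bounded random region almost surely, so the problem localizes to a sufficiently large $[-M,M]$. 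There, a Cameron-Martin perturbation of $h$ by a smooth bump concentrated in $(-\eps,\eps)$, combined with the support theorem for Brownian motion, shows that $\mathrm{argmax}(h+f)$ has conditional support equal to all of $\R$; in particular the conditional probability of lying in $(-\eps,\eps)$ is positive almost surely. Making this argument quantitative is the main obstacle.

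Combining the two inputs, let $A$ be a Borel set of paths on $[s,t_1]$ with $\mathbf{Pr}(\pi|_{[s,t_1]}\in A)=0$. Since $\{\gamma|_{[s,t_1]}\in A\}\in\mathcal G$,
\[
\mathbf{E}\bigl[\mathbf{1}_{\gamma|_{[s,t_1]}\in A}\,\mathbf{Pr}(|\pi(0)|<\eps\mid \mathcal G)\bigr]
= \mathbf{Pr}\bigl(\gamma|_{[s,t_1]}\in A,\ |\pi(0)|<\eps\bigr)
= \mathbf{Pr}\bigl(\pi|_{[s,t_1]}\in A,\ |\pi(0)|<\eps\bigr) = 0,
\]
where the second equality uses the coincidence $\pi|_{[s,t_1]}=\gamma|_{[s,t_1]}$ on $\{|\pi(0)|<\eps\}$ established above. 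As the conditional probability is almost surely positive, $\mathbf{1}_{\gamma|_{[s,t_1]}\in A}=0$ almost surely, giving $\mathbf{Pr}(\gamma|_{[s,t_1]}\in A)=0$ and hence the claimed absolute continuity.
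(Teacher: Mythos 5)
Your overall skeleton is the same as the paper's: condition on the post-time-zero landscape, use the stability of geodesic overlaps \cite[Lemma 3.3]{DSV} to find a $\mathcal G$-measurable $\eps>0$ so that $\pi$ agrees with $\gamma$ on $[s,t_1]$ whenever $|\pi(0)|<\eps$, and reduce everything to showing $\mathbf{Pr}(|\pi(0)|<\eps\mid\mathcal G)>0$ almost surely. Your way of finishing from that positivity (the identity $\mathbf{E}[\mathbf{1}_{\gamma'\in A}\,\mathbf{Pr}(|\pi(0)|<\eps\mid\mathcal G)]=\mathbf{Pr}(\pi'\in A,\,|\pi(0)|<\eps)=0$) is correct and is a slightly more direct alternative to the paper's argument by repeated conditionally independent resampling of $\pi$.

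The genuine gap is in the key positivity step, and it is not merely a matter of making the argument ``quantitative.'' You want, for a fixed ($\mathcal G$-measurable) function $f=\La(\cdot,0;r)$, that the maximizer of $h+f$ with $h=\La(p;\cdot,0)$ lies in $(-\eps,\eps)$ with positive probability, and you try to get this from the absolute continuity of the (recentred) Airy process with respect to Brownian motion \cite{CH,SV} together with Cameron--Martin and the support theorem for Brownian motion. But that absolute continuity goes the wrong way for this purpose: it says that Brownian nullsets are Airy nullsets, so it transfers \emph{null} events from Brownian motion to $h$, not \emph{positive-probability} events. Knowing that the bump-perturbed event has positive probability under the Brownian law (or that the law of $h+c\phi$ is absolutely continuous with respect to the Brownian law) gives nothing about the law of $h$ unless you also know the reverse domination, i.e.\ that Brownian motion is absolutely continuous with respect to $h$ on compacts, or equivalently a quasi-invariance of the Airy law under adding smooth bumps; neither is supplied by the results you cite. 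The paper circumvents exactly this issue by using the Brownian Gibbs property of the Airy line ensemble: conditionally on the ensemble outside the top line on $[-\eps,\eps]$, the top line is a Brownian bridge conditioned to stay above the second line, and this conditional law charges paths exceeding any prescribed level, so the maximum of \eqref{eqn:airysum} falls in $[-\eps,\eps]$ with positive conditional probability. Replacing your Cameron--Martin step with this Gibbs-resampling argument closes the gap. (A minor additional point: you assert a.s.\ uniqueness of $G_y$ simultaneously for all real $y$, which is not available; work with all geodesics, or leftmost and rightmost ones, from $(y,0)$ to $r$ as the paper does.)
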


	\begin{proof}
		Let $\gamma'$ and $\pi'$ be the restrictions of $\gamma$ and $\pi$ to $[s,t_1]$, respectively.
		Let $\mathcal F$ be the sigma algebra generated by $\La$ between times $0$ and $t_1$.
		
		We will show that
		\begin{equation} \label{eqn:gammapi}
			\mathbf{Pr}(\gamma'=\pi' \,|\,\mathcal F)>0 \quad \text{almost surely}.
		\end{equation}
		This suffices for absolute continuity by the following argument.
		Let $\pi_1,\pi_2,\ldots$ be conditionally independent samples from the conditional distribution of $\pi$ given $\mathcal F$
		and let $\pi'_k$ be their restrictions to $[s,t_1]$.  There is a random, finite first time $T$ so that $\pi'_T=\gamma'$,  as
		there is a positive probability of this occurring for each sample. Then for any set $A$,
		$$ \mathbf{Pr}(\gamma'\in A)=\sum_{k=1}^\infty \mathbf{Pr}(\pi'_k\in A,T=k).$$
		This means that if $A$ is a nullset for $\pi'$, then the right hand side is zero, and $A$ is also a nullset for $\gamma'$ as required.
		
		Now, using the fact about overlaps between geodesics mentioned above, there is an $\eps > 0$ such that all geodesics
		from $(x,0)$ to $r$ with $|x| \leq \eps$ coincide with $\gamma$ on $[s,t_1]$. Indeed, as $\eps \to 0$ the geodesics
		from $(x,0)$ to $r$ tend to $\gamma$ by compactness, and since $\gamma$ is unique, their overlap must eventually occupy $[s, t_1]$.
		Therefore, to conclude \eqref{eqn:gammapi} it is enough that for any $\eps > 0$ one has
		$\mathbf{Pr}(\pi(0)\in [-\eps,\eps]\,|\,\mathcal F)>0$ almost surely.
		 
		 In fact $\pi(0)$ is the maximizer of the function
		 \begin{equation} \label{eqn:airysum}
		 	x \mapsto \mathcal L(p;x,0)+\mathcal L(x,0,r).
	 	\end{equation}
		 The two summands are independent, continuous, and have finite maxima almost surely. The second is $\mathcal F$-measurable;
		 we will think of it as a fixed function.
		 
		 The first summand is a re-scaled and re-centred Airy process, the top line of an Airy line ensemble. We condition on all values of the Airy line ensemble except for the top line in $[-\eps,\eps]$. The Brownian Gibbs property gives that the conditional distribution is a Brownian bridge $B$ conditioned to be greater than the second to top line on $[-\eps,\eps]$ \cite{CH}. Let $m$ be the maximum of \eqref{eqn:airysum} outside $[-\eps,\eps]$, and let $h$ be the minimum of the second summand on $[-\eps,\eps]$. If $\max_{[-\eps,\eps]} B> m-h$, then \eqref{eqn:airysum} is maximized in $[-\eps,\eps]$. The conditional probability of this is always positive.
	\end{proof}
	
	There is a parallel result about the absolute continuity of infinite geodesics in a common direction. Recall there is almost surely a unique infinite geodesic from a given point in a given direction.
	
	\begin{prop} \label{prop:infgeoabscont}
		Let $g_0$ be the unique infinite geodesic of $\La$ from $(0,0)$ in direction zero.
		Let $g_p$ be the unique infinite geodesic of $\La$ from $p = (x_0,t_0)$ with $t_0 < 0$ also in direction zero.
		For any $s > 0$, the law of the restriction of $g_0$ to $[s, \infty]$ is absolutely continuous with respect to the restriction of $g_p$ to $[s,\infty]$.
	\end{prop}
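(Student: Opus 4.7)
The plan is to follow the resampling strategy from Proposition \ref{prop:fingeoabscont}, using the Busemann function to characterize the starting point of $g_p$ at time zero and the geodesic tree to pin down when coalescence happens. Let $\mathcal{G}$ be the $\sigma$-algebra generated by $\La(\cdot,s';\cdot,t')$ for $0\le s'<t'$, so that $g_0$ and $W(\cdot,0)$ are both $\mathcal{G}$-measurable. By the independent-increments structure of $\La$ across the time line $t=0$, the random function $y\mapsto\La(p;y,0)$ is independent of $\mathcal{G}$. If one can establish
\[
\mathbf{Pr}\bigl(g_p|_{[s,\infty)}=g_0|_{[s,\infty)} \,\big|\, \mathcal{G}\bigr)>0 \quad \text{almost surely},
\]
then resampling $\La$ on $(-\infty,0]$ independently yields conditionally i.i.d.\ copies $g_p^{(1)},g_p^{(2)},\ldots$ of $g_p$ given $\mathcal{G}$. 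Almost surely some first index $T$ has $g_p^{(T)}|_{[s,\infty)}=g_0|_{[s,\infty)}$, and the identity $\mathbf{Pr}(g_0|_{[s,\infty)}\in A)=\sum_k \mathbf{Pr}(g_p^{(k)}|_{[s,\infty)}\in A, T=k)$ gives the required absolute continuity exactly as in the proof of Proposition \ref{prop:fingeoabscont}.

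The main work is producing that positive conditional probability. First I reduce it to a statement about $g_p(0)$. For any $y\in\R$ and any infinite geodesic $\tilde g_y$ from $(y,0)$ in direction zero, continuity and compactness of geodesics together with the ``nearby geodesics meet'' property give that, once $y$ is close enough to $0$, $\tilde g_y$ meets $g_0$ at some time in $[0,s]$. At that meeting time, the upward-coalescence statement of Theorem \ref{thm:geotree}(4) forces $\tilde g_y$ and $g_0$ to agree forever after. Hence the $\mathcal{G}$-measurable set
\[
S=\{y\in\R:\text{every infinite geodesic from }(y,0)\text{ in direction }0\text{ equals }g_0\text{ on }[s,\infty)\}
\]
contains a $\mathcal{G}$-measurable open neighborhood $(-\delta,\delta)$ of $0$. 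Since $g_p|_{[0,\infty)}$ is itself an infinite geodesic from $(g_p(0),0)$ in direction zero, the event $\{g_p|_{[s,\infty)}=g_0|_{[s,\infty)}\}$ is implied by $\{g_p(0)\in(-\delta,\delta)\}$, so it suffices to show
\[
\mathbf{Pr}\bigl(g_p(0)\in(-\delta,\delta)\,\big|\,\mathcal{G}\bigr)>0 \quad \text{almost surely}.
\]

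Applying Theorem \ref{thm:Busemann-KPZ} with the two times $t_0$ and $0$, and using the a.s.\ uniqueness of $g_p$, the starting point is characterized as
\[
g_p(0)=\mathrm{argmax}_{y\in\R}\,\bigl\{\La(p;y,0)+W(y,0)\bigr\}.
\]
In this objective, $W(\cdot,0)$ is $\mathcal{G}$-measurable and continuous (Corollary \ref{c:Busemann-Brownian}), whereas $\La(p;\cdot,0)$ is, up to subtracting a deterministic parabola, the top line of an Airy line ensemble, independent of $\mathcal{G}$. The parabolic decay in \eqref{eqn:Abound} together with the Brownian growth of $W(\cdot,0)$ ensures the supremum is attained at a finite, a.s.\ unique $y$, and that the maximum over $\R\setminus(-\delta,\delta)$ is finite. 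The Brownian Gibbs argument from Proposition \ref{prop:fingeoabscont} now applies verbatim after the further conditioning on $\mathcal{G}$: conditioning on the rest of the Airy line ensemble and on the top line outside $(-\delta,\delta)$, the top line on $(-\delta,\delta)$ is a Brownian bridge conditioned to stay above the second line, and with strictly positive probability this bridge pushes the argmax inside $(-\delta,\delta)$.

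The main obstacle is the first step — converting ``nearby infinite geodesics from the real axis at time zero meet in $[0,s]$'' into ``they coalesce and hence agree on $[s,\infty)$''. The meeting point is random and generally not a unique-geodesic point of $\La$, so meeting alone is not obviously sufficient. What makes the argument go through is the strong upward-coalescence statement in Theorem \ref{thm:geotree}(4): for any pair of geodesics in direction zero starting from the real axis at time zero, once they meet they coincide thereafter. Everything else — the independence of $\La(p;\cdot,0)$ from $\mathcal{G}$, the Busemann identification of $g_p(0)$, and the Brownian Gibbs positivity — is a transcription of the finite-time Proposition \ref{prop:fingeoabscont} into the infinite-time setting.
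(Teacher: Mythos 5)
Your proposal is correct and follows essentially the same route as the paper's proof: condition on $\mathcal F_{\ge 0}$, reduce absolute continuity to a positive conditional probability of exact agreement on $[s,\infty)$ via resampling, localize this to the event $g_p(0)\in(-\eps,\eps)$ using coalescence of direction-zero geodesics with the unique $g_0$, identify $g_p(0)$ as the argmax of $\La(p;\cdot,0)+W(\cdot,0)$ through the Busemann function, and conclude with the Brownian Gibbs resampling of the top line of the Airy line ensemble. The only cosmetic difference is that you invoke the nearby-geodesics-meet property together with Theorem \ref{thm:geotree}(4), where the paper cites the geodesic overlap fact from \cite{DSV}; both serve the same purpose.
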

	
	\begin{proof}
		Let $\mathcal F = \mathcal F_{\geq 0}$ be the sigma algebra generated by the restriction of $\La$ to times after 0, see \eqref{e:latesigma}.
		Let $g_0'$ and $g_p'$ be the restrictions of $g_0$ and $g_p$ to times in $[s,\infty]$. It is enough to show that
		$\mathbf{Pr}(g_0' = g_p' \, |\, \mathcal F) > 0$ almost surely.
		
		Using the fact about geodesic overlaps, there is an $\eps >0$ such that any geodesic from $(x,0)$ in direction zero with $|x| \leq \eps$ coincides with $g_0$ on $[s,\infty]$. Indeed, due to uniqueness of $g_0$, the geodesic from $(x,0)$ coalesces with $g_0$ once they meet, and they must meet before time $s$ if $\eps$ is small enough by the overlap condition. We have to prove that for any $\eps$,  $\mathbf{Pr}(g_p(0)\in [-\eps,\eps]\,|\, \mathcal F)>0$ almost surely.
		
		The location $g_p(0)$ is the maximizer of
		\begin{equation} \label{eqn:airyWsum}
			x \mapsto \La(p; x,0) + W(x)
		\end{equation}
	where $W(x) = \lim_n \La(x,0;0,n) - \La(0,0;0,n)$. This limit exists by Proposition \ref{prop:Busemann}, is $\mathcal F$-measurable and has the law of a Brownian motion by Corollary \ref{c:Busemann-Brownian}. We think of $W$ as a fixed function. The two summands are independent, continuous and the sum has a finite maximum $M$ outside of $[-\eps,\eps]$ almost surely. 
	
	The first summand is a re-scaled and re-centred Airy process, the top line of an Airy line ensemble. We condition on all values of the Airy line ensemble except for the top line in $[-\eps,\eps]$. The Brownian Gibbs property gives that the conditional distribution is a Brownian bridge $B$ conditioned to be greater than the second to top line on $[-\eps,\eps]$. Let $H=\min_{[-\eps,\eps]} W$. If $\max_{[-\eps,\eps]} B>M-H$, then \eqref{eqn:airyWsum} is maximized in $[-\eps,\eps]$. The conditional probability of this is always positive. 
	\end{proof}

	\subsection{Questions}
	
	\paragraph*{\textbf{Law of the geodesic starting point}}
	Let $h_0$ be an initial condition. The geodesic $g$ from $h_0$ to $(0,1)$ is unique almost surely.
	Compute the law of its starting point  $g(0)$ (which is also $e(0,1)$ in the notation of $\S$\ref{sec:geoendpoint}).
	In particular, does its distribution function have a determinantal form?
	
	The  law of the starting points for the flat and stationary initial conditions have been computed in determinantal form \cite{MFQR, PS}.
	
	\paragraph*{\textbf{Law of the coalescence point}}
	Study the law of the random point $\kappa(p,q)$ where the geodesic from $p$ in direction 0 first coalesces
	with the geodesic from $q$ in direction zero. In would be interesting to compute the law of $\kappa(p,q)$ exactly.
	See \cite{Pim} for results on last passage percolation models.
	
	\section{Competition interfaces} \label{sec:Interfaces}
	
	This section studies competition interfaces of the directed landscape.
	Let $h_0$ be an initial condition as in $\S$\ref{sec:KPZfixedpt}. Throughout this discussion,
	assume good samples of $\La$ relative to $h_0$ as in $\S \ref{sec:goodoutcomes}$, over which one may work sample-wise.
	
	\subsection{The competition function} \label{sec:compfunc}
	A point $p \in \R$ is a {\bf reference point} for an initial condition $h_0$ if there are points
	$a \leq p \leq b$ such that $h_0(a)$ and $h_0(b)$ are finite. The point $p$ is an {\bf interior reference point} if this holds for some $a<p<b$. 
	Define the left and right height functions split at a reference point $p$ as
	\begin{equation} \label{eqn:split}
	h^{-}_p(x) = \begin{cases} h_0(x) & \text{if}\; x \leq p \\ -\infty & \text{if} \; x > p \end{cases} \quad \text{and}\quad
	h^{+}_p(x) = \begin{cases} h_0(x) & \text{if}\; x \geq p \\ -\infty & \text{if} \; x < p \end{cases}.
	\end{equation}
	Let $\La(h^{-}_p; x,t)$ and $\La(h^{+}_p; x,t)$ be the directed landscape height functions associated to initial conditions $h^{-}_p$ and $h^{+}_p$, respectively. In other words,
	\begin{equation} \label{eqn:hp}
		\La(h^{-}_p; x,t) = \sup_{y \leq p} \, \{ h_0(y) + \La(0,y; x,t)\} \quad \text{and} \quad \La(h^{+}_p; x,t) = \sup_{y \geq p} \, \{h_0(y) + \La(0,y; x,t)\} .
	\end{equation}
	The competition function from $p$ is
	\begin{equation} \label{eqn:dp}
		d_p(x,t) = \La(h^{+}_p; x,t) - \La(h^{-}_p; x,t) \quad \text{for}\; (x,t) \in \Hp.
	\end{equation}
	The competition function is continuous on $\Hp$ owing to the continuity of height functions.
	
	\begin{prop}[Monotonicity] \label{prop:mono}
		For every $t > 0$, the competition function is non-decreasing in the variable $x$.
	\end{prop}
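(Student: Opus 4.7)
The plan is to derive monotonicity of $d_p(x,t)$ in $x$ from a reverse quadrangle inequality for $\La$, which is the metric version of the statement that geodesics cannot cross more than necessary. Concretely, I will establish, for every $y<y'$ and $x<x'$ in $\R$ and every $t>0$,
\begin{equation}\label{eqn:rqi}
\La(y,0;x,t) + \La(y',0;x',t) \;\geq\; \La(y,0;x',t) + \La(y',0;x,t).
\end{equation}
Once \eqref{eqn:rqi} is in hand, I fix $x<x'$, restrict $y\leq p$ and $y'\geq p$, add $h_0(y)+h_0(y')$ to both sides, and take the supremum separately over the two variables. The left-hand side yields $\hh^{-}_p(x,t)+\hh^{+}_p(x',t)$ (both suprema are finite by the reference-point assumption and the parabolic decay estimate \eqref{eqn:Abound}), while the right-hand side gives $\hh^{-}_p(x',t)+\hh^{+}_p(x,t)$. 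Rearranging this inequality yields $d_p(x,t)\leq d_p(x',t)$.

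For \eqref{eqn:rqi}, I would take a geodesic $g_1$ from $(y,0)$ to $(x',t)$ and a geodesic $g_2$ from $(y',0)$ to $(x,t)$, which exist by the existence property of $\La$-geodesics in $\S$\ref{sec:Lgeodesic}. Since $y<y'$ but $g_1(t)=x'>x=g_2(t)$, continuity forces a crossing time $u\in(0,t)$ at which $g_1(u)=g_2(u)=:z$. By the triangle equality along geodesics,
\begin{align*}
\La(y,0;x',t) &= \La(y,0;z,u) + \La(z,u;x',t),\\
\La(y',0;x,t) &= \La(y',0;z,u) + \La(z,u;x,t).
\end{align*}
On the other hand, concatenating the initial segment of $g_1$ up to $(z,u)$ with any path from $(z,u)$ to $(x,t)$ gives a path from $(y,0)$ to $(x,t)$, hence the reverse triangle inequality yields
\[
\La(y,0;x,t) \;\geq\; \La(y,0;z,u) + \La(z,u;x,t),
\]
and symmetrically $\La(y',0;x',t) \geq \La(y',0;z,u) + \La(z,u;x',t)$. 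Adding these two inequalities and comparing with the previous identities gives precisely \eqref{eqn:rqi}.

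The only delicate point is handling suprema when $h_0$ may take the value $-\infty$. If one of the two suprema $\hh^{\pm}_p$ at $(x,t)$ or $(x',t)$ equals $-\infty$, then, by the definitions in \eqref{eqn:hp} and the reference-point assumption, this forces $h^{\pm}_p \equiv -\infty$ on the relevant half-line, and the corresponding suprema at the other spatial point also equal $-\infty$, so the monotonicity claim is vacuous. In all remaining cases the maximizers are attained in a compact interval thanks to the parabolic decay of $\La$, and the supremum-swap argument based on \eqref{eqn:rqi} goes through without further issue. I expect the geodesic-crossing step leading to \eqref{eqn:rqi} to be the main conceptual point; once that is clear, the rest is bookkeeping.
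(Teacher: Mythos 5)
Your proof is correct and follows essentially the same route as the paper: the crossing-and-switching of geodesics is exactly the reverse quadrangle inequality that the paper applies (see Figure \ref{fig:compete}). The only cosmetic difference is that you prove the quadrangle inequality for point-to-point values of $\La$ and then take suprema over $y\leq p$ and $y'\geq p$, whereas the paper switches geodesics emanating from the split initial conditions $h_p^{\pm}$ directly; both are the same idea.
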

	
	\begin{proof}
		Suppose $x < y$ and consider geodesics from  $h^{\pm}_p$ to $(x,t)$ and from $h^{\pm}_p$ to $(y,t)$. The values of $\La(h^{\pm}_p; x,t)$ and $\La(h^{\pm}_p; y,t)$ are the relative lengths of these geodesics. The geodesic from $h^{+}_p$ to $(x,t)$ has to meet the one from $h^{-}_p$ to $(y,t)$
		because the $(x,t)$-geodesic starts to the right of the $(y,t)$-geodesic and ends to its left. Suppose they meet at a point $a = (z,s)$.
		See Figure \ref{fig:compete} for the setup (and essentially the proof).
		
		\begin{figure}[ht!]
			\begin{center}
				\includegraphics[scale=0.6]{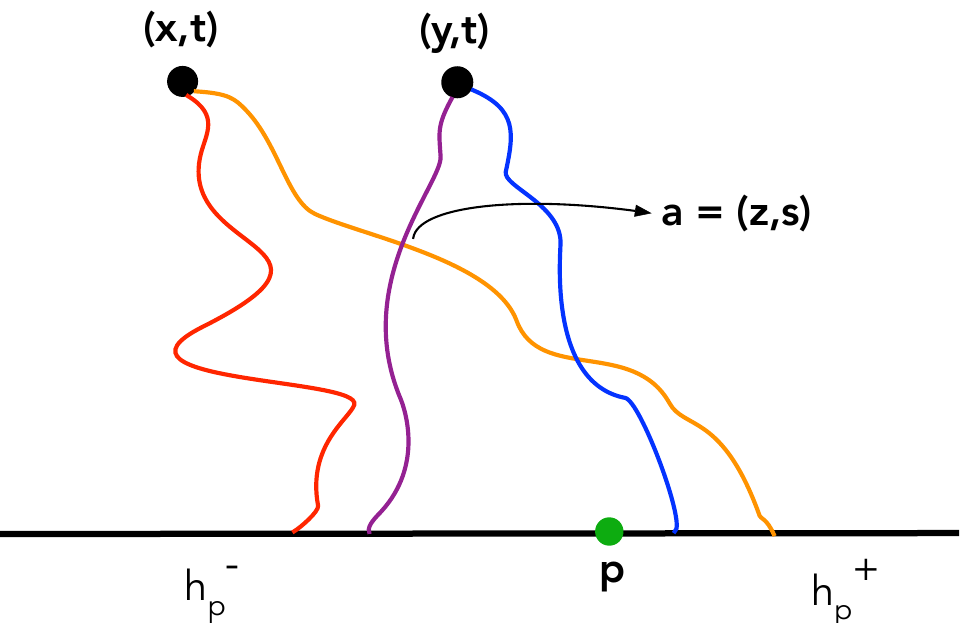}
				\caption{Reverse quadrangle inequality: red + blue $\geq$ orange + purple in length.}
				\label{fig:compete}
			\end{center}
		\end{figure}
		
		Let $g_1$ be the path from $h^{-}_p$ to $(x,t)$  obtained by going from $h^{-}_p$ to $a$ along the $ h^{-}_p\to (y,t)$ geodesic and then from $a$ to $(x,t)$ along the $ h^{+}_p\to (x,t)$ geodesic. Similarly, let $g_2$ be the path from $h^{+}_p$ to $(y,t)$ obtained by switching geodesics at $a$. 
		The sum of the lengths of $g_1$ and $g_2$ is $\La(h^{+}_p; x,t)+\La(h^{-}_p; y,t)$, as it is the same as the sum of the lengths of the $h^{+}_p\to(x,t) $ and $ h^{-}_p\to (y,t)$ geodesics.
		On the other hand, the length of $g_1$ is at most $\La(h^{-}_p; x,t)$, which is the geodesic length from $h^{-}_p$ to $(x,t)$. Similarly, $g_2$ has length at most $\La(h^{+}_p; y,t)$. So, $\La(h^{+}_p; x,t) + \La(h^{-}_p; y,t) \leq \La(h^{-}_p; x,t) + \La(h^{+}_p; y,t)$, which says that $d_p(x,t) \leq d_p(y,t)$.
	\end{proof}
	
	\subsection{Defining an interface} \label{sec:defofint}
	Given an initial condition $h_0$ and a reference point $p$ for it, the interface with reference point $p$
	is roughly the zero set of the competition function $d_p$. It represents points $(x,t) \in \Hp$ that have
	geodesics from $h_0$ starting both to the left and right of $p$. It is the boundary of this zero set that encodes information, leading to
	the following definition of interfaces.
	
	\begin{defn} \label{def:interface}
		The left and right interfaces from a reference point $p$ are as follows. For every $t > 0$,
		\begin{align*}
			I^{-}_p(t) &= \inf \, \{x \in \R: d_p(x,t) \geq 0 \}; \\
			I^{+}_p(t) &= \sup \, \{x \in \R: d_p(x,t) \leq 0 \}.
		\end{align*}
	\end{defn}
	
	It follows that $- \infty \leq I^{-}_p(t) \leq I^{+}_p(t) \leq + \infty$ since $d_p$ is continuous and non-decreasing in $x$.
	With the appropriate conventions, $d_p^{-1}(\cdot, t)\{0\} = [I^{-}_p(t), I^{+}_p(t)]$.
	
	Interfaces can be infinite, for instance $I^{-}_p(t)$ will be identically $-\infty$ if $h_0(x) = -\infty$ for every $x < p$.
	The right and left interfaces can also be separated by a region of points whose geodesics start at $p$.
	Such considerations lead to another definition.
	
	\begin{defn} \label{def:unique}
		The interface from reference point $p$ is uniquely defined if $I^{-}_p(t) = I^{+}_p(t) \in \R$ for every $t > 0$.
	\end{defn}
	
	To visualize the definitions, note the upper half place $\Hp$ is partitioned into the sets $\{d_p < 0\}$, $\{d_p=0\}$ and $\{d_p > 0\}$.
	These sets are connected and $\{d_p=0\}$ is sandwiched between $\{d_p < 0\}$ to the left and $\{d_p > 0\}$ to the right.
	For every $t > 0$, the intersections of $\R \times \{t\}$ with these sets are intervals (possibly empty or infinite).
	The left and right boundaries of $\{d_p = 0\}$ make the left and right interfaces, respectively.
	It is convenient to visualize and identify a finite interface $I_p$ with its graph $\{(I_p(t), t), \, t > 0\}$ lying in $\Hp$.
	
	\subsection{Basic properties of interfaces} \label{sec:propertiesofint}
	The following properties of interfaces hold for good samples of $\La$.
	
	\begin{prop}[Finiteness criterion] \label{prop:intfinite}
		Let $p$ be a reference point for $h_0$. The interface $I^{-}_p(t)$ is identically $-\infty$ if $h_0(x) \equiv -\infty$ for all $x < p$;
		otherwise it is finite. Similarly, $I^{+}_p(t)$ is identically $+ \infty$ if $h_0(x) \equiv - \infty $ for all $x > p$ and otherwise it is finite.
		So the left and right interfaces from an interior reference point are finite.
	\end{prop}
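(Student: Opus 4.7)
The strategy is to analyze the asymptotic behavior of the competition function $d_p(x,t) = \hh^{+}_p(x,t) - \hh^{-}_p(x,t)$ as $x \to \pm\infty$ for each fixed $t > 0$, splitting into cases based on whether $h_0$ is identically $-\infty$ on $(-\infty,p)$. The Airy bound \eqref{eqn:Abound} together with the parabolic decay of $\La$ drives the argument, and the hypotheses on $h_0$ are exactly what is needed so that one side of the competition dominates.

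For the degenerate case $h_0 \equiv -\infty$ on $(-\infty,p)$, the reference-point condition forces $h_0(p)$ to be finite, and therefore $\hh^{-}_p(x,t) = h_0(p) + \La(p,0;x,t)$ since only $y = p$ contributes to the sup defining $\hh^{-}_p$. Because $y = p$ is also admissible in the sup defining $\hh^{+}_p$, we have $\hh^{+}_p(x,t) \geq \hh^{-}_p(x,t)$; i.e.\ $d_p(x,t) \geq 0$ everywhere, which gives $I^{-}_p(t) = \inf \R = -\infty$ for every $t$. The symmetric assertion about $I^{+}_p$ follows the same way.

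In the nondegenerate case, pick $a < p$ with $h_0(a)$ finite. Lower-bounding by the $y=a$ choice and using \eqref{eqn:Abound}, $\hh^{-}_p(x,t) \geq h_0(a) - (a-x)^2/t + \Ai(a,0;x,t)$ with $\Ai(a,0;x,t) = O(|x|^{1/2})$. For the matching upper bound on $\hh^{+}_p$, the sup is over the unbounded tail $y \geq p$, but expanding
\[
-(y-x)^2/t = -(p-x)^2/t - 2(y-p)(p-x)/t - (y-p)^2/t
\]
shows that for $y > p$ the cross term contributes a drift whose slope in $y-p$ is $-2(p-x)/t \to -\infty$ as $x \to -\infty$. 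This drift dominates both the linear bound $h_0(y)\leq c(1+|y|)$ and the $|y|^{1/2}$ correction in \eqref{eqn:Abound} once $|x|$ is large, so the supremum concentrates at $y = p$ and yields $\hh^{+}_p(x,t) \leq -(p-x)^2/t + O(|x|^{1/2})$. Subtracting,
\[
d_p(x,t) \leq \frac{(a-p)(a+p-2x)}{t} + O(|x|^{1/2}) \longrightarrow -\infty \quad \text{as } x \to -\infty,
\]
since $a<p$, so $I^{-}_p(t) > -\infty$. When additionally $h_0 \not\equiv -\infty$ on $(p,\infty)$, the mirror argument using a finite point $b > p$ gives $d_p(x,t) \to +\infty$ as $x \to +\infty$, hence $I^{-}_p(t) < +\infty$; the analogous statements for $I^{+}_p$ follow by swapping the roles of $\hh^{\pm}_p$, and the concluding statement about interior reference points is immediate because both nondegeneracy hypotheses are then in force.

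The main technical obstacle is the uniform upper bound on $\hh^{+}_p$ over the unbounded tail $y \geq p$ as $x \to -\infty$: one must absorb the linear growth of $h_0$ and the $|y|^{1/2}$ Airy correction against the cross term $-2(y-p)(p-x)/t$, whose slope in $y$ becomes arbitrarily large for very negative $x$. Once this reduction to a neighborhood of $y=p$ is achieved, what remains is a clean quadratic comparison between the $y=a$ and $y=p$ branches, together with a symmetric comparison on the right.
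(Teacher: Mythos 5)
Your argument is essentially the paper's: in both, the degenerate case follows because $y=p$ contributes to both suprema, and in the nondegenerate case the bound \eqref{eqn:Abound} plus the quadratic cross term (slope of order $|x|/t$ in $y-p$) absorbs the linear growth of $h_0$ and the $|y|^{1/2}$ Airy correction uniformly over the tail $y\geq p$, forcing $d_p(x,t)\to-\infty$ as $x\to-\infty$. The paper organizes this as a termwise comparison of the single entry $y=a$ against every $y\geq p$ (its function $f(y)$), whereas you first prove the intermediate bound $\hh^+_p(x,t)\leq -(p-x)^2/t+O(|x|^{1/2})$ and then compare; this is a cosmetic difference, and your identification of the tail-absorption step as the only technical point is accurate.

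The one substantive divergence is the upper bound $I^-_p(t)<+\infty$, i.e.\ non-emptiness of $\{x: d_p(x,t)\geq 0\}$. You prove it only under the extra hypothesis that $h_0\not\equiv-\infty$ on $(p,\infty)$, while the proposition as stated (and the paper's parenthetical ``non-empty because $h_0(x)$ is finite for some $x\geq p$'') claims finiteness assuming only that $p$ is a reference point, so the finite value could sit at $b=p$ itself. Your caution is in fact warranted: if $h_0$ is finite at and below $p$ but $\equiv-\infty$ strictly above $p$, then $\hh^+_p(x,t)=h_0(p)+\La(p,0;x,t)\leq \hh^-_p(x,t)$ for all $x$, with equality only when $y\mapsto h_0(y)+\La(y,0;x,t)$ attains its supremum over $(-\infty,p]$ at the endpoint $p$; by local absolute continuity with respect to Brownian motion this is a null event for each fixed $x$, and since $d_p(\cdot,t)$ is monotone the set $\{d_p(\cdot,t)\geq 0\}$ is then almost surely empty, giving $I^-_p(t)=+\infty$. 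So in that boundary sub-case the literal statement (and the paper's one-line justification) fails, and your version proves exactly what is true; in particular the concluding claim about interior reference points, which is what the paper relies on later, is fully covered by your argument.
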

	
	\begin{proof}
		If $h_0(x)$ is identically $-\infty$ for all $x < p$ then $d_p(y,t) \geq 0$ for all $y \in \R$ because all geodesics start at or to the right of $p$.
		Thus, $I^{-}_p(t) \equiv -\infty$. Now suppose there is a point $a < p$ such that $h_0(a)$ is finite.
		We will prove that for every $t$, $d_p(x,t) < 0$ for sufficiently negative values of $x$.
		Then the set of points $x$ such that $d_p(x,t) \geq 0$ is bounded from below and non-empty
		(the latter because $h_0(x)$ is finite for some $x \geq p$).
		So $I^{-}_p(t)$ is finite. The same argument applies for the finiteness of $I^{+}_p(t)$.
		
		In order to see that $d_p(x,t)$ becomes negative, consider $\La(h^{\pm}_p; x,t)$ separately. For $x \leq a-1$,
		$$\La(h^{-}_p; x,t) \geq \Ai(a,0;x,t) - \frac{(x-a)^2}{t} + h_0(a).$$
		This is to be compared to $\La(y,0; x,t) + h_0(y)$ for $y \geq p$. Observe that
		$(x-a)^2 - (x-y)^2 = -(y-a)(y-x+a-x) \leq -(y-a)(y-x+1)$ since $x \leq a-1$ and $y \geq p > a$. So,
		$$ - \frac{(x-a)^2}{t} \geq - \frac{(x-y)^2}{t} + \frac{(y-a)(y-x+1)}{t}.$$
		One also has the bound
		$$|\Ai(a,0;x,t)| + |\Ai(y,0; x,t)| \leq C (t^{1/2} + |y|^{1/2} + |x|^{1/2}+1)$$
		for a random variable $C$ by \eqref{eqn:Abound}. As a result, since $h_0(y) \leq B(1+|y|)$,
		$$\La(a,0;x,t) + h_0(a) \geq \La(y,0;x,t) + h_0(y) + f(y)$$
		where
		$$f(y) = \frac{(y-a)(y-x+1)}{t} - C|y|^{1/2} -C |x|^{1/2} - B|y| -D_{t,a}$$
		for constants $C$, $B$ and $D_{t,a}$, some of which are random.
		
		The dominating term in $f$ is the quadratic $(y-a)(y-x+1)/t$, which is positive and tends to $+\infty$ as $x \to - \infty$
		faster than the other terms (note $y \geq p > a> x$.) If $x$ is sufficiently negative in terms of the parameters
		$a,t,C,B,D_{t,a}$, then $f(y)$ will be strictly positive over all $y \geq p$. For such $x$,
		$$\La(a,0;x,t) + h_0(a) > \sup_{y \geq p} \{\La(y,0;x,t) + h_0(y)\},$$
		and so $\La(h^{-}_p; x,t) > \La(h^{+}_p; x,t)$ as claimed.
	\end{proof}
	
	\begin{prop}[Continuity] \label{prop:intcont}
		When the interface $I^{+}_p(t)$ or $I^{-}_p(t)$ is finite, it is continuous.
	\end{prop}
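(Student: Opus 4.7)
The plan is to establish upper and lower semicontinuity of $I^{+}_p$ at every $t_0$ where $I^{+}_p(t_0)$ is finite; the argument for $I^{-}_p$ is completely symmetric. The common ingredient is joint continuity of the competition function $d_p=\hh^{+}_p-\hh^{-}_p$ on $\Hp$, inherited from continuity of the KPZ fixed point height functions $\hh^{\pm}_p$, together with monotonicity in $x$ from Proposition~\ref{prop:mono}. This makes the $t$-slices of $\{d_p\le 0\}$ and $\{d_p\ge 0\}$ equal to the half-lines $(-\infty,I^{+}_p(t)]$ and $[I^{-}_p(t),\infty)$.

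Upper semicontinuity of $I^{+}_p$ is immediate. For any $\eps>0$, set $x=I^{+}_p(t_0)+\eps$; by the defining supremum $d_p(x,t_0)>0$ strictly, so joint continuity yields $d_p(x,t_n)>0$ for $t_n\to t_0$ and $n$ large, hence $I^{+}_p(t_n)<x$. This also shows $I^{+}_p$ remains finite in a neighborhood of $t_0$. Symmetrically, $I^{-}_p$ is lower semicontinuous wherever finite.

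For lower semicontinuity of $I^{+}_p$, take $L<I^{+}_p(t_0)$ and seek $L\le I^{+}_p(t_n)$ for large $n$. When additionally $L<I^{-}_p(t_0)$, one has $d_p(L,t_0)<0$ strictly, so continuity gives $d_p(L,t_n)<0$ eventually, whence $L<I^{-}_p(t_n)\le I^{+}_p(t_n)$. This argument alone already settles the continuity of $I^{+}_p$ at every $t_0$ where $I^{-}_p(t_0)=I^{+}_p(t_0)$, in particular at every time when the interface is uniquely defined in the sense of Definition~\ref{def:unique}.

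The main obstacle is the residual case $L\in[I^{-}_p(t_0),I^{+}_p(t_0))$, where $d_p(\cdot,t_0)$ vanishes identically on the non-trivial interval $[I^{-}_p(t_0),I^{+}_p(t_0)]$ and continuity of $d_p$ alone no longer pins down the sign of $d_p(L,t_n)$. To resolve it I would feed the leftmost geodesics from $h_0$ to $(I^{+}_p(t_n),t_n)$ and the rightmost geodesics to $(I^{+}_p(t_n)+\delta_n,t_n)$, for a sequence $\delta_n\downarrow 0$ chosen suitably with $t_n\to t_0$, into the geodesic compactness of Lemma~\ref{lem:ccc}. This extracts two limit geodesics from $h_0$ to $(L,t_0)$ whose starting points lie $\le p$ and $\ge p$ respectively. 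The unique geodesic condition from Theorem~\ref{thm:ugc} would then force these two limit geodesics to coincide with common starting point exactly $p$, and the step-function description of the starting-point map $e(\cdot,t_0)$ from Proposition~\ref{prop:endpointfunc} would rigidify this enough to conclude $L=I^{+}_p(t_0)$, closing the argument.
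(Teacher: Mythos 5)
Your upper-semicontinuity step and the case $L<I^{-}_p(t_0)$ are correct, and they coincide with how the paper starts (continuity of $d_p$ plus monotonicity in $x$). The gap is in the residual case $L\in[I^{-}_p(t_0),I^{+}_p(t_0))$, which is exactly the hard part of the proposition. Extracting, via geodesic compactness, two geodesics from $h_0$ to $(L,t_0)$ whose starting points are $\leq p$ and $\geq p$ yields no contradiction: it merely re-establishes $d_p(L,t_0)=0$, i.e.\ $L\in[I^{-}_p(t_0),I^{+}_p(t_0)]$, which you already knew. Moreover Theorem~\ref{thm:ugc} cannot ``force these two limit geodesics to coincide'': the UGC says only that, at time $t_0$, all but countably many \emph{deterministic-in-$x$} locations are unique geodesic points, and $(L,t_0)$ is a random point built from the configuration (a liminf of interface values) lying in the zero set of $d_p(\cdot,t_0)$ --- precisely where non-uniqueness is expected; indeed every point on an interface has distinct leftmost and rightmost geodesics (cf.\ Proposition~\ref{prop:portraittopo}). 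Nor does the step-function structure of $e(\cdot,t_0)$ from Proposition~\ref{prop:endpointfunc} help as stated: it rigidifies the picture in the spatial direction at the single time $t_0$, whereas the failure of lower semicontinuity you must exclude is a time-direction phenomenon.

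The missing ingredient, which is how the paper closes this case, is a displacement estimate for geodesics. Fix $\eps>0$ and consider $a=(I^{+}_p(t_0)-\eps,\,t_0+\delta)$. If $d_p(a)>0$, then every geodesic $g$ from $h_0$ to $a$ starts strictly to the right of $p$; a concatenation argument shows $g(t_0)\geq I^{+}_p(t_0)$ (if $g(t_0)<I^{+}_p(t_0)$ then $d_p(g(t_0),t_0)\leq 0$, so some geodesic to $(g(t_0),t_0)$ starts at $\leq p$, and splicing it with $g$ above time $t_0$ produces a geodesic to $a$ starting at $\leq p$, a contradiction). Hence $|g(t_0+\delta)-g(t_0)|\geq\eps$, which for small $\delta$ contradicts the modulus of continuity furnished by geodesic compactness; so $d_p(I^{+}_p(t_0)-\eps,\,t_0+\delta)\leq 0$ for all small $\delta$, giving the lower bound for times just above $t_0$. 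For times $t_0-\delta$ one argues similarly, using geodesic ordering against a geodesic to $(I^{+}_p(t_0),t_0)$ that starts at $\leq p$. Some argument of this kind --- converting positivity of $d_p$ slightly to the left of $I^{+}_p(t_0)$ at a nearby time into a forced spatial jump of size $\eps$ of a geodesic over a vanishing time interval --- is what your proposal lacks, and without it the residual case remains open.
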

	
	\begin{proof}
		Take for instance the right interface, and a time $t > 0$ at which we want continuity. Fix an $\epsilon > 0$.
		The point $(I^{+}_p(t)+\epsilon, t)$ satisfies $d_p(I^{+}_p(t)+\epsilon, t) > 0$. By continuity of $d_p$ there is a $\delta_1 > 0$
		such that $d_p(I^{+}_p(t)+\epsilon, s)>0$ for every $s \in (t-\delta_1, t+\delta_1)$. Consequently,
		$$I^{+}_p(s) \leq I^{+}_p(t) + \epsilon \quad \text{for}\; s \in (t-\delta_1, t+\delta_1).$$
		
		We need to show that there is a $\delta_2 > 0$ such that $I^{+}_p(t) - \eps \leq I^{+}_p(s)$ for all $s \in (t-\delta_2, t+\delta_2)$.
		Consider the point $(I^{+}_p(t)-\epsilon, t)$. If $d_p$ is negative at this point then arguing as above there is a $\delta_2 > 0$
		such that $I^{+}_p(t) -\epsilon \leq I^{+}_p(s)$ for every $s \in (t-\delta_2, t+\delta_2)$. So we are reduced to considering
		the case $d_p(I^{+}_p(t)-\epsilon, t) = 0$.
		
		Suppose that $d_p(I^{+}_p(t)-\epsilon, t) = 0$. Consider the point $a = (I^{+}_p(t)-\epsilon, t+\delta)$ for $\delta > 0$.
		If $d_p(a) \leq 0$ then $I^{+}_p(t+\delta) \geq I^{+}_p(t) - \epsilon$. We will argue below that there is a $\delta_3 > 0$
		such that $d_p(a) \leq 0$ for all $\delta \leq \delta_3$. Thus, $I^{+}_p(s) \geq I^{+}_p(t) - \epsilon$ for all $s \in [t, t+\delta_3]$.
		Similarly, for points of the form $b = (I^{+}_p(t)-\epsilon, t-\delta)$ there is a $\delta_4 > 0$ such that $d_p(b) \leq 0$ for all $\delta \leq \delta_4$,
		which implies $I^{+}_p(s) \geq I^{+}_p(t) - \eps$ for $s \in [t-\delta_4, t]$. This proves continuity at $t$.
		
		Suppose that $d_p(a) > 0$ with $a$ as above. Consider any geodesic $g$ from $h_0$ to $a$, which then has to start to the right of $p$ since $d_p(a) > 0$.
		At time $t$ this geodesic must be weakly to the right of $I^{+}_p(t)$. Otherwise, if $g(t) < I^{+}_p(t)$, then there is a geodesic to $(g(t),t)$ which starts at a point $\leq p$.
        Then following the latter geodesic until time $t$ and switching to $g$ after time $t$ yields a new geodesic to $a$, starting at a point $\leq p$.
		Since this cannot happen due to $d_p(a) > 0$, it must be that $g(t) \geq I^{+}_p(t)$. It follows that $|g(t+\delta) - g(t)| \geq \epsilon$.
		But by continuity and compactness of geodesics, this cannot happen for sufficiently small $\delta$.
		Thus, there is a $\delta_3 > 0$ such that if $s \in [t, t+\delta_3]$ then $d_p(I^{+}_p(t)-\epsilon, s) \leq 0$.
		
		A similar argument works for points of the form $b = (I^{+}_p(t)-\epsilon, t-\delta)$ to show that there is a $\delta_4 > 0$ such that $d_p(b) \leq 0$ for all $\delta \leq \delta_4$.
		Indeed, let $g$ be a geodesic from $h_0$ to $(I^{+}_p(t),t)$ that starts at a point $\leq p$.
		With $b$ as above, suppose $d_p(b) > 0$. Then all geodesics to $b$ start at points $> p$. As the geodesic $g$ starts at a point $\leq p$, geodesic ordering implies
		$g(t-\delta)$ must be to the left of $I^{+}_p(t)-\eps$. So $|g(t-\delta) - g(t)| \geq \eps$ if $d_p(b) > 0$. Geodesic continuity implies this won't happen for all small values of $\delta$,
		so indeed $d_p(b) \leq 0$ for all $\delta \leq \delta_4$.
	\end{proof}
	
	\begin{prop}[Starting location] \label{prop:intstart}
		Suppose for a reference point $p$ there are $a < p < b$ such that $h_0(a)$ and $h_0(b)$ are finite. Then $\liminf_{t \to 0} I^{-}_p(t) \geq a$
		and $\limsup_{t \to 0} I^{+}_p(t) \leq b$. As a result, if for every $\eps > 0$ there are $a$ and $b$ such that $p - \eps < a < p < b < p + \eps$
		and $h_0(a), h_0(b)$ are both finite, then $I^{\pm}_p(t) \to p$ as $t \to 0$.
	\end{prop}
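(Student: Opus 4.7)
The plan is to prove $\liminf_{t\to 0} I^-_p(t)\geq a$; the inequality $\limsup_{t\to 0}I^+_p(t)\leq b$ follows by a symmetric argument (or from applying the first half to the reflected initial condition). Since $I^-_p$ is finite by the finiteness criterion (Proposition \ref{prop:intfinite}), it suffices to show that for every fixed $x<a$ one has $d_p(x,t)<0$ for all sufficiently small $t>0$; then $I^-_p(t)\geq x$ eventually, and sending $x\nearrow a$ gives the claim.

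To establish $\hh^+_p(x,t)<\hh^-_p(x,t)$ for small $t$, I would bound the two sides against each other by the following lower/upper pair. For the lower bound, using the candidate starting point $a\leq p$,
\[
\hh^-_p(x,t) \;\geq\; h_0(a)+\La(a,0;x,t) \;=\; h_0(a)-\tfrac{(a-x)^2}{t}+\Ai(a,0;x,t),
\]
and for the upper bound I use the linear growth $h_0(y)\leq c(1+|y|)$ together with the Airy bound \eqref{eqn:Abound}:
\[
h_0(y)+\La(y,0;x,t)\;\leq\; c(1+|y|)-\tfrac{(y-x)^2}{t}+C\bigl(t^{1/2}+|y|^{1/2}+|x|^{1/2}+1\bigr).
\]
Split $\{y\geq p\}$ into a compact piece $[p,Y]$ and a tail $\{y\geq Y\}$. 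On the tail, the quadratic $-(y-x)^2/t$ swamps the linear and square-root error terms once $Y$ is chosen large enough (depending only on $c,C,x$), so the supremum over $\{y\geq Y\}$ is strictly less than the $y=a$ lower bound for any $t<1$. On the compact piece, $|y|^{1/2}$ and $h_0(y)$ are bounded, and $-(y-x)^2/t\leq-(p-x)^2/t$, giving
\[
\sup_{y\in[p,Y]}\{h_0(y)+\La(y,0;x,t)\}\;\leq\; K(x,Y)-\tfrac{(p-x)^2}{t},
\]
where $K(x,Y)$ is bounded uniformly for $t\leq 1$.

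Subtracting, the difference $\hh^-_p(x,t)-\hh^+_p(x,t)$ contains the term
\[
\frac{(p-x)^2-(a-x)^2}{t}\;=\;\frac{(p-a)\,(p+a-2x)}{t},
\]
which is strictly positive (since $x<a<p$) and diverges to $+\infty$ as $t\to 0^+$, while all remaining contributions -- namely $h_0(a)$, $\Ai(a,0;x,t)$, and $K(x,Y)$ -- are bounded in $t$ on $(0,1]$. Hence $d_p(x,t)<0$ for all small enough $t$, proving $\liminf I^-_p\geq a$. The corollary about $I^\pm_p(t)\to p$ is then immediate: under the two-sided accumulation hypothesis, for each $\eps>0$ pick $a\in(p-\eps,p)$ and $b\in(p,p+\eps)$ with $h_0(a),h_0(b)$ finite; the two inequalities just proved give $\liminf I^-_p\geq p-\eps$ and $\limsup I^+_p\leq p+\eps$, and combining with $I^-_p\leq I^+_p$ pinches both interfaces to $p$ as $\eps\to 0$.

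The only delicate step is the uniform-in-$y$ upper bound on $\hh^+_p$; the quadratic-beats-linear truncation to $[p,Y]$ is routine once one has \eqref{eqn:Abound} in hand, and the rest is arithmetic comparing $1/t$-order terms to $O(1)$ errors.
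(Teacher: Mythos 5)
Your proposal is correct and follows essentially the same route as the paper's proof: fix $x<a$, compare the $y=a$ lower bound for $\hh^-_p(x,t)$ against an upper bound on the supremum over $y\geq p$, and let the divergent term $\bigl((p-x)^2-(a-x)^2\bigr)/t$ force $d_p(x,t)<0$ for small $t$. The only difference is cosmetic: where the paper evaluates $\hh^+_p(x,t)$ at a maximizer $y_t$ and notes that $y_t$ stays bounded as $t\to 0$, you handle the same issue by an explicit truncation into $[p,Y]$ and a tail $\{y\geq Y\}$, which is an equally valid (and slightly more self-contained) way to control the supremum.
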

	
	\begin{proof}
		We will show that the liminf of $I^{-}_p(t)$ is at least $a$ at $t \to 0$; the argument for the limsup being similar. Fix a point $x < a$
		and consider points $(x,t) \in \Hp$ as $t \to 0$. We will argue that $d_p(x,t) < 0$ for all sufficiently small $t$, almost surely in the underlying randomness.
		This implies that $\liminf_{t \to 0} I^{-}_p(t) \geq a$.
		
		Recall $\La(h^{\pm}_p; x,t)$ from \eqref{eqn:hp}. Clearly,
		$$\La(h^{-}_p; x,t) \geq \Ai(a,0;x,t) - \frac{(x-a)^2}{t} + h_0(a).$$
		
		Let $y_t \geq p$ be a point where the supremum in the definition of $\La(x,t; h^{+}_p)$ is obtained. So,
		$$\La(h^{+}_p; x,t) = \Ai(y_t,0;x,t) - \frac{(x-y_t)^2}{t} + h_0(y_t).$$
		Such a $y_t$ exists by the assumptions of $h_0$, namely that it is upper semicontinuous and bounded from above by a linear function.
		Moreover, almost surely in the underlying randomness, $y_t$ will be bounded as $t \to 0$. It follows that
		\begin{equation} \label{eqn:hhdiff1}
		  \La(h^{-}_p; x,t) - \La(h^{+}_p; x,t) \geq \Ai(a,0;x,t) - \Ai(y_t,0;x,t) + h_0(a) - h_0(y_t) +\frac{(x-y_t)^2-(x-a)^2}{t}.  
		\end{equation}
		
		There is a finite random variable $C$ such that $|\Ai(a,0;x,t)| + |\Ai(y,0;x,t)| \leq C (t^{1/2}+|x|^{1/2} + |y|^{1/2} + |a|^{1/2}+1)$ by \eqref{eqn:Abound}. This means there is a finite random variable $C'$ such that for $t \leq 1$, $|\Ai(a,0;x,t)| + |\Ai(y_t,0;x,t)| \leq C'$ because $y_t$ remains bounded, while $a$ and $x$ are now fixed. There is also a finite random variable $C''$ such that $h_0(a) - h_0(y_t) \geq C''$ since $h_0$ is bounded from above by a linear function, and $y_t$ is bounded. Consequently,
		\begin{equation} \label{eqn:hhdiff2}
		\Ai(a,0;x,t) - \Ai(y_t,0;x,t) + h_0(a) - h_0(y_t) \geq C'' - C'.
		\end{equation}
		
		Consider now the term $(x-y_t)^2 - (x-a)^2$, which equals $(y_t-a)(a+y_t-2x)$.
		Due to $y_t \geq p > a$, we find that $y_t-a \geq p-a$ and $a+y_t - 2x \geq 2(a-x)$.
		So,
		\begin{equation} \label{eqn:hhdiff3}
		   \frac{(x-y_t)^2}{t} - \frac{(x-a)^2}{t} \geq \frac{2(a-x)(p-a)}{t} = \frac{2 |a-x||p-a|}{t}. 
		\end{equation}
		Combining \eqref{eqn:hhdiff1} with \eqref{eqn:hhdiff2} and \eqref{eqn:hhdiff3} we infer that
		$$\La(h^{-}_p; x,t) - \La(h^{+}_p; x,t) \geq \frac{2 |a-x||p-a|}{t} + C'' - C'.$$
		It follows that $\La(h^{-}_p; x,t) > \La(h^{+}_p; x,t)$ for sufficiently small values of $t$, and so $d_p(x,t) < 0$ for such $t$.
	\end{proof}
	
	\subsection{Distributional invariances of interfaces} \label{sec:invarianceofint}
	Distributional invariances of the  directed landscape  \cite{DOV} give way to distributional invariances of interfaces.
	These are recorded here. In order to track dependence of interfaces on the initial condition, write $I^{\pm}_p(t; h_0(x))$
	for the left or right interface with reference point $p$ and initial condition $h_0$.
	
	The following invariances hold jointly in $p$ and $t$, and $h_0$.
		\begin{description}
		\item[Scaling.] For $\sigma> 0$,
		$$I^{\pm}_p(\sigma^3 t; h_0) \overset{law}{=} \sigma^2 I^{\pm}_{p/\sigma^2}(t; h_0(\sigma^2\cdot )/\sigma).$$
		This is due to the scaling invariance of the directed landscape height functions:
		$$\La(h_0(y); \sigma^2x,\sigma^3 t)/\sigma \overset{law}{=} \La(h_0(\sigma^2 y)/\sigma; x,t),$$
		which in turn implies
		$$d_p(\sigma^2x,\sigma^3 t; h_0(y))/\sigma \overset{law}{=} d_{p/\sigma^2}(x,t; h_0(\sigma^2 y )/\sigma).$$
		
		\item[Translation.] For $a \in \R$,
		$$ I^{\pm}_p(t; h_0(x)+a) \overset{law}{=} I^{\pm}_p(t; h_0(x)).$$
		Indeed, $\La(h_0(y)+a; x,t)$ has law $\La(h_0(y); x,t)+a$ and the competition function remains unchanged by such translations.
		Similarly the law of $\La(h_0(y+a); x,t)$ is $\La(h_0(y); x+a,t)$, which implies that
		$$I^{\pm}_p(t; h_0(x)) \overset{law}{=} I^{\pm}_{p+a}(t; h_0(x-a)) - a.$$
		
		Also, if $h_0$ satisfies $h_0(x-a) = h_0(x) - h_0(0) + h_0(-a)$ for every $x$, then $I^{\pm}_p(t; h_0(x))$
		has the same law as $I^{\pm}_{p+a}(t; h_0(x))-a$. As examples, if $h_0$ is flat or a  two-sided Brownian motion
		then $I^{\pm}_p(t; h_0)$ has the same law as $I^{\pm}_{p+a}(t; h_0) -a$ for every $a$.
		
		\item[Affine shift.] For $a \in \R$,
		$$I^{\pm}_p(t; h_0(x)+ax) \overset{law}{=} I^{\pm}_p(t; h_0(x)) - \frac{at}{2}.$$
		This comes from the invariance
		$$\La(x,t; h_0(y)+ay) \overset{law}{=} \La(x + (at)/2,t; h_0(y)),$$
		which implies that $d_p(x,t; h_0(y)+ay)$ has law $d_p(x+(at)/2, t; h_0(y))$.
	\end{description}
	
	\subsection{Example: bisector between two points} \label{sec:2wedges}
	Consider interfaces for the initial condition consisting of two narrow wedges:
	$$h_0(x) = \begin{cases} 0 & \text{if}\; x = \pm 1 \\ - \infty & \text{otherwise}\end{cases}.$$
	
	This initial condition and its competition function has been studied in \cite{BasGH}.
	The finite interfaces are $I^{\pm}_p(t)$ for $p \in (-1,1)$, and
	they are the same for every $p$.
	This interface is uniquely defined: $I^{-}_0(t) = I^{+}_0(t)$ for every $t$.
	Indeed, the zero set of the function $x \to d_0(x,t)$ is a non-empty interval by Propositions \ref{prop:mono} and \ref{prop:intfinite}.
	Any point in the zero set can not be a UGP as it must have geodesics ending at both $-1$ and $1$.
	Since the initial condition satisfies the UGC, the zero set is countable and so it must be a singleton.
	
	The unique interface $I(t) = I^{+}_0(t)$ consists of points in $\Hp$ that are $\La$-equidistant from
	$(-1,0)$ and $(1,0)$. It begins at the origin: $\lim_{t \to 0} I(t) = 0$. To see this note that
	$$d_0(x,t) = \Ai(1,0;x,t) - \Ai(-1,0; x,t) + \frac{4x}{t}.$$
	The bound \eqref{eqn:Abound} implies $|\Ai(1,0;x,t) - \Ai(-1,0; x,t)| \leq C (|x|^{1/2}+ t^{1/2}+1)$ for a random variable $C$.
	Suppose $t \leq 1$. If $x$ is such that $d_0(x,t) = 0$, then
	$$|x| \leq \frac{t}{4} C (|x|^{1/2}+2),$$
	from which it is easy to conclude that $|x| \leq C' t$ for a random variable $C'$. Consequently, $|I(t)| \leq C' t$ as $t \to 0$.
	
	The following proposition establishes the first half of Theorem \ref{thm:bisdir}.
	\begin{prop} \label{prop:bisector}
		There is a random variable $D$ having a normal distribution with mean 0 and variance $1/4$ such that, almost surely,
		$$ \lim_{t \to \infty} \frac{I(t)}{t} = D.$$
	\end{prop}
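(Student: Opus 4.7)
The plan is to analyze the defining equation $f(I(t), t) = 0$ for
\[
f(x, t) \;:=\; \La(-1, 0; x, t) - \La(1, 0; x, t) \;=\; -d_0(x, t),
\]
on the scale $x = dt$ for $d \in \mathbb{R}$. By Proposition \ref{prop:mono}, the map $x \mapsto f(x, t)$ is non-increasing, and since the interface is uniquely defined here (as argued immediately before the proposition), $I(t)$ is the unique zero of $f(\cdot, t)$.

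The main step is to establish, for each $d \in \mathbb{R}$, the almost sure pointwise limit
\[
f(x_t, t) \;\longrightarrow\; \psi(d) \;:=\; W^{(d)}(-1, 0) - W^{(d)}(1, 0) - 4d \qquad \text{when } x_t/t \to d,
\]
where $W^{(d)}$ denotes the direction-$0$ Busemann function of the shear-transformed landscape $\La^{(d)}$ obtained from \eqref{eqn:shear}. For $d = 0$ this reduces directly to Proposition \ref{prop:Busemann} applied at the points $(\pm 1, 0)$. For general $d$ I would apply Proposition \ref{prop:Busemann} to $\La^{(d)}$ (which has the same law as $\La$) and translate back to $\La$ via the algebraic shear identity
\[
\La^{(d)}(x, 0; x_n, t_n) - \La^{(d)}(y, 0; x_n, t_n) \;=\; \La(x, 0; x_n + d t_n, t_n) - \La(y, 0; x_n + d t_n, t_n) - 2d(x - y),
\]
which for $(x, y) = (-1, 1)$ produces the additional deterministic term $-4d$. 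Since Proposition \ref{prop:Busemann} gives eventual exact equality along fixed compact endpoints, the limit in fact holds as equality for all large $t$. By Corollary \ref{c:Busemann-Brownian} and $\La^{(d)} \overset{law}{=} \La$, the process $W^{(d)}(\cdot, 0)$ is a two-sided Brownian motion of diffusivity $\sqrt{2}$, so for each fixed $d$, $\psi(d) \overset{law}{=} N(-4d, 4)$.

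Define $D := \inf\{d \in \mathbb{R} : \psi(d) \leq 0\}$; this is well-defined because Remark \ref{r:Busemann-direction} forces $\psi$ to be non-increasing in $d$ (the map $d \mapsto W_d(1,0) - W_d(-1,0)$ being non-decreasing), while the deterministic drift sends $\psi(d) \to \mp\infty$ as $d \to \pm\infty$. The pointwise limit $f(dt, t) \to \psi(d)$ holds a.s.\ along all rational $d$ simultaneously, and by monotonicity of $f(\cdot, t)$ in $x$, for every rational $d > D$ we have $f(dt, t) \to \psi(d) < 0$ hence $I(t)/t < d$ eventually; likewise for rational $d < D$ we get $I(t)/t > d$ eventually. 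Density of rationals yields $I(t)/t \to D$ a.s. Finally, monotonicity of $\psi$ gives $\{D \leq d_0\} = \{\psi(d_0) \leq 0\}$ (up to the null event where $\psi(d_0) = 0$), so
\[
\mathbf{Pr}(D \leq d_0) \;=\; \mathbf{Pr}\!\left(\psi(d_0) \leq 0\right) \;=\; \mathbf{Pr}\!\left(N(-4 d_0, 4) \leq 0\right) \;=\; \Phi(2 d_0),
\]
the CDF of $N(0, 1/4)$. The principal technical hurdle is the Busemann-type limit in an arbitrary direction $d$: Proposition \ref{prop:Busemann} is phrased only for direction $0$, so the shear-invariance reduction to the direction-$0$ Busemann of $\La^{(d)}$, together with the precise algebraic translation identity, is the main input driving the proof.
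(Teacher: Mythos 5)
Your proposal is correct in substance, but it follows a genuinely different route from the paper's proof of this proposition. The paper splits the statement in two: (i) existence of the limit is proved geometrically, by constructing for every rational $d$ an infinite geodesic from the two-wedge initial condition with direction $d$, observing that such a geodesic cannot cross the bisector (it would have to end at both $-1$ and $1$), and hence that for each rational $d$ either $\liminf I(t)/t \geq d$ or $\limsup I(t)/t \leq d$; (ii) the law of $D$ is then computed purely distributionally, writing $\mathbf{Pr}(D>a)=\lim_t \mathbf{Pr}(d_0(at,t)<0)$, applying shear invariance \emph{in law} along the single ray $x=at$, and invoking Proposition \ref{prop:Busemann} and Corollary \ref{c:Busemann-Brownian}. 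You instead work pathwise: using the shear map as a deterministic bijection on landscapes, you transfer Proposition \ref{prop:Busemann} to every rational direction, obtain the a.s.\ limits $f(dt,t)\to\psi(d)=W_d(-1,0)-W_d(1,0)$ simultaneously for rational $d$, and identify $D$ as the level-crossing point of the monotone function $d\mapsto\psi(d)$, exploiting monotonicity of $f(\cdot,t)$ in $x$ and of Busemann increments in $d$ (Remark \ref{r:Busemann-direction}). This is essentially the mechanism of the paper's later Theorem \ref{thm:randdirection} and Corollary \ref{cor:directionlaw} specialized to the two-wedge case; it buys a stronger, geometric identification of $D$ (not just its law), at the price of needing Busemann functions in all rational directions at once and the monotonicity-in-$d$ comparison, whereas the paper's argument for this proposition only needs the fixed-direction Busemann theory plus the geodesic non-crossing trick.

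One small repair: the assertion that ``the deterministic drift sends $\psi(d)\to\mp\infty$'' does not follow from the drift alone, since the random term $W_d(-1,0)-W_d(1,0)$ changes with $d$. Finiteness of $D$ should instead be obtained from monotonicity of $\psi$ together with a Borel--Cantelli argument along integer directions, using $\mathbf{Pr}(\psi(d)>0)=\Phi(-2d)$, which is summable over positive integers (and symmetrically for negative integers); alternatively, finiteness follows a posteriori from the distributional identification of $D$. With that adjustment, and defining $D$ as an infimum over rational $d$ (consistent with the countably-many-directions construction the paper allows), your argument goes through.
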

	
	\begin{proof}
		For $d \in \R$, almost surely there is an infinite geodesic $g$ from the initial condition $h_0$ with asymptotic direction $d$.
		Its existence can be proved as in Theorem \ref{thm:geoexist}. So $g(t)$ for $t \geq 0$ is a path that satisfies $g(t)/t \to d$ as $t \to \infty$,
		and for which the segment $g(s)$ for $0 \leq s \leq t$ is a geodesic to the initial condition for every $t$.
		
		An infinite geodesic cannot cross the bisector, that is, satisfy $g(t) < I(t)$ and $g(t') > I(t')$ for different times $t$ and $t'$.
		If it did cross the bisector then it would have to end simultaneously at $-1$ and $1$. So it lies either to the left or
		to the right of the bisector. If it is to the left ($g(t) \leq I(t)$) then $\liminf_{t \to \infty} I(t)/t \geq d$.
		While if it stays to the right then $\limsup_{t \to \infty} I(t)/t \leq d$.
		
		So almost surely, for every $d \in \mathbb{Q}$, either $\liminf_{t \to \infty} I(t)/t \geq d$ or $\limsup_{t \to \infty} I(t)/t \leq d$.
		This means there is a limit of $I(t)/t$ or else there would be a rational $d$ satisfying
		$\liminf_{t \to \infty} I(t)/t < d < \limsup_{t \to \infty} I(t)/t $.
		
		In order to derive the law of $D$, note $\mathbf{Pr}(D > a)$ is the limit as $t \to \infty$ of $\mathbf{Pr}(d_0(at,t) < 0)$.
		Now $d_0(at,t) = \La(1,0;at,t) - \La(-1,0;at,t)$, which by shear invariance of $\La$ has the law of $\La(1,0;0,t) - \La(-1,0;0,t) + 4a$.
		The difference $\La(1,0;0,t) - \La(-1,0;0,t)$ converges to $W(1,0) - W(-1,0)$ by Proposition \ref{prop:Busemann},
		where $W$ is the Busemann function. The law of $W(x,0)$ is Brownian with diffusivity $\sqrt{2}$ by Corollary \ref{c:Busemann-Brownian},
		so $W(1,0) - W(-1,0)$ has a Normal distribution with mean 0 and variance $4$. Therefore, $\mathbf{Pr}(D >a) = \mathbf{Pr}(2Z +4a < 0)$,
		where $Z$ is a standard Normal random variable. This shows $D$ is Normal with mean 0 and variance $1/4$.
	\end{proof}
	
	\subsection{Criterion for a uniquely defined interface} \label{sec:uniquedef}
	It is often necessary to know when an interface is uniquely defined, so we present a criterion to check this condition.
	
	Let $h_0(x)$ be an initial condition. Let $[a, b]$ be a compact interval and suppose $h_0$ is not identically $-\infty$ on $[a,b]$.
	\begin{defn} \label{def:polar}
		The polar measure of $h_0$ over $[a,b]$, denoted $\mathfrak{p}_{h_0, [a,b]}$, is the law of the maximizer
		of $h_0(x) + B(x)$ for $x \in [a, b]$, where $B$ is a Brownian motion started from $B(a) = 0$
		and with diffusivity constant $\sqrt{2}$. 
	\end{defn}
	The proof of Lemma \ref{lem:uniqueend} shows the maximizer is unique, so the measure is well defined.
	
	The polar points of $h_0$ are
	\begin{equation} \label{eqn:polarpoints}
		\mathfrak{pp}(h_0) = \{ x \in \R: \mathfrak{p}_{h_0, [a,b ]}(\{x\}) > 0 \;\; \text{for some}\;\; a < x < b\}.
	\end{equation}
	If $[a, b] \subset [c,d]$ then $\mathfrak{p}_{h_0, [a,b]}$ is absolutely continuous with respect to $\mathfrak{p}_{h_0, [c,d]}$.
	So the polar points of $h_0$ are the atoms of its polar measures over increasing compact intervals.
	There are at most a countable number of polar points of any $h_0$. Recall that $p$ is an interior point for $h_0$ if there are $a < p < b$ such that $h_0(a), h_0(b)$ are both finite.
	
	\begin{prop} \label{prop:intunique}
		Let $p$ be a non-polar interior reference point of $h_0$. Then the interface of $h_0$ from reference point $p$ is uniquely defined almost surely.
	\end{prop}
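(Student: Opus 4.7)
The plan is to combine two ingredients. Ingredient (a) is the \emph{pointwise} statement $\mathbf{Pr}(d_p(x,t)=0)=0$ for each fixed $(x,t)\in\Hp$. Ingredient (b) is a continuity-and-density argument that upgrades (a) to the simultaneous statement $I^{-}_p(t)=I^{+}_p(t)$ for \emph{every} $t>0$ almost surely. Both interfaces are finite by Proposition \ref{prop:intfinite} (since $p$ is an interior reference point) and continuous in $t$ by Proposition \ref{prop:intcont}. Monotonicity of $d_p(\cdot,t)$ (Proposition \ref{prop:mono}) identifies the zero set with the closed interval $[I^{-}_p(t),I^{+}_p(t)]$, so unique definedness amounts to this interval being a single point for every $t>0$.

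For (a), fix $(x,t)\in\Hp$ and let $y_{*}$ be the maximizer of $y\mapsto h_0(y)+\La(y,0;x,t)$, which is unique almost surely by Lemma \ref{lem:uniqueend}. If $y_{*}>p$ then the supremum defining $\hh^{+}_p(x,t)$ is attained and equals $\hh(x,t)$, whereas $\hh^{-}_p(x,t)<\hh(x,t)$ because the unique maximizer lies strictly in $(p,\infty)$; so $d_p(x,t)>0$. Symmetrically $y_{*}<p$ gives $d_p(x,t)<0$, and $y_{*}=p$ gives $d_p(x,t)=0$. Thus $\{d_p(x,t)=0\}$ and $\{y_{*}=p\}$ coincide up to a null set. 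Now choose rational $a<p<b$ with $h_0(a),h_0(b)$ finite. Exactly as in the proof of Lemma \ref{lem:uniqueend}, the process $y\mapsto\La(y,0;x,t)-\La(a,0;x,t)$ on $[a,b]$ is absolutely continuous with respect to a Brownian motion of diffusivity $\sqrt{2}$ started at zero. Consequently the law of the maximizer on $[a,b]$ of $h_0+\La(\cdot,0;x,t)$ is absolutely continuous with respect to $\mathfrak{p}_{h_0,[a,b]}$, which by non-polarity of $p$ assigns zero mass to $\{p\}$. Since $y_{*}=p$ forces the restricted maximizer to equal $p$, we conclude $\mathbf{Pr}(y_{*}=p)=0$.

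For (b), a union bound over the countable set $\mathbb{Q}\times(\mathbb{Q}\cap(0,\infty))$ using (a) yields an almost sure event on which $d_p(x,t)\neq 0$ for every rational pair $(x,t)\in\Hp$. Suppose on this event that some (random) $t_0>0$ satisfied $I^{-}_p(t_0)<I^{+}_p(t_0)$. By continuity of $I^{\pm}_p$, there is an open neighbourhood $U$ of $t_0$ and a single rational $x$ with $I^{-}_p(t)<x<I^{+}_p(t)$ for every $t\in U$; picking a rational $t_1\in U$ gives $d_p(x,t_1)=0$ at a rational pair, contradicting the union bound. Hence almost surely $I^{-}_p(t)=I^{+}_p(t)\in\R$ for every $t>0$, and the interface is uniquely defined.

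The main obstacle I anticipate is step (a): the delicate part is converting the topological condition ``non-polar'' (formulated via Brownian motion) into a statement about the $\La$-maximizer, which requires combining the almost-sure uniqueness of the maximizer (Lemma \ref{lem:uniqueend}) with the Airy-to-Brownian absolute continuity on a fixed compact interval and then arguing that the global maximizer being at $p$ is controlled by the restricted one. Once (a) is established, the continuity argument (b) is routine.
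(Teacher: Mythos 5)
Your proof is correct, and its probabilistic core is the same as the paper's: at a fixed $(x,t)$, the event that $p$ maximizes $y\mapsto h_0(y)+\La(y,0;x,t)$ is a null event, via the absolute continuity of the recentred Airy-type process with respect to Brownian motion on a compact interval around $p$ together with non-polarity of $p$; the upgrade to all $t>0$ then goes through countability of rational points and continuity of the interfaces, just as in the paper. Where you differ is in the reduction to that pointwise statement: the paper passes through Lemma \ref{lem:Gp} (every interior point of $\{d_p=0\}$ has all its geodesics from $h_0$ starting at $p$), which leans on the unique geodesic condition (Theorem \ref{thm:ugc}), and only needs that a fixed point has \emph{some} geodesic starting at $p$ with probability zero; you instead invoke the a.s.\ uniqueness of the maximizer (Lemma \ref{lem:uniqueend}) to get the trichotomy $y_*>p\Rightarrow d_p>0$, $y_*<p\Rightarrow d_p<0$, and hence the sharper pointwise statement $\mathbf{Pr}(d_p(x,t)=0)=0$ directly, bypassing Lemma \ref{lem:Gp} and the UGC. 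Both routes work; yours is marginally more self-contained at this point of the paper, while the paper's exploits machinery it needs anyway for the portrait results. Two small touch-ups: in the trichotomy, the strict inequality $\hh^{-}_p(x,t)<\hh(x,t)$ when $y_*>p$ uses that the restricted supremum over $(-\infty,p]$ is attained (this follows from Lemma \ref{lem:exist} applied to $h^-_p$, which is finite somewhere since $h_0(a)$ is finite); and there is no need (nor guarantee) that the points $a<p<b$ with $h_0(a),h_0(b)$ finite can be taken rational — any such interval works, since non-polarity of $p$ applies to every interval containing $p$ in its interior.
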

	
	\begin{proof}
		Since $p$ is an interior reference point, $I^{\pm}_p(t)$ are finite valued curves by Proposition \ref{prop:intfinite}.
		
		Recall $I^{\pm}_p$ are the boundaries of the set $\{d_p = 0\}$ and, by continuity of interfaces, they are equal if and only if this
		set has an empty interior. A point in the interior of $\{d_p=0\}$ has the property that all its geodesics from $h_0$ start at $p$; this
		is shown in Lemma \ref{lem:Gp} below. So to show $\{d_p=0\}$ has empty interior almost surely, it is enough to prove that any
		given $(x,t) \in \Hp$ has a geodesic from $h_0$ starting at $p$ with zero probability. For then the interior of $\{d_p=0\}$
		contains no rational points almost surely, and so it is empty.
		
		Now if $(x,t) \in \Hp$ has a geodesic from $h_0$ starting at $p$ then $y=p$ maximizes $y \mapsto \La(0,y; x,t) + h_0(y)$.
		Let $[c, d]$ be any compact interval of positive length containing $p$. Then $p$ is a maximizer of $y \mapsto \La(0,y;x,t) - \La(0,c;x,t) + h_0(y)$
		for $y \in [c, d]$. Let $B(y)$ be a Brownian motion started from $B(c) = 0$. The former process is absolutely continuous with
		respect to $B(y) + h_0(y)$ over $[c,d]$. As $p$ is not a polar point of $h_0$, it maximizes $B+h_0$ over $[c,d]$ with zero probability.
		The same therefore holds for the former process, as required.
	\end{proof}
	
	A sufficient condition for $p$ being a non polar point of $h_0$ is if the modulus of continuity of $h_0$ at $p$ has H\"{o}lder exponent more than $1/2$. Indeed, assuming without loss of generality that $p=0$ and $h_0(0)=0$, note that  by the law of iterated logarithm there exists arbitrary small $t>0$ so that $B(t)>\sqrt{t\log \log (1/t)}$, and for those $t$ one has $B(t)+h(t)>0$. 
	
    \begin{exm}\label{exm:BMpolar}
    Another example: given $p\in \R$, when $h_0$ is Brownian motion then $p$ is non-polar almost surely. This follows from the fact that sum of two independent Brownian motions is another Brownian motion. In a similar vein, any given $p$ is almost surely a non-polar point of the KPZ fixed point $\La(h_0; x,t)$ at any fixed positive time. This is because $\La(h_0; x,t)$ is absolutely continuous with respect to Brownian motion on any compact interval \cite[Theorem 1.2]{SV}.
    \end{exm}

	\subsection{Distribution of interfaces} \label{sec:interfacelaw}
	There is a geometric description of the law of the interface $I^{\pm}_p(t)$ associated to an initial condition $h_0$
	in terms of the concave majorant of a function. This also helps to derive the law of the asymptotic direction of interfaces.
	
	The {\bf concave majorant} of a function $f: \R \to \R \cup \{-\infty\}$ is the smallest concave function $c$ such that $c(x) \geq f(x)$
	for every $x$. We denote it by $\cm(f)$.
	
	Let $\partial_l c$ and $\partial_r c$ be the left and right derivatives, respectively, of a concave function $c$.
	Recall these are defined as $\partial_lc(p) = \lim_{\eps \downarrow 0} (c(p)-c(p-\eps))/\eps$
	and $\partial_rc(p) = \lim_{\eps \downarrow 0} (c(p+\eps)-c(p))/\eps$.
	The concave majorant has the property that all maximizers of $f$ are strictly to the right of $p$ if and only if $\partial_r \cm(f)(p) > 0$.
	Similarly, all maximizers of $f$ are strictly to the left of $p$ if and only if $\partial_l \cm(f)(p) < 0$. Indeed, $f$ and $\cm(f)$
	are equal at the maximizers of $f$ and the derivative of $\cm(f)$ is non-increasing. See \cite{Gr} for properties of concave majorants.
	
	Denote by $$\mathfrak{A}_t(x) = \La(0,0;x,t)$$ the (parabolic) Airy process at time $t$ as a process in $x$.
	
	\begin{prop}\label{p:interface-distr}
		Given an initial condition $h_0$, an interior reference point $p$ and a time $t > 0$,
		$$
		I^-_p(t)\;\;\stackrel{law}{=}\;\;-\frac{t}{2}\partial_{l}\cm(h_0+\mathfrak A_t)(p) \quad \text{and} \quad I^+_p(t)\;\;\stackrel{law}{=}\;\;-\frac{t}{2}\partial_{r}\cm(h_0+\mathfrak A_t)(p).
		$$
	\end{prop}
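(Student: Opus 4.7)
The plan is to rewrite $\{I^+_p(t)\ge x\}$ as a deterministic condition on the concave majorant of the random function $F_{x,t}(y):=h_0(y)+\La(y,0;x,t)$, then use shear invariance and a start--end symmetry of $\La$ to replace $F_{x,t}$ by $h_0+\mathfrak A_t$ modulo a linear term in $y$.

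First I would set up the reformulation. By monotonicity of $d_p(\cdot,t)$ (Proposition \ref{prop:mono}) and continuity, $I^+_p(t)\ge x$ iff $d_p(x,t)\le 0$, which says the rightmost maximizer of the upper semicontinuous function $F_{x,t}$ is $\le p$. Because $h_0$ grows linearly while $\La(\cdot,0;x,t)$ decays parabolically, the argmax set is compact; and by Lemma \ref{lem:uniqueend} it is almost surely a single point $y^\ast$. A deterministic lemma about concave majorants (the convex hull of the argmax of $F$ equals the argmax of $\cm(F)$ for upper semicontinuous $F$) shows that $y^\ast\le p$ is equivalent to $\cm(F_{x,t})$ being non-increasing on $[p,\infty)$, i.e.\ to $\partial_r\cm(F_{x,t})(p)\le 0$. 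Thus a.s.\
\[
\{I^+_p(t)\ge x\}=\{\partial_r\cm(F_{x,t})(p)\le 0\}.
\]

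Second I would apply the symmetries of $\La$. Writing $x=dt$, shear invariance \eqref{eqn:shear} applied to slide the endpoint from $dt$ down to $0$ gives, as joint laws in $y$,
\[
\La(y,0;dt,t)\;\stackrel{law}{=}\;\La(y,0;0,t)+2dy-d^2 t.
\]
Next, by a start--end swap symmetry of the directed landscape (a consequence of spatial stationarity, spatial reflection, and the time-reversal symmetry of $\La$ from \cite{DOV}), the process $y\mapsto\La(y,0;0,t)$ has the same joint law as $y\mapsto\La(0,0;y,t)=\mathfrak A_t(y)$. Combining, $F_{dt,t}(\cdot)\stackrel{law}{=}h_0(\cdot)+\mathfrak A_t(\cdot)+2d(\cdot)-d^2 t$ as processes in $y$, with the additive constant being irrelevant.

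Third I would carry out the concave-majorant computation. Since adding an affine function to $F$ adds the same affine function to $\cm(F)$, one gets $\partial_r\cm(h_0+\mathfrak A_t+2d(\cdot))(p)=\partial_r\cm(h_0+\mathfrak A_t)(p)+2d$. Setting $R:=\partial_r\cm(h_0+\mathfrak A_t)(p)$ and combining with the first step,
\[
\mathbb P(I^+_p(t)\ge dt)\;=\;\mathbb P(R+2d\le 0)\;=\;\mathbb P\!\left(-\tfrac{t}{2}R\ge dt\right).
\]
This survival-function identity holds for every $x=dt\in\R$, so $I^+_p(t)\stackrel{law}{=}-\tfrac{t}{2}\partial_r\cm(h_0+\mathfrak A_t)(p)$. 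The statement for $I^-_p(t)$ follows by the mirror argument using the leftmost maximizer and the left derivative $\partial_l$.

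The main obstacle is the process-level start--end symmetry $y\mapsto\La(y,0;0,t)\stackrel{law}{=}\mathfrak A_t(y)$: it is a standard consequence of known symmetries of $\La$, but must be invoked as an equality of joint laws (not sample-wise), so the citation should be precise. A secondary point is tie-breaking in the argmax when $p$ is a polar point of $h_0+\mathfrak A_t$, but this is handled transparently by using ``rightmost argmax'' with $\partial_r$ and ``leftmost argmax'' with $\partial_l$, because both the definitions of $I^\pm_p(t)$ in Definition \ref{def:interface} and the CM characterization treat ties on the same side.
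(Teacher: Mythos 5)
Your proposal is correct and follows essentially the same route as the paper's proof: rewrite the event $\{I^{\pm}_p(t)\gtrless x\}$ as a statement about where the maximizer of $y\mapsto h_0(y)+\La(y,0;x,t)$ sits, translate that into a sign condition on a one-sided derivative of the concave majorant, and then use shear invariance plus the time-reversal symmetry of $\La$ to replace this process by $h_0+\mathfrak A_t$ up to an affine term. The only (cosmetic) difference is that you work with non-strict inequalities and invoke the a.s.\ uniqueness of the maximizer (Lemma \ref{lem:uniqueend}), whereas the paper uses strict inequalities and the deterministic fact that all maximizers lie strictly left of $p$ iff $\partial_l\cm<0$ at $p$, so no uniqueness is needed.
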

	
	\begin{proof}
		We prove the first claim as the second is symmetric.
		Note $I^-_p(t) > y$ if and only if $\La(\cdot,0;y,t)+h_0(\cdot)$
		takes its maximum strictly to the left of $p$. Since $h_0$ is an initial condition, hence
		does not grow faster than linearly, and $\La(\cdot,0;y,t)$ decays quadratically, the
		concave majorant of $\La(\cdot,0;y,t)+h_0(\cdot)$ exists, and the above is equivalent to
		$$
		\partial_l \cm \big (\La(\cdot, 0; y,t)+h_0(\cdot)\big )(p) < 0.
		$$
		By shear invariance, as functions of $x$,
		$$
		\La(x,0;y,t)\stackrel{law}{=} \mathcal L(x,0;0,t)+ \frac{y(2x-y)}{t}.
		$$
		$\La(x,0;0,t)$ has the same law as $\La(0,0;x,t)$ as a function of $x$ by the time reversal symmetry of $\La$ \cite{DV}.
		Therefore,
		$$
		\partial_l \cm \big (\mathcal L(\cdot,0;y,t)+h_0(\cdot) \big )(p)\stackrel{law}{=}  \partial_l \cm(\mathfrak A_t+h_0)(p)+\frac{2y}{t}.
		$$
		As a result, $\mathbf{Pr}(I^{-}_p(t) > y) = \mathbf{Pr}(\partial_l \cm(\mathfrak A_t+h_0)(p) < -2y/t)$ as claimed.
	\end{proof}
	
	\begin{cor}
		For any $t > 0$,
		$$ \mathbf{Pr}(I^-_p(t)=I^+_p(t))=1\;\; \Leftrightarrow \;\; \mathbf{Pr}(\cm(h_0+\mathfrak A_t)\mbox{ is differentiable at }p)=1.$$
	\end{cor}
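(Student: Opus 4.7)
The plan is to deduce this corollary directly from Proposition \ref{p:interface-distr} by combining the distributional identities given there with two monotonicity facts: $I^-_p(t)\le I^+_p(t)$ pointwise (immediate from Definition \ref{def:interface} since the competition function is non-decreasing in $x$) and $\partial_l \cm(f)(p)\ge \partial_r \cm(f)(p)$ pointwise (a general property of concave functions). The key auxiliary observation is the following elementary lemma, which I will state and prove: if real random variables $X,Y$ satisfy $X\le Y$ almost surely and have the same distribution, then $X=Y$ almost surely. The proof is a one-liner: for each rational $a$, $\{X\le a\}\supseteq \{Y\le a\}$ and the two events have the same probability, so $\mathbf{Pr}(X\le a<Y)=0$; summing over a countable dense set of $a$ gives $\mathbf{Pr}(X<Y)=0$.

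For the direction ($\Leftarrow$), suppose $\cm(h_0+\mathfrak A_t)$ is differentiable at $p$ almost surely. Then $\partial_l\cm(h_0+\mathfrak A_t)(p) = \partial_r\cm(h_0+\mathfrak A_t)(p)$ almost surely, so the two sides have identical marginal laws. Proposition \ref{p:interface-distr} then gives that $I^-_p(t)$ and $I^+_p(t)$ have identical marginal laws. Since $I^-_p(t)\le I^+_p(t)$ almost surely, the lemma above forces $I^-_p(t)=I^+_p(t)$ almost surely.

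For the direction ($\Rightarrow$), suppose $\mathbf{Pr}(I^-_p(t)=I^+_p(t))=1$. Then $I^-_p(t)$ and $I^+_p(t)$ have the same law, and by Proposition \ref{p:interface-distr} so do $-\tfrac{t}{2}\partial_l \cm(h_0+\mathfrak A_t)(p)$ and $-\tfrac{t}{2}\partial_r\cm(h_0+\mathfrak A_t)(p)$; equivalently, $\partial_l\cm(h_0+\mathfrak A_t)(p)$ and $\partial_r\cm(h_0+\mathfrak A_t)(p)$ have the same law. Applying the lemma with $Y=\partial_l\cm(h_0+\mathfrak A_t)(p)$ and $X=\partial_r\cm(h_0+\mathfrak A_t)(p)$ (which satisfy $X\le Y$ always) yields $\partial_l\cm(h_0+\mathfrak A_t)(p)=\partial_r\cm(h_0+\mathfrak A_t)(p)$ almost surely, which is exactly differentiability of $\cm(h_0+\mathfrak A_t)$ at $p$.

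There is essentially no obstacle here; the only subtlety to flag is that Proposition \ref{p:interface-distr} compares only the marginal distributions on the two sides, so one cannot directly transfer the almost-sure identity $I^-_p(t)=I^+_p(t)$ into an almost-sure identity of derivatives without invoking the pointwise orderings and the $X\le Y$, $X\stackrel{law}{=} Y$ lemma. Using the pointwise orderings on both the interface side and the concave-majorant side is what makes the marginal-law information sufficient.
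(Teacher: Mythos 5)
Your proposal is correct and follows essentially the same route as the paper: both arguments rest on the pointwise orderings $I^-_p(t)\le I^+_p(t)$ and $\partial_r\cm\le\partial_l\cm$ together with the observation that two almost surely ordered random variables with the same law must coincide almost surely, and then transfer between the two sides via the distributional identities of Proposition \ref{p:interface-distr}. The only difference is that you state and prove the ordered-equal-in-law lemma explicitly, which the paper leaves implicit.
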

	
	\begin{proof}
		Since $I^{-}_p(t) \leq I^{+}_p(t)$ and $\partial_r \cm(h_0 + \mathfrak A_t)(p) \leq \partial_l \cm(h_0 + \mathfrak A_t)(p)$,
		each of these pairs of random variables are equal with probability one if and only if they have the same law.
		Therefore, by Proposition \ref{p:interface-distr}, $I^-_p(t)$ and $I^+_p(t)$ have the same law if and only if
		$\cm(h_0 + \mathfrak A_t)$ is differentiable at $p$.
	\end{proof}
	
	\subsection{Asymptotic direction of interfaces} \label{sec:directionofint}
	Proposition \ref{prop:bisector} showed that the bisector between two points has an asymptotically Normal direction.
	What about other initial conditions? Intuition from Euclidean geometry suggests asymptotically flat initial conditions
	should have asymptotically vertical interfaces -- a fact confirmed by Theorem \ref{thm:direction}. More generally,
	in $\S$\ref{sec:subwedge} we compute the direction of interfaces for initial conditions that do not grow too fast.
	
	\begin{thm} \label{thm:direction}
		Suppose an initial condition $h_0(x)$ is asymptotically flat in that $|h_0(x)|/ |x| \to 0$ as $|x| \to \infty$.
		Then for every $p$, almost surely, $\lim_{t \to \infty} I^{\pm}_{p}(t)/t = 0$.
	\end{thm}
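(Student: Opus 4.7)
The plan is to show sample-wise that, almost surely, for every $\varepsilon > 0$ there is a random threshold $T(\omega, \varepsilon) < \infty$ such that
$$ d_p(\varepsilon t, t) > 0 \qquad \text{and} \qquad d_p(-\varepsilon t, t) < 0 \qquad \text{for all}\; t \geq T. $$
By the monotonicity and continuity of $d_p(\cdot, t)$ (Proposition \ref{prop:mono}), these inequalities force $-\varepsilon t < I^{-}_p(t) \leq I^{+}_p(t) < \varepsilon t$. Taking $\varepsilon = 1/n$ for $n \in \mathbb{N}$ then yields $I^{\pm}_p(t)/t \to 0$ almost surely.

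The two inequalities are symmetric; I focus on $d_p(\varepsilon t, t) > 0$, and write $d_p(\varepsilon t, t) = \hh^{+}_p(\varepsilon t, t) - \hh^{-}_p(\varepsilon t, t)$. For a lower bound on $\hh^{+}_p$, use the test point $y = \varepsilon t$ (which lies in $[p, \infty)$ once $t \geq p/\varepsilon$):
$$ \hh^{+}_p(\varepsilon t, t) \;\geq\; h_0(\varepsilon t) + \La(\varepsilon t, 0; \varepsilon t, t) \;=\; h_0(\varepsilon t) + \Ai(\varepsilon t, 0; \varepsilon t, t). $$
Asymptotic flatness gives $h_0(\varepsilon t) \geq -\delta \varepsilon t$ for any $\delta > 0$ and $t$ sufficiently large, while \eqref{eqn:Abound} controls $|\Ai(\varepsilon t, 0; \varepsilon t, t)|$ by a sample-wise finite random constant times $t^{1/2}$. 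Thus $\hh^{+}_p(\varepsilon t, t) \geq -\delta \varepsilon t - O(t^{1/2})$.

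For an upper bound on
$$ \hh^{-}_p(\varepsilon t, t) = \sup_{y \leq p} \left\{ h_0(y) + \Ai(y, 0; \varepsilon t, t) - \frac{(\varepsilon t - y)^2}{t} \right\}, $$
split the supremum into $y \in [-t, p]$ and $y < -t$. On the first range, the parabolic term is at least $\varepsilon^2 t - O(1)$, while asymptotic flatness and \eqref{eqn:Abound} bound $h_0(y) + \Ai \leq \delta t + O(t^{1/2})$, so the contribution is at most $(\delta - \varepsilon^2) t + O(t^{1/2})$. On $y < -t$, the quadratic $-(\varepsilon t - y)^2/t$ is of order $-y^2/t \leq -|y|$, which dominates the linear-in-$|y|$ and $|y|^{1/2}$ contributions from $h_0$ and \eqref{eqn:Abound}, so this range contributes less. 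Combining,
$$ d_p(\varepsilon t, t) \;\geq\; \bigl(\varepsilon^2 - \delta(1 + \varepsilon)\bigr) t - O(t^{1/2}), $$
which is strictly positive for $\delta < \varepsilon^2/(1 + \varepsilon)$ and all $t$ large enough (depending on the sample).

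The principal technical point is the uniform sample-wise control of the $O(t^{1/2})$ remainders, which rests on the pathwise Airy bound \eqref{eqn:Abound}: its random constant is finite almost surely and independent of $(y, x, t)$, so the estimate propagates through the supremum at every scale $t$. The companion inequality $d_p(-\varepsilon t, t) < 0$ follows by the parallel calculation with test point $y = -\varepsilon t$ in $\hh^{-}_p$ and splitting the supremum over $y \geq p$ in $\hh^{+}_p$ analogously.
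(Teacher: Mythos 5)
Your proposal is correct and follows essentially the same route as the paper: reduce the claim to showing $d_p(\eps t,t)>0$ and $d_p(-\eps t,t)<0$ for all large $t$, then beat the sublinear growth of $h_0$ and the pathwise Airy fluctuation bound against the parabolic penalty $(x-y)^2/t$. The only (immaterial) differences are that the paper uses the sharper bound \eqref{eqn:DOVAbound} to absorb the fluctuations into $\delta(t+|x|+|y|)$ and optimizes the suprema directly, whereas you use \eqref{eqn:Abound} with a single test point for $\hh^{+}_p$ and a split of the supremum at $|y|=t$ for $\hh^{-}_p$.
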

	
	Combining the theorem with the affine shift invariance of interfaces leads to
	\begin{cor}
		Suppose $h_0(x) = a x + h(x)$ where $h(x)$ is asymptotically flat and $a \in \R$.
		Then for every $p$, almost surely, $\lim_{t \to \infty} I^{\pm}_p(t)/ t = -a/2$.
	\end{cor}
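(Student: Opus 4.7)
The plan is to deduce the corollary from Theorem \ref{thm:direction} using the affine shift invariance from $\S$\ref{sec:invarianceofint}. Since $h$ is asymptotically flat, Theorem \ref{thm:direction} applied with initial condition $h$ gives that, for every $p$, almost surely
\[
\lim_{t\to\infty}\frac{I^{\pm}_p(t;\,h)}{t}\;=\;0.
\]
This takes care of the ``easy'' case $a=0$, and the only remaining task is to propagate it to $h_0(x)=ax+h(x)$, picking up the claimed drift $-a/2$.

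For this propagation I would use the affine shift identity from $\S$\ref{sec:invarianceofint}, applied with $h$ in place of $h_0$ and slope $a$. It states that as processes in the time variable,
\[
\bigl(I^{\pm}_p(t;\,h_0)\bigr)_{t>0}\;\stackrel{law}{=}\;\Bigl(I^{\pm}_p(t;\,h)\;-\;\tfrac{at}{2}\Bigr)_{t>0},
\]
where both sides live in $C((0,\infty),\R)$ by the continuity of interfaces, Proposition \ref{prop:intcont}. Dividing through by $t$, the process $t\mapsto I^{\pm}_p(t;\,h_0)/t$ has the same law as the process $t\mapsto I^{\pm}_p(t;\,h)/t-a/2$. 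Since convergence to a specified limit as $t\to\infty$ is a measurable event on path space, the almost sure convergence of the right-hand process to $-a/2$ (established in the first paragraph) transfers to the left-hand one, yielding $\lim_{t\to\infty}I^{\pm}_p(t;\,h_0)/t=-a/2$ almost surely.

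The main obstacle, minor but genuine, is that the affine shift invariance is an equality of laws rather than a sample-path identity, so one must be careful that it holds at the process level in $t$ and not merely for each fixed $t$ separately; otherwise one would only recover convergence in probability. The statement in $\S$\ref{sec:invarianceofint} explicitly asserts that the invariances hold jointly in $t$, which is precisely what is required, and this in turn comes from the process-level shear invariance \eqref{eqn:shear} of $\La$ itself. With this joint-in-$t$ identity in hand, the transfer of the almost sure limit is automatic.
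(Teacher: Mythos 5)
Your proposal is correct and matches the paper's intended argument: the paper derives this corollary precisely by combining Theorem \ref{thm:direction} (applied to the asymptotically flat part $h$) with the affine shift invariance of interfaces from $\S$\ref{sec:invarianceofint}, which is stated to hold jointly in $t$ so the almost sure limit transfers across the equality in law exactly as you describe. Your explicit handling of the process-level (rather than fixed-$t$) identity is the right way to fill in the one-line justification the paper gives.
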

	
	\begin{proof}
		We may assume that $p = 0$ by translation invariance of interfaces. Since $I^{-}_0(t)$ and $I^{+}_0(t)$
		are the first and last zeroes of the monotone function $d(\cdot, t)$, it is enough to show that for every $\epsilon > 0$
		it holds that $d(\eps t, t) > 0$ and $d(-\eps t, t) < 0$ for all large values of $t$. We will prove that
		$d(\eps t, t) > 0$, and the argument for the other inequality is entirely similar.
		
		Suppose $0 < \eps < 1/2$ and let $\delta \in (0, \eps^2/5)$. The bound \eqref{eqn:DOVAbound} for $\Ai(y,0;x,t)$
		implies that there is a random $t_0$ such that if $t \geq t_0$ then
		$$ |\Ai(y,0;x,t)| \leq \delta (t + |x| + |y|).$$
		Since $|h_0(x)|/|x|$ tends to zero, there is a $t_1$ such that for $t \geq t_1$,
		$$|h_0(x)| \leq \delta \left (|x| + \frac{t}{2} \right ).$$
		Suppose $t \geq \max \{t_0,t_1\}$ so that both the bounds above hold.
		We will show that $d(\eps t,t) > 0$, which is sufficient.
		
		Consider $\La(h_0^{-}; \eps t,t) = \sup_{y \leq 0} \, \{ h_0(y) + \La(y,0;\eps t, t)\}$. The bounds above imply
		$$ \La(h_0^{-}; \eps t,t) \leq \sup_{y \leq 0} \, \left \{ -\frac{(y-\eps t)^2}{t}-2\delta y  +2\delta t \right \}$$
		since $\eps < 1/2$. The optimization may be rearranged as
		$$ t(2\delta-\eps^2) + \sup_{y \leq 0} \, \left \{-\frac{y^2}{t}+2y(\eps-\delta) \right \} = t(2\delta-\eps^2). $$
		Since $\delta < \eps$, the maximum occurs at $y = 0$.
		
		A similar lower bound holds for $\La(h_0^{+}; \eps t,t;) = \sup_{y \geq 0} \, \{h_0(y) + \La(y,0;\eps t, t)\}$.
		\begin{align*}
			\La(h_0^{+}; \eps t,t) &\geq \sup_{y\geq 0} \left \{ -\frac{(y-\eps t)^2}{t}-2\delta y -2\delta t \right \}\\
			& = -t(2\delta+\eps^2) + \sup_{y \geq 0} \left \{-\frac{y^2}{t}+2y(\eps-\delta) \right \} \\
			& = -t(2\delta +\eps^2)+ t(\eps-\delta)^2 \\
			& \geq - 3 \delta t.
		\end{align*}
		In the final line we used that $\eps < 1/2$.
		
		Combining the upper bound on $\La(\eps t,t; h_0^{-})$ with the lower bound on $\La(\eps t,t; h_0^{+})$ shows 
		that $d(\eps t, t) \geq (\eps^2 - 5 \delta) t > 0$.
	\end{proof}
	
	\subsubsection{Directions for sub-wedge initial conditions} \label{sec:subwedge}
	For an initial condition $h_0$, its left and right upper linear growth rates are
	$$ a_{-} = \liminf_{x \to -\infty} \frac{h_0(x)}{x} \qquad a_{+} = \limsup_{x \to \infty} \frac{h_0(x)}{x}.$$
	Note $a_{\pm}$ take values in $\R \cup \{\pm \infty \}$. An initial condition is called {\bf sub-wedge}
	if $a_{+} < a_{-}$. For instance, $h_0(x) = - a|x| + b$ for $a > 0$ is sub-wedge, as is any $h_0$ that is $-\infty$ outside a
	compact interval or outside a half-line of the from $[p,\infty)$ or $(-\infty, p]$. We determine the asymptotic
	direction of interfaces to sub-wedge initial conditions in Theorem \ref{thm:randdirection} and Corollary \ref{cor:directionlaw}.
	
	If $B(x)$ is two-sided Brownian motion (of diffusivity $\sqrt{2}$) and $h_0$ is a sub-wedge initial condition, then
	\begin{equation}\label{e:supBy}
		\sup_{y \in \R}\, \{h_0(y)+B(y)+by\}
	\end{equation}
	is almost surely finite and achieved for $-a_-< b <-a_+$. For $b<-a_-$ or $b>-a_+$, it is almost surely infinite.
	Recall the Busemann function $W_d$ in direction $d$, for which $W_d(x,0)$ has law $B(x) + 2d x$.
	Consequently, if $d \in (- a_{-}/2, -a_{+}/2)$ then, almost surely,
	\begin{equation} \label{eqn:Wdh} 
		W_d(h_0) = \sup_{y \in \R} \, \{h_0(y) + W_d(y,0) \}
	\end{equation}
	is finite and achieved.
	
	A geodesic $\gamma$ from an initial condition $h_0$ in direction $d$ is a geodesic $\gamma: [0,\infty) \to \R$
	of $\La$ in direction $d$ such that $\gamma$ restricted to $[0,t]$ is a geodesic from $h_0$ for every $t$. Recall the notation
	$\La(h_0;p) = \sup_y \, \{h_0(y) + \La(y,0;p)\}$ from \eqref{eqn:Ldist}.
	
	\begin{lem}\label{p:h0-geod}
		Let $d \in \R$ and $h_0$ be an initial condition. Suppose $W_d(h_0)$ from \eqref{eqn:Wdh} is finite and achieved.
		Then there is a geodesic $\gamma$ from $h_0$ in direction $d$.
		Such a geodesic is obtained by choosing a maximizer $z$ of $W_d(h_0)$ and following the rightmost geodesic of $\La$ from $(z,0)$
		in direction $d$. With $z=\gamma(0)$, for any point $p$ on $\gamma$, 
		\begin{eqnarray}\label{e:h0p}
			\La(h_0;p)&=&\La(z,0;p)+h_0(z)
			\\
			\label{e:WzWp}
			&=&W_d(z,0) + h_0(z) -W_d(p)
			\\
			\label{e:WpWy}
			&=&
			W_d(h_0) -W_d(p).
		\end{eqnarray}
	\end{lem}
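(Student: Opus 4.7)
The plan is to build $\gamma$ exactly by the recipe in the statement, and then verify the three equalities simultaneously; the claim that $\gamma$ is a geodesic from $h_0$ falls out of \eqref{e:h0p}.

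First I would check that $W_d(h_0) = \sup_y \{h_0(y) + W_d(y,0)\}$ is almost surely finite and attained. By shear invariance of $\La$ together with Corollary \ref{c:Busemann-Brownian}, the process $x \mapsto W_d(x,0) - W_d(0,0)$ is two-sided Brownian with diffusivity $\sqrt{2}$ and drift $2d$. The sub-wedge hypothesis $d \in (-a_-/2, -a_+/2)$ then forces $h_0(y) + W_d(y,0) \to -\infty$ as $|y| \to \infty$ almost surely, so by upper semicontinuity of $h_0$ the supremum is attained at some random $z$. Fix such a $z$ and take an infinite geodesic $\gamma$ from $(z,0)$ in direction $d$, which exists by Theorem \ref{thm:geoexist}.

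The key input is the Busemann upper bound: for every $y \in \R$ and $p \in \Hp$,
$$\La(y,0;p)\ \le\ W_d(y,0) - W_d(p),$$
with equality when $p$ lies on an infinite geodesic from $(y,0)$ in direction $d$. To prove this, let $r$ be a point on the common tail after coalescence of all direction-$d$ geodesics from $(y,0)$ and from $p$; such an $r$ exists by the direction-$d$ analogue of Theorem \ref{thm:geotree}, obtained by shearing direction $d$ to direction $0$. Then $W_d(y,0) - W_d(p) = \La(y,0;r) - \La(p;r)$, and the reverse triangle inequality $\La(y,0;r) \ge \La(y,0;p) + \La(p;r)$ delivers the bound; equality holds precisely when $p$ lies on a $(y,0)\to r$ geodesic. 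Applying this with $y = z$ and $p$ on $\gamma$, one may take $r$ on the tail of $\gamma$ past $p$, which lies on both Busemann-defining geodesics after coalescence; the triangle equality along $\gamma$ then yields $\La(z,0;p) = W_d(z,0) - W_d(p)$.

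Finally, for $p$ on $\gamma$ and arbitrary $y$,
$$h_0(y) + \La(y,0;p)\ \le\ h_0(y) + W_d(y,0) - W_d(p)\ \le\ W_d(h_0) - W_d(p),$$
with both inequalities tight at $y = z$. Taking the supremum over $y$ gives $\La(h_0;p) = W_d(h_0) - W_d(p)$, which is \eqref{e:WpWy}, while the attained value at $y = z$ gives $\La(h_0;p) = \La(z,0;p) + h_0(z) = W_d(z,0) + h_0(z) - W_d(p)$, delivering both \eqref{e:h0p} and \eqref{e:WzWp}. The relation \eqref{e:h0p} says the relative length of $\gamma|_{[0,t]}$ equals $\La(h_0;p)$ at $p = (\gamma(t),t)$, i.e.\ $\gamma|_{[0,t]}$ is a geodesic from $h_0$ to $p$ for every $t>0$, so $\gamma$ is a geodesic from $h_0$ in direction $d$. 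The main obstacle is the Busemann upper bound for direction $d \ne 0$: the paper develops $W = W_0$ in detail, and one must transfer the almost sure coalescence of direction-$d$ geodesics and the resulting path-independent definition of $W_d$ across directions via the shear invariance \eqref{eqn:shear}.
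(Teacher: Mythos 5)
Your proposal is correct and follows essentially the same route as the paper: choose a maximizer $z$ of $h_0(\cdot)+W_d(\cdot,0)$, follow the $\La$-geodesic from $(z,0)$ in direction $d$, and compare an arbitrary starting point $y$ against $z$ through the Busemann cocycle, with equality along $\gamma$ giving \eqref{e:h0p}--\eqref{e:WpWy}. The only (harmless) difference is in the intermediate step: you establish the bound $\La(y,0;p)\le W_d(y,0)-W_d(p)$ via a common coalescence point and the reverse triangle inequality (the same mechanism as Theorem \ref{thm:Busemann-KPZ}), whereas the paper routes the finite geodesic from $(y,0)$ to $p$ through its first meeting point with $\gamma$; since only the inequality is needed, your version is, if anything, slightly cleaner.
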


	\begin{proof}
		By definition of the Busemann function $W_d$,
		for any two points $p,q$ in increasing time order on a geodesic $\gamma$ of $\La$ in direction $d$,
		\begin{equation}\label{e:geod-pq}
			\mathcal L(p,q)=W_d(p)-W_d(q).
		\end{equation}
	
		Since $W_d(h_0)$ if finite and achieved, there is a $z$ such that for every $y \in \R$,
		\begin{equation}\label{e:ylex}
			W_d(y,0)+h_0(y)\leq W_d(z,0)+h_0(z) = W_d(h_0).
		\end{equation}
		Let $\gamma$ be the rightmost geodesic of $\La$ from $(z,0)$ in direction $d$. We will show $\gamma$ is a geodesic from $h_0$.
		
		Let $p$ be any point on $\gamma$. Let $y \in \R$ and let $\gamma'$ be the rightmost geodesic from $(y,0)$ in direction $d$.
		Let $q$ be the point where $\gamma$ and $\gamma'$ coalesce.
		If $q$ appears on $\gamma$ at time after $p$, the reverse triangle inequality implies
		$$\La(y,0;p) \leq \La(y,0;q) - \La(p;q).$$
		If $q$ appears at time before $p$, the sharpness of the reverse triangle equality along the geodesic $\gamma'$ implies
		$$\La(y,0;p) = \La(y,0;q) + \La(q;p).$$
		The two displays above together with \eqref{e:geod-pq} implies
		$$\La (y,0;p) \leq W_d(y,0)-W_d(p).$$
		Now \eqref{e:ylex} gives
		$$\mathcal L(y,0;p)+h_0(y) \le W_d(z,0)+h_0(z)-W_d(p)=\mathcal L(z,0;p)+h_0(z)$$
		by \eqref{e:geod-pq}, showing \eqref{e:WzWp}. Thus the left hand side is maximized at $z$, showing \eqref{e:h0p}.
		Since this holds for every $p$ on $\gamma$,  $\gamma$ is a geodesic from $h_0$.
		The definition of $z$ implies $\eqref{e:WpWy}$.
	\end{proof}
	
	So far we have considered the Busemann function and geodesic tree in a fixed direction.
	Our construction allows us to extend Busemann functions simultaneously in every rational direction.
	For the next result, we need to consider them as such. Lemma \ref{p:h0-geod} implies that if $h_0$ is sub-wedge then, almost surely,
	there are geodesics from $h_0$ in every rational direction $d$ for $d \in (-a_{-}/2, -a_{+}/2)$.
	
	Recall the splitting of an initial condition $h_0$ at a reference point $p$, which defines split functions $h^{\pm}_p$ according to \eqref{eqn:split}.
	The function $d \mapsto W_d(h^{+}_p) - W_d(h^{-}_p)$ is non-decreasing (restricted to rational $d$ as we do).
	This may be proved in the same way as Proposition \ref{prop:mono}, using the reverse quadrangle inequality, by
	interpreting $W_d(h_0)$ as a geodesic length from $h_0$ in direction $d$. In fact, the difference is the limit as $t \to \infty$
	of the function $d_p(dt,t)$ where $d_p(x,t)$ is the competition function from \eqref{eqn:dp}.
	This motivates the following theorem.
	
	\begin{thm}[Asymptotic direction]\label{thm:randdirection}
		Let $h_0$ be a sub-wedge initial condition with upper linear growth rates $a_{\pm}$. Let $p$ be an interior reference point of $h_0$.
		Define
		\begin{align*}
		D^{-}_p &= \inf \, \{d\in \mathbb Q :W_d(h_p^+) \geq W_d(h_p^-)\}; \\
		D^{+}_p &= \sup \, \{d\in \mathbb Q :W_d(h_p^+) \leq W_d(h_p^-)\}.
		\end{align*}
		These quantities lie in the interval  $[- a_{-}/2,- a_{+}/2]$.
		The left and right interfaces $I^{\pm}_p(t)$ have almost sure directions: $\lim_{t \to \infty} I^{-}_p(t)/t = D_p^{-}$
		and $\lim_{t \to \infty} I^{+}_p(t)/t = D^{+}_p$. 
	\end{thm}
	
	\begin{proof}
		We will prove the proposition for the left interface since the proof for the right interface is symmetric.
		
		If $d>-a_{+}/2$ then $W_d(h_p^+)=\infty$ and $W_d(h_p^-)<\infty$; so $D^-_p\leq -a_{+}/2$. Similarly, $D^-_p\geq -a_{-}/2$.
		
		Let $d < D^{-}_p$ be an arbitrary rational direction. Then $W_d(h_p^-) > W_d(h_p^+)$ and $W_d(h_p^+)$ is finite and achieved (since $d < -a_{+}/2$).
		Let $\gamma$ be the geodesic from $h_p^{+}$ in direction $d$ according to Lemma \ref{p:h0-geod} .
		For all times $t$,
		\begin{equation}\label{e:fromgeod}
			\La(h_p^+;(\gamma(t),t))=W(h_p^+)-W_d((\gamma(t),t)).
		\end{equation}
		Since $W_d(h_p^-)>W_d(h_p^+)$, there is a $y$ such that $W_d(y,0)+h_p^-(y) > W_d(h_p^+)$.
		Let $\gamma'$ be the rightmost geodesic of $\La$ from $(y,0)$ in the direction $d$. Then
		\begin{align*}
			\La(h_p^-;(\gamma'(t),t)) & \geq h_p^-(y) +\La(y,0;(\gamma'(t),t)) \\
			& = h_p^-(y) + W_d(y,0) -W_d((\gamma'(t),t)) \\
			& > W_d(h_p^+) -W_d((\gamma'(t),t)).
		\end{align*}
		
		Comparing this with \eqref{e:fromgeod}, it follows that for all times $t$ after $\gamma$ and $\gamma'$ meet,
		$$\La (h_p^+;(\gamma(t),t)) < \La (h_p^-; (\gamma(t),t)), $$
		which implies $ \gamma(t) \leq I^-_p(t)$. Since $\gamma$ has a direction $d$, this means $d \leq \liminf_t I^-_p(t)/t$.
		As this holds for all rational $d < D^{-}_p$, it means $D^-_p \leq \liminf_t I^-_p(t)/t$.
		
		A symmetric argument for the other side gives that for all rational $d>D_p^-$,
		for the geodesic $\gamma$ from  $h_p^-$ in direction $d$ given by Lemma \ref{p:h0-geod},
		$$
		\La(h_p^+;(\gamma(t),t))\geq  \La(h_p^-;(\gamma(t),t)) \qquad \mbox{ for all large enough } t,
		$$
		which implies $I^-_p(t)\leq  \gamma(t)$. It follows that $\limsup_t I^{-}_p(t)/t \leq D^{-}_p$.
	\end{proof}
	
	\begin{cor} \label{cor:directionlaw}
		Let $h_0$ be a sub-wedge initial condition with an interior reference point p. Then,
		\begin{align*}
			D_p^{-} &\stackrel{law}{=} -\frac{1}{2} \partial_l \cm(h_0 + B)(p)\\
			D_p^{+} & \stackrel{law}{=} -\frac{1}{2} \partial_r \cm(h_0+B)(p)
		\end{align*}
		where $B$ is a two-sided Brownian motion of diffusivity $\sqrt{2}$.
	\end{cor}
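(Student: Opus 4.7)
I prove the claim for $D_p^-$; the $D_p^+$ case follows symmetrically using right derivatives. Set $f_d(y) := h_0(y) + W_d(y, 0)$. Under the sub-wedge hypothesis, $f_d$ tends to $-\infty$ at $\pm\infty$ for $d$ in the interior of the valid range, so its maximizer exists and is compact; extending the compact-interval uniqueness of Lemma \ref{lem:uniqueend} to $\R$ via the sub-wedge decay of $h_0$, this maximizer is almost surely a single point $z_d$. Theorem \ref{thm:randdirection} then reads $D_p^- = \inf\{d \in \mathbb{Q} : z_d \ge p\}$ almost surely.

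\emph{Concave-majorant identification.} Combining Corollary \ref{c:Busemann-Brownian} with the shear invariance of $\La$, for each fixed $d$ the process $W_d(\cdot, 0) - W_d(0, 0)$ has the law of $B(\cdot) + 2d\cdot$, where $B$ is two-sided Brownian motion of diffusivity $\sqrt{2}$. I claim $\arg\max \cm(f_d) = \{z_d\}$ almost surely: any flat top $[L, R]$ of $\cm(f_d)$ with $L < R$ forces $L$ and $R$ to be kinks of $\cm(f_d)$ and hence touching points (since $\cm$ is linear between consecutive touching points), giving $f_d(L) = f_d(R) = \max f_d$ and contradicting the a.s.\ uniqueness of the maximizer. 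Hence $z_d \ge p$ iff $\partial_l \cm(f_d)(p) \ge 0$, and since adding a linear term of slope $2d$ shifts $\partial_l \cm(\cdot)(p)$ by $2d$,
$$ P(z_d \ge p) = P(\Psi \ge -2d), \qquad \Psi := \partial_l \cm(h_0 + B)(p). $$

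\emph{CDF computation.} The monotonicity of $d \mapsto W_d(h_p^+) - W_d(h_p^-)$ established just before Theorem \ref{thm:randdirection} makes $\{z_d \ge p\}$ an increasing family of events in $d$. Continuity of measure combined with the marginal identity above yields
\begin{align*}
P(D_p^- < a) &= P\bigl(z_d \ge p \text{ for some rational } d < a\bigr) = \lim_{n \to \infty} P\bigl(z_{a - 1/n} \ge p\bigr)\\
&= \lim_{n \to \infty} P\bigl(\Psi \ge -2a + 2/n\bigr) = P(\Psi > -2a) = P\bigl(-\tfrac{1}{2}\Psi < a\bigr).
\end{align*}
Comparing CDFs gives $D_p^- \stackrel{\mathrm{law}}{=} -\tfrac{1}{2} \partial_l \cm(h_0 + B)(p)$, as claimed.

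The main technical subtlety is the almost-sure identification $\arg\max \cm(f_d) = \{z_d\}$, which underlies the equivalence between the geodesic-direction condition of Theorem \ref{thm:randdirection} and the concave-majorant derivative. The flat-top ruling argument sketched above handles a single $d$, but the pathwise formula $D_p^- = \inf\{d \in \mathbb{Q}: z_d \ge p\}$ used in the CDF computation requires the equivalence to hold simultaneously for all rational $d$, which is handled by a countable union bound; one must also track the strict versus non-strict boundary inequalities carefully, since the marginal identity naturally produces the strict event $\{\Psi > -2a\}$ in the limit.
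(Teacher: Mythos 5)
Your proof is correct and takes essentially the same route as the paper: the variational formula of Theorem \ref{thm:randdirection}, the Brownian-with-drift law of $W_d(\cdot,0)$ from Corollary \ref{c:Busemann-Brownian} and shear invariance, and the concave-majorant derivative characterization, matched through a CDF computation over rational $d$ using the monotonicity in $d$. The only cosmetic differences are that you route the argument through the a.s.\ unique maximizer $z_d$ (with a flat-top argument the paper sidesteps by directly using the stated equivalence between ``all maximizers strictly left of $p$'' and $\partial_l\cm(\cdot)(p)<0$), and that you compute $P(D_p^-<a)$ as a monotone limit where the paper sandwiches $P(D_p^-<d)\le P\bigl(-\tfrac12\partial_l\cm(h_0+B)(p)\le d\bigr)\le P(D_p^-\le d)$ for all rational $d$.
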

	
	\begin{proof}
		For rational values of $d$, it follows that if $ d < D_p^{-}$ then $W_d(h_p^{+}) < W_d(h_p^{-})$,
		which means all geodesics from $h_0$ start strictly from the left of $p$. So $h_0(y) + W_d(y,0)$
		has all maximizers to the left of $p$, which is equivalent to $\partial_l\cm (h_0 + W_d)(p) < 0$.
		Since $W_d$ equals $B(x) + 2dx$ in law, $\mathbf{Pr}(\partial_l \cm (h_0+W_d)(p) < 0)$
		equals $\mathbf{Pr}(\partial_l \cm(h_0 + B)(p) < -2d)$. So $\mathbf{Pr}(D_p^{-} > d) \leq \mathbf{Pr}(\partial_l \cm(h_0+B)(p) < -2d)$.
		Taking complements gives
		$$\mathbf{Pr}\left (-\frac{1}{2} \partial_l \cm(h_0+B)(p) \leq d \right ) \leq \mathbf{Pr}(D^{-}_p \leq d).$$
		
		On the other hand, $D^{-}_p < d$ implies $W_d(h_p^{+}) \geq W_d(h_p^{-})$. The latter is equivalent to
		$\partial_l \cm(h_0+W_d)(p) \geq 0$. The probability of this event is $\mathbf{Pr}(\partial_l \cm(h_0+B)(p) \geq -2d)$.
		Therefore, $\mathbf{Pr}(D^-_p < d) \leq \mathbf{Pr}(\partial_l \cm(h_0+B)(p) \geq -2d)$. Combined with the bound above, this gives
		$$ \mathbf{Pr}(D^-_p < d)  \leq \mathbf{Pr}\left (-\frac{1}{2} \partial_l \cm(h_0+B)(p) \leq d \right ) \leq \mathbf{Pr}(D^{-}_p \leq d).$$
		As this holds of all rational values of $d$, the two distribution functions are equal. The law of $D^+_p$ is derived in the same way.
	\end{proof}

	The asymptotic direction of interfaces can often be computed via Corollary \ref{cor:directionlaw}.
	This was the case for the two narrow wedges initial condition in $\S$\ref{sec:2wedges}.
	Three other examples are discussed below, the second of which proves the second half of Theorem \ref{thm:bisdir}.
	
	\paragraph{\textbf{One-sided Brownian initial condition}}
	Suppose $h_0(x)$ is a one-sided Brownian motion with diffusivity $\sqrt{2}\sigma$ for $x \geq 0$
	and equals $-\infty$ for all $x < 0$. Every reference point $p > 0$ has a uniquely defined interface with a
	random asymptotic direction $D_p$. The law of $D_p$ is $\sqrt{(\sigma^2+1)/2}\cm(B_+)'(p)$,
	where $B_+$ is a one-sided standard Brownian motion.
	
	It is shown in \cite[Corollary 2.2]{Gr} that the random variable $K = \cm(B_+)'(1)$ exists
	with probability one and has the probability density
	$$ \mathbf{Pr}(K \in dx) = 4 \mathbf{Pr}( N \in dx)- 4 x \mathbf{Pr}(N>x)dx, \qquad x \geq 0,$$
	where $N$ is a standard normal variable. Also, $K$ has the law of the product of a standard $\chi_5$
	random variable and an independent  beta$(1,3)$ random variable $\beta_1$; see the example below.
	By Brownian scaling, for $p > 0$,
	$$
	D_p \;\;\stackrel{law}{=}\;\; -\sqrt{\frac{\sigma^2+1}{2p}}\chi_5\, \beta_1.
	$$
	
	\paragraph{\textbf{Flat initial condition on an interval}}
	When $h_0$ is $0$ on $[0,1]$ and $-\infty$ elsewhere, and $p \in (0,1)$, the quantity in question is the slope at $p$
	of the concave majorant of Brownian motion on $[0,1]$. To compute the distribution, we use an argument suggested by Jim Pitman based on
	the results in \cite{OP}. First write Brownian motion on $[0,1]$ as $Nt+R(t)$, where $R(t)$ is a Brownian bridge and $N$ is an  independent
	standard normal.
	
	Next, Doob's transformation $f \mapsto g$, $g(u)=(1-u)f(u/(1-u))$, maps the line with slope $a$ and intercept $b$
	to the line with slope $a-b$ and intercept $b$. Also, it maps the standard Brownian motion to a standard Brownian bridge.
	As a result, 
	$$
	-\sqrt{2}D_p \;\; \stackrel{law}{=}\;\;N+K_t-I_t \stackrel{law}{=}N+K_1/\sqrt{t}-I_1\sqrt{t}, \qquad t=p/(1-p),
	$$
	where $K_t, I_t$ are the slope and intercept of the tangent line at time $t$ in the concave majorant of standard Brownian motion.
	The law of $(K_1,I_1)$ is is given in Ouaki and Pitman \cite{OP} (see Proposition 1.2, density $f_3$).
	If the vector $(\beta_1,\beta_2,\beta_3)$ are chosen from Dirichlet$(1,1,2)$ distribution, then for an independent $\chi_5$-random variable, $(K_1,I_1)\stackrel{law}=(\beta_1,\beta_2)\chi_5$. So
	$$
	D_p \;\; \stackrel{law}{=}\;\;\frac{p\beta_2-(1-p)\beta_1}{\sqrt{2(1-p)p}}\chi_5 -\frac{N}{\sqrt{2}}.
	$$

        \paragraph{\textbf{Brownian minus a wedge initial condition}}
	Take $h_0(x) = \sigma B(x) - \mu |x|$ where $\sigma \geq 0, \mu > 0$ and $B(x)$ is a two-sided standard Brownian motion.
	Then the interface from $p=0$ has direction
	$$ D_0 \stackrel{law}{=} \mathrm{Uniform}[-\mu/2, \mu/2].$$
	This is based on the fact that if $V_{\sigma, \mu}(x) = \sigma B(x) - \mu |x|$, then for every $\sigma, \mu > 0$,
	\begin{equation} \label{eqn:wedgecm}
		\cm(V_{\sigma,\mu})'(0) \stackrel{law}{=} \mathrm{Uniform}[-\mu, \mu].
	\end{equation}
	
	To prove this, note that the law of the derivative does not depend on $\sigma$. This is because $\cm(f)'(0)$ is unchanged
	under the transformation $f \mapsto \lambda f(x/\lambda)$ for any $\lambda > 0$, and this takes the law of $V_{\sigma, \mu}$
	to the law of $V_{\sqrt{\lambda}\sigma, \mu}$. Thus we may assume $\sigma = 1$.
	
	For any function $V$ with a concave majorant, $\cm(V)'(0) = \lim_{a \to \infty} \cm(V \mid_{[-a, \infty)})'(0)$.
	Consider the transformation $V \mapsto \tilde{V}$ where
	$$\tilde{V}(x) = \frac{V((x-1)a) - V(-a)}{\sqrt a} + \mu\sqrt{a} x  \qquad \text{for}\;\;  x\geq 0.$$
	This takes the law of $V_{1,\mu}$ restricted to $[a,\infty)$ to the law of $B_{2\mu\sqrt{a}}$,
	where $B_{\nu}(x) = B_{0}(x) + \nu (\min \{x,1\})$ for $x \geq 0$, and where $B_0(x)$ is a one-sided standard Brownian motion.
	As $\cm(V)'(0) = \frac{1}{\sqrt{a}}\cm(\tilde V)'(1)-\mu$, it is enough to show that as $\nu \to \infty$,
	\begin{equation}\label{eqn:Bmu}
		\frac{1}{\nu}\cm(B_\nu)'(1)\stackrel{law}{\to}\mathrm{Uniform}[0,1].
	\end{equation}

	Let $K_\nu=\cm(B_\nu)(1)$. Then $K_\nu-B_\nu(1)\geq 0$, and it is  non-increasing in $\nu$. This is because $K_\nu-B_\nu(1)=\cm(B_\nu-B_0(1)-\nu)(1)$ and $B_\nu(\cdot)-B_0(1)-\nu$ is non-increasing in $\nu$. 
	Since $B_\nu(1)/\nu\to 1$ almost surely, it follows that $K_\nu/\nu\to 1$ as well.
	
	When $\nu=0$, the results of Ouaki and Pitman \cite{OP} (Proposition 1.2, density $f_3$) imply that conditionally on 
	$(K_\nu, B_\nu(1))$ the law of $\cm(B_\nu)'(1)$ is uniform on $[0,K_\nu]$. Since $B_\nu$ has the law of $B_0$
	biased by $e^{\nu B_0(1) - \nu^2/2}$, the same property holds for all  $\nu$, and \eqref{eqn:Bmu} follows from $K_\nu / \nu \to 1$.
 
	\subsection{Interfaces are locally absolutely continuous with respect to geodesics} \label{sec:intabscont}
	This section proves Theorem \ref{thm:abscont}.
	A theorem about exponential last passage percolation states that the infinite geodesic in point-to-point
	last passage percolation has the same law as the competition interface in stationary last
	passage percolation \cite{BCS,Sep2}. This suggests a connection between geodesics and interfaces in the limit, which we
	formalize in terms of absolute continuity.
	
	\begin{defn}
	Let $I = (I(t), t \geq s)$ be a continuous process. Let $g$ be the (almost surely unique) infinite geodesic of $\La$ from $(0,s)$ in direction zero. The process $I$ is \bemph{geodesic-like on compacts} if for every compact interval $[a,b] \subset (s, \infty)$, the process $I \mid_{[a,b]}$ is absolutely continuous (as a continuous process) with respect to $g\mid_{[a,b]}$.
	\end{defn}
	
	We re-state Theorem \ref{thm:abscont} using the terminology established.
	\begin{thm} \label{thm:intgeo}
		Let $h_0$ be an initial condition. Suppose $p$ is an interior reference point for $h_0$ which is non-polar,
		so there is a uniquely defined interface $I_p(t)$ for $t \geq 0$ from reference point $p$. 
		The process $I_p(t)-p$ is geodesic-like on compacts.
	\end{thm}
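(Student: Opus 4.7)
The plan is to identify $I_p|_{[a,b]}$ with the restriction of a $\La$-geodesic and then transfer absolute continuity using the results of Section \ref{sec:geoabscont}. By translation invariance of $\La$, I may assume $p=0$. For the identification, fix $c>b$ and set $q_c=(I_0(c),c)$. Since the interface is uniquely defined (because $0$ is non-polar), $\hh^{-}_0(q_c)=\hh^{+}_0(q_c)$. Let $\pi_c^-$ be the rightmost geodesic from $h_0^-$ ending at $q_c$ and $\pi_c^+$ the leftmost from $h_0^+$ ending at $q_c$; their starting points satisfy $\pi_c^-(0)\le 0\le \pi_c^+(0)$. By geodesic ordering (Lemma \ref{lem:geoorder}) and the common endpoint, $\pi_c^-$ and $\pi_c^+$ coalesce downward from a fork time $t^{*}(c)\in(0,c]$. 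Any point on the common path over $[t^{*}(c),c]$ admits geodesics to both sides of $0$, hence lies in $\{d_0=0\}$; since that zero set has empty interior (by non-polarity, as in Proposition \ref{prop:intunique}), the common path coincides with the interface: $I_0(t)=\pi_c^-(t)$ for $t\in[t^{*}(c),c]$.

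Next I would argue that $t^{*}(c)\le a$ after choosing $c$ large enough, so that the whole of $[a,b]$ is covered. Using compactness of geodesics together with the existence of an asymptotic direction for the interface (Theorems \ref{thm:direction} and \ref{thm:randdirection}), as $c\to\infty$ the pair $\pi_c^\pm$ subsequentially converges to infinite geodesics from $h_0^\pm$ in a common direction; such geodesics coalesce upward at a finite random time, which bounds $t^{*}(c)$ from above for all large $c$. If the bound fails to drop below $a$, one iterates the construction along a finite cover of $[a,b]$: on each sub-interval, identify $I_0$ with a geodesic segment to a distinct intermediate target point. Either way, $I_0|_{[a,b]}$ is realized as the restriction of a (possibly piecewise) $\La$-geodesic between random spacetime endpoints.

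Finally, with this identification in hand, I invoke Propositions \ref{prop:fingeoabscont} and \ref{prop:infgeoabscont}. Proposition \ref{prop:fingeoabscont} compares $\pi_c^-|_{[a,b]}$ to a finite geodesic ending at $q_c$ but starting at some $t_0<0$, giving absolute continuity of the former with respect to the latter; Proposition \ref{prop:infgeoabscont} then bridges this past-started geodesic to the infinite geodesic $g$ from $(0,0)$ in direction zero, again up to absolute continuity. Stringing the comparisons together yields the claimed absolute continuity of $I_p-p$ restricted to $[a,b]$ with respect to $g|_{[a,b]}$. The main obstacle will be controlling $t^{*}(c)$: the upward coalescence time of the limiting infinite geodesics is random and need not be smaller than an arbitrary $a>0$, so the iterative cover requires care. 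A secondary difficulty is the joint randomness of the starting and ending points of $\pi_c^-$, which must be conditioned upon when invoking the absolute continuity propositions, since these are stated for deterministic endpoints.
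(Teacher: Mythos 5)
Your argument hinges on a sample-wise identification of the interface with a geodesic of the \emph{same} landscape: you claim that the rightmost geodesic from $h_0^-$ and the leftmost geodesic from $h_0^+$ to the interface point $q_c=(I_0(c),c)$ share a common terminal segment $[t^*(c),c]$, and that $t^*(c)$ can be pushed below $a$. This is the genuine gap. Nothing forces the two one-sided geodesics to $q_c$ to agree on any nontrivial time interval: the coalescence results in the paper are \emph{downward} coalescence for geodesics to distinct points at the same time (Lemma \ref{lem:geoorder}) and \emph{upward} coalescence only for infinite geodesics in a fixed deterministic direction (Theorem \ref{thm:geotree}); neither applies to two finite geodesics sharing the single endpoint $q_c$, which generically fork immediately at $q_c$. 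Your fallback via $c\to\infty$ does not repair this: (i) Theorem \ref{thm:intgeo} assumes only that $p$ is a non-polar interior reference point, so the interface need not have an asymptotic direction at all (Theorems \ref{thm:direction} and \ref{thm:randdirection} require extra hypotheses); (ii) coalescence is only proved for deterministic directions, not the random direction of the interface; and (iii) even when the two one-sided infinite geodesics coalesce at a point $\kappa$, the difference profile $d_0$ is constant along the common ray above $\kappa$ (both heights increase by $\La(\kappa;\cdot)$ along it), so unless $d_0(\kappa)=0$ — which there is no reason to expect — the common ray lies strictly on one side of the interface and cannot coincide with it. In other words, coalescence of the limits neither bounds $t^*(c)$ nor produces the identification you need; and if your identification were correct it would make the interface literally an infinite geodesic of the same sample above time $a$, a far stronger statement than the paper even conjectures (it poses a \emph{distributional} identity as an open question). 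The secondary issue you flag is also real rather than technical: Propositions \ref{prop:fingeoabscont} and \ref{prop:infgeoabscont} compare geodesics with fixed endpoints, whereas your endpoints $q_c$ are random and built from the same field, so absolute continuity does not transfer by a simple conditioning.

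For contrast, the paper's route never identifies the interface with a geodesic sample-wise. It first proves a \emph{distributional} identity for the stationary (Brownian) initial condition, imported from the discrete duality between the stationary competition interface and the point-to-point geodesic in exponential last passage percolation and passed to the limit (Lemma \ref{lem:intgeo1}); it then upgrades this to any initial condition that is Brownian on compacts by Cameron--Martin and localization arguments on the initial data (Lemma \ref{lem:intgeo2}); finally, non-polarity of $p$ enters through a restart argument (Lemma \ref{lem:intgeo3}): for small $s>0$ the interface on $[a,b]$ agrees with the interface started from the evolved height $h_s$, which is Brownian on compacts and independent of the later landscape, and Proposition \ref{prop:infgeoabscont} shifts the reference geodesic's starting time from $s$ back to $0$. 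The discrete duality input is what actually links interfaces to geodesics in law, and it is absent from your proposal.
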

	
	The theorem will be proved in several steps through a sequence of lemmas. The first lemma below is proved independently in \cite[Proposition 2.8]{GZ}
	
	\begin{lem} \label{lem:intgeo1}
		Let $I_0(t)$ be the uniquely defined interface from reference point $0$ associated to the stationary Brownian initial condition.
		The process $I_0$ has the same law as the infinite geodesic of $\La$ from $(0,0)$ with direction zero.
	\end{lem}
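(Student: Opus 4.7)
The strategy is to realize the Brownian initial condition as the Busemann function $W(\cdot,0)$ of $\La$ in direction zero. By Corollary \ref{c:Busemann-Brownian}, $W(\cdot,0)$ is a two-sided Brownian motion of diffusivity $\sqrt{2}$; by Proposition \ref{prop:Busemann} it is a measurable function of $\La|_{t\geq 0}$, hence independent of $\La|_{t\leq 0}$. Define the time-reversed landscape $\tilde\La(y,s;x,t)=\La(x,-t;y,-s)$ for $0\leq s<t$; by the time reversal symmetry of $\La$ it has the same law as $\La|_{t\geq 0}$, while being built entirely from $\La|_{t\leq 0}$. Consequently the joint law of $(W(\cdot,0),\tilde\La)$ matches that of $(B,\La|_{t\geq 0})$, where $B$ is a Brownian motion of diffusivity $\sqrt{2}$ independent of $\La$. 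So if $\tilde I$ denotes the interface associated to $W(\cdot,0)$ in $\tilde\La$ with reference point $0$, then $\tilde I\stackrel{law}{=}I_0$.

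Next, $\tilde I$ is identified intrinsically within $\La$. The height function in $\tilde\La$ with initial condition $W(\cdot,0)$ is
\[
\tilde\hh(x,\tau)=\sup_y\{W(y,0)+\La(x,-\tau;y,0)\}=W(x,-\tau),
\]
and Theorem \ref{thm:Busemann-KPZ} identifies the maximizers as exactly those $y$ for which $(y,0)$ lies on an infinite $\La$-geodesic from $(x,-\tau)$ in direction zero. Comparing the split suprema over $y\geq 0$ and $y\leq 0$, a point $(x,\tau)$ lies on $\tilde I$ precisely when some infinite direction-zero geodesic of $\La$ from $(x,-\tau)$ passes through $(0,0)$. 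Since $0$ is almost surely non-polar for Brownian motion, Proposition \ref{prop:intunique} makes $\tilde I$ uniquely defined and continuous, with $\tilde I(\tau)\to 0$ as $\tau\to 0^+$ by Proposition \ref{prop:intstart}.

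It remains to show that $\tilde I$ traces a backward infinite geodesic of $\La$ from $(0,0)$ in direction zero. Fix $\tau_2>0$ and choose any direction-zero geodesic $\gamma$ of $\La$ from $(\tilde I(\tau_2),-\tau_2)$ passing through $(0,0)$; one exists by the characterization above. For every $\tau_1\in(0,\tau_2)$, the restriction $\gamma|_{[-\tau_1,\infty)}$ is a direction-zero geodesic from $(\gamma(-\tau_1),-\tau_1)$ also passing through $(0,0)$, so uniqueness of $\tilde I$ forces $\gamma(-\tau_1)=\tilde I(\tau_1)$. Hence $\tau\mapsto(\tilde I(\tau),-\tau)$ on $[0,\tau_2]$ parametrizes the geodesic segment $\gamma|_{[-\tau_2,0]}$; letting $\tau_2$ range over $(0,\infty)$ shows that $\tilde I$ as a whole parametrizes a backward infinite geodesic of $\La$ from $(0,0)$ in direction zero.

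Applying the time reversal symmetry of $\La$ once more, this backward infinite geodesic has the same law as the forward infinite geodesic of $\La$ from $(0,0)$ in direction zero, namely the geodesic $g$ in the lemma. Chaining the identifications gives $I_0\stackrel{law}{=}\tilde I\stackrel{law}{=}g$. The main subtlety is the tracing step: a direction-zero geodesic from the random point $(\tilde I(\tau_2),-\tau_2)$ need not be unique a priori, but the already-established uniqueness of the interface $\tilde I$ itself pins down the traced curve independently of which candidate $\gamma$ is chosen.
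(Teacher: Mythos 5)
Your steps (1)--(2) are fine: realizing the Brownian initial condition as $W(\cdot,0)$, which is measurable with respect to $\La$ at nonnegative times and hence independent of the time-reversed landscape $\tilde\La$ built from $\La$ at nonpositive times, does give $\tilde I\stackrel{law}{=}I_0$. The argument breaks at the characterization in step (3). From Theorem \ref{thm:Busemann-KPZ} and the equality of the two split suprema you may only conclude that the function $y\mapsto \La(x,-\tau;y,0)+W(y,0)$ has global maximizers in \emph{both} closed half-lines $(-\infty,0]$ and $[0,\infty)$, i.e.\ that direction-zero geodesics from $(x,-\tau)$ exit time $0$ at some $y_-\leq 0$ and some $y_+\geq 0$. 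It does not follow that $y=0$ itself is a maximizer, which is what ``some direction-zero geodesic from $(x,-\tau)$ passes through $(0,0)$'' requires. Indeed, by Proposition \ref{prop:portraittopo} (transported to the reversed picture) an interface point has distinct leftmost and rightmost geodesics, and generically these straddle the origin strictly, $y_-<0<y_+$, with no geodesic through $(0,0)$ at all. Worse, your claimed equivalence is almost surely vacuous: the exit locations at time $0$ of all direction-zero geodesics started from a fixed earlier level form a countable (hence Lebesgue-null) set — the exit map $x\mapsto g_{(x,-\tau)}(0)$ is a monotone step function by coalescence and the ``nearby geodesics meet'' property — so by translation invariance the fixed point $(0,0)$ is almost surely on no direction-zero geodesic started strictly below time $0$. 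If your tracing step (4) were valid, letting $\tau_2\to\infty$ would even produce a bi-infinite direction-zero geodesic through the deterministic point $(0,0)$, which has probability zero. So the geodesic $\gamma$ you propose to trace does not exist, and in this coupling the reversed interface is \emph{not} a geodesic of $\La$; the identity with the geodesic is only distributional.

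This is exactly the gap the paper fills by a different route: it invokes the prelimit duality of Bal\'azs--Cator--Sepp\"al\"ainen (\cite[Proposition 5.2]{Sep2}) stating that in exponential last passage percolation the stationary competition interface and the point-to-point semi-infinite geodesic in direction zero are equal in law, and then passes both objects through the KPZ scaling limit: Corollary \ref{cor:intlimit} gives convergence of the rescaled interface to $I_0$, while convergence of rescaled LPP geodesics to $\La$-geodesics plus the uniqueness in Theorem \ref{thm:longgeo} identifies the limit of the geodesic side. A purely continuum proof along your lines would need a genuine analogue of that Burke-property reversal duality (an almost sure identification of the interface with a geodesic of a suitably transformed landscape, not of time-reversed $\La$ itself); your proposal does not supply it. The Busemann/time-reversal mechanism you use is, however, essentially how the paper proves the different statement Corollary \ref{cor:geolaw} about the law of the infinite geodesic, which may be where the confusion arose.
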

	
	\begin{proof}
		In order to prove this result we shall consider exponential last passage percolation,
		whose setting and terminology are described in the Introduction and in $\S$\ref{sec:2ndandcomp} below.
		
		Consider point-to-point last passage percolation on $\Ze^2$ with i.i.d.~exponential weights of mean 1.
		Let $L(a,b)$ be the last passage time from point $a \in \Ze^2$ to $b \in \Ze^2$ as defined by \eqref{eqn:LPPdef}.
		Thus $L(a,b)$ is the maximal sum of weights over all directed paths from $a$ to $b$.
		A geodesic is a directed path that achieves the last passage time between its endpoints.
		
		In this setting there is an almost surely unique infinite geodesic $\Gamma(n)$, $n = 0,1,2,3,\ldots$, from $(0,0)$ in direction zero \cite[Theorem 2.2]{Sep2}.
		This means that $n \mapsto (n, \Gamma(n))$ is an infinite directed path that is a geodesic when restricted to any finite interval of times,
		$\Gamma(0) =0$, and $\Gamma(n)/n$ tends to $0$ as $n$ goes to infinity.
		
		The second process to consider in the competition interface in stationary last passage percolation. Stationary last passage percolation looks at last passage times to an initial condition $h_0(x)$ that is a two-sided simple symmetric random walk on $\Z$. See $\S$\ref{sec:2ndandcomp} for the setup in detail. (There is actually a one-parameter family of stationary initial conditions indexed by the drift of the random walk, but we consider the symmetric case.) The stationary competition interface is a process $\Gamma'(n)$, for $n=0,1,2,3,\ldots$, which is the process $X_n$ from \eqref{eqn:2ndnstep} for the stationary initial condition. We note that $\Gamma'(n)$ records the position of a 2nd class particle starting from the origin in stationary tasep after $n$ steps.
		
		A surprising fact is that the geodesic $\Gamma$ and the competition interface $\Gamma'$ have the same law as random processes \cite[Proposition 5.2]{Sep2}.
		
    We may now consider the KPZ scaling limit of these two processes. Under KPZ scaling, $\Gamma$ and $\Gamma'$ are re-scaled to $t \mapsto m^{-2/3} \Gamma(\lceil m t \rceil)$ and $t \mapsto m^{-2/3} \Gamma'(\lceil m t \rceil)$ for a large integer $m$ and $t \geq 0$.
  
    As $m \to \infty$, the KPZ re-scaled competition interface converges in law to the uniquely defined interface $I_0(t)$ associated to the two-sided Brownian initial condition with diffusivity constant $\sqrt{2}$. The convergence here is under the topology of uniform convergence on compact subsets of $t \in (0,\infty)$. This is proved in Proposition \ref{prop:statintlimit}. Note further that $I_0(t)$ has asymptotic direction zero because Brownian motion is almost surely an asymptotically flat initial condition, so Theorem \ref{thm:direction} applies.
		
    Due to the equality of law between $\Gamma$ and $\Gamma'$, we conclude that the KPZ re-scaled geodesic has a tight law and admits distributional limits. Any limit point $g(t)$ of the re-scaled geodesic must be an infinite geodesic of $\La$ because limits of geodesics in last passage percolation are geodesics of $\La$ \cite{DV}. The process $g(t)$ also has direction zero because $I_0(t)$ does so. Since $\La$ almost surely has a unique infinite geodesic from $(0,0)$ with direction zero (Theorem \ref{thm:longgeo}), it follows that $\Gamma$ converges in law to said geodesic in the KPZ scaling limit.
		
    From the equality of laws between $\Gamma$ and $\Gamma'$, one concludes that $I_0(t)$ has the same law as the infinite geodesic of $\La$ from $(0,0)$ with direction zero.
	\end{proof}
	
	Let $h_0$ be a random initial condition independent of $\La$. We say $h_0$ is Brownian on compacts if for every interval $[-n,n]$, the law of $h_0(x)-h_0(0)$ on $[-n,n]$ is absolutely continuous with respect to that of a two-sided Brownian motion with diffusivity $\sqrt{2}$. Observe in this case every deterministic $p \in \R$ is a non-polar point of $h_0$ and, thus, the interface $I_p(t)$ of $h_0$ is uniquely defined by Proposition \ref{prop:intunique}.
	
	\begin{lem} \label{lem:intgeo2}
		Let $h_0$ be a random initial condition independent of $\La$ that is Brownian on compacts.
		For any $p \in \R$, the process $I_p(t)-p$ is geodesic-like on compacts.
	\end{lem}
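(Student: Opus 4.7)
The strategy is to reduce to the stationary Brownian initial condition, where Lemma \ref{lem:intgeo1} identifies the interface with the infinite geodesic, by combining a spatial localization with absolute continuity on compacts. The translation identity $I^{\pm}_p(t; h_0) \stackrel{law}{=} I^{\pm}_0(t; h_0(\cdot + p)) + p$ from $\S$\ref{sec:invarianceofint}, together with the fact that $h_0(\cdot + p)$ is again Brownian on compacts, reduces the problem to $p = 0$.

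For the localization, I would introduce for $N, M > 0$ the event $E_{N, M}$ that every geodesic from $h_0$ to a point of $[-M, M] \times [a, b]$ starts in $[-N, N]$ and that $I_0(t) \in [-M, M]$ for all $t \in [a, b]$. Combining the parabolic bound \eqref{eqn:Abound} on $\La$ with the at most linear growth of $h_0$ and the continuity of $I_0$ on $[a, b]$, one has $\mathbf{Pr}(E_{N, M}) \to 1$ as $N, M \to \infty$; the corresponding event $E_{N, M}^B$ for a two-sided Brownian motion $B$ with diffusivity $\sqrt{2}$ in place of $h_0$ also has probability tending to one. On $E_{N, M}$, $I_0|_{[a, b]}$ coincides with the truncated interface $J_N|_{[a, b]}$ built from the initial condition $\tilde h_0^N$ that equals $h_0$ on $[-N, N]$ and $-\infty$ outside; crucially, $J_N|_{[a, b]}$ is a measurable function of $(h_0|_{[-N, N]}, \La)$.

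Let $A$ be a null event for $g|_{[a, b]}$. Decomposing
\[
\mathbf{Pr}(I_0|_{[a, b]} \in A) \le \mathbf{Pr}(J_N^{h_0}|_{[a, b]} \in A) + \mathbf{Pr}(E_{N, M}^c),
\]
I convert the first term to $\mathbf{E}_B[f_N \mathbf{1}(J_N^B|_{[a, b]} \in A)]$ via the Radon--Nikodym density $f_N$ of $h_0|_{[-N, N]}$ with respect to $B|_{[-N, N]}$. Splitting this further by $E_{N, M}^B$, on the good part $J_N^B = I_0^B$, and by Lemma \ref{lem:intgeo1} the conditional probability $\mathbf{Pr}_B(I_0^B \in A \mid B|_{[-N, N]}, \La)$ is almost surely zero (it integrates to $\mathbf{Pr}(g \in A) = 0$); the tower property then makes the contribution from $E_{N, M}^B$ exactly zero. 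The remaining piece from $(E_{N, M}^B)^c$, together with $\mathbf{Pr}(E_{N, M}^c)$, is driven to zero by letting $N, M \to \infty$ appropriately.

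The main obstacle is that $f_N$ is not uniformly bounded in $N$, so small $B$-probabilities do not transfer quantitatively to small $h_0$-probabilities. The resolution is that on $E_{N, M}^B$ the truncated Brownian interface agrees \emph{exactly} with the untruncated one, so the null event $\{I_0^B \in A\}$ produces an exactly zero contribution under absolute continuity---no quantitative control on $f_N$ is needed. The residual geometric error, bounded by the $\La$-tail estimates forcing $\mathbf{Pr}((E_{N, M}^B)^c) \to 0$, is handled by coordinating the growth of $N$ and $M$.
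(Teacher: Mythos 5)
Your architecture (localize the interface and its geodesics to a window, change measure on that window to two-sided Brownian motion, and invoke Lemma \ref{lem:intgeo1}) matches the spirit of the paper's proof, and the parts you assert without proof but which do work are: the reduction to $p=0$, the identification $J_N|_{[a,b]}=I_0|_{[a,b]}$ on the localization event (this needs the monotonicity of the competition function and the observation that at points strictly left/right of the interface all maximizers lie in the corresponding half-line; note your event only constrains geodesics from $h_0$, whereas the paper's event also constrains geodesics from the split conditions $h_0^{\pm}$, so this identification must be argued), and the fact that the ``good'' term $\mathbf{E}_B\bigl[f_N\,\mathbf{1}(J_N^B\in A)\,\mathbf{1}(E_{N,M}^B)\bigr]$ vanishes exactly without any control on $f_N$. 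The genuine gap is the residual term $\mathbf{E}_B\bigl[f_N\,\mathbf{1}(J_N^B\in A)\,\mathbf{1}((E_{N,M}^B)^c)\bigr]$. The event $E_{N,M}^B$ is \emph{not} measurable with respect to $(B|_{[-N,N]},\La)$: it involves the geodesics and the interface of the full two-sided Brownian initial condition, hence the Brownian path outside $[-N,N]$. Consequently its indicator cannot be pulled back through the density $f_N$, and since $f_N$ is unbounded and in general not uniformly integrable in $N$ (``Brownian on compacts'' gives only local absolute continuity, so $(f_N)$ is typically a non-uniformly-integrable martingale), smallness of $\mathbf{Pr}_B((E_{N,M}^B)^c)$ gives no bound on the weighted expectation; ``coordinating the growth of $N$ and $M$'' does not address this. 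If you change measure only on the middle segment, this term becomes the probability of the analogous bad event for a hybrid initial condition ($h_0$ on $[-N,N]$, independent Brownian tails attached at $\pm N$), and you would then need localization estimates for this $N$-dependent family, uniformly in $N$ --- an argument your proposal does not supply. Note also that the natural repair, namely proving for \emph{fixed} $N$ that the truncated-Brownian interface $J_N^B|_{[a,b]}$ is geodesic-like and only then localizing $h_0$, cannot be achieved by localization alone: for fixed $N$ the truncated and full interfaces disagree with positive probability, and on that event nothing is known.

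This missing step is precisely the paper's intermediate stage: for the initial condition ``Brownian on $[-n,n]$, $-\infty$ outside'' it compares with $h_0^m=B+f_m$, where $f_m$ is a bounded ramp function, so that $h_0^m\ll B$ \emph{globally} by Cameron--Martin (a single honest Radon--Nikodym density, no boundary issue), and then shows the interfaces of $h_0^m$ and of the truncated condition agree on $[0,t]$ for all large $m$, using that $-n$ is not a polar point. With that stage in hand, the general case is handled exactly as in your outer step, but crucially the change of measure is applied only to the functional $I_0(\cdot;h_0|_{[-n,n]})$, which depends on the initial condition solely through its restriction to $[-n,n]$, so no term analogous to your residual ever appears. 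Either adopt this intermediate Cameron--Martin stage, or prove the required uniform-in-$N$ estimates for the hybrid initial conditions; as written, the proposal does not close.
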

	
\begin{proof}
    We may assume without loss of generality that $p=0$. The proof is built up in three stages of increasing complexity, which are:
    \begin{enumerate}
	\item \label{bc1} When $h_0$ is a two-sided Brownian motion.
	\item \label{bc2} When $h_0$ is a two-sided Brownian motion over $[-n,n]$ for some $n > 0$ and $-\infty$ outside the interval.
	\item \label{bc3} When $h_0$ is simply Brownian on compacts.
    \end{enumerate}
		
    For \eqref{bc1} the claim follows from Lemma \ref{lem:intgeo1} with an equality of laws.
		
    For the proofs of \eqref{bc2} and \eqref{bc3}, we note that a probability measure $\mu$ is absolutely continuous with respect to $\nu$ if there are events $A_m$ such that $\mu(A_m) \to 1$ and $\mu\mid_{A_m}$ is absolutely continuous with respect to $\nu$.
		
    Also, for an initial condition $h_0$, denote by $I_0(t; h_0)$ the interface from reference point $0$ associated to $h$ (provided it is uniquely defined as it will be in the following).
		
    For the proof of \eqref{bc2}, let $f_m$ be the continuous function that is 0 on $[-n,n]$, equal to $-m$ on the complement of $[-n-1,n+1]$, and linear on $[-n-1,-n]$ and $[n,n+1]$. Let $f_{\infty}$ be the pointwise limit of $f_m$. Let $B$ denote a two-sided Brownian motion. Observe that $h_0 = B + f_{\infty}$. Let $h_0^m = B + f_m$.
		
    By the Cameron-Martin Theorem, $h^m_0$ is absolutely continuous with respect to $B$. So $I_0(t; h^m_0)$ is geodesic-like on compacts by \eqref{bc1}. Let $t > 0$. We claim the event
    $$ A_m = \{I_0(s; h^m_0) = I_0(s; h_0)\;\text{for all}\; s \leq t \}$$
    holds eventually for all large $m$. In particular, $\mathbf{Pr}(A_m) \to 1$. This implies that $I_0(\cdot, h_0)$ is geodesic-like on every interval $[0,t]$, and since $t$ is arbitrary, \eqref{bc2} follows.
		
    To see that the events $A_m$ eventually hold, it helps to consider the Brownian path $B$ fixed. 
    There exists  $b > \sup_{s \in [0,t]} |I_0(s,h_0)|$ so that $(-b,t)$ is a UGP for all initial conditions $(h_0^m)^{-}$ and $(h_0)^{-}$, and $(b,t)$ is a UGP for all initial conditions $(h_0^m)^{+}$ and $(h_0)^{+}$. Such a $b$ exists since there are only countably many non-UGPs at time $t$ for each initial condition, see Theorem \ref{thm:ugc}. (One way to measurably choose from an intersection of an interval $[a,a+a']$ and the complement of a random measure zero set is to use $a+a'U$ with a uniform$[0,1]$ random variable $U$ independent of everything.)
  
    Let $\gamma_m$ be any geodesic to $(-b,t)$ from $(h_0^m)^{-}$ and $\gamma_{\infty}$ a geodesic to $(-b,t)$ from $(h_0)^{-}$. By definition,
    $$ \gamma_m(0) = \mathrm{argmax}_{y \leq 0}\, \{h^m_0(y) + \La(y,0; -b,t) \}.$$
    Let $Y=B(-n)+\La(-n,0;-b,t)$. Since $-n$ is not a polar point for $B$ almost surely (Example \ref{exm:BMpolar}), the maximum on the interval $[-n,0]$ is strictly greater than $Y$. On the other hand, the maximum on the interval  $(-\infty,-n)$ converges to $Y$ as $m\to\infty$. Thus, for large enough $m$,  $\gamma_m(0)\in (-n,0)$, and so $\gamma_m(0)=\gamma_\infty(0)$. Once this happens, $\gamma_m=\gamma_{\infty}$ since $(-b,t)$ is a UGP. 
  
    If the analogous condition also  holds for the geodesics from $(h_0^m)^+$ and $(h_0)^{+}$ to $(b,t)$, then by definition of the interface the event $A_m$ also holds. So we have shown that $A_m$ holds eventually almost surely.  
		
    For the proof of \eqref{bc3}, let $t>0$ be arbitrary. Let $b>0$ and consider the event $E_b$ that $\sup_{s \in [0,t]} |I_0(s; h_0)| < b$. Now let $E_{b,n}$ be the intersection of $E_b$ with the event that all geodesics from $h_0,h_0^+,h_0^-$ to $(-b,t)$ and $(b,t)$ start from within $[-n,n]$. On $E_{b,n}$, 
    \begin{equation*}
	I_0(s;h_0)=I_0(s; h_0|_{[-n,n]})\mbox{ for all } s \le t.
    \end{equation*}
    Here $h_0\mid_{[-n,n]}$ equals $h_0$ over $[-n,n]$ and is $-\infty$ outside.
		
    Note that $I_0\big(\cdot;h_0|_{[-n,n]}\big)=I_0\big(\cdot; h_0|_{[-n,n]}-h_0(0)\big)$ since the interface does not change when the initial condition is shifted by a constant. Since $h_0|_{[-n,n]}-h_0(0)$ is absolutely continuous with respect to $B|_{[-n,n]}$, part \eqref{bc2} implies that on the event $E_{b,n}$ the process $I_0(\cdot; h_0)$ restricted to $[0,t]$ is geodesic-like. Taking $b$ and then $n$ large, we infer \eqref{bc3}.
\end{proof}
	
	The next part of the argument requires some new notation. For an initial condition $h_0$ and $s \geq 0$, denote by $h_s$ the function $x \mapsto \La(h_0; x,s)$, which is $h_0$ grown to time $s$.
	For $t > s$, let $t \mapsto I(s,h_s,p,t)$ denote the interface from reference point $p$ for the initial condition $h_s$. We imagine that the interface starts from time $s$,
	and $p$ is assumed to be such that the interface is uniquely defined.
	
	The Markovian evolution of $s \mapsto h_s$ implies a Markovian evolution of an interface together with its environment:
	for any $s \geq 0$ it holds for every $t > s$ that
	\begin{equation} \label{eqn:intevol}
		I(0,h_0, p, t) = I(s,h_s, p_s, t) \quad \text{with} \quad p_s = I(0,h_0,p,s).
	\end{equation}
	This follows from the observation that a geodesic to $(x,t)$ from $h_s$ is the restriction of a geodesic to $(x,t)$ from $h_0$ to times $u \in [s,t]$.
	
	\begin{lem} \label{lem:intgeo3}
		Let $p$ be a non-polar point of $h_0$ which is also an interior reference point. So then $I(0,h_0,p,t)$ is uniquely defined.
		The following condition holds almost surely. For every $t > 0$ there is an $s_0$ with $0 < s_0 < t$ such that for any $s \leq s_0$,
		$$ I(0,h_0,p,t') = I(s,h_s,p,t') \quad \text{for all}\;\; t' \geq t.$$
	\end{lem}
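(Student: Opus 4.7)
By the Markov evolution identity \eqref{eqn:intevol}, $I(0,h_0,p,t')=I(s,h_s,p_s,t')$ where $p_s:=I_p(s)$. Thus the task reduces to producing $s_0>0$ such that, for every $s\le s_0$ and every $t'\ge t$, $I(s,h_s,p,t')=I(s,h_s,p_s,t')$. Proposition \ref{prop:intstart} gives $p_s\to p$ as $s\to 0$; I may assume $p_s\le p$ for small $s$, the other case being symmetric. The plan is to verify the sufficient geometric condition: \emph{for $s$ small, no geodesic from $h_0$ to any point $(x,t')$ with $t'\ge t$ has its position $g(s)$ strictly inside the open interval $(p_s,p)$.}

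Sufficiency follows from the analysis of the competition function in $\S$\ref{sec:compfunc}. The positions at time $s$ of geodesics from $h_0$ to $(x,t')$ coincide with the maximizers of $y\mapsto h_s(y)+\La(y,s;x,t')$, and a case analysis of the splitting of $h_s$ at $p$ vs.~$p_s$ shows that if no such maximizer lies strictly in $(p_s,p)$ then the competition functions $d_p$ and $d_{p_s}$ agree in sign at $(x,t')$, hence the interface curves coincide. By geodesic ordering (Lemma \ref{lem:geoorder}) together with the interface characterization at time $s$, any geodesic $g$ from $h_0$ with $g(0)\le p$ satisfies $g(s)\le p_s$; and non-polarity of $p$, combined with the discrete and nested image of $e(\cdot,t')$ from Proposition \ref{prop:endpointfunc} via a union bound over rational endpoints, rules out $g(0)=p$ almost surely for any geodesic reaching time $\ge t$. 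So the only candidates for $g(s)\in(p_s,p)$ are geodesics with $g(0)>p$ strictly and $g(s)<p$.

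To exclude this crossing, I show the set $S_t:=\{g(0):g$ geodesic from $h_0$ to some $(x,t'),\,t'\ge t\}$ is bounded away from $p$ almost surely. On a compact box $K=[-R,R]\times[t,T]$, the geodesic compactness of Lemma \ref{lem:ccc} makes the corresponding family Hausdorff-compact, so their starting points form a compact subset of $\R$; any accumulation point of these starts at $p$ would, by the closure of limits of geodesics, produce a geodesic from $h_0$ starting exactly at $p$, contradicting non-polarity. For endpoints outside $K$, the parabolic decay of $\La$ via \eqref{eqn:Abound} combined with the linear upper bound on $h_0$ forces the starting point of a geodesic to $(x,t')$ to be comparable to $|x|$ for large $|x|$ and to $(t')^{2/3}$ for large $t'$, so such starts lie far from $p$ once $R,T$ are large. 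Hence $S_t$ avoids some random neighborhood $(p-\epsilon,p+\epsilon)$ of $p$.

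Finally, the uniform H\"older modulus of continuity for geodesics with compactly bounded endpoints (the bound \eqref{eqn:modulus} from the proof of Lemma \ref{lem:geobound}) gives $|g(s)-g(0)|<\epsilon/3$ uniformly for $s\le s_0$; for tail endpoints, the separation of $g(0)$ from $p$ is already so large that path regularity preserves $g(s)$ on the same side of $p$ as $g(0)$. Combined with $|p_s-p|<\epsilon/3$ for small $s$, no such $g(s)$ can lie in $(p_s,p)$. The main technical obstacle is the tail control in the third paragraph: extending the uniform separation of starting points from $p$ to geodesics whose endpoints escape to infinity in $|x|$ or $t'$, which rests on the careful interplay between the parabolic geometry of $\La$ and the linear growth of $h_0$, and then transferring this separation to positions at time $s$ along unbounded trajectories.
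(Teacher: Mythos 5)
Your opening move (the evolution identity \eqref{eqn:intevol}, reducing to comparing $I(s,h_s,p,\cdot)$ with $I(s,h_s,p_s,\cdot)$) matches the paper, but the sufficient condition you then set out to verify — that \emph{no} geodesic from $h_0$ to \emph{any} point $(x,t')$ with $t'\geq t$ passes through the strip $(p_s,p)$ at time $s$ — is a uniform statement over all endpoints, and the proposal does not actually establish it. The concrete gap is the one you flag yourself: transferring the separation of starting points from $p$ to the time-$s$ positions for geodesics whose endpoints escape to infinity. The modulus bound \eqref{eqn:modulus} only applies to geodesics whose endpoints lie in a fixed compact box; for a geodesic $g$ from $h_0$ to $(x,t)$ with $|x|$ large, $|g(s)-g(0)|$ grows roughly like $s|x-g(0)|/t$, so ``path regularity'' by itself does not keep $g(s)$ on the same side of $p$ — you would need a quantitative argument (via \eqref{eqn:Abound} and shear, say) that such geodesics start far on the correct side \emph{and} do not dip back across $p$ at small times, uniformly in the endpoint. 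In addition, your separation claim for $S_t$ is only sketched: the union bound over rational endpoints, via the step-function structure of $e(\cdot,t)$ in Proposition \ref{prop:endpointfunc}, covers leftmost and rightmost starting points, but geodesics through ``middle'' maximizers at exceptional endpoints are not addressed (you could sidestep them by noting that the sign of a competition function is determined by the extreme maximizers only, but the proposal does not make that reduction), and boundary cases where a maximizer sits exactly at $p_s$ or $p$ are left untreated. As written, the argument is therefore incomplete at its central step.

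The paper's proof shows that none of this uniform machinery is needed, because the statement can be localized to a single point. By \eqref{eqn:intevol} it suffices to prove the identity at $t'=t$ (and you could likewise have reduced your geodesic condition to endpoints at time exactly $t$, since the restriction of a geodesic from $h_0$ to $[0,t]$ is again a geodesic from $h_0$). Set $z=I(0,h_0,p,t)$ and let $a_s\leq b_s$ be the time-$s$ positions of the leftmost and rightmost geodesics from $h_0$ to $(z,t)$. Non-polarity of $p$ — used exactly through the countable-image argument you invoke, but now only for these two geodesics — gives $a_0<p<b_0$, and continuity of the two paths yields an $s_0$ with $a_s\leq p\leq b_s$ on $[0,s_0]$. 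A short manipulation of the suprema defining $\La(h_s|_{(-\infty,\cdot]},s;z,t)$ and $\La(h_s|_{[\cdot,\infty)},s;z,t)$, split according to whether $p_s\geq p$ or $p_s\leq p$, then shows $z$ is equidistant from the split of $h_s$ at $p$, i.e.\ $z=I(s,h_s,p,t)$. So the two routes differ substantially: yours attempts a global strip-avoidance statement for the whole geodesic forest (with the unbounded-endpoint control missing), while the paper's needs only the two extreme geodesics to one interface point, ordering (Lemma \ref{lem:geoorder}), and continuity.
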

	
	\begin{proof}
		We may assume that $p=0$. By the evolution of interfaces, it suffices to show the identity for $t'=t$.
		For all $s \in (0,t)$,
		$$
		z:=I(0,h_0,0,t)=I(s,h_s,p_s,t), \quad \text{where}\quad p_s=I(0,h_0,0,s).
		$$
		The second equality above, by the definition of interface, means that
		\begin{equation} \label{eqn:intgeo3a}
			\La(h_s|_{(-\infty,p_s]},s;z,t)=\La(h_s|_{[p_s,\infty)},s;z,t)=\La(h_s|_{\R},s;z,t)
		\end{equation}
		where
		$$ \La(h\mid_{J},s; z,t) = \sup_{y \in J} \, \{h(y) + \La(y,s;z,t) \}.$$

            Let $a_s$ and $b_s$ for $s \in [0,t]$ parameterize the leftmost and rightmost geodesics from $h_0$ to $(z,t)$, respectively.
            Due to the metric composition property of $\La$, $a_s$ and $b_s$ are both maximizers of the supremum in \eqref{eqn:intgeo3a} for every $s$ (with maximal value $\La(h_0;z,t)$).
            Indeed, consider the geodesic $s \mapsto a_s$. By definition, $a_0$ is a maximizer of the supremum $\La(h_0;z,t)$. Next, for any $s$ and $x \in \R$,
            $$h_s(x) + \La(x,s;z,t) = \sup_{y \in \R}\, \{h_0(y) + \La(y,0;x,s) + \La(x,s;z,t)\} \leq \sup_{y \in \R}\, \{h_0(y) + \La(y,0;z,t)\}$$ and the right hand side equals $\La(h_0;z,t)$.
            Moreover, $\La(a_0,0;a_s,s) + \La(a_s,s;z,t) = \La(a_0,0;z,t)$ because the path $s \mapsto a_s$ is a geodesic from $a_0$ to $(z,t)$.
            Thus, $h_s(a_s) + \La(a_s,s;z,t) \geq h_0(a_0) + \La(a_0,0;a_s,s) + \La(a_s,s;z,t) =\La(h_0;z,t)$.
  
		Since 0 is not a polar point of $h_0$, $a_0 < 0 < b_0$ almost surely.
		Therefore, by continuity of geodesics, there is an $s_0 > 0$ such that $a_s \leq 0 \leq b_s$ for all $s \in [0,s_0]$.
		Consider \eqref{eqn:intgeo3a} for $s \in (0,s_0]$.
		
		Suppose $p_s \geq 0$. Then, since $a_s \leq 0$,
		$$
		\La(h_s|_{(-\infty,p_s]},s;z,t)= \La(h_s|_{(-\infty,0]},s;z,t).
		$$
		On the other hand, since $p_s \geq 0$,
		$$
		  \La(h_s|_{[p_s,\infty)},s;z,t)\leq \La(h_s|_{[0,\infty)},s;z,t)\leq \La(h_s|_{\R},s;z,t)=\La(h_s|_{[p_s,\infty)},s;z,t).
		$$
		So
		$$
		\mathcal L(h_s|_{(-\infty,0]},s;z,t)=\mathcal L(h_s|_{[0,\infty)},s;z,t)
		$$
		which, by definition, means that  $z=I(s,h_s,0,t)$ as required.
		A symmetric argument applies when $p_s\le 0$ because $b_s \ge 0$.
	\end{proof}
	
	We can now complete the proof of Theorem \ref{thm:intgeo}.
	
	\begin{proof}
		We may assume $p=0$ without loss of generality. Let $[a,b] \subset (0,\infty)$ be a compact interval. By Lemma \ref{lem:intgeo3},
		there is an $s \in (0,a)$ such that $I(0,h_0,0,t) = I(s,h_s,0,t)$ for $t \in [a,b]$. By Lemma \ref{lem:intgeo2}, the process $t \mapsto I(s,h_s,0,t)$ over $[a,b]$ is absolutely continuous with respect to the geodesic of $\La$ from $(0,s)$ in direction zero because $h_s$ is Brownian on compacts \cite[Theorem 1.2]{SV}. This geodesic has the law of $t \mapsto g(t-s)$ for $t \geq s$, where $g$ is the geodesic of $\La$ from $(0,0)$ in direction zero. Now $g(t-s)$ is absolutely continuous over $t \in [a,b]$ with respect to $g(t)$ over $[a,b]$ by the absolute continuity of geodesics shown in Proposition \ref{prop:infgeoabscont}. Consequently, the process $I(0,h_0,0,t)$ is absolutely continuous over $[a,b]$ with respect to $g$.
	\end{proof}

	\subsection{Questions}
	
	\paragraph*{\textbf{No polar points for Brownian motion}}
	Show that Brownian motion almost surely has an empty polar set.
	
	\paragraph*{\textbf{Law of an interface}}
	Derive a method to compute, exactly, the law of $I_0(1)$ for interfaces with various initial conditions.
	This is interesting because interfaces are also the scaling limit of 2nd class particles in tasep.
	
	\paragraph*{\textbf{Likeliness between interfaces and geodesics}}
	Suppose for an initial condition $h_0$ the interface $I_0(t)$ from reference point 0 is uniquely defined.
	Is there a distributional identity between $I_0(t)$ and the infinite geodesic of $\La$ from $(0,0)$ in direction zero?
	Such an identity should be stronger than the absolute continuity result in Theorem \ref{thm:intgeo}.
	
	\section{A portrait of interfaces} \label{sec:portrait}
	Let $h_0$ be a fixed initial condition.
        The interface portrait of $h_0$ consists of all points lying on the finite interfaces associated to $h_0$:
	$$ \mathcal{I} = \{ (x,t) \in \Hp: x = I^{-}_p(t)\; \text{or}\; I^{+}_p(t) \;\text{for some reference point}\; p\}.$$
	
	\begin{figure}[ht!]
		\begin{center}
			\includegraphics[scale=2.5,trim={3.3cm 0 3.3cm 0},clip]{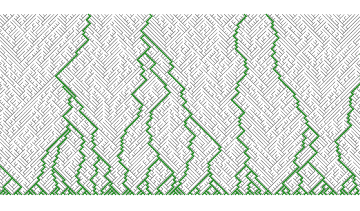}
			\caption{An interface portrait.}
			\label{fig:portrait}
		\end{center}
	\end{figure}
	
	This is the family of all finite interfaces, from all reference points, associated to $h_0$.
	It is an intriguing geometric object, see Figure \ref{fig:portrait}. Interfaces lie between geodesics,
	and the interface portrait is complementary to the web of geodesics emanating from $h_0$.
	The topics of this section are the topological and geometric properties of the portrait.
	
	Fix an initial condition $h_0$ and assume good samples of $\La$ relative to $h_0$ as in $\S \ref{sec:goodoutcomes}$.
	
	\subsection{Ordering of interfaces} \label{sec:orderofportrait}
	
	\begin{lem} \label{lem:order}
		If $p < q$ then $I^{+}_p(t) \leq I^{-}_q(t)$ for every $t$. So interfaces do not cross.
	\end{lem}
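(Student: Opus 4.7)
The plan is a pointwise comparison of the competition functions followed by a contradiction with the unique geodesic condition. The first step is to observe that when $p < q$ the split functions satisfy $h^{-}_p \leq h^{-}_q$ and $h^{+}_p \geq h^{+}_q$ pointwise, since the left half becomes smaller and the right half becomes larger as the splitting point moves right. Monotonicity of the variational formula \eqref{eqn:KPZfixedpt} in the initial condition then gives $\hh^{-}_p \leq \hh^{-}_q$ and $\hh^{+}_p \geq \hh^{+}_q$, and subtracting yields the pointwise inequality $d_p(x,t) \geq d_q(x,t)$ on all of $\Hp$.

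Next, suppose for contradiction that $I^{+}_p(t) > I^{-}_q(t)$ at some $t > 0$; the trivial cases where one interface is $\pm \infty$ in the conclusion-favoring direction make the desired inequality automatic. Continuity and monotonicity of $d_p(\cdot, t)$ and $d_q(\cdot, t)$ (Proposition \ref{prop:mono}) force $\{x : d_p(x,t) \leq 0\}$ to be a closed left-infinite ray with right endpoint $I^{+}_p(t)$ and $\{x : d_q(x,t) \geq 0\}$ to be a closed right-infinite ray with left endpoint $I^{-}_q(t)$. Hence every $x$ in the open interval $(I^{-}_q(t), I^{+}_p(t))$ satisfies $d_p(x,t) \leq 0 \leq d_q(x,t)$, and combined with the pointwise inequality $d_p \geq d_q$ all three values are zero. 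One then further deduces $\hh^{-}_p(x,t) = \hh^{-}_q(x,t) = \hh^{+}_p(x,t) = \hh^{+}_q(x,t)$, their common value being the full height $\hh(x,t)$.

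The final step reads these equalities as producing two geodesics from $h_0$ to $(x,t)$ with distinct starting points. Since $\hh^{-}_p(x,t) = \hh(x,t)$, the supremum of $y \mapsto h_0(y) + \La(y,0;x,t)$ over all of $\R$ is already attained on $(-\infty, p]$, so by the existence argument of Lemma \ref{lem:exist} applied to $h^{-}_p$ there is a geodesic from $h_0$ to $(x,t)$ starting at some $y^{-} \leq p$. The symmetric identity $\hh^{+}_q(x,t) = \hh(x,t)$ furnishes another geodesic starting at $y^{+} \geq q$, and $y^{-} < y^{+}$ since $p < q$. Hence $(x,t)$ is not a UGP. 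As $x$ ranged over an uncountable interval, this exhibits uncountably many non-UGPs at time $t$, contradicting the unique geodesic condition guaranteed by the good samples assumption (Theorem \ref{thm:ugc}). The main subtlety is the last step, which requires squeezing two genuinely distinct maximizers out of the chain of equalities; this reduces cleanly to the sup-attainment portion of Lemma \ref{lem:exist} applied separately to the split initial conditions on each side.
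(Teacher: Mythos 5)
Your proof is correct and follows essentially the same route as the paper: any $x$ strictly between $I^{-}_q(t)$ and $I^{+}_p(t)$ admits a geodesic from $h_0$ starting at some $y^{-}\leq p$ and another starting at some $y^{+}\geq q$, hence is not a UGP, and the unique geodesic condition rules out an interval of such points. Your preliminary step establishing $d_p \geq d_q$ pointwise is superfluous (the inequalities $d_p(x,t)\leq 0$ and $d_q(x,t)\geq 0$, available directly from the definitions of $I^{+}_p(t)$ and $I^{-}_q(t)$ and monotonicity in $x$, already yield the two maximizers on either side), but it is harmless.
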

	\begin{proof}
		Any point $x$ that satisfies $I^{-}_q(t)\leq x \leq I^{+}_p(t)$ for some $t$ has a geodesic from $h_0$ to $(x,t)$ that starts at a point $\leq p$ and another that starts at a point $\geq q$. So $(x,t)$ is not a UGP. By the UGC property there can not be an interval of such points $x$, and so $I^{+}_p(t) \leq I^{-}_q(t)$.
	\end{proof}
	
	\begin{lem} \label{lem:leftright}
		Suppose the interface $I^{-}_p(t)$ is finite. Then there are reference points $p_n \to p$ from the left such that $I^{+}_{p_n}(t) \to I^{-}_p(t)$ for every $t$.
		Similarly, a finite right interface can be approximated from the right by left interfaces.
	\end{lem}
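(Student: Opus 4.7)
The plan is to build the approximating sequence from the left by monotone convergence in the reference point, exploiting the pointwise monotonicity of the competition function. I would first observe that $p \mapsto \hh^-_p(x,t)$ is non-decreasing and $p \mapsto \hh^+_p(x,t)$ is non-increasing, simply because the supremums in \eqref{eqn:hp} are over nested sets. Thus $p \mapsto d_p(x,t)$ is non-increasing, and by Definition \ref{def:interface} the interface $I^+_p(t)$ is non-decreasing in $p$. Combined with Lemma \ref{lem:order}, which gives $I^+_{p'}(t) \leq I^-_p(t)$ for $p' < p$, the monotone limit $\ell(t) := \lim_{p' \uparrow p} I^+_{p'}(t)$ exists and satisfies $\ell(t) \leq I^-_p(t)$. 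Choosing the $p'$s to be reference points is automatic: finiteness of $I^-_p$ supplies some $a < p$ with $h_0(a)$ finite, so any $p' \in (a,p)$ is itself a reference point.

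The main step is to prove $\ell(t) \geq I^-_p(t)$. Fix $t > 0$ and any $x < I^-_p(t)$, so that $d_p(x,t) < 0$, and try to show $d_{p'}(x,t) \leq 0$ for $p'$ close enough to $p$ from below. The argument rests on the observation that strict negativity of $d_p(x,t)$ forces any maximizer $y^*$ of $y \mapsto h_0(y) + \La(y,0;x,t)$ to lie strictly below $p$---otherwise $\hh^+_p(x,t)$ would attain the full height $\hh_0(x,t)$ and we would have $d_p(x,t) \geq 0$. Such a $y^*$ exists by the compactness argument in Lemma \ref{lem:exist}. Once $y^* < p$ is fixed, for any $p' \in (y^*, p)$ the point $y^*$ still lies in the domain of $\hh^-_{p'}$, so
\[
\hh^-_{p'}(x,t) \geq h_0(y^*) + \La(y^*,0;x,t) = \hh_0(x,t) \geq \hh^+_{p'}(x,t),
\]
which gives $d_{p'}(x,t) \leq 0$ and hence $I^+_{p'}(t) \geq x$ directly from Definition \ref{def:interface}. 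Sending $p' \uparrow p$ and then $x \uparrow I^-_p(t)$ concludes $\ell(t) \geq I^-_p(t)$, and combined with the first paragraph the two limits agree.

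The statement for approximating a finite right interface $I^+_p$ from the right by left interfaces $I^-_{p'}$ follows by the mirror argument: for $p' > p$ reference points with $p' \downarrow p$, Lemma \ref{lem:order} gives $I^-_{p'}(t) \geq I^+_p(t)$, and for $x > I^+_p(t)$ any maximizer of $y \mapsto h_0(y) + \La(y,0;x,t)$ must lie strictly above $p$, forcing $d_{p'}(x,t) \geq 0$ for $p'$ close to $p$ and thus $I^-_{p'}(t) \leq x$. The only point requiring care is ensuring that the maximizer $y^*$ can always be chosen strictly on the correct side of $p$; as noted, this is forced by the strict sign of $d_p$, so I do not anticipate any serious technical obstacle beyond what is already packaged in the basic properties of $\S$\ref{sec:geobasics}.
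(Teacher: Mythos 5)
Your proposal is correct and takes essentially the same route as the paper: both arguments combine the monotone limit $\ell(t)=\lim_{p'\uparrow p}I^{+}_{p'}(t)\le I^{-}_p(t)$ (using Lemma \ref{lem:order}) with the observation that $d_p(x,t)<0$ forces every maximizer of $y\mapsto h_0(y)+\La(y,0;x,t)$, i.e.\ every geodesic starting point, to lie strictly to the left of $p$. The paper phrases this second step as a contradiction (a geodesic starting strictly left of $p$ yet right of every $q<p$), while you run it directly through a maximizer $y^*$ and points $p'\in(y^*,p)$; the substance is identical.
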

	
	\begin{proof}
		Since $I^{-}_p(t)$ is finite, there are points $a < p \leq b$ such that $h_0(a)$ and $h_0(b)$ are finite, according to the finiteness criterion in Proposition \ref{prop:intfinite}.
		For points $q \in (a,p)$ the right interface $I^{+}_q(t)$ is finite by the finiteness criterion.
		Moreover, by the previous lemma, $I^{+}_q(t)$ is non-decreasing in $q$ and bounded by $I^{-}_p(t)$; so it has a limit $\ell(t)$ as $q \to p$ from the left.
		If $\ell(t) < I^{-}_p(t)$ then consider a point $x$ between them. Any geodesic to $(x,t)$ must begin strictly to the left of $p$ and simultaneously to
		the right of every $q < p$. Such is impossible, so $\ell(t) = I^{-}_q(t)$ as required.
	\end{proof}
	
	\subsection{Topological view of the portrait} \label{sec:topoofportrait}
	For a reference for $p \in \R$, consider the set
	$$G_p = \{ (x,t) \in \Hp: \text{there is a geodesic from}\;h_0\; \text{to}\; (x,t)\; \text{starting at}\;(p,0)\}.$$
	The set $G_p$ lies inside the zero set $\{d_p=0\}$ of the competition function $d_p$.
	Like the zero set, $G_p$ is connected and, for every $t >0$, the intersection of $G_p$ with $\R \times \{t\}$ is an interval (possibly empty or infinite).
	
	\begin{lem} \label{lem:Gp}
		The set $G_p$ has the same interior as the zero set $\{d_p = 0\}$. All geodesics to interior points of $G_p$ start from $(p,0)$.
		The interfaces $I^{\pm}_p(t)$ are thus given by $\{d_p = 0\} \setminus \mathrm{Int}(G_p)$.
		Moreover, if $p \neq q$ then $\{d_p=0\}$ and $\{d_q=0\}$ (as well as $G_p$ and $G_q$) have disjoint interiors and can only intersect along their boundaries.
	\end{lem}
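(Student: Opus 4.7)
The plan is to split the lemma into three pieces: the inclusion $G_p \subseteq \{d_p = 0\}$, the agreement of interiors together with the stronger statement that all geodesics from $h_0$ to an interior point start at $p$, and finally the interface identification and disjointness. The first inclusion is immediate from the definitions: if $g$ is a geodesic from $h_0$ to $(x,t)$ with $g(0)=p$, then $h_0(p)$ is finite and $p$ lies in the support of both $h^{-}_p$ and $h^{+}_p$, so $\hh^{\pm}_p(x,t) \geq h_0(p) + \La(p,0;x,t) = \hh(x,t)$; the reverse inequalities are automatic, so $d_p(x,t) = 0$.

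The heart of the argument is a sandwich. Let $(x,t)$ be interior to $\{d_p = 0\}$ and pick $\eps > 0$ small enough that $(x \pm \eps, t) \in \{d_p=0\}$. Since $\hh^{+}_p(x - \eps, t) = \hh(x - \eps, t)$, there is a geodesic $g_L$ from $h_0$ to $(x - \eps, t)$ with $g_L(0) \geq p$; symmetrically, there is a geodesic $g_R$ from $h_0$ to $(x + \eps, t)$ with $g_R(0) \leq p$. Geodesic ordering (Lemma \ref{lem:geoorder}) forces $g_L(0) \leq g_R(0)$, so $g_L(0) = g_R(0) = p$. Applied once more, any geodesic $g$ from $h_0$ to $(x,t)$ is squeezed between $g_L$ and $g_R$ on $[0,t]$, forcing $g(0) = p$. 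This simultaneously proves the stronger ``all geodesics start at $p$'' statement and the inclusion $\operatorname{Int}(\{d_p=0\}) \subseteq G_p$; combined with the first inclusion this yields $\operatorname{Int}(G_p) = \operatorname{Int}(\{d_p=0\})$.

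The interface identification now follows from monotonicity of $d_p(\cdot,t)$ (Proposition \ref{prop:mono}) and continuity of the interfaces (Proposition \ref{prop:intcont}): each horizontal slice of $\{d_p=0\}$ is the interval $[I^{-}_p(t), I^{+}_p(t)]$, so continuity of $t \mapsto I^{\pm}_p(t)$ ensures that any point with $I^{-}_p(t) < x < I^{+}_p(t)$ has a planar neighborhood inside $\{d_p = 0\}$, while any interface point is approached by points outside the zero set. Thus $\{d_p=0\} \setminus \operatorname{Int}(G_p)$ is precisely the union of the two interface graphs. Disjointness of interiors for $p \neq q$ is then formal: a point in $\operatorname{Int}(G_p) \cap \operatorname{Int}(G_q)$ would, by the main step applied twice, have every geodesic from $h_0$ to it starting simultaneously at $p$ and at $q$, which is absurd; the same statement for $\{d_p=0\}$ and $\{d_q=0\}$ follows from the equality of interiors, and closedness of the level sets shows that any common point must be boundary for each.

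The only delicate point is the sandwich: the planar interior hypothesis is exactly what lets us produce two bracketing geodesics starting on opposite sides of $p$, after which geodesic ordering collapses everything onto $p$. Everything else reduces formally to monotonicity, continuity, and the already-established equality of interiors.
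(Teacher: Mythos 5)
Your proposal is correct in substance, but it takes a genuinely different route for the key step. You produce the two bracketing geodesics directly from the definition of the zero set: at $(x-\eps,t)$ the identity $\hh^{+}_p=\hh$ plus attainment of the split supremum gives a geodesic from $h_0$ starting at a point $\geq p$, symmetrically at $(x+\eps,t)$ one starting at $\leq p$, and then geodesic ordering (Lemma \ref{lem:geoorder}) collapses the starting points, and hence $g(0)$ for any geodesic $g$ to $(x,t)$, onto $p$. The paper instead runs the sandwich through the unique geodesic condition: by UGC there are UGPs $(x',t)$ and $(x'',t)$ in the zero set with $x'\leq x\leq x''$, a UGP in the zero set necessarily has its unique geodesic starting at $p$, and ordering against these two geodesics pins $g(0)=p$. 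Your version avoids UGC in this step at the cost of needing attainment of the suprema defining $\hh^{\pm}_p$ (which holds by the argument of Lemma \ref{lem:exist}, since $p$ is a reference point so $h^{\pm}_p$ are genuine initial conditions); both arguments ultimately rest on the same ordering lemma, the difference being which device supplies the flanking geodesics. The paper also treats the final disjointness claim via UGC: a common point of $\{d_p=0\}$ and $\{d_q=0\}$ cannot be a UGP, so the horizontal slices of the two zero sets meet in at most one point at each time.

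One clause of yours needs repair: ``closedness of the level sets shows that any common point must be boundary for each'' does not follow. Disjointness of interiors by itself still allows a common point to be interior to one zero set and boundary of the other, and closedness plays no role in excluding this. The fix is one line using tools you already deployed: if a common point $z$ were interior to $\{d_p=0\}$, then by your main step every geodesic from $h_0$ to $z$ starts at $p$; but $z\in\{d_q=0\}$ gives, by attainment for $h^{+}_q$ (or $h^{-}_q$), a geodesic to $z$ starting on the far side of $q$ from $p$, a contradiction. Hence every common point is a boundary point of each zero set, and the corresponding statement for $G_p$ and $G_q$ follows from the equality of interiors.
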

	
	\begin{proof}
		As $G_p$ lies inside the zero set $\{d_p=0\}$, $\mathrm{Int}(G_p) \subset \mathrm{Int}(\{d_p=0\})$.
		Suppose $(x,t)$ is an interior point of the zero set and consider any geodesic $g$ from $h_0$ to $(x,t)$, starting at $g(0)$.
		We will prove that $g(0)=p$, which shows that $\mathrm{Int}(\{d_p=0\}) \subset G_p$ and that all geodesics
		to $(x,t)$ start at $(p,0)$. Since $\mathrm{Int}(G_p)$ is the largest open set in $G_p$ and $\mathrm{Int}(\{d_p=0\})$ is open, it follows that $\mathrm{Int}(\{d_p=0\}) \subset \mathrm{Int}(G_p)$.
		
		Notice that a UGP in the zero set of $d_p$ must have its unique geodesic starting at $(p,0)$. By the UGC,
		since $(x,t)$ is an interior point of the zero set, there are UGPs $(x',t)$ and $(x'',t)$ that lie in the zero
            set and for which $x' < x < x''$. The unique geodesic to $(x',t)$ has to stay to the left of $g$, so that $p \leq g(0)$.
            Similarly, the geodesic to $(x'',t)$ stays to the right of $g$, so $g(0) \geq p$ as well.
		
		Finally, if $p \neq q$ and a point lies in both $\{d_p=0\}$ and $\{d_q=0\}$, then it cannot be a UGP. Due to the UGC, for every $t > 0$,
		the intervals $\{d_p=0\} \cap (\R \times \{t\})$ and $\{d_q=0\} \cap (\R \times \{t\})$ can intersect in at most one point.
            Therefore the sets $\{d_p=0\}$ and $\{d_q=0\}$ can only intersect along their boundaries and have disjoint interiors.
	\end{proof}
	
	\begin{prop} \label{prop:portraitclosed}
		The interface portrait is a closed subset of $\Hp$. It is also the closure in $\Hp$ of the set
		$$\{ (x,t) \in \Hp: x = I^{+}_p(t)\; \text{for some reference point}\;p\}$$
		of points on finite right interfaces or, alternatively, the closure of points on finite left interfaces.
	\end{prop}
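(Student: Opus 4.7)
The plan is to exploit the geometric characterization from Lemma \ref{lem:Gp}: since $\{d_p=0\}$ is closed and shares its interior with $G_p$, the interfaces $I^\pm_p$ together form exactly $\partial\{d_p=0\}$. So $(x,t) \in \mathcal{I}$ if and only if $(x,t) \in \partial\{d_p=0\}$ for some reference point $p$.

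For closedness, I would take $(x_n,t_n) \in \mathcal{I}$ converging to $(x,t) \in \Hp$, with reference point witnesses $p_n$. The key input is that $d_{p_n}(x_n,t_n)=0$ produces attained maximizers $a_n \leq p_n \leq b_n$: the starting points of two geodesics from $h_0$ to $(x_n,t_n)$, one on each side of $p_n$. By the compactness argument used in Lemma \ref{lem:exist}, geodesic starting points from $h_0$ to points in a compact region of $\Hp$ lie in a fixed compact set depending only on the limit point, so $\{a_n\},\{b_n\}$ and hence $\{p_n\}$ are all bounded. Passing to a subsequence, $a_n \to a$, $b_n \to b$, $p_n \to p$ with $a \leq p \leq b$, and geodesic compactness (Lemma \ref{lem:ccc}) produces geodesics from $h_0$ to $(x,t)$ starting at $a$ and $b$. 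This immediately gives $\hh^-_p(x,t) = \hh^+_p(x,t) = \hh(x,t)$, so $(x,t) \in \{d_p=0\}$.

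The main obstacle is ruling out that $(x,t)$ lies in the interior. If it did, then Lemma \ref{lem:Gp} forces all geodesics to $(x,t)$ to start at $p$; moreover, for large $n$, $(x_n,t_n)$ would also belong to the open set $\mathrm{Int}\{d_p=0\}$, so by Lemma \ref{lem:Gp} every geodesic to $(x_n,t_n)$ would start at $p$, forcing $a_n = b_n = p$ and hence $p_n = p$. But then $(x_n,t_n)$ would be simultaneously in the interior and the boundary of $\{d_p=0\}$, a contradiction. Hence $(x,t) \in \partial\{d_p=0\}$, and since $a \leq p \leq b$ with $h_0(a),h_0(b)$ finite, $p$ is a reference point, placing $(x,t) \in \mathcal{I}$.

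For the second statement, let $\mathcal{I}^+ \subset \mathcal{I}$ denote the set of points on finite right interfaces; closedness immediately gives $\overline{\mathcal{I}^+} \subset \mathcal{I}$. For the reverse inclusion, any finite left interface point $(I^-_p(t),t) \in \mathcal{I}$ not already in $\mathcal{I}^+$ can be approximated from $\mathcal{I}^+$ using Lemma \ref{lem:leftright}, which furnishes $p_n \nearrow p$ with $I^+_{p_n}(t) \to I^-_p(t)$. Hence $\mathcal{I} \subset \overline{\mathcal{I}^+}$. The analogous assertion that $\mathcal{I}$ equals the closure of points on finite left interfaces follows symmetrically using the second half of Lemma \ref{lem:leftright}.
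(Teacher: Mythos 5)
Your argument is correct, and the second half (identifying $\mathcal{I}$ with the closure of the finite right, or left, interface points via Lemma \ref{lem:leftright}) is exactly the paper's. The closedness half, however, runs along a different track. The paper's proof is global and purely topological: it sets $U=\bigcup_p \mathrm{Int}\{d_p=0\}$ and shows $\mathcal{I}=\Hp\setminus U$, using two facts from Lemma \ref{lem:Gp} — that zero sets for distinct reference points have disjoint interiors (so an interface point lies in no such interior), and that $\bigcup_p G_p=\Hp$ (so any point outside $U$ lies in some $G_p\setminus\mathrm{Int}(G_p)$, hence on an interface). This yields closedness with no compactness input and, as a by-product, an exact description of the complement of the portrait. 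You instead argue sequentially: from $d_{p_n}(x_n,t_n)=0$ you extract two-sided maximizers $a_n\le p_n\le b_n$, use the geodesic compactness of Lemma \ref{lem:ccc} (and the boundedness of starting points as in Lemma \ref{lem:exist}) to pass to limits $a\le p\le b$ with geodesics from $h_0$ to $(x,t)$ starting at $a$ and $b$, conclude $d_p(x,t)=0$ with $p$ a genuine reference point, and then rule out $(x,t)\in\mathrm{Int}\{d_p=0\}$ by the "all geodesics from interior points start at $p$" clause of Lemma \ref{lem:Gp}, since that would force $a_n=b_n=p_n=p$ and put $(x_n,t_n)$ simultaneously in the interior and on the boundary of $\{d_p=0\}$. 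This is a valid, more hands-on route; it trades the paper's covering/disjointness argument for the compactness machinery and has the merit of explicitly producing the limiting reference point, but it does not give the paper's clean identification of $\Hp\setminus\mathcal{I}$ as the open union $U$. Both proofs rest equally on Lemma \ref{lem:Gp}.
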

	
	\begin{proof}
		Let $U$ be the union of the interior of the sets $\{d_p=0\}$ over all reference points $p$ of $h_0$.
		This is an open set and we claim that its complement in $\Hp$ is the interface portrait $\mathcal{I}$, which is then closed.
		
		If a point $(x,t)$ belongs to $\mathcal{I}$ then it is at the boundary of some zero set $\{d_p = 0\}$. So $(x,t)$ cannot lie in the interior of any zero set $\{d_q=0\}$
		by Lemma \ref{lem:Gp}. Therefore $\mathcal{I}$ lies inside the complement of $U$.
		
		Now suppose a point $(x,t)$ is not in $U$, which by Lemma \ref{lem:Gp} means that it does not lie in the interior of any $G_p$.
		The union of the sets $G_p$ over all reference points $p$ is the entirety of $\Hp$. So there is some $p$ such that $(x,t) \in G_p \setminus \mathrm{Int}(G_p)$.
		But then $(x,t)$ lies at the boundary of $\{d_p=0\}$ because $G_p \subset \{d_p=0\}$ and they have the same interior. So the complement of $U$ belongs to $\mathcal{I}$.
		
		The fact that $\mathcal{I}$ is the closure of all the finite right or left interfaces comes from Lemma \ref{lem:leftright}.
	\end{proof}
	
	Recall the function $e$ from \eqref{eqn:geoendpoint} that shows where geodesics emanate. It describes the interface portrait.
	
	\begin{prop} \label{prop:portraittopo}
		Call a point $(x,t)$ a discontinuity of the function $e$ if $x$ is a discontinuity of $e(\cdot, t)$.
		The interface portrait $\mathcal{I}$ is the set of discontinuities of $e$. 
		Thus, $\mathcal{I} \cap (\R \times \{t\})$ is a discrete set for every $t > 0$.
	\end{prop}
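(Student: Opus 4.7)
The plan is to prove the set identity $\mathcal{I} = \{(x,t) \in \Hp : x \text{ is a discontinuity of } e(\cdot, t)\}$; the discreteness of the horizontal slice $\mathcal{I} \cap (\R \times \{t\})$ then follows immediately from Proposition \ref{prop:endpointfunc}, which asserts that the discontinuities of $e(\cdot, t)$ form a discrete set. I will establish the identity through two inclusions, one immediate and one requiring some care.

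For the direction ``discontinuity implies $\mathcal{I}$'', suppose $e(\cdot, t)$ jumps at $x$. By Proposition \ref{prop:endpointfunc}, the left limit $q_1 := e(x^-, t)$ and the value $q_2 := e(x, t)$ are the starting points of the leftmost and rightmost geodesics from $h_0$ to $(x,t)$ respectively, and $q_1 < q_2$. Since $h_0$ is finite at $q_1$ and $q_2$, any $p \in (q_1, q_2)$ is a reference point. The point $(x,t)$ then has geodesics emanating from both sides of $p$, so $\hh^{-}_p(x,t) = \hh^{+}_p(x,t) = \hh(x,t)$, giving $d_p(x,t) = 0$; on the other hand $(x,t) \notin \mathrm{Int}(G_p)$, since by Lemma \ref{lem:Gp} interior points of $G_p$ have all their geodesics starting at $p$, while ours start at $q_1, q_2 \neq p$. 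Hence $(x,t)$ lies on $\{d_p = 0\} \setminus \mathrm{Int}(G_p) = I^{-}_p \cup I^{+}_p$, placing it in $\mathcal{I}$.

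For the reverse inclusion, assume $(x,t) \in \mathcal{I}$, and say $x = I^{+}_p(t)$; the case $x = I^{-}_p(t)$ is entirely symmetric. Continuity of $d_p(\cdot, t)$ together with the definition of $I^{+}_p(t) = \sup\{x' : d_p(x', t) \leq 0\}$ yields $d_p(x, t) = 0$ and $d_p(x', t) > 0$ for every $x' > x$. The strict inequality for $x' > x$ excludes every maximizer of $y \mapsto h_0(y) + \La(y,0;x', t)$ from $(-\infty, p]$, whence $e(x', t) > p$. The delicate step is to upgrade this to $e(x, t) > p$ strictly: I would use that $e(\cdot, t)$ is a right-continuous step function with discrete discontinuities (Proposition \ref{prop:endpointfunc}), so it is constant on some interval $[x, x+\delta)$ with common value $e(x,t)$, and the fact that each value on this interval exceeds $p$ then forces $e(x,t) > p$. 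Meanwhile $d_p(x, t) = 0$ guarantees a maximizer of $y \mapsto h_0(y) + \La(y,0;x,t)$ in $(-\infty, p]$, so $e(x^-, t) \leq p$. Combining the two bounds gives $e(x^-, t) \leq p < e(x, t)$, making $x$ a jump discontinuity of $e(\cdot, t)$. I expect this strict inequality $e(x,t) > p$ to be the main obstacle, since pure right-continuity would only yield $e(x,t) \geq p$; invoking the step-function structure is essential.
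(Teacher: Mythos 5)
Your proposal is correct. The inclusion ``discontinuity of $e(\cdot,t)$ implies membership in $\mathcal{I}$'' is essentially the paper's own argument: choose $p$ strictly between the starting points of the leftmost and rightmost geodesics, note $d_p(x,t)=0$, and use Lemma \ref{lem:Gp} to exclude the interior of $G_p$. For the converse you take a genuinely different route. The paper argues by contraposition: if $e(\cdot,t)$ is constant, equal to some $p$, on a neighbourhood of $x$, then a neighbourhood of $(x,t)$ lies in $\{d_p=0\}$, so $(x,t)$ is an interior point of that zero set and, by the disjoint-interiors part of Lemma \ref{lem:Gp}, cannot lie on any interface from any reference point. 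You instead argue directly from the definition of $I^{+}_p(t)$: monotonicity and continuity of $d_p$ give $d_p(x,t)=0$ and $d_p(x',t)>0$ for $x'>x$, hence $e(x',t)>p$ for $x'>x$, and the step-function structure of $e(\cdot,t)$ from Proposition \ref{prop:endpointfunc} upgrades this to $e(x,t)>p$, while $d_p(x,t)=0$ yields a global maximizer in $(-\infty,p]$ and hence $e(x^-,t)\leq p$ via the left-limit characterization, exhibiting the jump. One small point worth writing out: in the ``symmetric'' case $x=I^{-}_p(t)$ the strictness migrates to the other side, namely $e(x^-,t)<p\leq e(x,t)$ (using constancy of $e(\cdot,t)$ on an interval to the left of $x$), so the two cases are symmetric but not literal mirror images of the same inequalities. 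Your route trades the paper's topological bookkeeping (interiors of zero sets, interfaces of distinct reference points meeting only along boundaries) for a more hands-on use of the fine structure of $e$; the paper's contrapositive excludes interfaces from every reference point in one stroke, whereas yours pins down the jump explicitly around the given $p$. Both arguments are complete, and the discreteness conclusion follows in either case from Proposition \ref{prop:endpointfunc}.
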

	
	\begin{proof}
		Let $e_t(x) = e(x,t)$ for a given $t$. Recall properties of $e$ from Proposition \ref{prop:endpointfunc}.
		
		If $x$ is not a discontinuity of $e_t$ then  $e_t$ is constant on $(x-\delta, x + \delta)$ for some $\delta > 0$ because $e_t$ is a step function with a discrete set of discontinuities. Let $p$ denote this constant. So the interval $(x-\delta, x+\delta) \times \{t\}$ lies in the zero set $\{d_p=0\}$. It is easy to see from this and continuity of the interfaces $I^{\pm}_p$ that the point $(x,t)$ then lies in the interior of $\{d_p=0\}$. So it cannot lie on any interface by Lemma \ref{lem:Gp}.
		
		Now suppose that $x$ is a discontinuity of $e_t$, meaning that the leftmost geodesic from $h_0$ to $(x,t)$ begins at some point $a$ and the rightmost one begins at $b > a$. Let $p$ be a point between $a$ and $b$. It is a reference point because $h_0(a)$ and $h_0(b)$ are finite. Clearly $d_p(x,t) = 0$. By Lemma \ref{lem:Gp}, the point $(x,t)$ does not belong to the interior of $\{d_p = 0\}$ because not all geodesics to it start at $p$. So it belongs to the boundary of $\{d_p = 0\}$ and lies on an interface.
		
		Discreteness of $\mathcal{I} \cap (\R \times \{t\})$  comes from discreteness of the discontinuities of $e_t$, see Proposition \ref{prop:endpointfunc}.
	\end{proof}
	
	\subsection{Geometric view of the portrait} \label{sec:geomofportarit}
	Geometrically, the interface portrait is a forest.
	
	\begin{thm}[Interface portrait is a forest] \label{thm:portraitgeom}
		The interface portrait $\mathcal{I}$ is a forest in $\Hp$ in the following sense: whenever two different interfaces meet at a positive time, they coalesce upward from then onwards. Moreover,
		geodesics do not cross interfaces. The only geodesics that meet $\mathcal{I}$ are those from points in $\mathcal{I}$.
	\end{thm}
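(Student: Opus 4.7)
The plan is to handle the three assertions in sequence. The proof uses geodesic ordering, the splicing of geodesics through a common point, and the unique geodesic condition, together with the discreteness of $\mathcal I\cap(\R\times\{s\})$ from Proposition \ref{prop:portraittopo}.

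I prove upward coalescence first. Take two different interfaces meeting at $(x,t)$; by symmetry I may assume they are $I^{+}_p$ and $I^{-}_q$ with $p<q$ and $I^{+}_p(t)=I^{-}_q(t)=x$. Both $d_p(x,t)=0$ and $d_q(x,t)=0$, so the leftmost maximizer $a$ and rightmost maximizer $b$ of $y\mapsto h_0(y)+\La(y,0;x,t)$ satisfy $a\le p<q\le b$. Suppose for contradiction that $I^{+}_p(s)<I^{-}_q(s)$ at some $s>t$, and pick $y$ in the open interval. The inequalities $d_p(y,s)>0$ and $d_q(y,s)<0$ force every geodesic from $h_0$ to $(y,s)$ to start in $(p,q)\subset[a,b]$, and by geodesic ordering each such geodesic lies between the extremal geodesics to $(x,t)$, which coincide at $(x,t)$; hence it passes through $(x,t)$. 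If $(y,s)$ is a UGP with unique starting point $y_0\in(p,q)$, I splice the leftmost geodesic from $(a,0)$ to $(x,t)$ with the segment of the $y_0$-geodesic from $(x,t)$ to $(y,s)$; the resulting path from $(a,0)$ to $(y,s)$ has relative length $\hh(x,t)+\La(x,t;y,s)=\hh(y,s)$, since the $y_0$-geodesic passes through $(x,t)$ and $a$ is a maximizer at $(x,t)$. This yields a second $h_0$-geodesic to $(y,s)$ starting at $a\ne y_0$, contradicting uniqueness. Hence every $y$ in the interval lies in $\mathcal I$, which is incompatible with Proposition \ref{prop:portraittopo}; so $I^{+}_p$ and $I^{-}_q$ coincide on $[t,\infty)$.

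For non-crossing, let $g$ be a geodesic from $h_0$ with $g(0)=y_0\le p$ for a reference point $p$. At every $(g(t'),t')$, one has $\hh^{-}_p(g(t'),t')\ge h_0(y_0)+\La(y_0,0;g(t'),t')=\hh(g(t'),t')$, so $d_p(g(t'),t')\le 0$ and $g(t')\le I^{+}_p(t')$; the symmetric case $y_0\ge p$ gives $g(t')\ge I^{-}_p(t')$, so $g$ never crosses an interface. For the last assertion, suppose $g$ terminates at $(x,t)$ and passes through $(y,t')\in\mathcal I$ for some $t'<t$. By definition of $\mathcal I$ there is a second geodesic $g'$ from $h_0$ to $(y,t')$ with $g'(0)\ne g(0)$; splicing $g'|_{[0,t']}$ with $g|_{[t',t]}$ produces an $h_0$-geodesic to $(x,t)$ starting at $g'(0)$, which puts $(x,t)\in\mathcal I$. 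Iterating the argument at every intermediate time shows $g$ lies in $\mathcal I$ after first touching it, i.e., once a geodesic meets the portrait it remains in the portrait, and is therefore naturally viewed as a geodesic emanating from its first $\mathcal I$-hit. The main technical obstacle is the coalescence step, where the sandwich-splice argument eliminates UGPs in the hypothetical separation interval and the discreteness of $\mathcal I$ at a fixed time eliminates the complementary possibility; the other two claims follow by comparatively simple concatenation arguments.
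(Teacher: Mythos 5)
Your splice-and-sandwich argument for coalescence works in the configuration you treat, but the opening reduction ``by symmetry I may assume they are $I^{+}_p$ and $I^{-}_q$ with $p<q$'' leaves a genuine gap. Two different interfaces can be $I^{-}_r$ and $I^{+}_r$ for a single reference point $r$, and even when the given pair is $I^{-}_p$ and $I^{+}_q$ with $p<q$, showing that the inner pair $I^{+}_p,I^{-}_q$ agrees after time $t$ does not give coalescence of the given pair: the separation can occur between the left and right interfaces of one reference point. In that case your mechanism fails as written, because for $y$ strictly between $I^{-}_r(s)$ and $I^{+}_r(s)$ all geodesics to $(y,s)$ start exactly at $r$ (Lemma \ref{lem:Gp}), so there is no interval $(p,q)$ of admissible starting points to contradict; moreover the sandwich step needs strict inequalities at time $0$, which are unavailable when an extremal starting point at the pinch equals $r$. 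The case is repairable (use that the pinch point lies in $\mathcal I$, hence its leftmost and rightmost geodesics have distinct starting points by Proposition \ref{prop:portraittopo}, and force $g(t)=x$ by monotonicity of $d_r$ along $g|_{[0,t]}$ rather than by sandwiching), but this is an additional argument you did not supply. The paper sidesteps the case split entirely: it first proves that geodesics from points off $\mathcal I$ never meet $\mathcal I$, and then, if two interfaces met at $(x,t)$ and later separated, it takes a UGP off $\mathcal I$ strictly between them and observes that its geodesic, running down past the pinch at $(x,t)$, must hit one of the two interfaces --- a topological contradiction valid for every pair at once.

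A second, smaller flaw: the inference ``no $y$ in the interval is a UGP, hence every $y$ lies in $\mathcal I$'' is not valid, since by Proposition \ref{prop:portraittopo} membership in $\mathcal I$ requires the leftmost and rightmost geodesics to have different starting points, and a non-UGP may have all its geodesics share one starting point. You do not need that step: your splice produces a geodesic to $(y,s)$ rooted at $a\le p$, which already contradicts $d_p(y,s)>0$ with no uniqueness assumption (or, if you prefer, an interval of non-UGPs contradicts the UGC directly). The remaining parts are fine and in fact cleaner than the paper's Lemma \ref{lem:crossing}: the splicing proof that a geodesic passing through a point of $\mathcal I$ has its endpoint in $\mathcal I$, hence stays in $\mathcal I$ above its first hit, is correct and yields the last assertion. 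Just note that the non-crossing claim needs this too: the $d_p$-sign argument alone only keeps a geodesic rooted at $y_0\le p$ weakly left of $I^{+}_p$; it does not by itself prevent it from crossing $I^{-}_p$ into the strip $\{d_p=0\}$, which is excluded only by combining your later argument with the fact that interior points of the strip are not in $\mathcal I$ (Lemma \ref{lem:Gp}).
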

	The theorem follows from a combination of the two lemmas below.
	
	\begin{lem} \label{lem:crossing}
		If $(x,t) \notin \mathcal{I}$ then every geodesic to it from $h_0$ lies outside $\mathcal{I}$. If $(x,t) \in \mathcal{I}$ then a geodesic from $h_0$ to $(x,t)$ can, looking backwards in time, follow an interface on which it belongs for a while and, if the geodesic ventures off, it doesn't meet $\mathcal{I}$ again. Every $(x,t) \in \mathcal{I}$ also has geodesics to it that lie (weakly) to the left and to the right of any interface on which $(x,t)$ belongs.
	\end{lem}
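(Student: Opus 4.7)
My plan is to base everything on a single \emph{concatenation principle}: if a geodesic $g$ from $h_0$ passes through a point $(y,s)\in\mathcal{I}$, then the multiplicity of starting points at $(y,s)$ propagates forward along $g$. Concretely, Proposition~\ref{prop:portraittopo} identifies $(y,s)$ as a discontinuity of $e_s$, so there are geodesics from $h_0$ to $(y,s)$ starting at distinct points $a_1<a_2$; for any $(z,u)$ on $g$ with $u>s$, concatenating the $a_i$-geodesic to $(y,s)$ with $g|_{[s,u]}$ produces geodesics from $h_0$ to $(z,u)$ starting at both $a_1$ and $a_2$, forcing $(z,u)\in\mathcal{I}$.

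From this the first claim is immediate by contradiction: if $(x,t)\notin\mathcal{I}$ and some geodesic $g$ from $h_0$ to $(x,t)$ met $\mathcal{I}$ at an interior time, propagation would push $(x,t)$ into $\mathcal{I}$. The key half of the second claim follows by the same mechanism: for a geodesic $g$ to $(x,t)\in\mathcal{I}$, the set $T:=\{s\in[0,t]:(g(s),s)\in\mathcal{I}\}$ is upward closed, and combined with continuity of $g$ and closedness of $\mathcal{I}$ (Proposition~\ref{prop:portraitclosed}) it is a closed sub-interval $[s_1,t]$, so once $g$ exits $\mathcal{I}$ moving backward in time it cannot return. The ``can follow an interface for a while'' part is the assertion that $s_1<t$ is achievable, which will be realized by the leftmost (or rightmost) geodesic from $h_0$ to $(x,t)$ constructed in the next step.

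For the third claim, let $(x,t)\in\mathcal{I}$ lie on an interface $I$, so $I=I^{\pm}_p$ for some reference point $p$ and $d_p(x,t)=0$. Since $\hh^{-}_p(x,t)=\hh^{+}_p(x,t)=\hh(x,t)$, there exist geodesics $g^{-}$ and $g^{+}$ from $h_0$ to $(x,t)$ whose starting points satisfy $g^{-}(0)\leq p\leq g^{+}(0)$. For any $s\in[0,t]$ the segment of $g^{-}$ up to $s$ is a geodesic to $(g^{-}(s),s)$ starting at $g^{-}(0)\leq p$, hence $\hh^{-}_p(g^{-}(s),s)=\hh(g^{-}(s),s)$, i.e.\ $d_p(g^{-}(s),s)\leq 0$; thus $g^{-}$ lies in the closed sub-level set $\{d_p\leq 0\}$, which is the region weakly to the left of $I$. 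Symmetrically $g^{+}$ lies in $\{d_p\geq 0\}$, weakly to the right.

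The main obstacle I foresee is how to interpret ``weakly to the left/right of an interface''. When $I^{-}_p\neq I^{+}_p$ there is a positive-width band $\{d_p=0\}$ in which the geodesic above can legitimately sit, so the natural reading is membership in the sub-level sets $\{d_p\leq 0\}$ and $\{d_p\geq 0\}$ rather than pointwise inequalities against the curve $I$; the concatenation argument delivers exactly this, but a pointwise reading would require a finer comparison between $g^{-}$ and $I^{-}_p$. A related bookkeeping point is to treat the endpoints $s=0$ and $s=t$ carefully in the propagation step, so that $(x,t)$ itself is not counted as an ``interior meeting'' of $g$ with $\mathcal{I}$. Both are definitional matters once the concatenation principle is in place.
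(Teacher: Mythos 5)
Your concatenation principle is sound and it carries the first two claims: by Proposition \ref{prop:portraittopo} a point of $\mathcal{I}$ is a discontinuity of $e$, hence has geodesics from $h_0$ with distinct starting points, and splicing those into $g|_{[s,u]}$ makes every later point of $g$ a discontinuity of $e$ as well; this is essentially the same mechanism as the paper's proof, which phrases it through unique geodesic points (the geodesic from a UGP cannot meet $\mathcal{I}$, and a point off $\mathcal{I}$ is sandwiched at its own time by UGPs off $\mathcal{I}$). Two small remarks on your second step: the ``can follow an interface for a while'' clause is permissive, so you do not need $s_1<t$ to be achievable, and your side remark that the leftmost or rightmost geodesic realizes it is neither proved by your third step nor required.

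The genuine gap is in the third claim. Your argument places $g^-$ in $\{d_p\le 0\}$ and $g^+$ in $\{d_p\ge 0\}$, i.e.\ weakly to the left of $I_p^+$ and weakly to the right of $I_p^-$. The statement asks for geodesics weakly to the left and to the right of the specific curve $I$ on which $(x,t)$ lies, and when $I_p^-\neq I_p^+$ your bound is against the wrong curve on one side: if $(x,t)$ lies on $I_p^+$ you produce no geodesic weakly to the right of $I_p^+$, only one weakly to the right of $I_p^-$ (symmetrically for $I_p^-$). This is not merely a definitional choice, because the pointwise version is true and is what the paper establishes: approximate $(x,t)$ at time $t$ by points off $\mathcal{I}$ with unique geodesics, from the left and from the right (possible by the UGC and the openness of $\Hp\setminus\mathcal{I}$ from Proposition \ref{prop:portraitclosed}); their geodesics cannot meet $\mathcal{I}$ (your first claim), hence by continuity and the intermediate value theorem they stay strictly on one side of the continuous curve $I$; a limit along geodesic compactness (Lemma \ref{lem:ccc}) then yields geodesics from $(x,t)$ lying weakly on each side of $I$. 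Replacing your level-set reinterpretation by this approximation argument closes the gap, and your propagation principle already supplies the fact that the approximating geodesics avoid $\mathcal{I}$.
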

	
	\begin{proof}
		No UGP can lie on $\mathcal{I}$ because every point on an interface has a different leftmost and rightmost geodesic by Proposition \ref{prop:portraittopo}. So the geodesic from $h_0$ to a UGP won't meet any interface either. Now suppose $(x,t) \notin \mathcal{I}$ and let $g$ be a geodesic from $h_0$ to $(x,t)$. Suppose by way of contradiction that $g$ meets $\mathcal{I}$ at the interface $I$. Since $(x,t) \notin \mathcal{I}$ and the complement of $\mathcal{I}$ is open by Proposition \ref{prop:portraitclosed}, continuity of geodesics and interfaces imply that there is an $s < t$ such that $g(s) = I(s)$ while either $I < g$ or $I > g$ on $(s,t]$. Suppose $I < g$ on $(s,t]$. Due to the UGC, there is a UGP $(x',t)$ with $I(t) < x' < x = g(t)$. But due to geodesic ordering and continuity, the geodesic from $h_0$ to $(x',t)$ has to meet $I$ on $[s,t]$, which cannot be. So $g$ indeed does not meet $\mathcal{I}$.
		
		If $(x,t)$ lies on an interface $I$ then any geodesic to it can stay on $I$ from time $t$ downward, and if it ever leaves $I$ then it won't meet $\mathcal{I}$ again due to the previous assertion. By approaching $(x,t)$ with UGPs from the left and the right, and using geodesic compactness, it follows that there are geodesics to $(x,t)$ that stay to the left and to the right of $I$.
	\end{proof}
	
	\begin{lem} \label{lem:coalesce}
		If two different interfaces meet at a positive time then they coalesce upwards from then onwards.
	\end{lem}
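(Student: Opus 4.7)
I would proceed by contradiction. Suppose two distinct interfaces $I_1$ and $I_2$ meet at some $(x_0, t_0)$ with $t_0 > 0$. Using the linear ordering of interfaces (Lemma~\ref{lem:order}) and Proposition~\ref{prop:portraittopo}, which identifies interface points with discontinuities of $e(\cdot, t_0)$, the claim reduces to the case $I_1 = I^+_p$ and $I_2 = I^-_q$ for some reference points $p < q$: any pair of interfaces through $(x_0, t_0)$ can be sandwiched between such a pair, so the coalescence of $I^+_p$ and $I^-_q$ upward forces that of $I_1, I_2$. When $I_1$ and $I_2$ come from the same reference point $r$, so that $I_1 = I^-_r$ and $I_2 = I^+_r$, the starting points $a < b$ of the leftmost and rightmost geodesics from $h_0$ to $(x_0, t_0)$ satisfy $a < r < b$ strictly, and one can choose $p \in (a, r)$ and $q \in (r, b)$ to set up the auxiliary pair.

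Now suppose for contradiction that $I^+_p(t_1) < I^-_q(t_1)$ at some $t_1 > t_0$, and pick $y \in (I^+_p(t_1), I^-_q(t_1))$. The definitions of the interfaces give $d_p(y, t_1) > 0$ and $d_q(y, t_1) < 0$, so every geodesic from $h_0$ to $(y, t_1)$ starts strictly inside $(p, q)$. Continuity of $d_p, d_q$ together with $(x_0, t_0) \in I^+_p \cap I^-_q$ forces $d_p(x_0, t_0) = d_q(x_0, t_0) = 0$, so there exist geodesics $\gamma_L, \gamma_R$ from $h_0$ to $(x_0, t_0)$ with $\gamma_L(0) \le p$ and $\gamma_R(0) \ge q$. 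For any geodesic $g$ from $h_0$ to $(y, t_1)$, the restriction $g|_{[0, t_0]}$ is itself a geodesic to $(g(t_0), t_0)$; comparing its endpoint with $\gamma_L(t_0) = \gamma_R(t_0) = x_0$ via geodesic ordering (Lemma~\ref{lem:geoorder}) forces $g(t_0) = x_0$, since $g(t_0) > x_0$ would give $\gamma_R(0) \le g(0)$ contradicting $\gamma_R(0) \ge q > g(0)$, and the case $g(t_0) < x_0$ is excluded symmetrically via $\gamma_L$.

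Thus $(x_0, t_0)$ lies on every geodesic from $h_0$ to $(y, t_1)$. I then concatenate $\gamma_L$ on $[0, t_0]$ with the upper segment of any such $g$ on $[t_0, t_1]$ to obtain a path $g'$ from $h_0$ to $(y, t_1)$ starting at $\gamma_L(0) \le p$. Its relative length equals $\La(h_0; x_0, t_0) + \La(x_0, t_0; y, t_1) = \La(h_0; y, t_1)$, where the equality uses that the reverse triangle inequality is tight at $(x_0, t_0)$ (because $(x_0, t_0)$ lies on a geodesic from $h_0$ to $(y, t_1)$). Hence $g'$ is a geodesic from $h_0$ to $(y, t_1)$ starting at $\le p$, contradicting $d_p(y, t_1) > 0$. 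The main difficulty in executing this plan is the reduction step, especially the same-reference-point case, where one must verify $a < r < b$ strictly; this follows from analyzing the signs of $d_r$ immediately to the left and right of $x_0$ at time $t_0$.
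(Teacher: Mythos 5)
Your argument is correct in substance, but it takes a genuinely different route from the paper's. The paper's proof is short and topological: if the two interfaces met at $(x_0,t_0)$ but separated later, the UGC together with Proposition \ref{prop:portraittopo} would give a UGP strictly between them at a later time; by Lemma \ref{lem:crossing} its unique geodesic avoids $\mathcal I$, yet to reach the initial condition from between two curves that meet at time $t_0$ it must hit one of them --- a contradiction, with no case analysis at all. You instead argue directly with geodesic ordering: for $y$ strictly between $I^+_p(t_1)$ and $I^-_q(t_1)$, the signs $d_p(y,t_1)>0$, $d_q(y,t_1)<0$ pin the starting point of any geodesic to $(y,t_1)$ inside $(p,q)$, while $d_p(x_0,t_0)=d_q(x_0,t_0)=0$ supply geodesics $\gamma_L,\gamma_R$ to $(x_0,t_0)$ starting on either side; ordering then forces every geodesic to $(y,t_1)$ through $(x_0,t_0)$, and the concatenation of $\gamma_L$ with the upper segment is a geodesic starting at $\le p$, contradicting $d_p(y,t_1)>0$. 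This core step is sound (the additivity of relative length across $t_0$ and the tightness of the triangle inequality are fine), and it yields a hands-on proof that bypasses Lemma \ref{lem:crossing} and the portrait machinery; the price is the reduction, where two points need tightening. First, ``sandwiched between such a pair'' points the wrong way in the mixed cases: if $I_1=I^-_p$ and $I_2=I^+_q$ with $p<q$, the auxiliary pair $I^+_p,I^-_q$ lies \emph{inside} $I_1,I_2$, so its coalescence alone does not force theirs; you must also invoke your same-reference-point case at $p$ and at $q$ (all four interfaces agree at $t_0$ by Lemma \ref{lem:order} and the definition of $I^\pm$) and chain $I^-_p=I^+_p=I^-_q=I^+_q$ on $[t_0,\infty)$. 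Second, the strict inequalities $a<r<b$ in the same-reference-point case are true but need more than ``signs of $d_r$ near $x_0$'': the clean justification is the step structure of $e$ in Proposition \ref{prop:endpointfunc} (equivalently, coalescence of nearby geodesics) --- for $x$ slightly left of $x_0$ the rightmost geodesic to $(x,t_0)$ starts at $a$, and $d_r(x,t_0)<0$ then forces $a<r$; symmetrically $r<b$. With those repairs your proof is complete, at the cost of being noticeably longer than the paper's.
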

	
	\begin{proof}
		Let $I$ and $J$ be finite interfaces (left or right). Suppose they meet at some point $(x,t) \in \Hp$, that is $I(t) = J(t) = x$.
		Now if $I$ and $J$ do not coalesce from time $t$ onward then there is a UGP not in $\mathcal{I}$ lying between them after time $t$, by the UGC and Proposition \ref{prop:portraittopo}. Its unique geodesic cannot meet $I$ or $J$ by Lemma \ref{lem:crossing},
		but since $I$ and $J$ meet at time $t$ and the geodesic ends between them after time $t$,
		it must encounter a point on $I$ or $J$ as it goes down to the initial condition. The contradiction means $I$ and $J$ do coalesce.
	\end{proof}
	
	\subsection{Questions}
	
	\paragraph*{\textbf{Geometry of interface portraits}}
	Find a criterion to determine when an interface portrait is a tree.
	For instance, prove that the flat (when $h_0 \equiv 0$) and stationary (when $h_0$ is Brownian motion) interface portraits are trees.
	
	\paragraph*{\textbf{Duality between interfaces and geodesics}}
	The interface portrait of the stationary initial condition (Brownian motion with diffusivity $\sqrt{2}$) is scale and translation invariant. (Here one considers the law of the interface portrait jointly over the Brownian motion and an independent directed landscape.) Study the distributional and geometric properties of the stationary portrait in detail. In particular, show that it has the same law as the geodesic tree from Theorem \ref{thm:geotree}.
	
	\section{The 2nd class particle} \label{sec:2ndclassparticle}
	
	This section derives the scaling limit of the 2nd class particle in tasep -- Theorem \ref{thm:scalinglimit}.
	We prove a more general result, Theorem \ref{thm:2ndclass},
	by relaxing the mode of convergence in \eqref{eqn:diffscaling}.
	There are several steps to the proof. First, we model the 2nd class particle in terms of a hole--particle pair and map its trajectory to
	a competition interface in last passage percolation. This mapping is well known and has been utilized for instance in \cite{CP, FGN, FP, FMP}.
	Next, we establish the scaling limit of the competition interface by using the fact that exponential last passage percolation
	scales to the directed landscape. Together, this leads to the scaling limit of the 2nd class particle. 

        We start with some background on the Burgers' equation that motivates our scaling. 

  \subsection{Tasep, the Burgers' equation and diffusive scaling}
  \label{s:burgers}
	Tasep is a microscopic model for the Burgers' equation with the 2nd class particle representing a microscopic characteristic.
	Suppose the initial condition of tasep is such that there is a macroscopic particle density $u_0(x)$:
	$$\int_0^x u_0(y) \, dy = \lim_{\eps \to 0} \eps \cdot \# \{ \text{particles inside}\; [0, \eps^{-1}x]\;\text{at time 0}.\}\,.$$
	Then for further times there is an almost sure macroscopic density $u(x,t)$ \cite{Rez2}:
	$$ \int_0^x u(y,t)\, dy = \lim_{\eps \to 0} \eps \cdot \# \{ \text{particles inside}\; [0, \eps^{-1}x]\; \text{at time}\; \eps^{-1}t \}.$$
	The density $u(x,t)$ satisfies the Burgers' equation (a scalar conservation law) in the form
	$$ \partial_t u + \partial_x (u (1-u)) = 0; \quad u(x,0) = u_0(x).$$
	
	There are 3 classical initial conditions for which the interplay between tasep and Burgers' equation has been studied.
	These are the rarefaction, shock and flat initial conditions. They are characterized by two parameters $\rho_{-}, \rho_{+} \in [0,1]$ for which
	$$u_0(x) = \rho_{-}\mathbf{1}_{x < 0} + \rho_{+} \mathbf{1}_{x \geq 0}.$$
	The rarefaction case is when $\rho_{-} > \rho_{+}$, shock being $\rho_{-} < \rho_{+}$, and flat being $\rho_{-} = \rho_{+}$.
	
	In all three cases the Burgers' equation is solved by finding characteristics, which are curves $x(t)$ along which $u(x(t),t)$ is constant.
	Characteristics satisfy the equation $x'(t) = 1 - 2u(x(t), t)$, starting from some $x(0) = x_0$. The characteristics are thus straight lines.
	
	In the rarefaction case there is a fan of characteristics emanating from the origin, and the solution $u(x,t)$ equals $u(x/t,1)$ where
	$$u(x,1) = \begin{cases}
		\rho_{-} & \text{if}\; x \leq 1- 2\rho_{-} \\
		\rho_{+} & \text{if}\; x \geq 1- 2\rho_{+} \\
		\text{linear in between} & \text{if}\; x \in [1-2\rho_{-}, 1- 2\rho_{+}]
	\end{cases}.
	$$
	A classical result is that in the rarefaction setting, a 2nd class particle in tasep started from the origin
	follows one of the characteristics inside the fan at random. If $X(t)$ is the position of the 2nd class particle at time $t$,
	then $X(t)/t$ converges in probability to a random variable uniformly distributed over the interval $[1-2\rho_{-}, 1- 2\rho_{+}]$ \cite{FK}.
	This convergence is later shown to hold almost surely \cite{MG}.
	
	In the shock case characteristics collide and Burgers' equation is not well-posed in the usual sense.
	There is the entropic solution, which, based on physical grounds, is taken to mean the correct solution in this setting, see \cite{Fer1, Rez1}.
	The entropic solution is simply the travelling front $u(x,t) = u_0(x - (1-\rho_{-}-\rho_{+})t)$.
	The line $x = (1-\rho_{-}-\rho_{+})t$ carries the discontinuity of the initial condition.
	It is also the macroscopic trajectory of a 2nd class particle in that a 2nd class particle
	from the origin has an asymptotic speed, the limit of $X(t)/t$ in probability, which is $1- \rho_{-}-\rho_{+}$ \cite{Lig}.
	
	Fluctuations of the 2nd class particle around its deterministic speed has been investigated in the shock setting.
	If the shock density profile is modelled by a Bernoulli random initial condition, $X(t)$ has Gaussian fluctuations on the scale of $t^{1/2}$ \cite{Lig}.
	When the initial condition is deterministic, say particles are placed periodically in large blocks to achieve the
	shock density profile, then $X(t)$ has Tracy-Widom fluctuations on the scale of $t^{1/3}$ \cite{FGN, QR}.
	
	Finally, consider the flat case $\rho_{-} = \rho_{+} = \rho$. The characteristics are the lines $x = x_0 + (1-2\rho)t$
	and $u(x,t) \equiv \rho$. A 2nd class particle from the origin has asymptotic speed $1-2\rho$ \cite{Fer1, Rez1, Sep1}. It is
	therefore interesting to investigate the fluctuations of its position $X(t)$ around $(1-2\rho)t$, which had remained open. This is the
	content of Theorem \ref{thm:scalinglimit}. We take $\rho = 1/2$. The initial conditions $X^{\eps}_0$ have a macroscopically
	flat density as $\eps \to 0$, but can differ from the identically flat density on scales of $\eps^{-1}$ along the $x$-axis
	and $\eps^{1/2}$ along the $u$-axis. This makes for the scaling \eqref{eqn:diffscaling}.

	\subsection{Statement of the limit theorem} \label{sec:limittheorem}
	
	Recalling back to the Introduction, consider tasep with a 2nd class particle at the origin and an initial condition $X_0(\cdot)$
	of regular particles. Encode the initial condition as a height function $h_0: \Z \to \Z$ according to
	\begin{equation} \label{eqn:initheight}
		h_0(0) = 0 \;\;\text{and}\;\; h_0(x+1) - h_0(x) = \begin{cases}
			1 & \text{if initially there is a hole at site}\; x\\
			-1 & \text{if initially there is a particle at site}\; x\\
		\end{cases}
	\end{equation}
	The 2nd class particle is treated as a particle in the above.
	
	Let $h^n_0(x)$ be a sequence of initial height functions. Extend them to $x \in \R$ by linear interpolation.
	The sequence converges if for a sequence $\eps_n \to 0$ one has that
	\begin{equation} \label{eqn:initialconv}
		- \eps_n^{1/2} h^n_0(2\eps_n^{-1}x) \to \mathbf{h}_0(x) \quad \text{as}\; n \to \infty.
	\end{equation}
	The sense in which this limit should hold is as follows, called hypograph convergence.
	Assume $\mathbf{h_0}(x)$ is an initial condition for the directed landscape, namely an
	upper semicontinuous function with values in $\R \cup \{-\infty\}$ and finite somewhere.
	The convergence should be in the natural sense for upper semicontinuous functions: 
	for every $x \in \R$ and sequence $x^n \to x$,
	$$\limsup_{n \to \infty} \; - \eps_n^{1/2}h^{n}_0(2\eps_n^{-1}x^n) \leq \mathbf{h}_0(x),$$
	and there exists some sequence $x^n \to x$ such that
	$$\liminf_{n \to \infty} \; - \eps_n^{1/2}h^{n}_0(2\eps_n^{-1}x^n) \geq \mathbf{h}_0(x).$$
	Assume also that there is a constant $c$ such that for every $n$ and $x$,
	$$-\eps_n^{1/2}h_0^{n}(2\eps_n^{-1}x) \leq c (1 + |x|).$$
	
	\begin{thm}[Limit of the 2nd class particle] \label{thm:2ndclass}
		Consider tasep with a sequence of initial conditions $X^{n}_0$ containing a 2nd class particle at the origin.
		Let $X^{n}(t)$ be the position of the 2nd class particle at time $t$ for the initial condition $X^n_0$.
		Let $h^{n}_0$ denote the corresponding initial height functions given by \eqref{eqn:initheight}.
		
		Suppose $h^{n}_0$ converges to $\mathbf{h}_0$ in the hypograph topology as in \eqref{eqn:initialconv}.
		Furthermore, suppose the interface $I_0(t)$ of the directed landscape from reference point $p=0$ for the
		initial condition $\mathbf{h}_0$ is uniquely defined (see Definition \ref{def:unique} and $\S$\ref{sec:uniquedef} for such a criterion).
		
		Then the re-scaled trajectory of the 2nd class particle
		\begin{equation} \label{eqn:2ndscaled}
			\mathbf{X}^{n}(t) = (\eps_n/2) X^{n}(\eps_n^{-3/2}t)
		\end{equation}
		converges to $I_0(t/2)$ in law, uniformly in $t$ over compact subsets of $(0,\infty)$.
	\end{thm}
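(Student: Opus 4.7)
The proof proceeds by transferring the question to exponential last passage percolation (lpp) via a classical deterministic coupling, taking the kpz scaling limit of the lpp data using \cite{DV}, and then reading off the 2nd class particle trajectory from the directed landscape competition interface constructed in Section~\ref{sec:Interfaces}.

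\textbf{Step 1: Pre-limit identification.} For each $\eps$, I would invoke the Ferrari--Martin--Pimentel coupling to identify the sequence of sites occupied by the 2nd class particle at its successive swap times with the discrete competition interface $\Gamma^\eps$ in exponential lpp driven by the initial height function $h_0^\eps$. Here $\Gamma^\eps$ is the boundary in the dual lattice between sites whose rightmost lpp geodesic from $h_0^\eps$ starts strictly to the left of the origin and those whose geodesic starts strictly to the right. If $N^\eps(t)$ denotes the number of swaps of the 2nd class particle up to real time $t$, then $X^\eps(t) = \Gamma^\eps(N^\eps(t))$, and standard tasep hydrodynamics at mean density $1/2$ gives $\eps^{3/2} N^\eps(\eps^{-3/2}\tau)$ concentrating around a deterministic linear function of $\tau$.

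\textbf{Step 2: Scaling limit of the height functions.} The kpz scaling \eqref{eqn:KPZscaling2} together with the hypograph convergence \eqref{eqn:initialconv} yields joint convergence in law of the lpp split height functions
$$\hh^{\eps,\pm}(x,t) \;=\; \sup_{y \, :\, \pm y \geq 0} \left\{\, h^\eps_0(y) \,+\, (\eps/2)\,L\!\big(2y/\eps^2, 0\, ;\, 2x/\eps^2, t/\eps^3\big) \,\right\}$$
to the directed landscape split height functions $\hh^\pm_0$ associated to $\mathbf h_0$, locally uniformly on $\Hp$ and coupled with the limit. Upper semicontinuity of $\mathbf h_0$ together with hypograph convergence is precisely what lets one pass the supremum through the limit. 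Taking differences yields locally uniform convergence of the competition functions, $d^\eps_0 \to d_0$, on $\Hp$.

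\textbf{Step 3: Convergence of interfaces.} By Proposition~\ref{prop:mono}, both $d^\eps_0(\cdot,t)$ and $d_0(\cdot,t)$ are non-decreasing. The uniqueness hypothesis forces $d_0(\cdot,t)$ to have a unique simple sign change at $I_0(t)$. Combining this with the locally uniform convergence of Step~2 collapses the pre-limit zero region around $I_0(t)$, yielding pointwise convergence $\mathbf{X}^\eps(t) \to I_0(t/2)$ in probability; the factor $t/2$ comes from reconciling the scaling \eqref{eqn:2ndscaled} with the kpz time scaling in \eqref{eqn:KPZscaling2} through the hydrodynamic constant from Step~1. To upgrade pointwise convergence to uniform convergence on compact intervals $[a,b] \subset (0,\infty)$, I would use tightness of $\{\mathbf{X}^\eps\}$ via monotonicity and continuity of the limit $I_0$ (Proposition~\ref{prop:intcont}), combined with a discrete analogue of Lemma~\ref{lem:intgeo3} to rule out drift to infinity on small pre-limit time scales.

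\textbf{Main obstacle.} The chief technical difficulty is Step~3: promoting locally uniform convergence of the competition functions to uniform-on-compacts convergence of the interfaces themselves. The finite-dimensional identification is straightforward once the uniqueness hypothesis is in force, but establishing tightness of the discrete interface paths and excluding anomalous fluctuations requires a careful monotone-sandwich argument using two-sided limits from reference points $\pm\delta$ together with continuity of $d_0$. This step is where the uniqueness of $I_0$ is essential rather than merely convenient.
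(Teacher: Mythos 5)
Your architecture coincides with the paper's: the Ferrari--Martin--Pimentel hole--particle ($\circ-\bullet$) coupling, the identification of the pair's step sequence with a discrete competition interface in exponential last passage percolation, convergence of the split last passage profiles to $\hh^{\pm}_0$ via \cite{DV}, and a monotonicity-plus-uniqueness sandwich to pass from convergence of competition functions to convergence of interfaces (this is exactly Proposition \ref{prop:intlimit} and Corollary \ref{cor:intlimit}). Your Steps 2 and 3 are sound, and Step 3 is in fact easier than you fear: once $d^{\eps}\to d_0$ locally uniformly, compactness of the graphs of $I^{\pm}_0\pm\delta$ over $[a,b]$, continuity and strict sign of $d_0$ there, and monotonicity in $x$ give the uniform sandwich directly; no separate tightness of the discrete interface is needed, and Lemma \ref{lem:intgeo3} (which concerns restarting an interface from a later time, in the absolute continuity argument) plays no role here. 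Only a minor caveat in Step 2: when $\mathbf{h}_0$ is not $-\infty$ outside a compact set, passing the supremum to the limit requires the additional tightness input of \cite{DV} beyond hypograph convergence.

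The genuine gap is in Step 1, the time change. You assert that ``standard tasep hydrodynamics at mean density $1/2$'' gives concentration of the swap count $N^{\eps}$. But $N^{\eps}(t)$ is the number of jumps of the $\circ-\bullet$ pair, i.e.\ the activity along a random trajectory that itself tracks the competition interface; this is not controlled by the hydrodynamic limit of the density profile, and the environment seen from the pair is not a priori in local equilibrium at density $1/2$, so no standard result delivers the claimed law of large numbers --- which is exactly where the constant producing $I_0(t/2)$ comes from. The paper's substitute is the second half of the FMP dictionary (Lemma \ref{lem:2ndLPP}): the time of the $n$-th jump equals the last passage value $\tau_n=g(X_n,n)$. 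Combined with the already-established interface convergence, which confines $X_n$ to the window $|X_n|\le Ln^{2/3}$, and the coupled KPZ asymptotics $g(x,n)=2n+O(n^{1/3})$ uniformly over that window, this gives $|t-2n_t|=O(t^{1/3})$, hence $n_t\sim t/2$ uniformly (Lemma \ref{lem:jumptimes}); composing with the uniformly convergent rescaled interface (or, as the paper does, using the Lipschitz bound $|X_n-X_m|\le|n-m|$) then yields the theorem. Without the identity $\tau_n=g(X_n,n)$, or some equivalent quantitative control of the jump times along the pair's trajectory, your Step 1 remains an unproved assertion rather than a citation of a standard fact.
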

	
	The convergence of $\mathbf{X}^{n}(t)$ to $I_0(t/2)$ may not extend to $t = 0$. The 2nd class particle can move
	a positive KPZ-distance in zero KPZ-time. For example, consider the initial condition $\mathbf{h}_0(x)$
	with narrow wedges at $x = -1$ and $x= 1$ only. The interface from reference point $p = 1/2$, say, begins at $x = 0$ at time zero.
	In tasep terms this means that when the initial conditions $h^{n}_0$ approximate $\mathbf{h}_0$, a 2nd class particle
	started from site $x = \eps_n^{-1}/2$ moves to a site $x = o(\eps_n^{-1})$ in time scales of order $o(\eps_n^{-3/2})$.
	
	\subsection{The 2nd class particle as a hole--particle pair} \label{sec:holeparticle}
        In this section we explain how tasep with a single second class particle can be modelled in terms of tasep with a marked hole--particle pair.
	Consider tasep with some initial configuration of particles but with no 2nd class particle.
	Let there be a particle at site 0, labelled 0, and a hole at site $-1$ with label 0, too.
	Then label the particles right to left and the holes left to right, as in Figure \ref{fig:0}.
	Write $X_t(n)$ for the position of particle number $n$ at time $t$; for example, $X_0(1) = -3$ in Figure \ref{fig:0}.
	\begin{figure}[ht!]
		\begin{center}
			\includegraphics[scale=0.5]{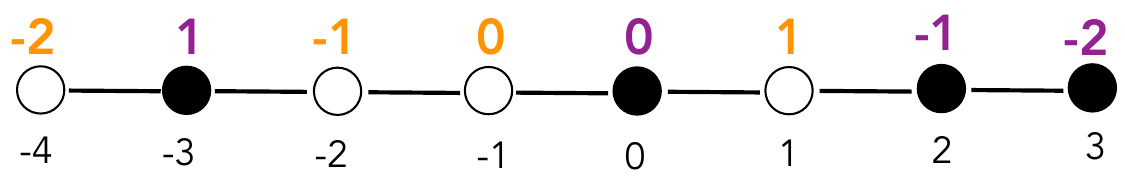}
			\caption{Snapshot of a tasep initial configuration.
				Particles are labelled right to left with particle 0 at site 0; holes are labelled left to right with hole 0 at site $-1$. Here, particle number 1 is at site $-3$ and hole number 1 is at site $1$.}
			\label{fig:0}
		\end{center}
	\end{figure}
	
	Consider the pair hole 0 -- particle 0 that are initially adjacent at sites $-1$ and $0$.
	Mark it as a pair like so: $\circ - \bullet$.
	This pair remains intact under the dynamics of tasep and moves left or right together as shown in Figure \ref{fig:1}.
	Note although the label of the particle or hole in the pair changes, the pair itself is always intact.
	
	\begin{figure}[ht]
		\begin{center}
			\includegraphics[scale=0.5]{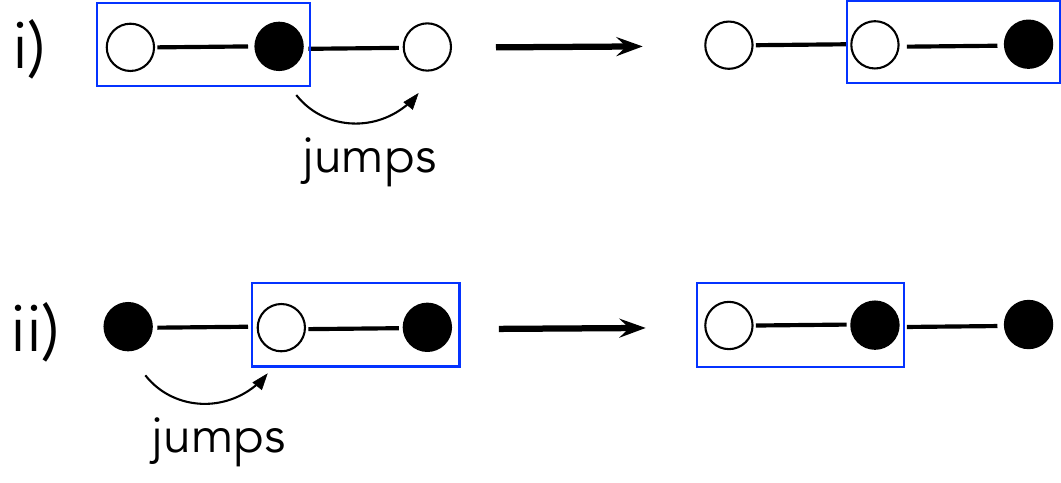}
			\caption{How the $\circ-\bullet$ pair moves in tasep.
				In (i) it moves right and in (ii) it moves left. It does not move otherwise.}
			\label{fig:1}
		\end{center}
	\end{figure}
	
	The $\circ-\bullet$ pair moves according to the same rules as a 2nd class particle!
	Let $X(t)$ be the position of the particle in the $\circ - \bullet$ pair at time $t$.
	Consider, at each time, the configuration obtained by removing the hole in the pair and then sliding all particles
	and holes to its left by one unit rightward. Mark the surviving particle from the $\circ-\bullet$ pair.
	The following observation is due to Ferrari, Martin and Pimentel; the lemma is from \cite[Lemma 6]{FP}.
	
	\begin{lem} \label{lem:HPpair}
		The new system (after removing, sliding and marking) evolves like tasep with a 2nd class particle,
		the 2nd class particle being the marked one. So $X(t)$ is the location of the 2nd class particle at time $t$.
	\end{lem}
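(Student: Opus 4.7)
The plan is to verify the lemma by setting up an explicit, time-dependent bijection $\Phi_t$ between the tasep configuration $\eta_t$ (together with its $\circ - \bullet$ pair) and a configuration $\tilde\eta_t$ with a marked 2nd class particle, and then to check that each elementary jump of $\eta_t$ corresponds under $\Phi_t$ to a jump of $\tilde\eta_t$ with matching rate. Let $H_t$ and $P_t = H_t + 1$ denote the sites of the hole and particle in the pair. Define $\tilde\eta_t(x) = \eta_t(x)$ for $x \geq P_t$ and $\tilde\eta_t(x) = \eta_t(x-1)$ for $x \leq H_t$, and mark the particle at $P_t$ as the 2nd class particle; this formalizes the ``remove, slide, mark'' operation.

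First I would handle the two special transitions depicted in Figure \ref{fig:1}. In case (i), the particle at $P_t$ jumps to $P_t + 1$ (requiring a hole there), so the pair becomes $(P_t, P_t + 1)$. A direct computation with $\Phi$ shows that the marked site moves from $P_t$ to $P_t + 1$ while $P_t$ becomes a hole, and nothing else changes; this is precisely the 2nd class particle jumping right into a hole at rate one. In case (ii), a regular particle at $H_t - 1$ jumps into $H_t$, making the new pair $(H_t - 1, H_t)$. Evaluating $\tilde\eta_{t+}$ at $P_t - 1$ and $P_t$ shows that the marked site and the regular particle immediately to its left swap, and no other site changes; this is exactly the rule under which a regular particle attempting to jump onto the 2nd class particle interchanges positions with it, again at rate one.

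Next I would dispatch all remaining jumps: any transition $y \to y+1$ of a regular particle with either $y + 1 \leq H_t$ or $y \geq P_t$. The pair is unaffected. If $y + 1 \leq H_t$, the jump appears in $\tilde\eta_t$ at the shifted positions $y + 1 \to y + 2$ at rate one, and the hole condition in $\eta_t$ translates directly to the hole condition in $\tilde\eta_t$ via the identity $\tilde\eta_t(x) = \eta_t(x-1)$. If $y \geq P_t$, the jump appears at the same positions in $\tilde\eta_t$ since $\Phi$ is the identity there. Conversely, every elementary transition available to $\tilde\eta_t$ under the 2nd class particle dynamics arises from exactly one of these three cases in $\eta_t$, so the generators agree.

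The main obstacle is essentially bookkeeping: ensuring the pair-tracking remains consistent across jumps, since the constituent hole and particle of the pair are replaced by adjacent ones even though the pair itself persists. Once the definition of $(H_t, P_t)$ is fixed and cases (i)--(ii) are carefully checked, the rest is a routine verification that $\Phi_t$ is a bijection between elementary transitions preserving rates. Combined with the standard generator characterization of tasep and tasep with a 2nd class particle as Markov jump processes on $\{0,1\}^{\mathbb Z}$, this yields that $\tilde\eta_t$ has the law of tasep with a 2nd class particle and that the marked position $X(t) = P_t$ is distributed as the 2nd class particle's position.
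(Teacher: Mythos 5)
Your verification is correct: the configuration map $\tilde\eta_t(x)=\eta_t(x)$ for $x\ge P_t$, $\tilde\eta_t(x)=\eta_t(x-1)$ for $x\le H_t$ with the pair particle marked, together with the rate-preserving bijection of elementary transitions (cases (i), (ii) of Figure \ref{fig:1} giving the right jump and the left swap of the 2nd class particle, and all other jumps matching up to the unit shift left of the seam), is exactly the standard argument. The paper itself offers no proof, citing \cite[Lemma 6]{FP} instead, and your generator-matching computation essentially reconstructs that argument; the only cosmetic point is that your ``remaining jumps'' case nominally overlaps the two special cases already treated, which does not affect the conclusion.
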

	
	In order to analyse tasep with a 2nd class particle, it is therefore enough to follow the $\circ - \bullet$ pair in the original system,
	and this is what we will do. Let $X_n$ be the position of the particle in the $\circ-\bullet$ pair after it has taken $n$ steps:
	\begin{equation} \label{eqn:2ndnstep}
		X_n = \text{position of the particle in the}\; \circ-\bullet \; \text{pair after it has taken}\; n \; \text{steps}.
	\end{equation}
	Equivalently, by Lemma \ref{lem:HPpair}, $X_n$ is the position of the 2nd class particle after $n$ steps.
	Clearly, $X_0 = 0$ and $X_{n+1}-X_n \in \{\pm 1\}$. The process $X(t)$ is a time change of $X_n$.
	Let $\tau_n$ be the time when the $\circ-\bullet$ pair takes its $n$-th step. Then, with $\tau_0 = 0$,
	\begin{equation} \label{eqn:timechange}
		X(t) = X_n \quad \text{for}\; t \in [\tau_n, \tau_{n+1}).
	\end{equation}
	We will describe the evolution of $X_n$ in terms of a competition interface in last passage percolation and use \eqref{eqn:timechange}
	to get the scaling limit of the 2nd class particle.
	
	\subsubsection{Height functions}
	Consider the encoding \eqref{eqn:initheight} of an initial condition $X_0$ of tasep as a height function $h_0$.
	When $X_0$ contains a 2nd class particle at the origin, map it to an initial condition $\hat{X}_0$ with a $\circ-\bullet$ pair at $-1-0$ as follows.
	Remove the 2nd class particle, slide all particles and holes of $X_0$ to the left of the origin by one unit leftward, and insert a $\circ-\bullet$ pair at $-1-0$.
	Let $\hat{h}_0$ be the height function associated to $\hat{X}_0$. The relation between $h_0$ and $\hat{h}_0$ is
	$$ \hat{h}_0(x) = \begin{cases}
		h_0(x) & \text{if}\; x \geq 0 \\
		h_0(x+1) -1 & \text{if}\; x < 0
	\end{cases}
	$$
	
When a sequence of initial height functions $h^{n}_0$ of tasep converges to $\mathbf{h}_0$ according to \eqref{eqn:initialconv}, the corresponding functions $\hat{h}^{n}_0$ converge to $\mathbf{h}_0$ under the same scaling. Therefore, in order to prove Theorem \ref{thm:2ndclass}, it is enough to consider tasep with initial conditions that have a $\circ-\bullet$ pair instead of a 2nd class particle, assume the initial height functions converge according to \eqref{eqn:initialconv}, and prove that the trajectory of the $\circ-\bullet$ pair converges to the interface under the scaling \eqref{eqn:2ndscaled}.
	
We remark that the height function $h_0(x)$ is a simple random walk path. Such a path has a peak at site $x$ if $h_0(x \pm 1) = h_0(x) - 1$, and a valley at $x$ if $h_0(x \pm 1) = h_0(x) + 1$. Having a $\circ - \bullet$ pair at sites $-1 - 0$ corresponds to $h_0$ having a peak at $x=0$.
	
The evolution of tasep defines a growing family of height functions $h_t(x)$ for $t \geq 0$ that are simple random walk paths. Starting from $h_0(x)$, valleys simply turn to peaks at exponential rate 1, and $h_t(x)$ is the state of the function at time $t$. The trajectory of the $\circ-\bullet$ pair is that of the initial peak at $x=0$, so the 2nd class particle has the same evolution as a peak of $h_0$.
	
\subsection{Mapping the 2nd class particle to a competition interface} \label{sec:2ndandcomp}
The previous section explained how tasep with a 2nd class particle can be identified as tasep with a marked $\circ-\bullet$ pair. We now explain how the trajectory of the $\circ-\bullet$ pair can be mapped to a competition interface.
	
\subsubsection{Exponential last passage percolation}
From the evolution of a tasep, define the function
\begin{equation*}
    G(i,j) = \text{time when hole}\; i \;\text{interchanges position with particle}\; j.
\end{equation*}
Set $G(i,j) = 0$ if hole $i$ is initially left of particle $j$, with the conventions that $\mathrm{position}(\mathrm{hole} \; i) = -\infty$ if there is no hole $i$ and $\mathrm{position}(\mathrm{particle} \; j) = \infty$ if there is no particle $j$. Recall particles are labelled right to left and holes left to right, and particle 0 is initially at site 0 and hole 0 is initially at site $-1$.
	
Function $G$ is the model of last passage percolation. This mapping between tasep and last passage percolation goes back to Rost \cite{Ro}. Consider the set
$$\Lambda = \{(i,j) \in \Z^2 : G(i,j) = 0 \; \text{and}\; G(i+1,j+1) > 0 \}.$$
$\Lambda$ is a down/right path in $\Z^2$ which is determined by, and in one to one correspondence with, the initial configuration of tasep. Let $\omega'_{i,j}$ be independent exponential random variables with mean 1 for $i,j \in \Z$. Then $G(i,j)$ is determined by
\begin{equation} \label{eqn:LPP}
    G(i,j) = \max \{ \, G(i-1,j), G(i,j-1) \, \} + \omega'_{i,j}
\end{equation}
with boundary conditions $G(i,j) = 0$ on $\Lambda$. Indeed, in order for particle $j$ and hole $i$ to swap positions they must first be adjacent, which happens at time $\max \{ G(i-1,j), G(i,j-1)\}$. Then they wait an independent exponential unit of time before swapping, which accounts for adding $\omega'_{i,j}$.
	
Now change coordinates in last passage percolation from $(i,j) \in \Z^2$ to
$$ x = i-j,  \quad y = i+j, \quad \omega_{x,y}=\omega'_{i,j}$$
This takes $\Z^2$ to
$$ \Ze^2 = \{(x,y) \in \Z^2: x+y\; \text{is even}\}.$$
Define
\begin{equation} \label{eqn:LPPg}
    g(x,y) = G((x+y)/2, (y-x)/2) \;\; \text{for}\;\; (x,y) \in \Ze^2, 
\end{equation}
The boundary $\Lambda$ is mapped to the initial height function $h_0(x)$ defined by \eqref{eqn:initheight}. Unwrapping the recursion \eqref{eqn:LPP} shows that $g$ satisfies
\begin{equation} \label{eqn:LPPmax}
	g(x,y) = \max_{\pi} \, \sum_{\substack{k=0 \\ (x_k,y_k) \in \pi}}^{n-1} \omega_{x_k,y_k}
\end{equation}
where $\pi$ is any upward directed path in $\Ze^2$ from the graph of $h_0$ to $(x,y)$. Here directed means that at each step the path moves in the direction $(-1,1)$ or $(1,1)$.
	
As the $\omega_{i,j}$ have a continuous distribution we note that, almost surely, $g(x,y)$ takes distinct and positive values for every $(x,y)$ in $\Ze^2$ and the maximizing paths $\pi$ in \eqref{eqn:LPPmax} are unique. Also, any maximizing path $\pi$ always starts from a valley of $h_0$ because it can be continued down a slope of $h_0$ until it reaches a valley.
	
Now recall the process $X(t)$ which is the position of the particle in the $\circ-\bullet$ pair at time $t$, and the process $X_n$ from \eqref{eqn:2ndnstep} that records the steps of the pair. The following lemma is from \cite[Proposition 3]{FP}; it describes the evolution of $X(t)$.
\begin{lem} \label{lem:2ndLPP}
Starting with $X_0=0$, for every $n \geq 0$,
$$ X_{n+1} = \begin{cases}
	X_n + 1 & \text{if}\;\; g(X_n+1,n+1) < g(X_n-1,n+1) \\
	X_n - 1 & \text{if}\;\; g(X_n-1,n+1) < g(X_n+1,n+1)
\end{cases}.$$
In other words, $(X_{n+1}, n+1) = \mathrm{argmin}\, \{ g(X_n-1,n+1), g(X_n+1, n+1)\}$. Moreover, let
\begin{equation} \label{eqm:tau}
    \tau_n = g(X_n,n)\end{equation}
for $n \geq 1$ with $\tau_0=0$. Then $X(t) = X_n$ for $t \in [\tau_n, \tau_{n+1})$.
\end{lem}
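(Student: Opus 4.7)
The plan is to use the tasep-to-LPP dictionary from $\S$\ref{sec:2ndandcomp}: each swap between hole $i$ and particle $j$ happens at time $G(i,j)$ and is indexed by the lattice point $(i,j)$, which in the rotated coordinates $(x,y)=(i-j,i+j)$ becomes the point with time $g(x,y)$. The task reduces to identifying, for each step of the $\circ-\bullet$ pair, which hole-particle swap event it corresponds to, and then converting the lattice coordinates.

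First I would establish by induction on $n$ that after the pair has executed $r$ right moves and $\ell$ left moves (so $n = r + \ell$), the pair consists of hole number $r$ and particle number $\ell$, with particle position $X_n = r - \ell$. The base case is immediate from the initial configuration: hole $0$ at site $-1$ and particle $0$ at site $0$. The inductive step is a small combinatorial check using the rules in Figure \ref{fig:1}: for a right step, the current particle of the pair swaps with the hole immediately on its right, which by the left-to-right labelling of holes is hole $r+1$, and after the swap the new adjacent $\circ-\bullet$ pair consists of hole $r+1$ and the same particle $\ell$. Symmetrically, for a left step the hole of the pair is preserved and the particle label advances by one. Consequently the $n$-th swap is the lattice point $(r,\ell)$, which in rotated coordinates is $(X_n, n)$, giving $\tau_n = G(r,\ell) = g(X_n, n)$.

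Now, conditional on the pair being in state $(r,\ell)$, the only two candidate swaps that can execute its $(n+1)$-th step are: particle $\ell$ swapping with hole $r+1$ (a right move, at time $G(r+1,\ell) = g(X_n+1, n+1)$), or particle $\ell+1$ swapping with hole $r$ (a left move, at time $G(r,\ell+1) = g(X_n-1, n+1)$). Since the $\omega$-weights are continuous, these two times are almost surely distinct, and the pair moves in the direction of whichever occurs first -- which is exactly the argmin formula in the lemma. The time-change identity $X(t) = X_n$ on $[\tau_n, \tau_{n+1})$ then follows tautologically from the definition of $\tau_n$ as the time of the $n$-th move. I do not foresee a serious obstacle: the entire argument rests on the inductive bookkeeping of the pair's labels through the move-and-relabel procedure, and once the identification $(i_n, j_n) = (r,\ell)$ is secured the rest is a mechanical coordinate change using the definition of $g$ in \eqref{eqn:LPPg}.
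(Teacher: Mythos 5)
Your argument is correct. The paper does not actually prove Lemma~\ref{lem:2ndLPP}; it is quoted from \cite[Proposition 3]{FP}, and your proposal is essentially a reconstruction of that argument: the induction showing that after $r$ right and $\ell$ left moves the pair is (hole $r$, particle $\ell$) at position $X_n=r-\ell$, the identification of the two candidate swaps $G(r+1,\ell)$ and $G(r,\ell+1)$, and the change of coordinates $(i,j)\mapsto(i-j,i+j)$ turning these into $g(X_n+1,n+1)$, $g(X_n-1,n+1)$ and $\tau_n=G(r,\ell)=g(X_n,n)$ (note that both a right and a left $n$-th step are executed by the swap of hole $r$ with particle $\ell$, so the formula for $\tau_n$ holds in either case, as you implicitly use).

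Two small points are worth making explicit to close the sketch. First, the hole immediately to the right of the pair's particle is exactly hole $r+1$ (and the particle immediately to the left of the pair's hole is exactly particle $\ell+1$) because tasep preserves the left-to-right order of holes and of particles, so holes with labels $>r$ are precisely those strictly to the right of the pair. Second, neither candidate swap can have occurred before the pair reached state $(r,\ell)$ — at that moment hole $r+1$ is still strictly to the right of particle $\ell$, and particle $\ell+1$ strictly to the left of hole $r$ — and between pair moves the pair's particle and hole are frozen, so the earlier of the two swap times is indeed the time of the pair's next move; with these observations your ``whichever occurs first'' step, and hence the argmin formula and the time change, are complete.
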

	
\subsubsection{Competition interface}
Split the initial height function $h_0$ at the origin according to
$$ h_0^{-}(x) =
	\begin{cases} h_0(x) & \text{for}\;\; x \leq 0 \\ x & \text{for}\;\; x > 0 \end{cases} \quad \text{and}\quad
	h_0^{+}(x) = 
	\begin{cases} h_0(x) & \text{for}\;\; x > 0 \\ -x & \text{for}\;\; x \leq 0 \end{cases}.$$
Let
\begin{equation} \label{eqn:gpm}
g(x,y; h_0^{\pm})
\end{equation}
denote the last passage times associated to the initial height functions $h^{+}_0$ and $h^{-}_0$, respectively, see \eqref{eqn:LPPmax}. Note the same weights $\omega_{x,y}$ are being used for $g(\cdot; h_0^{\pm})$ as for $g$, and it is only the initial conditions that differ.
	
The variational principle \eqref{eqn:LPPmax} shows that
	$$g(x,y) = \max \{g(x,y; h_0^{-}), g(x,y; h_0^{+})\}.$$
The $\circ-\bullet$ pair induces a peak of $h_0$ at $x=0$. Therefore, the maximizing path $\pi$ from $h_0$ to $g(x,y)$ starts to the left or right of the origin and, since the weight are continuous, there are no ties between $g^{-}(x,y)$ and $g^{+}(x,y)$. The discrete competition function
    \begin{equation} \label{eqn:discomfun}
    d(x,y) = g(x,y; h^{+}_0) - g(x,y; h^{-}_0)
    \end{equation}
    is non-decreasing in $x$ and never vanishes. Define the sets Yellow (Y) and Blue (B) as
	\begin{align*}
		Y & = \{d < 0\} = \{(x,y) \in \Ze^2: \text{maximizing path from}\; h_0\; \text{to}\; (x,y) \; \text{starts at}\; h^{-}_0\} \\
		B & = \{d > 0\} = \{(x,y) \in \Ze^2: \text{maximizing path from}\; h_0\; \text{to}\; (x,y) \; \text{starts at}\; h^{+}_0\}
	\end{align*}
	
	If $(x,y) \in B$ then both $(x+1,y\pm 1) \in B$ as well.
	Indeed, the maximizing path to $(x+1, y \pm 1)$ from $h_0$ lies weakly to the right of the maximizing
	path to $(x,y)$. This observation implies if $(x,y) \in B$ then $(x+2,y) \in B$, by the chain of inclusions
	$(x,y)  \to (x+1,y+1) \to (x+2,y)$. This shows $B$ is connected and filled in to the right. Likewise,
	$Y$ is connected and filled in to the left because if $(x,y) \in Y$ then $(x-1,y\pm1) \in Y$ too.
	
	We infer that for every $n \geq 1$ there is an $I_n \in \Z$ such that
	$$ Y \cap (\Z \times \{n\}) = \{\cdots, I_n-4, I_n-2, I_n \} \; \text{and}\; B \cap (\Z \times \{n\}) = \{I_n+2, I_n+4, I_n+6, \cdots \}.$$
	The discrete competition interface $(I_n, n \geq 0)$ is defined by $I_0 = 0$ and
	\begin{equation} \label{eqn:compint}
		I_n = \text{rightmost point of}\; Y \cap ( \Z \times \{n\}).
	\end{equation}
	
	\begin{lem} \label{lem:compint}
		For every $n \geq 0$, $X_n \in \{I_n, I_n+2\}$ and $(X_n-1,n+1) \in Y$ while $(X_n+1,n+1) \in B$.
	\end{lem}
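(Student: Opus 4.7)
I plan to prove both assertions simultaneously by induction on $n$. The base case $n = 0$ is immediate: $X_0 = 0 = I_0$ by construction, and the initial peak of $h_0$ at the origin forces any directed path from the graph of $h_0$ ending at $(-1, 1)$ to start at some $x_0 \leq 0$ — i.e., on the portion of the graph that coincides with $h_0^-$ — giving $(-1, 1) \in Y$. The case $(1, 1) \in B$ is symmetric.

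For the inductive step, I first apply the filling-in properties of $Y$ and $B$ established in the discussion preceding \eqref{eqn:compint}, namely that $(x,y) \in Y$ implies $(x-1, y+1) \in Y$ and $(x,y) \in B$ implies $(x+1, y+1) \in B$. Applied to $(I_n, n) \in Y$ and $(I_n + 2, n) \in B$, these give $I_{n+1} \in \{I_n - 1, I_n + 1\}$ by parity. The inductive hypothesis then determines which value occurs: if $X_n = I_n$, the membership $(X_n + 1, n+1) = (I_n + 1, n+1) \in B$ excludes $I_{n+1} = I_n + 1$, forcing $I_{n+1} = I_n - 1$; if $X_n = I_n + 2$, symmetrically $I_{n+1} = I_n + 1$. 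Combined with $X_{n+1} \in \{X_n - 1, X_n + 1\}$ from Lemma \ref{lem:2ndLPP}, each case directly yields $X_{n+1} \in \{I_{n+1}, I_{n+1} + 2\}$.

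The remaining task is to verify the Y/B conditions at the next level, $(X_{n+1} - 1, n+2) \in Y$ and $(X_{n+1} + 1, n+2) \in B$. My plan is to invoke the non-crossing property of maximizing LPP paths: whenever two endpoints are horizontally ordered, so are the starting points of their max paths on the graph of $h_0$. The inductive hypothesis gives that the max path to $(X_n - 1, n+1)$ starts at some $x_0 \leq 0$, so any endpoint that is weakly up-and-to-the-left has its max path starting at $x_0' \leq 0$, placing it in $Y$. In the sub-case $X_{n+1} = X_n - 1$, the point $(X_{n+1} - 1, n + 2) = (X_n - 2, n + 2)$ is such an endpoint, giving the required Y-membership; the companion statement $(X_{n+1} + 1, n+2) \in B$ follows from the mirror argument applied to $(X_n + 1, n+1) \in B$. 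The sub-case $X_{n+1} = X_n + 1$ is handled mirror-symmetrically, with the direct non-crossing argument giving the B-membership and the Y-membership extracted by combining the mirror bound with the already-established relation $X_{n+1} \in \{I_{n+1}, I_{n+1} + 2\}$ together with the filling-in properties at level $n+2$. The main obstacle I anticipate is cleanly executing this case-by-case bookkeeping, as well as rigorously deploying the non-crossing principle in the discrete LPP setting (for which the standard argmax-swap argument at the first crossing site should suffice).
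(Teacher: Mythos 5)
Your base case and the first half of the inductive step are fine (in fact the first half is simpler than you make it: the inductive hypothesis $(X_n-1,n+1)\in Y$, $(X_n+1,n+1)\in B$ already forces $I_{n+1}=X_n-1$, so $X_{n+1}\in\{X_n-1,X_n+1\}=\{I_{n+1},I_{n+1}+2\}$ without any case split on $X_n=I_n$ versus $I_n+2$). The genuine gap is in the colour statements at level $n+2$. The non-crossing/fill-in property only propagates yellow along the up-and-left cone ($(x,y)\in Y\Rightarrow(x-1,y\pm1)\in Y$) and blue along the up-and-right cone, so from the level-$(n+1)$ data it gives you $(X_n-2,n+2)\in Y$ and $(X_n+2,n+2)\in B$ and nothing more. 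The site $(X_n,n+2)$, which is exactly $(X_{n+1}+1,n+2)$ when $X_{n+1}=X_n-1$ and $(X_{n+1}-1,n+2)$ when $X_{n+1}=X_n+1$, sits directly above the $Y/B$ boundary and its colour is new information not determined by the level-$(n+1)$ colouring; your ``mirror argument'' applied to $(X_n+1,n+1)\in B$ only yields blueness to the right of it, not at $(X_n,n+2)$, and the relation $X_{n+1}\in\{I_{n+1},I_{n+1}+2\}$ says nothing about level $n+2$. As written, nothing in your argument rules out, say, $X_{n+1}=X_n-1$ together with $(X_n,n+2)$ yellow, which would break the induction.

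What is missing is the link between the jump direction and the colour of $(X_n,n+2)$, and this requires the quantitative content of Lemma \ref{lem:2ndLPP}, not just $X_{n+1}=X_n\pm1$: the pair jumps toward the \emph{smaller} of $g(X_n-1,n+1)$ and $g(X_n+1,n+1)$, while the maximizing path to $(X_n,n+2)$ enters through the parent with the \emph{larger} value and (since the two parents have different colours by the inductive hypothesis) inherits that parent's colour. The paper runs the induction by conditioning on the colour of $(X_n,n+2)$: if it is yellow, its path must come through the yellow parent $(X_n-1,n+1)$, so $g(X_n-1,n+1)>g(X_n+1,n+1)$, hence $X_{n+1}=X_n+1$, and then $(X_{n+1}-1,n+2)=(X_n,n+2)\in Y$ by the case assumption and $(X_{n+1}+1,n+2)\in B$ by fill-in; the blue case is symmetric. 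You would need to incorporate this argmin/argmax comparison to close your induction; conditioning on the jump direction, as you do, and then appealing only to non-crossing cannot recover the colour of the site straddling the interface.
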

	
	\begin{proof}
		Clearly, $X_0 = I_0 = 0$. Observe that $(-1,1)$ is yellow and $(1,1)$ is blue because $h_0$ has a peak at the origin. So the claim holds
		for $n=0$, and we proceed by induction. Assuming the claim holds for $n$, consider the colour of site $(X_n, n+2)$.
		We will prove the claims at time $n+1$ when $(X_n,n+2)$ is yellow. The proof when $(X_n,n+2)$ is blue is analogous.
		
	Suppose $(X_n,n+2)$ is yellow. Since $(X_n-1,n+1) \in Y$ and $(X_n+1,n+1) \in B$ by hypothesis, the maximizing path to $(X_n,n+2)$ enters from $(X_n-1,n+1)$ instead of $(X_n+1,n+1)$. This means $g(X_n+1, n+1) < g(X_n-1,n+1)$, so $X_{n+1} = X_n + 1$ by Lemma \ref{lem:2ndLPP}.
		Now $(X_{n+1}+1,n+2)$ is blue since its lower-left site $(X_{n+1},n+1) = (X_n+1,n+1)$ is blue, and $(X_{n+1}-1,n+2) = (X_n,n+2)$
		is yellow by assumption. Also, $I_{n+1} = X_n-1$ and so $X_{n+1} = X_n+1 = I_{n+1}+2$. Both claims at time $n+1$ are established when $(X_n,n+2)$ is yellow.
	\end{proof}

\subsection{Scaling limit of exponential last passage percolation} \label{sec:lpplimit}
We first discuss the scaling limit of exponential last passage percolation, from which the limit of competition interfaces follows.

 \begin{thm} \label{thm:lppconvergence}
 Consider a sequence $\eps_n \to 0$. There exists a coupling of mean 1 exponential random variables $\omega_{x,y,n}, x,y\in  \Ze^2, n\ge 1$, and a directed landscape $\La$ such that for each $n$ the variables $\omega_{x,y,n}, x,y\in  \Ze^2$ are independent and the following hold. For $p,q\in \Ze^2$ define the exponential last passage values 
 $$L_n(p; q) = \max_{\pi} \sum_{(x_k,y_k) \in \pi} \omega_{x_k,y_k,n}$$
 where $\pi$ is any upward directed path in $\Ze^2$ from $p$ to $q$. For an initial condition $h_0$, define 
	\begin{equation} \label{eqn:LPPmax3}
		g_n(x,y;h_0) = \max_{z\in \mathbb Z}L_n(z,h_0(z);x,y)
	\end{equation}
 This is the same as  definition \eqref{eqn:LPPmax}.

Consider any collection of initial conditions $h_0^n$ so that with $\eps=\eps_n$,
their re-scaling $$\mathbf h_0^{n}(x)=-\eps^{1/2} h_0^n(2\eps^{-1}x)$$ satisfies
	\begin{enumerate}
		\item $\mathbf h_0^{n} \to \mathbf h_0$ in hypograph topology (see the discussion under \eqref{eqn:initialconv}), for some $\mathbf h_0: \R \to \R \cup \{-\infty\}$ with $\mathbf h_0 \not \equiv -\infty$, and 
		\item there exists $c \in \R$ such that for all $x$, $\mathbf h_0^{n}(x) < c(1+ |x|)$ for every $n$.
	\end{enumerate}
 
Let $h^n_t(x;h_0)$ denote the evolution of the tasep height function at time $t$ from initial condition $h_0$ as defined in \eqref{eqn:LPP} using the random variables $\omega_{x,y,n}$.

The following convergences hold almost surely as $n \to \infty$.
\begin{enumerate}[(i)]
    \item $\frac{\eps^{1/2}}{2}L_n(2\eps^{-1}y,\eps^{-3/2}s;2\eps^{-1}x,\eps^{-3/2}t)-\frac{t-s}{\eps} \to \mathcal L(y,s;x,t)$ compactly on $\{(y,s;x,t)\in \mathbb \R^4: s<t\}$.
    \item $\frac{t}{\eps} - \eps^{1/2}h^n_{2\eps^{-3/2}t}(2\eps^{-1}x;h_0^n) \to \mathcal L(\mathbf h_0; x,t)$ compactly over $(x,t) \in \Hp$.
    \item $\frac{\eps^{1/2}}{2} g_n(2\eps^{-1}x,\eps^{-3/2}t;h_0^n) - \frac{t}{\eps} \to \mathcal L(\mathbf h_0; x,t)$ compactly over $(x,t) \in \Hp$.
\end{enumerate} 
\end{thm}

\begin{proof}
We will follow the proof of Theorem 16.5 in \cite{DV}. The notation is slightly different there.  The word interface there does not refer to competition interfaces, but rather to the boundary of a directed metric neighborhood.
In \cite{DV} the tasep height function is defined so that it has the same sign as its limit (or the opposite sign as presented here). The correspondence between $n_{DV}$ there and $\eps_n$ here is $\eps_n =n_{DV}^{-2/3}$. So when we explain the translation between the two notations, we will just use the case $\eps_n=n^{-2/3}$ (so that we have $n=n_{DV}$) for simplicity. The coupling mentioned here comes from the coupling stated there.

\textbf{Proof of Claim  (i)} This is the convergence of exponential last passage percolation on the rotated lattice $\Ze^2$. This is proved in Corollary 16.1 (see also Corollary 13.2 for the stated coupling), with the translation 
$$
(d_X)_{DV}(x,-t;y,-s)=L_n(y,s;x,t);
$$
$$
(d_n)_{DV}(x,-t;y,-s)=\frac{\sqrt{\eps}}{2}L_n(\frac{2y}\eps,\frac{ s}{\eps^{3/2}};\frac{2x}\eps, \frac{t}{\eps^{3/2}})-\frac{t-s}\eps
$$
and by Corollary 16.1, we have $(d_n)_{DV}\to \mathcal L_{DV}$ compactly, with the correspondence $\La_{DV}(x,-t;y,-s)=\La(y,s;x,t)$. This proves (i).

\textbf{Proof of Claim (ii)} The claim is a  strengthening of \cite[Theorem 16.5]{DV} to compact convergence over $\Hp$.  In \cite{DV}, the proof of Theorem 16.5 uses Theorem 15.5, which relies on the tightness condition 15.4. The tightness condition can be strengthened to the following. 
 
\textbf{Strengthened tightness condition 15.4}. For every compact  $K\subset \Hp$, slope and height $s',h^*\in \mathbb R$ there exists $a\in \mathbb R$ and initial conditions $f_n:\R \to \R$ with $\mathbf{f}_n(x)\ge s'|x|-a$ for all $x\in \R$ and $\sup_{(y,t)\in K} \tilde h_n(y,t;f_n)\le h_*$ for all large enough $n$. The notations $\mathbf{f}_n(x) = - \eps^{1/2} f_n(2\eps^{-1}x)$ and $\tilde h_n(y,t;f_n) = \frac{t}{\eps} - \eps^{1/2}h^n_{2\eps^{-3/2}t}(2\eps^{-1}y;f_n)$ with $\eps = \eps_n$.

The proof of \cite[Theorem 15.5]{DV} implies Claim (ii) assuming the strengthened tightness condition 15.4. Lemma \ref{lem:tightnesscond} proves that strengthened tightness condition 15.4 holds in the setting of exponential last passage percolation. 

\textbf{Proof of Claim (iii)} Denote $\mathbf{g}_n(x,t; h_0^n) = \frac{\eps^{1/2}}{2} g_n(2\eps^{-1}x,\eps^{-2/3}t ;h_0^n) - \frac{t}{\eps}$.
Using \eqref{eqn:LPPmax3} we have that
\begin{equation} \label{eqn:gmax}
    \mathbf{g}_n(x,t; h_0^n) = \sup_{y \in \R}\, \{ d_n(x, -t; y, \eps \mathbf h_0^n(y)) + \mathbf h_0^{n}(y)\}
\end{equation}
where $d_n$ is $(d_n)_{DV}$ from above. By Claim (i), $d_n(x,-t;y,-s) \to \La(y,s;x,t)$ uniformly on compacts.

Since $h_0^n$ is a simple random walk path, $|h_0^n(x)| \leq |x|$, which implies that $|\eps \mathbf h_0^n(y)| \leq 2 \eps^{1/2} |y|$ and it tends to zero uniformly on compacts. Thus, $d_n(x, -t; y, \eps \mathbf h_0^n(y)) \to \La(y,0;x,t)$ uniformly on compacts.

Recall $\La(\mathbf h_0; x,t) = \sup_{y \in \R} \{\La(y,0;x,t) + \mathbf h_0(y)\}$. Suppose there is a compact subset $D \subset \R$ such that $h^n_0$ does not have any valleys outside of $(2/\eps_n)D$ for every $n$, that is, $h^n_0$ is decreasing outside the set $(2/\eps_n)D$. Then $\mathbf h_0(y) = - \infty$ for $y \notin D$. Moreover,
$$\mathbf{g}_n(x,t; h_0^n) = \sup_{y \in D}\, \{ d_n(x, -t; y, \eps \mathbf h_0^n(y)) + \mathbf h_0^{n}(y)\}; \quad \La(\mathbf h_0; x,t) = \sup_{y \in D} \{\La(y,0;x,t) + \mathbf h_0(y)\}.$$
In this case compact convergence of $d_n$ to $\La$ and hypograph convergence of $\mathbf h_0^n$ to $\mathbf h_0$ implies convergence of $\mathbf g_n(x,t; h_0^n)$ to $\La(\mathbf h_0; x,t)$ compactly over $(x,t) \in \Hp$.

In order to prove convergence in general, we use a tightness condition for last passage times, the proof of which is in Lemma \ref{lem:lpptightness}.

\textbf{Tightness condition for LPP}. For every compact  $K\subset \Hp$, slope and height $s',h^*\in \mathbb R$ there exists $a\in \mathbb R$ and initial conditions $f_n:\R \to \R$ with $\mathbf{f}_n(x)\ge s'|x|-a$ for all $x\in \mathbb R$ and $\sup_{(y,t)\in K} \mathbf{g}_n(y,t;f_n)\le h_*$ for all large enough $n$. Here $\mathbf{f}_n = -\sqrt{\eps} f_n(2x/\eps)$ with $\eps = \eps_n$.

Let $K \subset \Hp$ be compact.
Note that $\mathbf h_0 \leq c (1+ |x|)$ for some constant $c$. The bound \eqref{eqn:DOVAbound} on $\La$ implies that there is a compact set $D \subset \R$ such that
\begin{equation} \label{eqn:Drestriction}
\La(\mathbf h_0; y,t) = \La(\mathbf{h}_0|_{D}; y,t) \quad \text{for}\; (y,t) \in K; \;\; \mathbf{h}_0|_{D}(x) = \begin{cases} \mathbf h_0(x) & x \in D \\ -\infty & x \notin D \end{cases}.
\end{equation}

Let $x_0$ be such that $\mathbf h_0(x_0) > - \infty$ and let $x_n \to x_0$ such that $\mathbf h_0^n(x_n) \to \mathbf h_0(x_0)$. Then \eqref{eqn:gmax} and Claim (i) imply that
\begin{align*}
    \liminf_n \inf_{(y,t) \in K} \mathbf{g}_n(y,t;h^n_0) &\geq \lim_n \inf_{(y,t) \in K} d_n(y,-t; x_n, \eps_n \mathbf h_0^n(x_n)) + \mathbf h_0^{n}(x_n) \\
    & = \inf_{(y,t) \in K} \La(x_0,0;y,t) + \mathbf h_0(x_0) := b.
\end{align*}
With $b$ as defined above, for all large enough $n$, $\mathbf{g}_n(y,t;h^0_n) \geq b-1$ for every $(y,t) \in K$. Let $f_n$ be as in the tightness condition above with parameters $s' = c+1$ and $h^* = b-2$. Then for all large enough $n$, $\mathbf{g}_n(y,t;h^n_0) > \mathbf{g}_n(y,t;f_n)$ for every $(y,t) \in K$.
As $\mathbf{h}_0^n(x) \leq c(1+|x|)$, there is a compact set $E$ containing the set $D$ in its interior such that $\mathbf{h}_0^n < \mathbf{f}_n$ on $E^c$ (outside $E$). Looking at \eqref{eqn:gmax}, since $\mathbf{h}_0^n < \mathbf{f}_n$ on $E^c$ but $\mathbf{g}_n(y,t;h^n_0) > \mathbf{g}_n(y,t;f_n)$ for every $(y,t) \in K$, it follows that all maximizers achieving the value $\mathbf{g}_n(y,t;h^n_0)$ must belong to $E$. Therefore,
\begin{equation} \label{eqn:Erestriction}
\mathbf{g}_n(y,t; h_0^n) = \mathbf{g}_n(y,t; h_0^n|_{E}) \quad \text{for}\; (y,t) \in K.
\end{equation}
Here $h_0^n|_{E}$ is the initial condition that equals $h_0^n$ on the set $(2/\eps_n)E$ and is modified to decrease outside $(2/\eps_n)E$ (so that all valleys are inside $(2/\eps_n)E$).

Now $h_0^n|_{E}$ converges in the hypograph topology to $\mathbf{h}_0|_{E}$. By the first case that we proved,
\begin{equation} \label{eqn:Erestconv}
  \mathbf{g}_n(y,t;h_0^n|_{E}) \to \La(\mathbf{h}_0|_{E}; y,t)  
\end{equation}
uniformly over $(y,t) \in K$. We have that $\mathbf{h}_0|_{D} \leq \mathbf{h}_0|_{E} \leq \mathbf{h}_0$ because $D \subset E$.
Therefore, $\La(\mathbf{h}_0|_{D}; y,t) \leq \La(\mathbf{h}_0|_{E}; y,t) \leq \La(\mathbf{h}_0; y,t)$. Then, due to \eqref{eqn:Drestriction}, it follows that
\begin{equation} \label{eqn:allequal}
    \La(\mathbf{h}_0|_{D}; y,t) = \La(\mathbf{h}_0|_{E}; y,t) = \La(\mathbf{h}_0; y,t) \quad \text{for}\; (y,t) \in K.
\end{equation}
Combining \eqref{eqn:allequal} with \eqref{eqn:Erestconv} and \eqref{eqn:Erestriction} implies Claim (iii).
\end{proof}

For the next two lemmas we will use the notation from Section 15 of \cite{DV}. See the definitions of $a_n, b_n, \ell_n, d_n, d, I_d$ and $\tilde h_n$.
See also assumptions 15.1 and tightness condition 15.4. These notations deal with general planar directed metrics. In the setting of rate 1 exponential last passage percolation, we have $a_n = 0$, $b_n = n^{2/3} = \eps_n^{-1}$, $\ell_n(x,-t;y,-s) = a_n(y-x) + b_n(t-s) = n^{2/3}(t-s)$. We also have
\begin{equation} \label{eqn:d_n}
(d_n)(x,-t;y,-s)=\frac{\eps^{1/2}}{2}L_n(2\eps^{-1}y,\eps^{-3/2}s;2\eps^{-1}x, \eps^{-3/2}t)-\frac{t-s}{\eps}; \quad \eps = n^{-2/3}
\end{equation}
and
\begin{equation} \label{eqn:tildehn}
\tilde h_n(y,t; f_n) = \frac{t}{\eps}- \eps^{1/2} h^n_{2\eps^{-3/2}t}(2\eps^{-1}y;f_n); \quad \eps = n^{-2/3}. 
\end{equation}
We have that $$d(x,-t;y,-s)=\La(y,s;x,t)$$ and assumptions 15.1 and tightness condition 15.4 both hold by \cite[Lemma 16.2]{DV}.

\begin{lem} \label{lem:tightnesscond}
    Let $0 < t_1 < t_2$, $k>0$ and $|y_0|\le k$. Set $q = inf_{|y|\le 2k, t\in [0,t_1]} d(y_0,-t_2;y,-t)$ and assume that $|q| < \infty$.
    Suppose assumptions 15.1 hold. Then as $n\to\infty$, 
    $$ \sup_{|y|\le k, t\in [0,t_1]} \tilde h_n(y,t;f_n) \le \tilde h_n(y_0, t_2; f_n) +1 - q+o(1).$$

    In particular, for exponential last passage percolation, since tightness condition 15.4 holds and $|q| < \infty$ for every $t_1$ and $k$,
    the strengthened tightness condition 15.4 holds as well.
\end{lem}

\begin{proof}
Let $h'\in \mathbb R$. If for some $t\in [0,t_1]$ and $|y|\le k$ we have 
$\tilde h(t,y;f_n)>h',$
then for some $x \in \R$, with $r=(f_n(x)-a_nx)/b_n$ and  with $\ell_n(x,s;x+y,s+t)=a_n y+b_nt$, we have
$$
I_{d_n+\ell_n}(x,r;b_nt)(y) > s; \qquad s =-t-\frac{a_n}{b_n}y+\frac{h'}{b_n}.
$$
This means that there exists $y'$ arbitrarily close to $y$, in particular some $y'\in [-2k,2k]$, so that 
$$
(d_n+\ell_n)(y',s;x,r) > b_nt 
$$
Let $s'=-t_2-\frac{a_n}{b_n}(y_0+y-y')+\frac{h'+(q-1)}{b_n}$. Then 
\begin{align*}
(d_n+\ell_n)(y_0,s';x,r) &\ge (d_n+\ell_n)(y',s;x,r) + (d_n+\ell_n)(y_0,s';y',s) \\&>b_nt+ (d_n+ \ell_n)(y_0,s';y',s)
\\&\ge b_nt + q + o(1) + \ell_n(y_0,s';y',s)
\\&\ge b_nt + q + o(1) + b_n(-t+t_2+(1-q)/b_n)
\\&\ge b_nt_2 +1 +o(1)
\end{align*}
Here we have used that $d_n+ \ell_n$ satisfies the reverse triangle inequality (part of assumption 15.1) and that $d_n$ converges to $d$. Therefore, for all large enough $n$,
$$
I_{d_n+\ell_n}(x,r;b_nt_2)(y_0)\ge s'
$$
and so 
$
\tilde h_n(y_0,t_2;f_n) \ge  h'+q-1$, as required.
\end{proof}

\begin{lem} \label{lem:lpptightness}
    Let $0 < t_1 < t_2$, $k>0$ and $y_0 \in \R$. Set $q = inf_{|y|\le k, t\in [0,t_1]} d(y_0,-t_2;y,-t)$ and assume that $|q| < \infty$.
    Suppose assumptions 15.1 hold. Then as $n\to\infty$, 
    $$ \sup_{|y|\le k, t\in [0,t_1]} \mathbf{g}_n(y,t;f_n) \le \mathbf{g}_n(y_0, t_2; f_n) - q+o(1).$$
    Moreover, with $\tilde h_n$ as in \eqref{eqn:tildehn} and $\eps = \eps_n = n^{-2/3}$, we have that
    $$\mathbf{g}_n(y_0,t_2; f_n) \leq h^{*} \iff \tilde h_n(y_0,t_2 + 2\eps h^{*};f_n) \leq h^{*}.$$
    In particular, for exponential last passage percolation, since tightness condition 15.4 holds and $|q| < \infty$ for every $t_1$ and $k$,
    the tightness condition for LPP that is assumed in the proof of Theorem \ref{thm:lppconvergence} (Claim iii) holds as well .
\end{lem}

\begin{proof}
From \eqref{eqn:gmax} we have that
$$ \mathbf{g}_n(y_0, t_2; f_n) = \sup_{z \in \R}\, \{d_n(y_0,-t_2; z, \eps f_n(z)) + \mathbf{f}_n(z) \}; \; \eps = n^{-2/3},\;
\mathbf{f}_n = - \eps^{1/2} f_n(2\eps^{-1}x).$$
Since $d_n + \ell_n$ satisfies the reverse triangle inequality, we find that
$$(d_n+\ell_n) (y_0, -t_2; z,\eps \mathbf{f}_n(z)) \geq (d_n +\ell_n)(y_0,-t_2; y,-t) + (d_n+\ell_n)(y,-t; z, \eps \mathbf{f}_n(z)),$$
and, since $\ell_n$ is linear, the above simplifies to
$$d_n(y_0, -t_2; z,\eps \mathbf{f}_n(z)) \geq d_n(y_0,-t_2; y,-t) + d_n(y,-t; z, \eps \mathbf{f}_n(z)).$$
This inequality together with the variational representation of $\mathbf{g}_n$ imply that for every $|y| \leq k$ and $t \in [0,t_1]$,
\begin{align*}
    \mathbf{g}_n(y_0,t_2; f_n) &\geq d_n(y_0,-t_2; y,-t) + \mathbf{g}_n(y,t;f_n) \\
    & \geq \inf_{|y|\leq k, t\in [0,t_1]}\, \{d_n(y_0,-t_2; y,-t)\} + \mathbf{g}_n(y,t;f_n).
\end{align*}
Since $d_n$ converges to $d$, $\inf_{|y|\leq k, t\in [0,t_1]}\, d_n(y_0,-t_2; y,-t) = q + o(1)$ as $n \to \infty$. Therefore,
$$ \sup_{|y| \leq k, t \in [0,1]} \mathbf{g}_n(y,t;f_n) \leq \mathbf{g}_n(y_0,t_2;f_n) - q + o(1).$$

Now the relationship between $g_n(x,y;f_n)$ and $h^n_T(x;f_n)$ is that
$$ g_n(x,y; f_n) \leq T \iff h_T(x;f_n) \geq y.$$
In terms of the re-scaled functions this amounts to
$$ \mathbf{g}_n(x,t, f_n) \leq h^* \iff \tilde h_n(x, t + 2\eps h^*;f_n) \leq h^*$$
The tightness condition 15.4 from \cite{DOV} then implies the tightness condition for LPP.
\end{proof}

 \subsection{Scaling limit of the competition interface} \label{sec:compscale}
 In this section we assume to be in the context of Theorem \ref{thm:lppconvergence}; namely,
 we use the coupling of mean 1 exponential random variables $\omega_{x,y,n}$ and the directed landscape $\La$ therein and assume the convergence of last passage times in (iii). We also use the notation therein.
 
Let $(I^n_m)_{m \geq 0}$ be the competition interface for last passage percolation with initial condition $h^n_0$ as defined in \eqref{eqn:compint}. Re-scale it according to
    \begin{equation} \label{eqn:compscaled}
	\mathbf{I}^{n}(t) = (\eps_n/2) \, I^{n}_{\lceil \eps_n^{-3/2}t \rceil} \quad \text{for}\; t > 0.
    \end{equation}
Recall the discrete competition function $d(x,y)$ from \eqref{eqn:discomfun} and let
	$$d^{n}(x,t) = \mathbf{g}_n(x,t; h^{+,n}_0) - \mathbf{g}_n(x,t; h^{-,n}_0)$$
be the re-scaled competition function for the initial condition $h^{n}_0$.
Recall the notation $\mathbf{g}_n(x,t; h^n_0)$ from \eqref{eqn:gmax}. The function is non-decreasing in $x$,
	and the interface $\mathbf{I}^{n}$ satisfies
	\begin{equation} \label{eqn:Id}
		d^{n}(\mathbf{I}^{n}(t), t) < 0 \quad \text{while} \quad d^{n}(\mathbf{I}^{n}(t)+\eps_n,t) > 0.
	\end{equation}
	In the context of Theorem \ref{thm:lppconvergence}, $d^{n}(x,t)$ converges, almost surely, uniformly over compact subsets of $\Hp$ to the limiting competition function
	$$d_0(x,t) = \La(h_0^{+}; x,t) - \La(h_0^{-}; x,t)$$
	from \eqref{eqn:dp}.
	
	Recall from $\S$\ref{sec:defofint} the left and right interfaces $I^{\pm}_0(t)$ from the reference point $p=0$
	in the directed landscape  with initial condition $\mathbf{h}_0$, and that the interface is uniquely defined if
	$I^{\pm}_0(t)$ are finite and $I^{-}_0(t) = I^{+}_0(t)$ for every $t > 0$.
	
\begin{prop} \label{prop:intlimit}
Suppose $d^{n}(x,t)$ converges uniformly on compact subsets of $\Hp$ to $d_0(x,t)$. For every compact interval $[a,b] \subset (0,\infty)$ and $\delta > 0$, there is an $n_0 > 0$ such that if $n > n_0$ then
$$ I^{-}_0(t) - \delta \leq \mathbf{I}^{n}(t) \leq I^{+}_0(t) + \delta \quad \text{for}\;\; t \in [a,b].$$
		
As a consequence, if the interface from reference point 0 for the initial condition $\mathbf{h}_0$ is uniquely defined, then $\mathbf{I}^{\eps}(t)$ converges to it uniformly in $t$ over compact subsets of $(0,\infty)$.
\end{prop}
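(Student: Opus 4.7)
The plan is a purely deterministic argument that exploits the uniform convergence $d^{\eps}\to d_0$ together with the monotonicity of both competition functions in the spatial variable. Fix a compact interval $[a,b]\subset(0,\infty)$ and $\delta>0$. I will prove the upper bound $\mathbf{I}^{\eps}(t)\le I^+_0(t)+\delta$; the lower bound is symmetric. If $I^+_0(t)=+\infty$ for some $t\in[a,b]$, the bound is vacuous there, so I may assume $I^+_0$ is finite on $[a,b]$ and is thus continuous by Proposition \ref{prop:intcont}.

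By definition of $I^+_0(t)$ as the last zero of $d_0(\cdot,t)$ and the strict monotonicity encoded in Proposition \ref{prop:mono}, one has $d_0(I^+_0(t)+\delta,t)>0$ for each $t$. The graph $K_+=\{(I^+_0(t)+\delta,t):t\in[a,b]\}$ is a compact subset of $\Hp$, so continuity of $d_0$ yields a positive constant $m_+:=\min_{K_+}d_0>0$. Pick a compact neighbourhood of $K_+$ inside $\Hp$; uniform convergence $d^{\eps}\to d_0$ on that neighbourhood gives $d^{\eps}(I^+_0(t)+\delta,t)>m_+/2>0$ for every $t\in[a,b]$ once $\eps$ is small enough. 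Combining this with the discrete interface characterization \eqref{eqn:Id} and the discrete analogue of Proposition \ref{prop:mono} (which makes $d^{\eps}(\cdot,t)$ non-decreasing), we rule out $\mathbf{I}^{\eps}(t)\ge I^+_0(t)+\delta$: otherwise $d^{\eps}(\mathbf{I}^{\eps}(t),t)\ge d^{\eps}(I^+_0(t)+\delta,t)>0$, contradicting \eqref{eqn:Id}. A symmetric argument using the compact graph $K_-=\{(I^-_0(t)-\delta,t):t\in[a,b]\}$ (when $I^-_0$ is finite there) yields $d^{\eps}(I^-_0(t)-\delta,t)<0$ for all small $\eps$, whence $\mathbf{I}^{\eps}(t)+\eps>I^-_0(t)-\delta$ by \eqref{eqn:Id} and monotonicity, and so $\mathbf{I}^{\eps}(t)\ge I^-_0(t)-2\delta$ once $\eps<\delta$. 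Since $\delta$ is arbitrary, the first assertion follows.

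For the consequence, when the interface is uniquely defined we have $I^-_0=I^+_0=:I$ continuous and finite on $(0,\infty)$, so the two-sided bound immediately gives $\sup_{t\in[a,b]}|\mathbf{I}^{\eps}(t)-I(t)|\le 2\delta$ for small $\eps$, i.e.\ uniform convergence on every compact subinterval. The main subtlety I expect is not the main bound itself but the bookkeeping around $I^{\pm}_0$ possibly being infinite: this is resolved by treating those cases as vacuous, since monotonicity of $d_0(\cdot,t)$ forces the corresponding one-sided bound to hold trivially. No stochastic input beyond the hypothesis on $d^{\eps}$ is needed; the heavy lifting has already been done in the preceding sections.
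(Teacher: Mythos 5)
Your argument is correct and is essentially the paper's proof: both take the compact graph $K_+=\{(I^+_0(t)+\delta,t)\}$ (and its left analogue), use positivity and continuity of $d_0$ there, upgrade to $d^{\eps}>0$ via uniform convergence, and conclude from monotonicity together with \eqref{eqn:Id}, absorbing the extra $\eps$ shift on the left side just as the paper does with its $\delta/2$ bookkeeping. The only cosmetic difference is that positivity of $d_0$ at $I^+_0(t)+\delta$ follows directly from the definition of $I^+_0$ as a supremum rather than from any strict monotonicity in Proposition \ref{prop:mono}.
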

	
\begin{proof}
Consider the set $K^{+} = (I^{+}_0(t)+\delta, t)$ for $t \in [a,b]$. As $I^{+}_0(t)$ is continuous, this is a compact subset of $\Hp$. The competition function $d_0$ is positive on $K^{+}$ by definition of $I^{+}_0$. Being continuous, $d_0$ has a positive minimum value over $K^{+}$, say $\eta$.
		
The function $d^{n}$ converges to $d_0$ uniformly over $K^{+}$. So there is an $n_1$ such that for $n > n_1$, $d^{n}(x,t) \geq \eta/2$ on $K^{+}$. Consequently, by monotonicity and \eqref{eqn:Id}, $\mathbf{I}^{n}(t) \leq I^{+}_0(t) + \delta$ for every $t \in[a,b]$.

By a parallel argument applied to $K^{-} = ( I^{-}_0(t) - \delta/2, t)$ for $t \in [a,b]$, there is an $n_2$ such that $I^{-}_0(t) - (\delta/2) - \eps_n \leq \mathbf{I}^{\eps}(t)$ for every $t \in [a,b]$, if $n > n_2$.
		
Since $\eps_n \to 0$, there is an $n_3$ such that $\eps_n < \delta/2$ for all $n > n_3$. The proposition follows by taking $n_0 = \max \, \{ n_1, n_2, n_3 \}$.
\end{proof}
	
	The proposition and the aforementioned convergence of last passage times imply
	\begin{cor}[Interface limit] \label{cor:intlimit}
		Suppose the initial height functions $h^{n}_0(x)$ for tasep converge to $\mathbf{h}_0(x)$ under the re-scaling \eqref{eqn:initialconv} in the hypograph topology (along with the growth condition on $h^n_0$ assumed there).
		Suppose also the interface from reference point $p=0$ and initial condition $\mathbf{h}_0$ is uniquely defined in the directed landscape .
		Call this unique interface $I_0(t)$ for $t > 0$.
		The re-scaled competition interface $\mathbf{I}^{n}(t)$ in \eqref{eqn:compscaled} associated to $h^{n}_0$
		converges in law, uniformly in $t$ over compact subsets of $(0, \infty)$, to $I_0(t)$.
	\end{cor}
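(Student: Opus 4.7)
The plan is to combine the convergence of rescaled last passage times from $\S$\ref{sec:lpplimit} with the deterministic sandwich bound in Proposition \ref{prop:intlimit}.

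First, by the discussion in $\S$\ref{sec:lpplimit}, under a suitable coupling of the rescaled last passage times $\mathbf{g}^{\pm,\eps}(x,t)$ with the directed landscape $\La$, both $\mathbf{g}^{-,\eps}$ and $\mathbf{g}^{+,\eps}$ converge almost surely, uniformly on compact subsets of $\Hp$, to the directed landscape height functions $\hh_0^{-}$ and $\hh_0^{+}$ associated to the split initial conditions $\hh_0^{\pm}$. The hypograph convergence of the rescaled initial data $-\eps^{1/2}h_0^{\eps,\pm}(2\eps^{-1}\cdot) \to \hh_0^{\pm}$ follows from the hypothesis \eqref{eqn:initialconv} since splitting at the origin is compatible with hypograph convergence when $\hh_0$ is finite at points on both sides of $0$, which is guaranteed because $0$ is an interior reference point in the limit. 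Subtracting gives that the rescaled competition functions satisfy
\begin{equation*}
d^{\eps}(x,t) = \mathbf{g}^{+,\eps}(x,t) - \mathbf{g}^{-,\eps}(x,t) \longrightarrow d_0(x,t) = \hh_0^{+}(x,t) - \hh_0^{-}(x,t),
\end{equation*}
almost surely and uniformly on compact subsets of $\Hp$, under the coupling.

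Next, apply Proposition \ref{prop:intlimit}. That proposition provides the key sandwich: for any compact $[a,b]\subset (0,\infty)$ and any $\delta > 0$, almost surely there exists $\eps_0$ (depending on the sample) so that for every $\eps < \eps_0$,
\begin{equation*}
I^{-}_0(t) - \delta \;\leq\; \mathbf{I}^{\eps}(t) \;\leq\; I^{+}_0(t) + \delta \quad \text{for all } t \in [a,b].
\end{equation*}
Because the limiting interface is assumed uniquely defined, $I^{-}_0(t) = I^{+}_0(t) = I_0(t)$ for every $t > 0$. Combining with the sandwich yields $\sup_{t \in [a,b]} |\mathbf{I}^{\eps}(t) - I_0(t)| \leq \delta$ for all $\eps < \eps_0$, which gives almost sure uniform convergence $\mathbf{I}^{\eps} \to I_0$ on $[a,b]$ under the coupling. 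Convergence in law under the topology of uniform convergence on compact subsets of $(0,\infty)$ follows from almost sure convergence along the coupling together with a diagonal argument over compact intervals $[a,b]$ exhausting $(0,\infty)$.

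The only subtle step is that the sandwich in Proposition \ref{prop:intlimit} is phrased for deterministic convergence of $d^{\eps}$ to $d_0$, and one must argue that it applies almost surely in the underlying randomness on a single probability space. This is not a genuine obstacle because the coupling from $\S$\ref{sec:lpplimit} provides exactly such a sample-wise uniform convergence, and the statement of Proposition \ref{prop:intlimit} is purely analytic in $d^{\eps}$ and $d_0$. Once almost sure convergence of $\mathbf{I}^{\eps}$ to $I_0$ is established under the coupling, weak convergence in the stated topology follows immediately, completing the proof.
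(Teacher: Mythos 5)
Your proposal is correct and follows essentially the same route as the paper: the paper derives the corollary directly by combining the coupled almost-sure uniform convergence of the split rescaled last passage times (hence of $d^{\eps}$ to $d_0$) from $\S$\ref{sec:lpplimit} with the sandwich of Proposition \ref{prop:intlimit}, using unique definedness to collapse $I^{-}_0 = I^{+}_0 = I_0$. Your write-up simply makes explicit the sample-wise application of the proposition under the coupling and the passage from almost-sure to distributional convergence, which is exactly what the paper's one-line deduction intends.
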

 
        \subsubsection{The stationary interface} \label{eqn:statint}
        The stationary initial condition $h_0$ for tasep that we consider is the two-sided simple symmetric random walk: $h_0(0)=0$ and $\eps_k=h_0(k+1)-h_0(k)$, $k \in \Z$, are i.i.d random variables taking values $\pm 1$ with probability $1/2$ and independent of the random weights $\omega_{x,y}$ from \eqref{eqn:LPPmax} that constitute the passage times.
        Through linear interpolation, $h_0$ is extended to $x \in \R$.
        (There is a one parameter family of stationary initial conditions according to the drift of the random walk, but here we consider the symmetric case.)

        The KPZ re-scaled competition interface for the stationary initial condition converges in law to the interface $I_0(t)$ of the directed landscape associated to the two-sided Brownian initial condition with diffusivity $\sqrt{2}$. The convergence here is jointly in the randomness over the initial condition and the passage times. We explain this in Proposition \ref{prop:statintlimit} below with the help of the following auxiliary tightness lemma concerning the initial condition.

        \begin{lem}\label{lem:stattightness}
            Let $h_0$ be the stationary initial condition above. For $0 < \eps \leq 1$, consider the random variables
            $$ \sup_{x \in \R} \; \left \{ \eps^{1/2} h_0(\eps^{-1}x) - |x| \right \}.$$
            The laws of these random variables are tight.
        \end{lem}

        \begin{proof}
            Let $S^{\eps}_-$ and $S^{\eps}_+$ be the supremum above restricted to $x \leq 0$ and $x \geq 0$, respectively.
            These two random variables are independent and identically distributed, and the supremum above over all $x \in \R$ equals $\max \{S^{\eps}_-, S^{\eps}_+\}$. It is therefore enough to show that the laws of $S^{\eps}_+$ are tight for $0 < \eps \leq 1$.

            Since $h_0(x)$ is the linear interpolation of its values at integer arguments $x$,
            $$S^{\eps}_+ = \sup_{x = 0, \eps, 2\eps, 3\eps, \ldots} \{ \eps^{1/2} h_0(\eps^{-1}x) - x\}.$$
            Clearly, $S^{\eps}_+ \geq 0$. It is enough to show that for every $0 < \eps \leq 1$, $\lambda > 0$ and $T > 0$,
            \begin{equation} \label{eqn:tightsup}
            \mathbf{Pr} \left ( \sup_{x = 0, \eps, 2\eps, 3\eps, \ldots;\, x \in [0,T]} \{\eps^{1/2} h_0(\eps^{-1}x) - x \} > \lambda \right) \leq e^{-\lambda}.
            \end{equation}

            For $x = 0, \eps, 2\eps, 3\eps, \ldots$, define
            $$ M_{\eps}(x) = e^{\eps^{1/2} h_0(\eps^{-1}x) - c(\eps)x}, \quad c(\eps) = \eps^{-1}\log\left( (e^{\sqrt{\eps}} + e^{-\sqrt{\eps}})/2\right).$$
            The process $M_{\eps}(x)$ is a positive martingale with $\mathbf{E}[M_{\eps}(x)] = 1$ for all $x$. By Doob's maximal inequality,
            $$\mathbf{Pr}\left ( \sup_{x \in [0,T]} M_{\eps}(x) > \lambda \right) \leq \lambda^{-1}.$$
            Upon taking logarithms and setting $\lambda$ to $e^{\lambda}$ gives
            $$\mathbf{Pr}\left ( \sup_{x = 0, \eps, 2\eps, 3\eps, \ldots;\, x \in [0,T]} \{\eps^{1/2} h_0(\eps^{-1}x) - c(\eps)x \} > \lambda \right) \leq e^{-\lambda}.$$

            The bound \eqref{eqn:tightsup} follows on showing that $c(\eps) \leq 1$ if $0 < \eps \leq 1$. Taylor expanding $(e^{\sqrt{\eps}} + e^{-\sqrt{\eps}})/2$ and using the fact that $\log(1+x) \leq x$, we indeed find that
            $$c(\eps) \leq \frac{1}{2!} + \frac{1}{4!} \eps + \frac{1}{6!} \eps^2 + \cdots \leq 1.$$

        \end{proof}

        \begin{prop} \label{prop:statintlimit}
            Let $h_0$ be the stationary initial condition and $(I_m)_{m \geq 0}$ be the competition interface for it defined via \eqref{eqn:compint}.
            For any sequence $\eps_n \to 0$, define $\mathbf{I}^{n}(t) = (\eps/2)I_{\lceil \eps^{-3/2} t \rceil}$ for $t \geq 0$ and $\eps = \eps_n$.
            Let $I_0(t)$ be the (uniquely defined) interface from reference point $p=0$ of the directed landscape associated to a two-sided Brownian initial condition with diffusivity constant $\sqrt{2}$. The process $\mathbf{I}^{n}(t)$ converges to $I_0(t)$ in law, uniformly over compact subsets of $t \in (0,\infty)$.
        \end{prop}

        \begin{proof}
            The functions $- \eps^{1/2}h_0(2 \eps^{-1}x)$ have the same law as $\eps^{1/2}h_0(2 \eps^{-1}x)$ for every $\eps = \eps_n$.
            By Lemma \ref{lem:stattightness}, there are coupled functions $h^{n}(x)$ and a finite random variable $C$
            such that $h^{n}(x)$ has the law of $-\eps_n^{1/2}h_0(2\eps_n^{-1}x)$ and $h^{n}(x) \leq C + 2|x|$.
            By Donsker's theorem and Skorokhod's representation theorem, we may also assume that there is a two-sided Brownian motion $B(x)$
            with diffusivity constant $\sqrt{2}$ coupled with the $h^{n}$ such that $h^{n}$ converges to $B$ uniformly on compact subsets of $x \in \R$, almost surely.

            Suppose we are in the context of Theorem \ref{thm:lppconvergence} with the coupling mentioned there (which is independent of the coupling of initial conditions assumed above). Consider the KPZ re-scaled last passage times $\mathbf{g}_n(x,t; h^n)$ associated to $h^n$ according to \eqref{eqn:gmax} 
            ($h^n$ is already re-scaled, so $\mathbf{h}^n_0$ in \eqref{eqn:gmax} is $h^n$ from here).
            Theorem \ref{thm:lppconvergence}(iii) implies that $\mathbf{g}_n(x,t; h^n)$ converges uniformly over compact subsets of $(x,t) \in \Hp$ to the limit $\La(B; x,t)$ as $n \to \infty$, almost surely. The limit $\La(B; x,t) = \sup_{y \in \R} \{B(y) + \La(y,0;x,t)\}$, which has the law of the directed landscape height function started from a two-sided Brownian initial condition of diffusivity $\sqrt{2}$. Similarly, the split last passage times $\mathbf{g}_n(x,t; h^{\pm,n})$ converge accordingly to $\La(B^{+}; x,t) = \sup_{y \geq 0} \{B(y) + \La(y,0;x,t)\}$ and
            $\La(B^{-}; x,t) = \sup_{y \leq 0} \{B(y) + \La(y,0;x,t)\}$. Thus, the re-scaled competition function
            $d^{n}(x,t)$ converges to $d_0(x,t) = \La(B^+; x,t) - \La(B^-;x,t)$.

            Proposition \ref{prop:intlimit} now implies $\mathbf{I}^{n}(t)$ converges to $I_0(t)$ in law, uniformly over compact subsets of $t \in (0,\infty)$.
        \end{proof}
        
\subsection{Scaling limit of the 2nd class particle} \label{sec:particlescale}
We can now prove Theorem \ref{thm:2ndclass}. Assume once again that we are in the context of Theorem \ref{thm:lppconvergence}. For every $n$, tasep with initial condition $h_0^n$ is defined by using the random variables $\omega_{x,y,n}$ and the bijective mapping between last passage percolation and tasep explained in Section \ref{sec:2ndandcomp}.
 
Recall the correspondence between the 2nd class particle and the $\circ-\bullet$ pair from Section \ref{sec:holeparticle}. We must first understand the random times when the $\circ-\bullet$ pair jumps. Recall from \eqref{eqn:2ndnstep} that $X_m$ is the position of the particle in the $\circ-\bullet$ pair after its $m$-th jump, and $\tau_m$ is the time of this jump. Lemma \ref{lem:2ndLPP} describes the evolution of $X_m$ and $\tau_m$. Let $m_t$ be the integer such that $t \in [\tau_{m_t}, \tau_{m_t+1})$, which means that the position $X(t)$ of the particle in the $\circ-\bullet$ pair satisfies
\begin{equation} \label{eqn:tandnt}
    X(t) = X_{m_t} \quad \text{for every}\; t.
\end{equation}
	
\begin{lem} \label{lem:jumptimes}
Assume the context of Theorem \ref{thm:lppconvergence} and the conditions of Theorem \ref{thm:2ndclass}: tasep initial conditions $h^{n}_0$ converge to $\mathbf{h}_0$ under the re-scaling \eqref{eqn:initialconv} in the hypograph topology and with the assumed growth condition. Assume there is a uniquely defined interface from reference point 0 for the initial condition $\mathbf{h}_0$.
		
Denote by $X^{n}_m$ , $\tau^{n}_m$ and $m^{n}_t$ the quantities $X_m$, $\tau_m$ and $m_t$ above for the tasep with initial condition $h^{n}_0$.  Given a compact interval $[a,b] \subset (0,\infty)$, the following holds uniformly for $s \in [a,b]$ for all large $n$. There is a finite random variable $C$ such that
$$ \sup_{s \in [a,b]}\; \left | \frac{m^{n}_{\eps^{-3/2}s}}{\eps^{-3/2}s} - \frac{1}{2} \right | \leq C \eps \quad \text{with}\; \eps = \eps_n.$$
\end{lem}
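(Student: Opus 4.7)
The plan is to invert the KPZ rescaling \eqref{eqn:KPZscaling} to express $\tau^{\eps}_n$ in terms of the rescaled last passage time $\mathbf{g}^{\eps}$ of Section \ref{sec:lpplimit}. From $\tau^{\eps}_n = g^{\eps}(X^{\eps}_n, n)$ and the definition \eqref{eqn:KPZscaling} of $\mathbf{g}^{\eps}$, one gets
$$\tau^{\eps}_n = 2n + 2\eps^{-1/2}\mathbf{g}^{\eps}\bigl(\eps X^{\eps}_n/2,\ \eps^{3/2}n\bigr)$$
up to a bounded error from the ceiling. Setting $t = \eps^{-3/2}s$ and using $\tau^{\eps}_{n^{\eps}_t} \leq t < \tau^{\eps}_{n^{\eps}_t+1}$, this rearranges to
$$\bigl|n^{\eps}_{\eps^{-3/2}s} - \eps^{-3/2}s/2\bigr| \leq \eps^{-1/2}\sup\bigl|\mathbf{g}^{\eps}(\eps X^{\eps}_n/2,\ \eps^{3/2}n)\bigr| + O(1),$$
where the supremum is over $n$ with $\eps^{3/2}n$ in a slight enlargement of $[a/2,b/2]$. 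Dividing by $\eps^{-3/2}s$, the claimed rate $C\eps$ reduces to showing $|\mathbf{g}^{\eps}|$ is bounded by a random constant on a suitable compact region.

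To establish this uniform bound I would combine two facts. First, by the scaling limit recalled in Section \ref{sec:lpplimit}, $\mathbf{g}^{\eps}$ converges almost surely to the continuous directed landscape height function $\hh$ uniformly on compact subsets of $\Hp$; in particular, on any compact rectangle $[-R,R]\times[a/2,b/2]$ there is a random $M$ bounding $|\mathbf{g}^{\eps}|$ for all small $\eps$. Second, to know that the spatial argument $\eps X^{\eps}_n/2$ remains in $[-R,R]$ throughout the relevant range of $n$, I would use Lemma \ref{lem:compint}, which says $|X^{\eps}_n - I^{\eps}_n| \leq 2$, together with Corollary \ref{cor:intlimit}, which gives that the rescaled competition interface $\mathbf{I}^{\eps}(t)$ converges uniformly on $[a/2,b/2]$ to the continuous, finite interface $I_0$. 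Since $I_0$ is bounded on $[a/2,b/2]$ and $\mathbf{I}^{\eps}$ is uniformly close to it, some deterministic $R$ works with high probability, and the bound on $M$ follows.

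For uniformity in $s \in [a,b]$, note that everything in the estimate above is simultaneous: a single bound $M$ over the compact rectangle controls $\mathbf{g}^{\eps}(\eps X^{\eps}_n/2, \eps^{3/2}n)$ for every $n$ with $\eps^{3/2}n \in [a/2,b/2]$, and hence controls $\tau^{\eps}_n - 2n$ uniformly. Combined with the monotone sandwich $\tau^{\eps}_{n^{\eps}_t} \leq t < \tau^{\eps}_{n^{\eps}_t+1}$ it yields the conclusion uniformly in $s$. The main obstacle is the compactness step for $X^{\eps}_n$: the KPZ bound on $\mathbf{g}^{\eps}$ is only useful on bounded spatial windows, so one must first show that the $\circ{-}\bullet$ pair does not escape to infinity on the relevant rescaled region, which is precisely what Lemma \ref{lem:compint} together with the interface limit Corollary \ref{cor:intlimit} provides. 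Once both ingredients are in place, the algebraic rearrangement above gives the rate $C\eps$ directly.
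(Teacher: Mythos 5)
Your proposal is correct and follows essentially the same route as the paper: the sandwich $\tau^{\eps}_{n_t}\leq t<\tau^{\eps}_{n_t+1}$, spatial control of $X^{\eps}_n$ via Lemma \ref{lem:compint} and Corollary \ref{cor:intlimit}, and the uniform-on-compacts convergence of the rescaled last passage times to $\hh$ (your bound on $\mathbf{g}^{\eps}$ is just the paper's statement $g(x,n)=2n+O(n^{1/3})$ written in rescaled coordinates). The only cosmetic difference is that the paper works in unscaled variables and takes a reciprocal at the end, whereas you divide by $\eps^{-3/2}s$ directly.
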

	
\begin{proof}
Fix $n$ (hence the initial condition $h^n_0$ and $\eps_n$) and to save notation suppress the $n$-dependence of particle locations, jump times, last passage times, etc. Recall from Lemma \ref{lem:2ndLPP} that $\tau_m = g(X_m,m)$ and
$$\tau_{m+1} = g(X_{m+1},m+1) = \min \{ g(X_m+1,m+1), g(X_m-1,n+1)\} \leq g(X_m, m+2).$$
Since $\tau_{m_t} \leq t \leq \tau_{m_t + 1}$, it follows that
\begin{equation} \label{eqn:tbound}
    g(X_{m_t}, m_t) \leq t \leq g(X_{m_t}, m_t+2).
\end{equation}
		
Now $X_m \in \{I_m, I_m + 2\}$ for every $m$ by Lemma \ref{lem:compint}. The processes $(I^n_m)_{m \geq 0}$ re-scaled according to \eqref{eqn:compscaled} converges to the uniquely defined interface $I_0(t)$ from reference point 0 for the initial condition $\mathbf{h}_0$, as implied by Proposition \ref{prop:intlimit}. So there is a random variable $L$ (using the coupling in the context of Theorem \ref{thm:lppconvergence}) such that $|I_m| \leq L m^{2/3}$ for every $m = m_t$ with $t = s\eps^{-3/2}$ and $s \in [a,b]$. Thus $X_m$ satisfies the same bound, under this coupling, for the same range of $m$.
		
From the convergence of the KPZ re-scaled last passage function $\mathbf{g}$ in \eqref{eqn:gmax} to the directed landscape height function $\La(\mathbf{h}_0;\cdot)$ as discussed in Theorem \ref{thm:lppconvergence} (the dependence on $n$ is suppressed here), it is easy to see that if $|x| \leq Lm^{2/3}$ then there is a random variable $C'$ such that
$$g(x, m) = 2m + C(x,m) m^{1/3} \quad \text{with}\;\; |C(x,m)| \leq C'.$$
Applying with to $x = X_{m_t}$ and then using \eqref{eqn:tbound} implies that
\begin{equation} \label{eqn:tbound2}
    |t - 2m_t| \leq C' m_t^{1/3}
\end{equation}
for every $t = s\eps^{-3/2}$ with $s \in [a,b]$. Now $t \leq (C'+2)m_t$, which shows that $m_t \geq c \eps^{-3/2}$ for every $t$ as before and a random $c$. As a result, one deduces from \eqref{eqn:tbound2} that
$$ \left | \frac{t}{m_t} - 2 \right | \leq C'' \eps$$
for a random variable $C''$. The estimate in the lemma follows after taking reciprocals.
\end{proof}
	
Now we complete the proof of Theorem \ref{thm:2ndclass}.
\begin{proof}
Consider $X^{n}_m$, the position of the particle in the $\circ-\bullet$ pair after $m$ steps for tasep with initial height function $h^{n}_0$. Since $|X^{n}_r - X^{n}_s| \leq |r-s|$,
$$|X^{n}_{m_t} - X^{n}_{\lceil t/2 \rceil}| \leq |m_t - \lceil t/2 \rceil|.$$
Plugging in $t = s \eps^{-3/2}$ for $s \in [a,b] \subset (0,\infty)$, with $[a,b]$ compact, and using Lemma \ref{lem:jumptimes},
$$ \left |X^{n}_{m_{s \eps^{-3/2}}} - X^{n}_{\lceil s \eps^{-3/2}/2 \rceil} \right | \leq C \eps^{-1/2}.$$
Here $C$ is the random variable from the aforementioned lemma.
		
Since $X^{n}(t) = X^{n}_{m_t}$, $\mathbf{X}^{n}(s) = (\eps/2) X^{\eps}_{m_{s \eps^{-3/2}}}$ with $\eps = \eps_n$.
Multiply the display above by $\eps/2$ and, writing $\overline{X}^{n}(s) = (\eps/2) X^{n}_{\lceil s \eps^{-3/2} \rceil}$, it becomes
$$ \left | \mathbf{X}^{n}(s) - \overline{X}^{n}(s/2)\right | \leq (C/2) \eps^{1/2}$$
for every $s \in [a,b]$ and $\eps = \eps_n$.
		
Finally, recall $\mathbf{I}^{n}(s) = (\eps_n/2)I^{n}_{\lceil s \eps_n^{-3/2} \rceil}$. Lemma \ref{lem:compint} implies
$|\overline{X}^{n}(s/2) - \mathbf{I}^{n}(s/2)| \leq \eps_n$. So,
$$| \mathbf{X}^{n}(s) - \mathbf{I}^{n}(s/2)| \leq \eps_n + (C/2) \eps_n^{1/2}.$$
The convergence of $\mathbf{X}^{n}(s)$ to $I_0(s/2)$ follows from the convergence of $\mathbf{I}^{n}(t)$ to $I_0(t)$ as stated in Corollary \ref{cor:intlimit}.
\end{proof}

	\subsection{Questions}
	
	\paragraph*{\textbf{Multiple 2nd class particles}}
	Consider tasep with more than one 2nd class particle. What is their joint scaling limit?
	It is no longer possible to model multiple 2nd class particles by using hole--particle pairs. Ideally, there should be a
	framework to study the joint scaling limits of 1st, 2nd, 3rd and so on class particles over suitable initial conditions.
	
	\paragraph*{\textbf{Universality of the 2nd class particle}}
	There are several particle systems in the KPZ universality class with a notion of a 2nd class particle,
	for instance, the general asymmetric simple exclusion process. Is the scaling limit of the 2nd class particle,
	namely the competition interface, universal for these models?
	
	An intriguing perspective would be to define the competition interface in terms of an SDE, mirroring the notion
	of characteristics in Burgers' equation. It may then be possible to define a 2nd class
	particle in models like the KPZ equation, and to show that it converges to the competition interface in the KPZ scaling limit.
	
	\paragraph*{\textbf{2nd class particles and characteristics}}
	Here is another question about where a 2nd class particle finds its characteristic, based on discussions with Jeremy Quastel.
	Suppose tasep has initial density $\rho_{-} = 1$ and $\rho_{+}=0$, so initially there are particles at all the negative sites.
	Consider a 2nd class particle started from $X(0) = x_t$. If $x_t$ is large enough then $X(t)$ will be outside the rarefaction fan.
	What is the smallest $x_t$, as $t$ tends to infinity, for which the 2nd class particle stays outside the fan? Formally,
	find the smallest positive sequence $x_t$ such that for every $\eps > 0$,
	$$ \lim_{t \to \infty} \mathbf{Pr} \left (X(t) \leq (1-\eps)t \mid X(0) = x_t \right) = 0.$$

	\subsection*{Acknowledgements}
	B.V. is partially supported by the NSERC Discovery accelerator grant. He thanks Duncan Dauvergne for several useful discussions. 
	M.R. thanks Jeremy Quastel for some early discussions which provided the impetus for this project.
	We thank Jim Pitman for explaining to us the concave majorant of Brownian motion.
        We also thank our three referees for extremely thoughtful and detailed reports (21 pages total) that helped us improve this paper greatly!

\end{document}